\newcommand{\dell}{\partial}
\newcommand{\Ndell}[2]{\rdell^{#1}\angdell^{\underline{#2}}}
\newcommand{\Ndellonetwo}[2]{\rdell^{#1}\angdell_{12}^{#2}}
\newcommand{\comm}[2]{\left[#1,#2\right]}
\newcommand{\X}[1]{\mathcal{X}^{#1}}
\newcommand{\Y}[2]{\mathcal{Y}^{#1}(#2)}
\newcommand{\angdell}{\cancel{\partial}}
\newcommand{\rdell}{\Lambda}
\newcommand{\cov}{\mathcal{M}}
\newcommand{\inv}{\mathcal{A}}
\newcommand{\jac}{\mathcal{J}}
\newcommand{\ncov}{\mathscr{M}}
\newcommand{\ninv}{\mathscr{A}}
\newcommand{\njac}{\mathscr{J}}
\newcommand{\pjac}{\mathscr{J}^{-1/\alpha}}
\newcommand{\rem}{\mathcal{R}}
\newcommand{\spaceI}{\int_{\Omega}}
\newcommand{\twopartdef}[4]
   {
    \left\{
      \begin{array}{ll}
        #1 & \text{if } #2 \\
        #3 & \text{if } #4
      \end{array}
    \right.
   }
\newcommand{\threepartdef}[6]
{
	\left\{
		\begin{array}{lll}
			#1 & \mbox{if } #2 \\
			#3 & \mbox{if } #4 \\
			#5 & \mbox{if } #6
		\end{array}
	\right.
}
\DeclarePairedDelimiterX{\inp}[2]{\langle}{\rangle}{#1, #2}
\DeclareMathOperator{\tr}{Tr}
\DeclareMathOperator{\supp}{supp}
\DeclareMathOperator{\dive}{div}
\DeclareMathOperator{\curl}{curl}
\DeclareMathOperator{\Curl}{Curl}
\DeclareMathOperator{\grad}{\nabla}
\DeclareMathOperator{\ngrad}{\nabla_{\zeta}}
\DeclareMathOperator{\ndiv}{div_{\zeta}}
\DeclareMathOperator{\ncurl}{curl_{\zeta}}
\DeclareMathOperator{\nCurl}{Curl_{\zeta}}
\DeclarePairedDelimiter\ceil{\lceil}{\rceil}
\def\bcr{\begin{color}{red}}
\def\ec{\end{color}}
\def\be{\begin{equation}}
\def\ee{\end{equation}}
\def\bcb{\begin{color}{blue}}
\def\bcv{\begin{color}{violet}}
\newtheorem{definition}{Definition}[section]
\newtheorem{theorem}[definition]{Theorem}
\newtheorem{proposition}[definition]{Proposition}
\newtheorem{lemma}[definition]{Lemma}
\newtheorem{remark}[definition]{Remark}
\title{Global expanding solutions of compressible Euler equations with small initial densities}
\author{Shrish Parmeshwar\footnote{Department of Mathematics, King's College London, Strand, London WC2R 2LS, UK}, \ Mahir Had\v zi\'c\footnote{Department of Mathematics, King's College London, Strand, London WC2R 2LS, UK}, \ 
Juhi Jang\footnote{Department of Mathematics, University of Southern California, Los Angeles, USA and Korea Institute for Advanced Study, Seoul, Republic of Korea}}
\date{}
\begin{document}

\maketitle


\begin{abstract}
We prove the existence of a large class of global-in-time expanding solutions to vacuum free boundary compressible Euler flows without relying on the existence of an underlying finite-dimensional family of special affine solutions of the flow. 
\end{abstract}

\tableofcontents

\section{Introduction}

In this work we prove the existence of a wide class of global solutions to the free boundary isentropic compressible Euler equations expanding into the vacuum, under 
a suitable assumption of {\em smallness} on the initial gas density and an {\em outgoing} condition on the initial velocity profile which roughly speaking states $u_0(x)\sim x$. 
The key property of the Euler flow is the behaviour under scaling of the nonlinearity, 
which generates a stabilising effect under a suitable outgoing condition on the initial velocity profile. 
We work in a vacuum free boundary setting which translates this stabilising effect into the expansion of the gas support. 
Through the mass conservation this will, morally speaking, force the gas density to decay and hence disperse away as $t\to\infty$.

We work with the compressible Euler equations describing the motion of an isentropic and ideal gas. We consider the free-boundary formulation of the problem wherein 
a blob of gas supported on a moving domain $\Omega(t)\subset \mathbb R^3$ is surrounded by vacuum.
Further unknowns are the gas velocity vectorfield ${\bf u}$, the density $\rho$, and the pressure $p$. We assume the isentropic equation of state
\begin{align}
    p=P(\rho) = \rho^{\gamma}, \ \ \gamma>1. \label{equation-of-state}
\end{align}
The associated system of equations takes the form
\begin{subequations}
\label{E:EULER}
\begin{alignat}{2}
    \dell_{t}\rho+\nabla\cdot\left(\rho {\bf u}\right)&=0 && \  \text{in}\ \Omega(t),\label{new-euler-1}\\
    \rho\left(\dell_{t}+{\bf u}\cdot\nabla\right){\bf u}+\nabla\left(\rho^{\gamma}\right)&=0 && \  \text{in}\ \Omega(t),\label{new-euler-2}\\
    \rho&=0 && \ \text{on}\ \dell\Omega(t),\label{new-euler-3}\\
    \mathcal{V}\left(\dell\Omega(t)\right)&={\bf u}\cdot {\bf n} &&  \  \text{on}\ \dell\Omega(t),\label{new-euler-4}
\end{alignat}
\end{subequations}
equipped with initial conditions
\begin{equation}
(\rho,{\bf u})=(\rho_{0},u_{0})\ \ \ \ \text{on}\ \Omega\coloneqq\Omega(0). \label{new-euler-5}
\end{equation}
Here $\mathcal{V}\left(\dell\Omega(t)\right)$ denotes the {\em normal velocity} of $\dell\Omega(t)$, and ${\bf n}$ is the {\em outward unit normal} of $\dell\Omega(t)$. 


A crucial requirement for the well-posedness theory of the moving vacuum boundary Euler equations is the so-called {\em physical vacuum} condition \cite{LY2,JM1}. 
We first define the {\em speed of sound} $c$ through the relationship
\begin{align}
    c^{2}=\frac{d P}{d\rho}=\gamma\rho^{\gamma-1}.\label{speed-of-sound}
\end{align}
Then the {\em physical vacuum boundary condition} reads
\begin{align}
    -\infty <\frac{\dell c^{2}}{\dell n}\bigg\rvert_{\dell\Omega} < 0.\label{vacuum-boundary-condition}
\end{align}
If we set
\begin{align}
w(x): =\rho_{0}^{\gamma-1}(x),\label{w-definition}
\end{align}
then~\eqref{vacuum-boundary-condition} in particular implies that there exists a constant $C>0$ such that 
\begin{align}\label{E:VACUUMCOND}
\frac1C \text{dist}(x,\partial\Omega) \le w(x) \le C  \text{dist}(x,\partial\Omega)
\end{align}
in the vicinity of the initial vacuum boundary $\partial\Omega$. Quantity $w$ is proportional to the enthalpy of the system and it will
play an important role in our analysis.
For notational convenience, define
\begin{align}
\alpha=\frac{1}{\gamma-1},\label{alpha-definition}
\end{align}
so that $\rho_0= w^\alpha$.

There are several works on the expansion-into-vacuum for gases described by the compressible Euler flows. As a rule, the vast majority of such results rely on a reduction of the flow to a finite-dimensional dynamical system, by means of separation of variables arguments for the Lagrangian flow map.
Such reductions of compressible flows with the {\em affine} ansatz on the Lagrangian flow can be tracked back to the works of Ovsiannikov~\cite{Ov1956} and Dyson~\cite{Dyson1968}, where one may obtain solutions with different properties based on the choice of the equation of state. With the choice of the equation of state~\eqref{equation-of-state}, a finite dimensional class of special affine compactly supported expanding solutions of the vacuum free boundary compressible Euler flows was discovered by Sideris~\cite{Sideris,Sideris2014}. Nonlinear stability of such motions was shown by Had\v zi\'c and Jang~\cite{HaJa2} for the range of adiabatic exponents $1<\gamma\le\frac53$ and it was extended to the range $\gamma>\frac53$ by Shkoller and Sideris~\cite{ShSi2017}. 



In the absence of free boundaries, the stabilising effect of the expansion was already understood by Serre~\cite{Se1997} and generalised in the subsequent work by Grassin~\cite{Grassin98}. In the latter work a class of smooth expanding solutions to Euler equations with small initial densities was constructed for any $\gamma>1$. In 2003 Rozanova~\cite{Ro} also showed the existence of  a class of global solutions with expansion as a driving mechanism. The associated velocity profiles are essentially linear (or affine) - this amounts to an assumption on the initial velocity field that drives the expansion of the fluid, 
which in turn overcomes a possible focusing effect that can lead to shock formation.  

It is well-known that the presence of the free boundary causes severe difficulties in the analysis, and a central theme in the well-posedness theory is a delicate 
interplay between the energy estimates and the transport equation satisfied by the vorticity of the velocity vector field \cite{CoSh2012,JaMa2009,JM2012,JaMa2015}. Our analysis is performed entirely in 
Lagrangian coordinates and our solutions are unique in a suitable regularity class which includes the physical vacuum boundary condition~\eqref{vacuum-boundary-condition}.

%

Our main result, Theorem~\ref{main-theorem}, shows by contrast to~\cite{HaJa2,HaJa3,ShSi2017} that the global existence of expanding solutions 
does not crucially depend on the existence of the underlying  ODE-type affine motions. 
Instead, in addition to the stabilising effects of the expansion described in~\cite{HaJa2}, this work exploits an additional scaling structure of the problem, already observed in~\cite{HaJa3} for the Euler-Poisson system, which allows us 
to insert a small parameter in front of the fluid density. 
As a consequence we identify an open class of initial data, with small, but otherwise essentially arbitrary density profiles satisfying~\eqref{E:VACUUMCOND} 
which lead to the global existence for~\eqref{E:EULER}--\eqref{vacuum-boundary-condition}.





\section{Lagrangian Coordinates, Rescaling, and Derivatives}\label{lagrangian-coordinates-rescaling-derivatives}

The well-posedness theory for~\eqref{E:EULER} with the physical vacuum condition~\eqref{vacuum-boundary-condition} was developed independently in~\cite{JaMa2015} and~\cite{CoSh2012} for domains that are periodic in two directions. For the purpose of
this work we rely on the well-posedness framework developed in~\cite{JaMa2015} that has been suitably adapted to handle ball-like domains in~\cite{HaJa2}. 
At the heart of this approach are
the Lagrangian coordinates; they allow us to pull back the free boundary problem onto a fixed domain.

\subsection{Lagrangian Coordinates}\label{lagrangian-coordinates}

In order to address the movement of the vacuum free boundary, we shall reformulate the problem using the Lagrangian coordinates. To that end we introduce the flow map  
$\eta: [0,T]\times \Omega\rightarrow\mathbb{R}^{3}$ as a solution of the ordinary differential equation
\begin{align}
    \dell_{t}\eta(t, x)&={\bf u}(t, \eta(t, x)),\ \ \ t\in [0,T],\nonumber\\
    \eta(0,x)&=x,\label{flow-map}
\end{align}
for some $T>0$.
Let
\begin{align*}
    v(t,\cdot)&={\bf u}(t,\eta(t,\cdot)),\\
    f(t,\cdot)&=\rho(t,\eta(t,\cdot))),\\
    \cov&=D\eta,\\
    \inv&=\cov^{-1},\\
    \jac&=\det{\cov},\\
    a&=\jac\inv.
\end{align*}
The following differentiation identities are useful to note:
\begin{align}
    &\dell\inv^{k}_{i}=-\inv^{k}_{r}\dell \dell_{s}\eta^{r}\inv^{s}_{i},\label{inverse-differentiation}\\
    &\dell\jac=\jac\inv^{s}_{r}\dell\dell_{s}\eta^{r},\label{jacobian-differentiation}
\end{align}
where $\dell = \dell_t$ or $\dell = \dell_i$, $i=1,2,3$. Here and hereafter we use the Einstein summation convention. 
From these, and the definition of $a$, we obtain the Piola identity
\begin{align}
    \dell_{k}a^{k}_{i}=0.\label{piola-identity}
\end{align}
We now pull back~\eqref{E:EULER}--\eqref{new-euler-5} with respect to the Lagrangian coordinates, thus fixing the domain. 
The new system of equations is given by
\begin{align}
    \dell_{t}f+f\inv^{j}_{i}\dell_{j}v^{i}=0\ \ \ \ &\text{in}\ I\times \Omega,\label{lagrangian-euler-1}\\
    f\dell_{t}v^{i}+\inv^{k}_{i}\dell_{k}f^{\gamma}=0\ \ \ \ &\text{in}\ I\times \Omega,\label{lagrangian-euler-2}\\
    f=0\ \ \ \ &\text{on}\ I\times \dell\Omega,\label{lagrangian-euler-3}\\
    (f,v,\eta)=(\rho_{0},u_{0},x)\ \ \ \ &\text{in}\ \{t=0\}\times \Omega .\label{lagrangian-euler-4}
\end{align}
Note that we can use the identity (\ref{jacobian-differentiation}) with the differential operator $\dell_t$ to rewrite $(\ref{lagrangian-euler-1})$ as
\begin{align*}
    \dell_{t}f+f\jac^{-1}\dell_{t}\jac=0.
\end{align*}
This simplifies to
   $ \dell_{t}\left(\log{f\jac}\right)=0, $
which gives the formula
\begin{align}
    f\jac=\rho_{0}=w^{\alpha}.\label{f-jacobian-weight}
\end{align}
Multiplying $(\ref{lagrangian-euler-2})$ by $\jac$, and using $(\ref{piola-identity})$, we arrive at
\begin{align*}
    w^{\alpha}\dell_{t}v^{i}+a^{k}_{i}\dell_{k}\left(\rho^{\gamma}\jac^{-\gamma}\right)=w^{\alpha}\dell_{t}v^{i}+\dell_{k}\left(a^{k}_{i}\rho^{\gamma}\jac^{-\gamma}\right)=0.
\end{align*}
Using the identities $\rho^{\gamma}=w^{1+\alpha}$ and $a^{k}_{i}\jac^{-\gamma}=\inv^{k}_{i}\jac^{1-\gamma}=\inv^{k}_{i}\jac^{-1/\alpha}$,  we finally obtain 
\begin{subequations}
\label{E:EULERNEW}
\begin{alignat}{2}
    w^{\alpha}\dell_{tt}\eta^{i}+\dell_{k}\left(w^{1+\alpha}\inv^{k}_{i}\jac^{-1/\alpha}\right)&=0 &&
    \ \text{in} \  I\times \Omega,\label{new-lagrangian-euler-1}\\
    (v,\eta)&=(u_{0},x) && \ \text{in} \ \{t=0\}\times \Omega,\label{new-lagrangian-euler-3}\\
    w&=0 && \ \text{on}\ \dell\Omega.\label{new-lagrangian-euler-2}
\end{alignat}
\end{subequations}


\subsection{Rescaling and a new formulation}\label{rescaling}


As the expected mechanism for the global existence is the expansion of the support of $\Omega(t)$, we pass to a new set of variables:
\begin{align}
    &\tau=\log{(1+t)},\label{rescaled-time}\\
    &\zeta=e^{-\tau}\eta.\label{rescaled-flow}
\end{align}
We then define
\be
\nu=\dell_{\tau}\zeta, \ \  \mathscr{M}=D\zeta, \ \  \mathscr{A}=[D\zeta]^{-1}, \ \ \mathscr{J}=\det{\mathscr{M}}, \ \ \tilde{a}=\mathscr{J}\mathscr{A}.\label{nu-jacobian-inverse-definition}
\ee
Comparing with Lagrangian coordinates, we have similar differentiation formulae for the jacobian, and inverse of the gradient, given by
\begin{align}
    &\dell\ninv^{k}_{i}=-\ninv^{k}_{r}\dell \dell_{s}\zeta^{r}\ninv^{s}_{i},\label{inverse-differentiation-zeta}\\
    &\dell\njac=\njac\ninv^{s}_{r}\dell\dell_{s}\zeta^{r},\label{jacobian-differentiation-zeta}
\end{align}
giving rise to an analogous Piola identity:
\begin{align}
    \dell_{k}\tilde{a}^{k}_{i}=0. \label{piola-identity-zeta}
\end{align}
Now the chain rule gives us
\begin{align*}
    \dell_{t}=\frac{d\tau}{dt}\dell_{\tau}=\frac{1}{1+t}\dell_{\tau}=e^{-\tau}\dell_{\tau}.
\end{align*}
Applying this to the first term in (\ref{new-lagrangian-euler-1}), we get
\begin{align*}
    \dell_{t}\eta(t,x)=\frac{de^{\tau}}{dt}\zeta+e^{\tau}\frac{\dell\zeta}{\dell\tau}=e^{-\tau}e^{\tau}\zeta+e^{\tau}e^{-\tau}\zeta_{\tau}=\zeta(\tau,x)+\zeta_{\tau}(\tau,x),
\end{align*}
which implies 
\begin{align*}
    \dell_{tt}\eta=e^{-\tau}\zeta_{\tau}+e^{-\tau}\zeta_{\tau\tau}.
\end{align*}
Now, for the pressure term, we can see that $\njac=\det{D\zeta}=\det{\left(e^{-\tau}D\eta\right)}=e^{-3\tau}\jac$. Finally, $\cov^{k}_{i}=e^{\tau}\ncov^{k}_{i}$, and therefore $\inv^{k}_{i}=e^{-\tau}\ninv^{k}_{i}$. Combining these identities allows us to rewrite (\ref{new-lagrangian-euler-1}) as
\begin{align*}
    w^{\alpha}\left(e^{-\tau}\zeta^{i}_{\tau}+e^{-\tau}\zeta^{i}_{\tau\tau}\right)+e^{-\left(1+\frac{3}{\alpha}\right)\tau}\dell_{k}\left(w^{1+\alpha}\ninv^{k}_{i}\pjac\right)=0.
\end{align*}
Multiplying by the appropriate power of $e^{\tau}$ to get rid of exponential terms on the pressure term we get
\begin{align}
    w^{\alpha}e^{\beta\tau}\left(\zeta^{i}_{\tau\tau}+\zeta^{i}_{\tau}\right)+\dell_{k}\left(w^{1+\alpha}\ninv^{k}_{i}\pjac\right)=0,\label{rescaled-euler-w}
\end{align}
where for clarity, we define 
\begin{align}
\beta\coloneqq\frac{3}{\alpha}=3(\gamma-1).\label{beta-definition}
\end{align}
In this paper we assume
\begin{align}
\Omega(0)=\Omega=B_{1},\label{initial-domain-is-unit-ball}
\end{align}
the closed unit ball on $\mathbb{R}^{3}$. Assumption~\eqref{initial-domain-is-unit-ball} will simplify some of the technical steps in the proof, but we can easily treat more general domains that are small smooth deformations of domains of the form $AB_1(0)$, for $A\in \text{GL}^+(3)$ (i.e. small deformations of ellipsoids).

We also introduce an ansatz for the weight function $w$.
\begin{definition}\label{definition-of-W}
	Let $W:\Omega\rightarrow\mathbb{R}$ be a given nonnegative function such that 
	\begin{itemize}
	\item $W>0$ on $\text{int}(\Omega)$ and
	\begin{align}
	W\rvert_{\dell\Omega}\equiv0.\label{W-0-on-boundary}
	\end{align} 
	\item 
	There exists a positive constant $C>0$ such that  for any $x\in\Omega$
	\begin{align}
	\frac1 Cd(x,\dell\Omega)\leq W(x)\leq C d(x,\dell\Omega),\label{W-and-d-equivalence}
	\end{align}
 where $x\mapsto d(x,\partial\Omega)$ is the distance function to $\partial\Omega$.
 	\item The function given by
 		\begin{align}
 		x\mapsto\frac{W(x)}{d(x,\Omega)}\label{ratio-of-W-d-smooth}
 		\end{align}
 		is smooth on a neighbourhood of the boundary $\dell\Omega$.
 	
	\end{itemize}
	For any $\delta>0$ we consider the enthalpy profile 
	\begin{align*}
	w(x)=w_\delta(x)=\delta W(x).
	\end{align*}
	The set $\{w_\delta\}_{\delta\in\mathbb R_{>0}}$ forms a 1-parameter family of initial enthalpies, which generate a 1-parameter family of the corresponding initial densities via
	\[
	\rho_0^\delta(x) = w_\delta(x)^\alpha.
	\]
\end{definition}

\begin{remark}
A model example of a function $W$ satisfying the above assumptions is $W(r)=(1-r^2)_+$.
\end{remark}


\begin{remark}\label{smallness-of-delta} 
The parameter $\delta>0$ will be assumed small in our work and will be used as an effective measure of smallness for the initial gas density.
The freedom to make $\delta>0$ small will be crucial in our strategy, as it will also be used as an additional small factor in the closing our estimates. Without this smallness our strategy would fail. 
\end{remark}

From now on we drop the explicit $\delta$-dependence in $w_\delta$ and simply write $w$. Using such a choice of $w$ in~\eqref{rescaled-euler-w} and Definition~\ref{definition-of-W}, upon dividing~\eqref{rescaled-euler-w} by $\delta^{1+\alpha}$, we obtain
\begin{align}
    \frac{1}{\delta}W^{\alpha}e^{\beta\tau}\left(\zeta^{i}_{\tau\tau}+\zeta^{i}_{\tau}\right)+\dell_{k}\left(W^{1+\alpha}\ninv^{k}_{i}\pjac\right)=0.\label{rescaled-euler-W}
\end{align}


We shall look for solutions where $\zeta$ is a small perturbation of the identity map. To that end, we introduce   
\[
\theta\coloneqq\zeta-x,
\] 
and note that $\theta_{\tau}=\zeta_{\tau}$. Then~\eqref{rescaled-euler-W} reads
\begin{align}
    \frac{1}{\delta}W^{\alpha}e^{\beta\tau}\left(\theta^{i}_{\tau\tau}+\theta^{i}_{\tau}\right)+\dell_{k}\left(W^{1+\alpha}\ninv^{k}_{i}\pjac\right)=0.\label{rescaled-euler-W-perturbation-estimate-form-beta-leq-2}
\end{align}

Muliplying~\eqref{rescaled-euler-W-perturbation-estimate-form-beta-leq-2} by $\theta_i$ and integrating over $\Omega$, we arrive at 
\[
\frac1\delta\frac12\frac{d}{d\tau}\int_{\Omega}e^{\beta\tau} |\theta|^2 W^\alpha\,dx
+ \frac1\delta\left(1-\frac{\beta}{2}\right) \int_{\Omega} e^{\beta\tau} |\theta|^2 W^\alpha\,dx
+ \left(\left(W^{1+\alpha}\ninv^{k}_{i}\pjac\right),\theta^i\right)_{L^2(\Omega)}=0
\]
Therefore, to guarantee the non-negativity  of the second term on the left-hand side above, it appears  necessary to assume $1-\beta/2\ge0$, which is in turn equivalent to 
$\gamma\leq 5/3$. This apparent restriction is analogous to the one in~\cite{HaJa2}.
A way to go around this when $\beta>2$ has was introduced in~\cite{ShSi2017}: we multiply~\eqref{rescaled-euler-W-perturbation-estimate-form-beta-leq-2} by $e^{(2-\beta)\tau}$ 
and obtain
\begin{align}
    \frac{1}{\delta}W^{\alpha}e^{2\tau}\left(\theta^{i}_{\tau\tau}+\theta^{i}_{\tau}\right)+e^{(2-\beta)\tau}\dell_{k}\left(W^{1+\alpha}\ninv^{k}_{i}\pjac\right)=0.\label{rescaled-euler-W-perturbation-estimate-form-beta-g-2}
\end{align}
This removes the above mentioned issue with the potentially wrongly signed damping term, at the expense of a negative exponential in front of the pressure term. Nevertheless, we will be able to close our estimates, thereby allowing some of the spatial norms to grow as $\tau\to\infty$ when $\beta>2$.

To unify the two cases
we introduce some more notation
\begin{definition}\label{D:SIGMA12}
For any $\beta\in(0,\infty)$ we define
\begin{align}
    &\sigma_{1}(\beta)\coloneqq\twopartdef{\beta}{\beta\leq2}{2}{\beta>2},\label{sigma-1}\\
    &\sigma_{2}(\beta)\coloneqq\twopartdef{0}{\beta\leq2}{\beta-2}{\beta>2}.\label{sigma-2}
\end{align}
\end{definition}

\begin{remark}
Note that $\sigma_{1}(\beta)+\sigma_{2}(\beta)=\beta$.
\end{remark}

We shall drop the explicit $\beta$ dependence in $\sigma_1(\beta),\sigma_2(\beta)$ and write instead 
$\sigma_1, \sigma_2$ respectively. We recall that $\beta=\frac3\alpha = 3(\gamma-1)$.
We may therefore rewrite our system succinctly as
\begin{subequations}
\label{E:EULERNEW2}
\begin{alignat}{2}
    \frac{1}{\delta}W^{\alpha}e^{\sigma_{1}\tau}\left(\theta^{i}_{\tau\tau}+\theta^{i}_{\tau}\right)+e^{-\sigma_{2}\tau}\dell_{k}\left(W^{1+\alpha}\ninv^{k}_{i}\pjac\right)&=0 && \ \text{in}\ I \times  \Omega,\label{rescaled-euler-W-perturbation-estimate-form-1}\\
    (\nu,\theta)&=(u_{0}-x,0) && \ \text{in}\ \{\tau=0\}\times \Omega,\label{rescaled-euler-W-perturbation-estimate-form-2}
\end{alignat}
\end{subequations}
where the profile $W$ is given in Definition~\ref{definition-of-W}, $\delta>0$ is a constant, and $\sigma_i$, $i=1,2$, are given in Definition~\ref{D:SIGMA12}.

\section{Notation}\label{notation}

\subsection{General Notation}
For a function $F:\mathcal O\rightarrow\mathbb{R}$, some domain $\mathcal O$, the support of $F$ is denoted $\supp{F}$. For a real number $\lambda$, the ceiling function, denoted $\ceil*{\lambda}$, is the smallest integer $M$ such that $\lambda\leq M$. For two real numbers $A$ and $B$, we say $A\lesssim B$ if there exists a positive constant $C$ such that
\begin{align}
A\leq CB,\label{lesssim-def}
\end{align}
and for two real valued functions $f$ and $g$, we say $f\lesssim g$ if $f(x)\lesssim g(x)$ holds pointwise. For two real-valued non-negative functions $f,g: \mathcal O\rightarrow \mathbb{R}_{\geq0}$, some domain $\mathcal O$, we say $f\sim g$ if there exist positive constants $c_{1}$ and $c_{2}$ such that
\begin{align}
c_{1}g(x)\leq f(x)\leq c_{1}g(x),\label{two-equivalent-functions-def}
\end{align}
for all $x\in \mathcal O$. 

For a collection of rank $2$ tensors $M[i]$,  
$i=1,\dots,j$ by
\begin{align}
M[1]\dots M[j],\label{schematically}
\end{align}
we mean a particular element of the rank $2^{j}$ tensor $M[1]\otimes\dots\otimes M[j]$, that is, something of the form
\begin{align}
M[1]^{k_{1}}_{i_{1}}\dots M[j]^{k_{j}}_{i_{j}},\label{schematically-2}
\end{align}
for some $k_{1},i_{1},\dots,k_{j},i_{j}\in\{1,2,3\}$.
We say this is a schematic representation of the object. Note that this notation is ambiguous and will only be used when we are looking to bound such a quantity using the properties of $M[1],\dots,M[j]$ themselves, rather than any of their specific elements.

We also record the definition of the radial function on $\Omega=B_{1}$ the unit ball:
\begin{align}
r:B_{1}&\rightarrow\mathbb{R}_{\ge0}\nonumber\\
x&\mapsto r(x)\coloneqq |x|.\label{radial-function}
\end{align}
It is convenient to define shorthand for the distance function on $\Omega$. Define
\begin{align}
d_{\Omega}(x)=d(x,\dell\Omega).\label{distance-function-shorthand}
\end{align}
\subsection{Derivatives}\label{derivatives}

As we have seen above, rectangular derivatives will be denoted as $\dell_{i}$, for $i$ in $1,2,3$. In addition, we define various rectangular and $\zeta$ Lie derivatives that will be used throughout. The gradient, divergence, and curl on vector fields are given by
\begin{align}
&[\nabla F]^{i}_{j}=\dell_{j}F^{i},\label{gradient}\\
&\dive{F}=\dell_{i}F^{i},\label{divergence}\\
&[\curl{F}]^{i}=\varepsilon_{ijk}\dell_{j}F^{k},\label{curl}
\end{align}
for $i,j=1,2,3.$

The $\zeta$ versions are given by
\begin{align}
&[\ngrad{F}]^{i}_{j}=\ninv^{k}_{j}\dell_{k}F^{i},\label{zeta-gradient}\\
&\ndiv{F}=\ninv^{k}_{i}\dell_{k}F^{i},\label{zeta-divergence}\\
&[\ncurl{F}]^{i}=\varepsilon_{ijk}\ninv^{s}_{j}\dell_{s}F^{k}.\label{zeta-curl}
\end{align}
In addition, we also need the matrix $\zeta$ curl, given by
\begin{align}
[\nCurl{F}]^{i}_{j}=\ninv^{s}_{j}\dell_{s}F^{i}-\ninv^{s}_{i}\dell_{s}F^{j}.\label{zeta-Curl}
\end{align}

As stated in $(\ref{initial-domain-is-unit-ball})$, our initial domain will be the closed unit ball in $\mathbb{R}^{3}$, so $\Omega=B_{1}$. Therefore, there exists a natural choice of spherical coordinates $(r,\omega,\phi)$. An advantage of this choice of domain is that we can privilege the outward normal derivative, the direction in which the degeneracy of the problem occurs, due to the vacuum boundary condition.

Accordingly, in essence we use $\dell_{r}$ as the normal derivative, and $\dell_{\omega},\dell_{\phi}$ as the tangential derivatives. However we modify these derivatives by using linear combinations. These modifications allow for better commutation relations with the rectangular derivatives.

Let the angular derivatives $\angdell_{ij}$ and radial derivative $\rdell$ be given by
\begin{align}
\angdell_{ij} &:=x_{i}\dell_{j}-x_{j}\dell_{i},\label{angular-derivative-def}\\
\rdell &: = x_i\partial_i =r\dell_{r},\label{radial-derivative-def}
\end{align}
where the $x_{i}$ and $\dell_{j}$ are rectangular, and $i,j$ run through $1,2,3$.


\begin{remark}\label{degeneracy-modified-spherical-derivatives}
	The coefficients of the derivatives we have defined go to 0 at the origin, which means we can only use them to do estimates on a region separated from the origin. This can be dealt with using a partition of unity argument. Near the boundary we use these modified spherical derivatives, and on the interior, we are free to use rectangular derivatives as the degeneracy at the vacuum boundary is not an issue in this case.
\end{remark}

Now, for $m\in\mathbb{Z}_{\geq0}$, and $\underline{n}=(n_{1},n_{2},n_{3})\in\mathbb{Z}^{3}_{\geq0}$, we define
\begin{align}
\Ndell{m}{n}\coloneqq \rdell^{m}\angdell_{12}^{n_{1}}\angdell_{13}^{n_{2}}\angdell_{23}^{n_{3}}.\label{Ndell-def}
\end{align}
Although there are six non-zero $\angdell$ derivatives to consider, $\angdell_{ij}=-\angdell_{ji}$, so $(\ref{Ndell-def})$ covers all cases. For such an $\underline{n}\in\mathbb{Z}^{3}_{\geq0}$, $|\underline{n}|=n_{1}+n_{2}+n_{3}$.

Similarly for rectangular derivatives we define, for $\underline{k}=(k_{1},k_{2},k_{3})\in\mathbb{Z}_{\geq0}^{3}$,
\begin{align}
\nabla^{\underline{k}}=\dell_{1}^{k_{1}}\dell_{2}^{k_{2}}\dell_{3}^{k_{3}}.\label{Dk-def}
\end{align}

We have the commutation relations between the modified spherical  and rectangular derivatives, for $i,j,k,m\in\{1,2,3\}$, given by
\begin{align}
&\comm{\angdell_{ij}}{\rdell}=0,\label{commutator-ang-rad}\\
&\comm{\angdell_{ij}}{\angdell_{jk}}=\angdell_{ik},\label{commutator-ang-ang}\\
&\comm{\dell_{m}}{\rdell}=\dell_{m},\label{commutator-rectangular-rad}\\
&\comm{\dell_{m}}{\angdell_{ji}}=\delta_{mj}\dell_{i}-\delta_{mi}\dell_{j}.\label{commutator-rectangular-ang}
\end{align}
We also define commutators between the higher order differential operator defined in $(\ref{Ndell-def})$, and $\grad$:
\begin{align}
\left(\comm{\grad}{\Ndell{m}{n}}F\right)^{i}_{j}&=\dell_{j}\left(\Ndell{m}{n}F^{i}\right)-\Ndell{m}{n}\left(\dell_{j}F^{i}\right).\label{commutator-grad-Ndell}
\end{align}
We can do the same thing with $\ngrad$:
\begin{align}
\left(\comm{\ngrad}{\Ndell{m}{n}}F\right)^{i}_{j}&=\ninv^{k}_{j}\dell_{k}\left(\Ndell{m}{n}F^{i}\right)-\Ndell{m}{n}\left(\ninv^{k}_{j}\dell_{k}F^{i}\right),\label{commutator-ngrad-Ndell}\\
\left(\comm{\ngrad}{\nabla^{\underline{k}}}F\right)^{i}_{j}&=\ninv^{k}_{j}\dell_{k}\left(\nabla^{\underline{k}}F^{i}\right)-\nabla^{\underline{k}}\left(\ninv^{k}_{j}\dell_{k}F^{i}\right).\label{commutator-ngrad-Dk}
\end{align}
There is no corresponding definition to $(\ref{commutator-ngrad-Dk})$ for $\grad$, as $\grad$ and $\nabla^{\underline{k}}$ commute for all $\underline{k}\in\mathbb{Z}_{\geq0}^{3}$. Note that $(\ref{commutator-ngrad-Ndell})$ and $(\ref{commutator-ngrad-Dk})$ also define analogous objects for $\nCurl$ and $\ndiv$ as the former is $\ngrad-\ngrad^{\intercal}$, and the latter is $\tr{\ngrad}$.

We also use the following decomposition frequently:
\begin{align}
\dell_{i}=\frac{x_{j}}{r^{2}}\angdell_{ji}+\frac{x_{i}}{r^{2}}\rdell.\label{rectangular-as-ang-rad}
\end{align}

\subsection{Higher Order Energies and Local Well-Posedness}\label{higher-order-energies-and-lwp}

We now define the function spaces, and their associated norms, that we use to prove global-in-time solutions for~\eqref{E:EULERNEW2} with small initial data.

As noted in Remark~\ref{degeneracy-modified-spherical-derivatives}, our choice of derivatives requires a separation of the analysis in two parts; near the boundary of the ball, and near the origin. Our choice of function spaces will reflect this fact.
Let $0<r_{1}<r_0<1$ be given and define a smooth cutoff function $\psi$ on the unit ball such that
\begin{align}
    \psi=\twopartdef{1}{r\in[r_{0},1]}{0}{r\in[0,r_{1}]},\label{cutoff-function}
\end{align}
and such that $W/d_{\Omega}$ is smooth on $\supp{\psi}$, as in (\ref{ratio-of-W-d-smooth}). In addition define $\bar{\psi}$ by
\begin{align}
	\bar{\psi}=1-\psi.\label{cutoff-function-conjugate}
\end{align}

Before we define our energy spaces, recall the definition of $\alpha$ given in $(\ref{alpha-definition})$ and $W$ in Definition \ref{definition-of-W}, and that our initial domain $\Omega$ is given by the closed unit ball $B_{1}$ on $\mathbb{R}^{3}$.
\begin{definition}{\label{energy-function-spaces-definition}}
    Let $b\in\mathbb{Z}_{\geq0}$ and define the space $\X{b}$ by
    \begin{align*}
        \X{b}=\left\{W^{\frac{\alpha}{2}}F\in L^{2}(\Omega): \spaceI W^{\alpha+m}\psi\left|\Ndell{m}{n}F\right|^{2}+W^\alpha\bar{\psi}\left|\nabla^{\underline{k}}F\right|^{2}dx<\infty, 0\leq \max{(m+|\underline{n}|,|\underline{k}|)}\leq b\right\}.
    \end{align*}
    The norm of $\X{b}$ is given by
    \begin{align}
    \left\|F\right\|^{2}_{\X{b}}=\sum_{m+|\underline{n}|=0}^{b}\spaceI\psi W^{\alpha+m}\left|\Ndell{m}{n}F\right|^{2}dx+\sum_{|\underline{k}|=0}^{b}\spaceI\bar{\psi} W^{\alpha}\left|\nabla^{\underline{k}}F\right|^{2}dx.\label{energy-space-norm}
    \end{align}
    For $\mathscr{D}\in\{\grad,\ngrad,\dive,\ndiv,\Curl,\nCurl\}$ define the set $\Y{b}{\mathscr{D}}$ by
    \begin{align*}
    \Y{b}{\mathscr{D}}=   \Big\{ &W^{\frac{1+\alpha}{2}}\mathscr{D}{F}\in L^{2}(\Omega): \spaceI W^{1+\alpha+m}\pjac\psi\left|\mathscr{D}{\Ndell{m}{n}F}\right|^{2}+W^{1+\alpha}\bar{\psi}\left|\mathscr{D}{\nabla^{\underline{k}}F}\right|^{2}dx<\infty, \\
    &0\leq \max{(m+|\underline{n}|,|\underline{k}|)}\leq b\Big\},
    \end{align*}
    and associate to it the quantity
    \begin{align}
    \left\|F\right\|^{2}_{\Y{b}{\mathscr{D}}}=\sum_{m+|\underline{n}|=0}^{b}\spaceI\psi W^{1+\alpha+m}\pjac\left|\mathscr{D}{\Ndell{m}{n}F}\right|^{2}dx+\sum_{|\underline{k}|=0}^{b}\spaceI\bar{\psi} W^{1+\alpha}\pjac\left|\mathscr{D}{\nabla^{\underline{k}}F}\right|^{2}dx.\label{energy-space-norm-2}
    \end{align}
\end{definition}
\begin{remark}\label{power-of-W-on-interior-norm}
By the assumption~\eqref{W-and-d-equivalence}, $W$ is bounded uniformly from below and above by some constant depending on $r_1$ in all of the above integrals that are weighted by $\bar\psi$. In particular, it is strictly speaking superfluous to keep the corresponding powers of $W$ in such integrals, but it provides a notational unity in the derivation of various energy identities and does not create any issues. 
\end{remark}

\begin{remark}\label{X-banach-space-Y-not}
It is important to note that while $\X{b}$ is a Banach space with $\left\|\cdot\right\|_{\X{b}}$ as its norm, the same is not true for $\Y{b}{\mathscr{D}}$ for any of the possible choices of $\mathscr{D}$ given in Definition \ref{energy-function-spaces-definition}. Nevertheless, the non-negative quantity $\left\|\cdot\right\|_{\Y{b}{\mathscr{D}}}$ is crucial as it forms part of our higher order energy function.
\end{remark}

We now define our higher order energy function and $\nCurl$ energy functions. Recall $\delta$ from Definition \ref{definition-of-W}. 

\begin{definition}\label{energy-function-for-solutions} Let $(\nu,\theta)$ be the solution to~\eqref{E:EULERNEW2} on $[0,T]$ in the sense of Theorem $\ref{local-well-posedness-theorem}$, for some $N\geq2\ceil*{\alpha}+12$. We define, for all $\tau\in[0,T]$, the {\em total energy}
	\begin{align}
	\mathscr{S}_{N}(\tau)
	& = S_{N}(\nu,\theta,\tau) \notag \\
	& \coloneqq \sup_{0\leq\tau'\leq\tau}\left(\frac{1}{\delta}e^{\sigma_{1}\tau'}\left\|\nu(\tau')\right\|_{\X{N}}^{2}+\left\|\theta(\tau')\right\|_{\X{N}}^{2}+e^{-\sigma_{2}\tau'}\left\|\theta(\tau')\right\|_{\Y{N}{\ngrad}}^{2}+\frac{1}{\alpha}e^{-\sigma_{2}\tau'}\left\|\theta(\tau')\right\|_{\Y{N}{\ndiv}}^{2}\right),\label{energy-function-for-solutions-statement}
	\end{align}
	and the {\em curl energy}
	\begin{align}
	\mathscr{C}_{N}(\tau) 
	& = C_{N}(\nu,\theta,\tau)\notag \\
& \coloneqq \sup_{0\leq\tau'\leq\tau}e^{-\sigma_{2}\tau'}\left( \left\|\nu(\tau')\right\|_{\Y{N}{\nCurl}}^{2}+\left\|\theta(\tau')\right\|_{\Y{N}{\nCurl}}^{2}\right).
	\label{Curl-energy-function-for-solutions-statement}
	\end{align}
\end{definition}


The local well-posedness theory for \eqref{new-lagrangian-euler-1}--\eqref{new-lagrangian-euler-2} is given in~\cite{JaMa2015}. From this proof we can adapt an argument to show local well-posedness for~\eqref{E:EULERNEW2}.

\begin{theorem}[Local Well-Posedness of the Rescaled Free Boundary Euler System]\label{local-well-posedness-theorem}Let $N$ be an integer such that $N\geq2\ceil*{\alpha}+12$, with $\alpha$ defined in $(\ref{alpha-definition})$. Additionally, assume that $\nabla W\in\X{N}$. Let $u_{0}$ be such that
	\begin{align*}
	 \mathscr{S}_{N}(0)+\mathscr{C}_{N}(0)=S_{N}(u_{0}-x,0,0)+C_{N}(u_{0}-x,0,0)<\infty.
	\end{align*}
	Then, there exists a $T>0$ such that there exists a unique solution $(\nu,\theta)$ to~\eqref{E:EULERNEW2} on the interval $[0,T]$ such that
	\begin{align*}
	& \mathscr{S}_{N}(\tau) + \mathscr{C}_{N}(\tau) 
	\leq2(\mathscr{S}_{N}(0)+\mathscr{C}_{N}(0)+\sqrt\delta),\ \ \ \forall\tau\in[0,T],\\
	&(\nu(0),\theta(0))=(u_{0}-x,0).
	\end{align*}
	Moreover, the function $\tau\mapsto S_{N}(\tau)$ is continuous.
\end{theorem}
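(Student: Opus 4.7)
The plan is to adapt the Jang--Masmoudi local well-posedness framework for \eqref{new-lagrangian-euler-1}--\eqref{new-lagrangian-euler-2} to the rescaled system \eqref{E:EULERNEW2}. First I would verify that the hypotheses on $W$ in Definition~\ref{definition-of-W} together with $\nabla W \in \X{N}$ imply the physical vacuum condition \eqref{E:VACUUMCOND} for $\rho_0 = (\delta W)^\alpha$, so that the theorem of \cite{JaMa2015} yields a unique local Lagrangian solution $\eta$ on some interval $[0,T_L]$ in the corresponding higher-order weighted energy class. Since the change of variables $\tau = \log(1+t)$, $\zeta = e^{-\tau}\eta$, $\theta = \zeta - x$ is a smooth bijection on any finite interval, this Lagrangian solution transports to a unique rescaled solution $(\nu,\theta)$ of \eqref{E:EULERNEW2} on $[0, \log(1+T_L)]$.

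For the a priori bound on $\mathscr{S}_N + \mathscr{C}_N$, I would derive the basic energy identity by multiplying \eqref{rescaled-euler-W-perturbation-estimate-form-1} by $\delta e^{-\sigma_1\tau}\theta^i_\tau$ and integrating over $\Omega$. The Piola identity \eqref{piola-identity-zeta} together with the vanishing of $W$ on $\partial\Omega$ kills the boundary contribution, and the pressure term generates the $\Y{0}{\ngrad}$ and $\Y{0}{\ndiv}$ pieces of $\mathscr{S}_N$. To lift to higher orders I would apply $\Ndell{m}{n}$ (weighted by $\psi$) near $\partial\Omega$ and $\nabla^{\underline k}$ (weighted by $\bar\psi$) near the origin, using the commutation relations \eqref{commutator-ang-rad}--\eqref{commutator-rectangular-ang} and the weighted Hardy-type inequalities implicit in the $\X{N}$, $\Y{N}{\mathscr{D}}$ structure to absorb the resulting commutator and lower-order error terms. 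For the curl part of the energy, I would apply $\nCurl$ to \eqref{rescaled-euler-W-perturbation-estimate-form-1}: the gradient structure of the pressure term ensures that $\nCurl$ annihilates it at principal order, leaving a transport-type evolution for $\nCurl\nu$ and $\nCurl\theta$ that controls $\mathscr{C}_N$. Assembling these ingredients yields a differential inequality of schematic form $\frac{d}{d\tau}(\mathscr{S}_N + \mathscr{C}_N) \leq C\bigl(1 + \mathscr{S}_N + \mathscr{C}_N\bigr)^{3/2}$, and Gr\"onwall on a sufficiently short $[0,T]$ delivers the bound $\mathscr{S}_N(\tau) + \mathscr{C}_N(\tau) \leq 2(\mathscr{S}_N(0) + \mathscr{C}_N(0) + \sqrt{\delta})$; the additive $\sqrt{\delta}$ accommodates the $\delta$-independent lower-order source terms produced when pushing the factor $1/\delta$ through the higher-order energy estimate. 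Uniqueness is then standard: subtract two solutions and run a low-order weighted energy estimate on the difference. Continuity of $\tau \mapsto S_N(\tau)$ follows from strong continuity of $(\nu,\theta)$ in the energy spaces, obtained via an approximation scheme together with the energy identity, in the spirit of~\cite{JaMa2015}.

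The main obstacle is the bookkeeping required by the simultaneous presence of the degenerate weight $W^\alpha$, the cutoff decomposition between the boundary region (modified spherical derivatives $\Ndell{m}{n}$) and the interior (rectangular $\nabla^{\underline k}$), and the time-dependent exponential factors $e^{\sigma_1\tau}$, $e^{-\sigma_2\tau}$ together with the $1/\delta$ weight on $\nu$. Each commutator produced in the higher-order estimate must be reweighted consistently so as to be absorbable back into the energy, and the near-boundary analysis has to be stitched to the rectangular estimates across $\supp\psi \cap \supp\bar\psi$ without loss of information. However, since $T$ is only required to be positive and all exponential factors are harmless on a short interval, these difficulties are already handled in essence by the framework of~\cite{JaMa2015}; the rescaling here contributes only bounded multiplicative constants on $[0,T]$.
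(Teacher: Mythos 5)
The paper gives no proof of this theorem, stating only that it follows by adapting the local well-posedness theory of \cite{JaMa2015}; your sketch — pulling back to the Lagrangian formulation \eqref{new-lagrangian-euler-1}--\eqref{new-lagrangian-euler-2} to invoke \cite{JaMa2015}, transporting the solution to the rescaled frame via \eqref{rescaled-time}--\eqref{rescaled-flow}, and then running the $\psi/\bar\psi$-localised energy identity plus the curl estimate to obtain the quantitative bound — is a plausible fleshing-out of exactly that adaptation and is consistent with the framework the paper deploys for the global estimates. In particular, your identification of the $W^\alpha\nabla W$ source term (present because $\theta\equiv 0$ is not a solution of \eqref{E:EULERNEW2}) as the origin of the additive $\sqrt\delta$ matches what one sees in the zero-order remainder estimate \eqref{zero-order-energy-estimates} and the energy inequality \eqref{energy-inequality-statement}.
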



\section{Main Theorem and A Priori Assumptions}\label{main-theorem-a-priori-assumptions}


Now we state the main theorem. Recall the definition of $\alpha$ in $(\ref{alpha-definition})$, as well as $W$ and $\delta$ in Definition \ref{definition-of-W}.

\begin{theorem}[Global Existence of Expanding Solutions with Small Initial Data]{\label{main-theorem}} Let $\gamma\in(1,\infty)$, or equivalently, $\alpha\in(0,\infty)$. Let $N$ be an integer such that $N\geq 2\ceil*{\alpha}+12$. Assume $\nabla W\in\X{N}$. Then there exists sufficiently small $\delta, \varepsilon_{0}>0$ such that for all $0\leq\varepsilon\leq\varepsilon_{0}$, and $u_{0}$ with
\begin{align*}
    \mathscr{S}_{N}(0)+\mathscr{C}_{N}(0)=S_{N}(u_{0}-x,0,0)+C_{N}(u_{0}-x,0,0)\leq\varepsilon,
\end{align*}
there exists a global-in-time solution to~\eqref{E:EULERNEW2}, with $(u_{0}-x,0)$ as the initial conditions, and 
\begin{align}
\mathscr{S}_{N}(\tau)\leq C\left(\varepsilon+\sqrt\delta\right),\ \ \ \forall\tau\in[0,\infty),\label{global-energy-estimate}
\end{align}
some constant $C$. 
Finally, there exists a $\tau$ independent function $\theta_{\infty}:B_{1}\rightarrow\mathbb{R}^{3}$ such that
\begin{align}
\left\|\theta(\tau)-\theta_{\infty}\right\|_{\X{N}}\rightarrow0\ \ \ \tau\rightarrow\infty.\label{theta-infinity-statement}
\end{align}
\end{theorem}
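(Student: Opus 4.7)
The plan is to combine Theorem~\ref{local-well-posedness-theorem} with a bootstrap/continuation argument driven by a uniform-in-time a priori energy estimate. Given initial data with $\mathscr{S}_{N}(0)+\mathscr{C}_{N}(0)\le\varepsilon$, Theorem~\ref{local-well-posedness-theorem} supplies a solution on some $[0,T_{0}]$ with $\mathscr{S}_{N}+\mathscr{C}_{N}\le 2(\varepsilon+\sqrt{\delta})$. On the maximal existence interval I postulate the bootstrap hypothesis
\begin{equation*}
\mathscr{S}_{N}(\tau)+\mathscr{C}_{N}(\tau)\le M(\varepsilon+\sqrt{\delta}),
\end{equation*}
with $M$ a large but fixed constant, and aim to improve this to $\tfrac{M}{2}(\varepsilon+\sqrt{\delta})$. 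Continuity of $\tau\mapsto\mathscr{S}_{N}+\mathscr{C}_{N}$ together with a reapplication of Theorem~\ref{local-well-posedness-theorem} then forces the bootstrap interval to be $[0,\infty)$.

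The heart of the proof is the high-order energy identity. Applying $\Ndell{m}{n}$ in the near-boundary region (where $\psi=1$) and $\nabla^{\underline{k}}$ in the interior (where $\bar\psi=1$) to~\eqref{rescaled-euler-W-perturbation-estimate-form-1}, testing against the corresponding derivative of $\nu=\theta_{\tau}$ weighted by $\psi W^{\alpha+m}$, resp.\ $\bar\psi W^{\alpha}$, and summing over $m+|\underline{n}|,|\underline{k}|\le N$, I would obtain a schematic identity
\begin{equation*}
\frac{d}{d\tau}\!\left[\frac{1}{2\delta}e^{\sigma_{1}\tau}\|\nu\|_{\X{N}}^{2}+\tfrac{1}{2}\mathcal{P}_{N}(\tau)\right]+\frac{1}{\delta}\bigl(1-\tfrac{\sigma_{1}}{2}\bigr)e^{\sigma_{1}\tau}\|\nu\|_{\X{N}}^{2}=\mathcal{F}_{N}(\tau),
\end{equation*}
where $\mathcal{P}_{N}$ is a coercive potential, equivalent modulo lower-order $\|\theta\|_{\X{N}}^{2}$ contributions to $e^{-\sigma_{2}\tau}(\|\theta\|_{\Y{N}{\ngrad}}^{2}+\tfrac{1}{\alpha}\|\theta\|_{\Y{N}{\ndiv}}^{2})$. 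The sign condition $1-\sigma_{1}/2\ge 0$ is precisely why Definition~\ref{D:SIGMA12} is set up as it is: for $\beta\le 2$ the natural damping closes the estimate directly, while for $\beta>2$ the damping coefficient vanishes but the factor $e^{-\sigma_{2}\tau}$ on the pressure side lets $\|\theta\|_{\Y{N}{\ngrad}}$ grow like $e^{\sigma_{2}\tau/2}$ without destroying the balance.

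The remainder $\mathcal{F}_{N}$ splits into (i) a constant forcing coming from $\partial_{k}(W^{1+\alpha})$ (the value of the pressure term at $\ninv=I,\njac=1$), and (ii) nonlinear and commutator remainders from expanding $\ninv^{k}_{i}\pjac$ about the identity and from $[\ngrad,\Ndell{m}{n}]$, $[\ngrad,\nabla^{\underline{k}}]$. Each nonlinear remainder is at least cubic in $(\theta,\nu)$; combining weighted Hardy inequalities adapted to the $\X{b}$- and $\Y{b}{\mathscr{D}}$-norms with an a priori $L^{\infty}$ bound on $\nabla\theta$ (Sobolev embedding in weighted spaces, using $N\ge 2\ceil*{\alpha}+12$) produces bounds of the form $C(\varepsilon+\sqrt{\delta})^{1/2}\mathscr{S}_{N}(\tau)$, which the bootstrap absorbs once $\delta,\varepsilon$ are small. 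The forcing piece is handled by Cauchy--Schwarz balanced against the factor $1/\delta$ on the kinetic side, producing the $O(\sqrt{\delta})$ contribution in~\eqref{global-energy-estimate}. The curl energy $\mathscr{C}_{N}$ is estimated separately by applying $\nCurl$ to~\eqref{rescaled-euler-W-perturbation-estimate-form-1}, which makes the pressure gradient cancel up to a commutator and yields a transport-type identity for $\nCurl\nu$ and $\nCurl\theta$ that closes under the same bootstrap. The main obstacle I expect is the careful bookkeeping of top-order commutators between $\Ndell{m}{n}$ and the $\zeta$-derivatives, since these involve $\ninv$ and require that $\mathscr{M}=I+\nabla\theta$ remain uniformly close to the identity -- a property ensured by the smallness of $\|\theta\|_{\X{N}}$.

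Once the uniform bound~\eqref{global-energy-estimate} is established, the convergence~\eqref{theta-infinity-statement} is immediate. Reading off the kinetic piece of $\mathscr{S}_{N}$, one has
\begin{equation*}
\|\nu(\tau)\|_{\X{N}}\le C\sqrt{\delta(\varepsilon+\sqrt{\delta})}\,e^{-\sigma_{1}\tau/2},
\end{equation*}
which is integrable on $[0,\infty)$ since $\sigma_{1}>0$ for every $\gamma>1$. For $0\le\tau_{1}<\tau_{2}$, since $\nu=\theta_{\tau}$,
\begin{equation*}
\|\theta(\tau_{2})-\theta(\tau_{1})\|_{\X{N}}\le\int_{\tau_{1}}^{\tau_{2}}\|\nu(s)\|_{\X{N}}\,ds\longrightarrow 0\quad\text{as }\tau_{1}\to\infty,
\end{equation*}
so $\{\theta(\tau)\}_{\tau\ge 0}$ is Cauchy in $\X{N}$; its limit $\theta_{\infty}$ satisfies~\eqref{theta-infinity-statement}.
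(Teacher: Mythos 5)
Your proposal follows essentially the same route as the paper: local well-posedness plus a continuation argument, a weighted high-order energy identity with damping and decoupled curl estimates, a source-term contribution generating the $O(\sqrt{\delta})$ inhomogeneity, and convergence to $\theta_{\infty}$ via integrability of $e^{-\sigma_{1}\tau/2}$. The decomposition, the role of Definition~\ref{D:SIGMA12}, and the curl-transport identity all match the paper.

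One imprecision worth fixing: your claim that ``each nonlinear remainder is at least cubic in $(\theta,\nu)$'' is not literally true; the remainder $\int W^{1+\alpha}(\njac^{-1/\alpha}-1)\,\partial_i\nu^i\,dx$ is quadratic, and several commutator terms at higher order are too. What saves the argument is not cubicity but the structural $\sqrt{\delta}$: every remainder in $\mathcal{R}(0)$ and $\mathcal{R}(m,\underline{n})$ carries at least one factor of $\nu$ (either directly or through $\partial_\tau\ninv$, $\partial_\tau\njac$), and $\|\nu\|_{\X{N}}^{2}\lesssim\delta\,e^{-\sigma_{1}\tau}\mathscr{S}_{N}$ by the very definition of the energy with its $1/\delta$ kinetic weight; combined with Lemma~\ref{theta-estimates-lemma} ($\|\theta\|_{\X{N}}^{2}\lesssim\delta\mathscr{S}_{N}$), this yields $\sqrt{\delta}\,\mathscr{S}_{N}$ bounds uniformly in $\tau$. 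Your formulation $C(\varepsilon+\sqrt{\delta})^{1/2}\mathscr{S}_{N}$ would only follow from genuine cubicity; here the smallness is structural in $\delta$, not in the bootstrap amplitude, which is exactly why the paper can absorb time-integrals like $\int\sqrt{\delta}\,\mathcal{G}(\beta,\tau')\mathscr{S}_{N}\,d\tau'$ without needing $\int\mathcal{G}$ itself to be small. With that correction your outline closes.
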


\begin{remark}[Initial density profiles]
The above theorem in particular implies that there exists a global solution to~\eqref{new-lagrangian-euler-1}--\eqref{new-lagrangian-euler-3} with initial densities
of the form $\rho_0 = w_\delta^\alpha = \delta^\alpha W^\alpha$ and the initial domain $\Omega=B_1(0)$. Therefore, apart for the smallness parameter $\delta$ and the physical vacuum 
condition~\eqref{ratio-of-W-d-smooth}, the density profile is essentially arbitrary! This highlights the difference to the results relying on finite-dimensional reductions of the flow~\cite{Sideris,Ov1956,Dyson1968}.
\end{remark}

\begin{remark}[Initial velocity profiles]
Unlike the densities, there is more rigidity for the initial Eulerian velocity profile, as it takes the form of a nearly linear field
\[
u_0(x) = x + \partial_\tau\theta(0,x) = x + O(\sqrt{\varepsilon\delta}).
\]
Thus, to leading order $u_0(x)\sim x$ and this assumption encodes the expansive nature of our flow.
\end{remark}

\begin{remark}
Unwinding the change of variables~\eqref{rescaled-time}--\eqref{rescaled-flow}, Theorem~\ref{main-theorem} gives us the formulas
\begin{align}
\eta(t,x) &= (1+t) \left(x +  O(\sqrt{\varepsilon}+\delta^{\frac14}) \right), \notag \\
v(t,x) = \partial_t\eta(t,x) &=x +  O(\sqrt{\varepsilon\delta}+\delta^{\frac34}) 
\notag
\end{align} 
where we recall Definition~\ref{D:SIGMA12}.
\end{remark}

\begin{remark}
Observe that the expansion as a mechanism for global existence is a stable phenomenon, since our initial data form an open set in a suitable topology.
\end{remark}

We note that our perturbed equations \eqref{E:EULERNEW2} do not admit $\theta\equiv 0$ as a solution, whereas in \cite{HaJa2} $\theta\equiv 0$ of the perturbed equations is the solution and it represents the Sideris' affine motions with a correct choice of the density. Hence it is a priori not clear why one would expect global solutions near $\theta=0$. A key to success of achieving the global-in-time solutions without being close to the affine motions is the scaling structure of the Euler equations that grants a small parameter $\delta$. With sufficiently small $0<\delta\ll1$ as well as the stabilising effect of the coefficient $e^{\sigma_1\tau}$, one would expect that $\theta_\tau$ in \eqref{E:EULERNEW2} decays and $\theta$ stays small for large time if initial data are sufficiently small.

 In order to capitalise on this viewpoint and prove Theorem \ref{main-theorem}, we adapt the weighted energy methods developed in \cite{HaJa,HaJa2,HaJa3,JaMa2015}. We use suitable spatial vector fields to deal with the vacuum degeneracy and obtain the leading order energy by high order energy estimates and curl estimates.  We keep track of the $\delta$ dependence and critically use the smallness of $\delta$ to continue the solution for all forward in time. 

Our method gives a unified treatment for all $\gamma>1$. In order to treat all $\gamma>1$, we design $\gamma$-dependent time weights in the energy norms, which still exhibit the stabilising effect and allow some growth for spatial norms in the energy at the top order for $\gamma>\frac53$. In this regime ($\gamma>\frac53$) we control the lower order spatial norms, inspired by \cite{ShSi2017},  by making use of the fundamental theorem of calculus in time variable to obtain their boundedness (without any growth in time) by the given total energy.

\begin{remark}
It is not hard to see that the analogue of Theorem~\ref{main-theorem} holds for the Euler-Poisson system in both the gravitational and the plasma case. However, in that case, due to the nature of the nonlocal forcing term, the same restriction on the values of $\gamma$ as in~\cite{HaJa3} applies, i.e. we allow $\gamma = 1+\frac1n$, $n\in\mathbb N$, $n\ge2$ or $\gamma\in(1,\frac{14}{13})$.
\end{remark}

Here we also state a priori assumptions that will be used to prove the energy estimates. In turn we use the energy estimates to improve upon these assumptions, thereby closing the proof via a continuation argument. The assumptions are, for a solution $(\nu,\theta)$ on $[0,T]$:
\begin{align}
\mathscr{S}_{N}(\tau)\leq \frac{1}{3}\ \ \ \left\|\ninv-I\right\|_{L^{\infty}(\Omega)}\leq \frac{1}{3}\ \ \ \left\|\njac-1\right\|_{L^{\infty}(\Omega)}\leq \frac{1}{3}.\label{a-priori-assumptions}
\end{align}

For sufficiently small enough initial data, the assumptions $(\ref{a-priori-assumptions})$ are initially true. Then the local well-posedness theory for this system ensures these bounds will hold for at least some, possibly short period of time. These conditions also ensure the invertibility of $\nabla\zeta$ on the time interval of existence for the solution.

\section{Lower Order Estimates}\label{theta-estimates}

In this section we record several estimates of the lower order terms.
We start with a lemma that explains how $\left\|\theta\right\|_{\X{N}}$ is bounded in terms of the energy function defined in Definition~\ref{energy-function-for-solutions}.


\begin{lemma}\label{theta-estimates-lemma}
	Let $(\nu,\theta)$ be the solution to~\eqref{E:EULERNEW2} on $[0,T]$ in the sense of Theorem~\ref{local-well-posedness-theorem}, for some $N\geq2\ceil*{\alpha}+12$. Suppose the a priori assumptions $(\ref{a-priori-assumptions})$ hold. Then for all $\tau\in[0,T]$, we have
	\begin{align*}
		\left\|\theta\right\|_{\X{N}}^{2}\lesssim \delta \mathscr{S}_{N}(\tau).
	\end{align*}
\end{lemma}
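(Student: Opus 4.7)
The key observation is that the initial condition in \eqref{rescaled-euler-W-perturbation-estimate-form-2} gives $\theta(0,\cdot)\equiv0$, so by the fundamental theorem of calculus
\[
\theta(\tau,x)=\int_0^\tau\theta_\tau(\tau',x)\,d\tau'=\int_0^\tau\nu(\tau',x)\,d\tau'.
\]
Since $\X{N}$ is a Banach space (Remark~\ref{X-banach-space-Y-not}) with a weighted $L^2$ norm that is linear in $F$ through its derivatives $\Ndell{m}{n}F$ and $\nabla^{\underline{k}}F$, Minkowski's integral inequality gives
\[
\|\theta(\tau)\|_{\X{N}}\le\int_0^\tau\|\nu(\tau')\|_{\X{N}}\,d\tau'.
\]

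Next, I would read off the bound on $\nu$ directly from the definition of the total energy \eqref{energy-function-for-solutions-statement}: the first summand in $\mathscr{S}_N$ is $\frac{1}{\delta}e^{\sigma_1\tau'}\|\nu(\tau')\|_{\X{N}}^2$, so
\[
\|\nu(\tau')\|_{\X{N}}\le\sqrt{\delta}\,e^{-\sigma_1\tau'/2}\,\mathscr{S}_N(\tau)^{1/2}\qquad\text{for all }\tau'\in[0,\tau].
\]
Plugging this into the Minkowski estimate and using $\sigma_1>0$ (which holds for every $\gamma>1$, cf.\ Definition~\ref{D:SIGMA12}), the time integral is uniformly bounded:
\[
\|\theta(\tau)\|_{\X{N}}\le\sqrt{\delta}\,\mathscr{S}_N(\tau)^{1/2}\int_0^\tau e^{-\sigma_1\tau'/2}\,d\tau'\le\frac{2}{\sigma_1}\sqrt{\delta}\,\mathscr{S}_N(\tau)^{1/2}.
\]
Squaring yields the claimed bound $\|\theta\|_{\X{N}}^2\lesssim\delta\,\mathscr{S}_N(\tau)$.

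There is essentially no serious obstacle here; the result is a direct consequence of the exponential weight $e^{\sigma_1\tau'}$ combined with the small factor $\frac{1}{\delta}$ that has been baked into the definition of the energy controlling $\nu$. The one place I would pause is to confirm that Minkowski's inequality applies cleanly to the cutoff-localised, $W$-weighted norms defining $\X{N}$; since both $\psi W^{\alpha+m}$ and $\bar\psi W^\alpha$ are non-negative, time-independent weights and $\Ndell{m}{n}$, $\nabla^{\underline{k}}$ are linear differential operators commuting with integration in $\tau'$, this is standard. The exponent $\sigma_1$ being strictly positive across the whole range $\gamma>1$ is what makes the time integral finite uniformly in $\tau$, and this is precisely why the energy was designed with this weight in \eqref{energy-function-for-solutions-statement}.
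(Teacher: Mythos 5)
Your proof is correct and follows essentially the same route as the paper's: the fundamental theorem of calculus from $\theta(0)=0$, Minkowski's integral inequality to pass the time integral outside the $\X{N}$ norm, the definition of $\mathscr{S}_N$ to extract the factor $\sqrt{\delta}\,e^{-\sigma_1\tau'/2}$, and $\sigma_1>0$ to bound the resulting time integral. The only cosmetic difference is that the paper performs the Minkowski step term-by-term on the weighted integrands $\sqrt{\psi}W^{(\alpha+m)/2}\Ndell{m}{n}\theta$ rather than directly on the full $\X{N}$ norm, but the content is identical.
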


\begin{remark}\label{theta-estimates-extra-smallness-remark}
	The $\delta$ in front of the energy function provides an extra source of smallness that we will use repeatedly in our estimates. 
\end{remark}

\begin{proof}The quantity $\left\|\theta\right\|_{\X{N}}$ is defined in $(\ref{energy-space-norm})$. Let $m\in\mathbb{Z}_{\geq0}$, and let $\underline{n}\in\mathbb{Z}_{\geq0}^{3}$ be such that $m+|\underline{n}|\leq N$.
	First we observe
	\begin{align}
	\sqrt{\psi}W^{\frac{\alpha+m}{2}}\Ndell{m}{n}\theta^{i}=\int_{0}^{\tau}\sqrt{\psi}W^{\frac{\alpha+m}{2}}\Ndell{m}{n}\nu^{i}d\tau'.\label{theta-ftc-at-zero}
	\end{align}
	Here we make use of $\theta(0,x)=0$ for all $x\in\Omega$.  Taking the square of the norm on both sides above gives
	\begin{align*}
	\psi W^{\alpha+m}\left|\Ndell{m}{n}\theta\right|^{2}= \left|\int_{0}^{\tau}\sqrt{\psi} W^{\frac{\alpha+m}{2}}\Ndell{m}{n}\nu\ d\tau'\right|^{2}.
	\end{align*}
	Now, integrating over $\Omega$ and using the generalised Minkowski's integral inequality gives us
	\begin{align*}
	\spaceI \psi W^{\alpha+m}\left|\Ndell{m}{n}\theta\right|^{2}dx&\lesssim\left(\int_{0}^{\tau}\left(\spaceI\psi W^{\alpha+m}\left|\Ndell{m}{n}\nu\right|^{2}dx\right)^{1/2}d\tau'\right)^{2}\nonumber\\
	&\lesssim\left(\int_{0}^{\tau}\sqrt{\delta}e^{-\sigma_{1}\tau'/2}\mathscr{S}_{N}(\tau')^{1/2}d\tau'\right)^{2}\nonumber\\
	&\lesssim\delta \mathscr{S}_{N}(\tau),
	\end{align*}
	where we have used $\sigma_1>0$.
\end{proof}

We also have a result that bounds $\left\|\theta\right\|_{\Y{M}{\ngrad}}$ in terms of $\left\|\theta\right\|_{\X{N}}$, both defined in $(\ref{energy-space-norm-2})$, for indices $M<N$.


\begin{lemma}\label{theta-Y-norm-theta-X-norm-relation-lemma}Let $(\nu,\theta)$ be the solution to~\eqref{E:EULERNEW2} on $[0,T]$ in the sense of Theorem~\ref{local-well-posedness-theorem}, for some $N\geq2\ceil*{\alpha}+12$. Suppose the a priori assumptions $(\ref{a-priori-assumptions})$ hold. Finally, let $M\in\mathbb{Z}_{\geq0}$ be such that $M<N$. Then for all $\tau\in[0,T]$, we have
	\begin{align*}
	\left\|\theta\right\|_{\Y{M}{\ngrad}}\lesssim\left\|\theta\right\|_{\X{N}}.
	\end{align*}
\end{lemma}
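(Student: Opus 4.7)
The plan is to unpack both norms according to Definition~\ref{energy-function-spaces-definition} and show that each summand of $\|\theta\|_{\Y{M}{\ngrad}}^2$ is controlled by a summand of $\|\theta\|_{\X{N}}^2$ of order at most one higher. The main ingredients are (i) the a priori assumption~\eqref{a-priori-assumptions}, which gives $\|\ninv\|_{L^\infty}\lesssim 1$ and $\pjac\lesssim 1$, so that
\[
|\ngrad \Ndell{m}{n}\theta|^2 \lesssim |\nabla \Ndell{m}{n}\theta|^2, \qquad \pjac\lesssim 1,
\]
and analogously for $\nabla^{\underline{k}}$ in place of $\Ndell{m}{n}$; and (ii) the fact that $W$ is bounded above on $\Omega$, which lets us trade higher powers of $W$ for lower powers at the cost of a harmless constant.

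For the interior piece, multiplied by $\bar\psi$, the bound is immediate: for $|\underline{k}|\le M$ we have $\nabla \nabla^{\underline{k}}\theta = \nabla^{\underline{k'}}\theta$ for some $|\underline{k'}|=|\underline{k}|+1\le N$, and since $W^{1+\alpha}\lesssim W^{\alpha}$,
\[
\int_\Omega \bar\psi\, W^{1+\alpha}\pjac |\ngrad \nabla^{\underline{k}}\theta|^2\,dx \lesssim \int_\Omega \bar\psi\, W^{\alpha}|\nabla^{\underline{k'}}\theta|^2\,dx \le \|\theta\|_{\X{N}}^2.
\]
For the near-boundary piece, I will work on $\mathrm{supp}\,\psi$, where $r\ge r_1>0$, so the decomposition~\eqref{rectangular-as-ang-rad} has bounded coefficients. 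Applying it to $\partial_i \Ndell{m}{n}\theta$ and commuting using~\eqref{commutator-ang-rad}--\eqref{commutator-ang-ang}, one writes
\[
|\nabla \Ndell{m}{n}\theta| \lesssim \sum_{\substack{m'+|\underline{n'}|=m+|\underline{n}|+1}} |\Ndell{m'}{n'}\theta|,
\]
with $m'\in\{m,m+1\}$. For a radial term ($m'=m+1$), the weight $W^{1+\alpha+m}$ on the $\Y{M}{\ngrad}$ side matches exactly $W^{\alpha+m'}$ on the $\X{N}$ side; for an angular term ($m'=m$), $W^{1+\alpha+m}=W\cdot W^{\alpha+m'}\lesssim W^{\alpha+m'}$ since $W$ is bounded. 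Summing over $m+|\underline{n}|\le M$ gives total indices $\le M+1\le N$, so every term is absorbed into $\|\theta\|_{\X{N}}^2$.

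I do not expect any serious obstacles: the entire statement is really a bookkeeping exercise in weights and commutators, and the a priori assumption~\eqref{a-priori-assumptions} trivialises the passage between $\ngrad$ and $\nabla$ and between $\pjac$ and $1$. The only point worth pausing on is the weight-matching in Step 3, where one must check that the angular direction does not require a negative power of $W$; this is precisely ensured by the boundedness of $W$ on $\Omega$.
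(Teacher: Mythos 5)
Your proof is correct and takes essentially the same route as the paper: split into the $\psi$ and $\bar\psi$ pieces, use the a priori assumptions to absorb $\ninv$ and $\pjac$, and on $\supp\psi$ use the decomposition~\eqref{rectangular-as-ang-rad} together with the boundedness of $W$ to match the extra power of $W$ against the one additional radial or angular derivative. The paper phrases the weight bookkeeping slightly more compactly (it does not separate the radial and angular cases explicitly), but the argument is the same.
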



\begin{proof}
	Recall that
	\begin{align*}
	\left\|\theta\right\|_{\Y{M}{\ngrad}}^{2}=\sum_{m+|\underline{n}|=0}^{M}\spaceI\psi W^{1+\alpha+m}\pjac\left|\ngrad{\Ndell{m}{n}\theta}\right|^{2}dx+\sum_{|\underline{k}|=0}^{M}\spaceI\bar{\psi} W^{1+\alpha+m}\pjac\left|\ngrad{\nabla^{\underline{k}}\theta}\right|^{2}dx.
	\end{align*}
	By the definition of $\ngrad$ in $(\ref{zeta-gradient})$, we have the bound
	\begin{align*}
	&\sum_{m+|\underline{n}|=0}^{M}\spaceI\psi W^{1+\alpha+m}\pjac\left|\ngrad{\Ndell{m}{n}\theta}\right|^{2}dx+\sum_{|\underline{k}|=0}^{M}\spaceI\bar{\psi} W^{1+\alpha+m}\pjac\left|\ngrad{\nabla^{\underline{k}}\theta}\right|^{2}dx\\
	&\lesssim\sum_{m+|\underline{n}|=0}^{M}\spaceI\psi W^{1+\alpha+m}\pjac\left|\ninv\right|^{2}\left|\nabla\Ndell{m}{n}\theta\right|^{2}dx+\sum_{|\underline{k}|=0}^{M}\spaceI\bar{\psi} W^{1+\alpha+m}\pjac\left|\ninv\right|^{2}\left|\nabla\nabla^{\underline{k}}\theta\right|^{2}dx.
	\end{align*}
	For the second term on the second line above,
	\begin{align}
	\sum_{|\underline{k}|=0}^{M}\spaceI\bar{\psi} W^{1+\alpha+m}\pjac\left|\ninv\right|^{2}\left|\nabla\nabla^{\underline{k}}\theta\right|^{2}dx\lesssim\sum_{|\underline{k}|=0}^{M+1}\spaceI\bar{\psi} W^{\alpha+m}\left|\nabla^{\underline{k}}\theta\right|^{2}dx.\label{theta-Y-norm-theta-X-norm-relation-interior-estimate}
	\end{align}
	We bound the $\ninv$ and $\pjac$ terms, as well as $W$, in $L^{\infty}$ using the a priori assumptions in $(\ref{a-priori-assumptions})$, and the fact that $W\sim1$ on $\supp{\bar{\psi}}$. Using $(\ref{rectangular-as-ang-rad})$, we can write rectangular derivatives as a linear combination of radial and angular derivatives with coefficients smooth away from the origin. So, for the integrals on $\supp{\psi}$, we have
	\begin{align}
	&\sum_{m+|\underline{n}|=0}^{M}\spaceI\psi W^{1+\alpha+m}\pjac\left|\ninv\right|^{2}\left|\grad{\Ndell{m}{n}\theta}\right|^{2}dx\nonumber\\
	&\lesssim\sum_{m+|\underline{n}|=0}^{M}\spaceI\psi W^{1+\alpha+m}\pjac\left|\ninv\right|^{2}\left|\rdell\Ndell{m}{n}\theta\right|^{2}dx+\sum_{m+|\underline{n}|=0}^{M}\spaceI\psi W^{1+\alpha+m}\pjac\left|\ninv\right|^{2}\left|\angdell\Ndell{m}{n}\theta\right|^{2}dx\nonumber\\
	&\lesssim\sum_{m+|\underline{n}|=0}^{M+1}\spaceI\psi W^{\alpha+m}\left|\Ndell{m}{n}\theta\right|^{2}dx.\label{theta-Y-norm-theta-X-norm-relation-boundary-estimate}
	\end{align}
	We once again use $(\ref{a-priori-assumptions})$ to bound $\ninv$ and $\pjac$ in $L^{\infty}$. We also use that on $\supp{\psi}$, $W(x)\sim d(x,\dell\Omega)$ to bound $W$ in $L^{\infty}$.
	 Combining $(\ref{theta-Y-norm-theta-X-norm-relation-interior-estimate})$ and $(\ref{theta-Y-norm-theta-X-norm-relation-boundary-estimate})$, we have
	\begin{align*}
	\left\|\theta\right\|_{\Y{M}{\ngrad}}^{2}\lesssim\left\|\theta\right\|_{\X{M+1}}^{2}\lesssim\left\|\theta\right\|_{\X{N}}^{2},
	\end{align*}
	as $M+1\leq N$. This gives the result.
\end{proof}


Finally, we prove a lemma that shows how to estimate terms of the form $\Ndell{a}{b}\left(W\pjac\ninv\right)$.


\begin{lemma}\label{lower-order-estimates-lemma}Let $(\nu,\theta)$ be the solution to~\eqref{E:EULERNEW2} on $[0,T]$ in the sense of Theorem~\ref{local-well-posedness-theorem}, for some $N\geq2\ceil*{\alpha}+12$. Assume $\nabla W\in\X{N}$. Suppose the a priori assumptions $(\ref{a-priori-assumptions})$ hold. Finally, let $(m,\underline{n},\underline{k})\in\mathbb{Z}_{\geq0}\times\mathbb{Z}_{\geq0}^{3}\times\mathbb{Z}_{\geq0}^{3}$ be such that $1\leq\max{(m+|\underline{n}|,|\underline{k}|)}\leq N$. Then for all $\tau\in[0,T]$, we have
	\begin{align}
	\spaceI\psi W^{\alpha+m}\left|\Ndell{m}{n}\left(W\pjac\ninv\right)\right|^{2}dx+\spaceI\bar{\psi}W^{\alpha}\left|\nabla^{\underline{k}}\left(W\pjac\ninv\right)\right|^{2}dx\lesssim1+\delta\mathscr{S}_{N}(\tau)+ e^{\sigma_{2}\tau}\mathscr{S}_{N}(\tau),\label{lower-order-estimates-statement-1}
	\end{align}
	\begin{align}
	\spaceI\psi W^{1+\alpha+m}\left|\Ndell{m}{n}\left(\pjac\ninv\right)\right|^{2}dx+\spaceI\bar{\psi}W^{1+\alpha}\left|\nabla^{\underline{k}}\left(\pjac\ninv\right)\right|^{2}dx\lesssim\delta\mathscr{S}_{N}(\tau)+ e^{\sigma_{2}\tau}\mathscr{S}_{N}(\tau).\label{lower-order-estimates-statement-2}
	\end{align}
\end{lemma}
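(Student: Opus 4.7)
The plan is to expand $\Ndell{m}{n}(W\pjac\ninv)$ and $\nabla^{\underline{k}}(W\pjac\ninv)$ via the Leibniz rule and the differentiation identities (\ref{inverse-differentiation-zeta})--(\ref{jacobian-differentiation-zeta}). Writing $\partial_s\zeta^r=\delta^r_s+\partial_s\theta^r$, every derivative landing on $\pjac$ or $\ninv$ produces a derivative of $\nabla\theta$ multiplied by polynomials in $\ninv$ and $\pjac$ that are uniformly $L^\infty$-bounded by the a priori assumptions (\ref{a-priori-assumptions}). Iterating, one obtains the schematic decomposition
\begin{equation*}
\Ndell{m}{n}(\pjac\ninv)=\sum P(\ninv,\pjac)\prod_{\ell=1}^{L}\Ndell{a_\ell}{b_\ell}\nabla\theta,\qquad \sum_{\ell=1}^{L}\left(a_\ell+|b_\ell|+1\right)=m+|\underline{n}|,
\end{equation*}
with an analogous expression for $\nabla^{\underline{k}}(\pjac\ninv)$ on $\supp\bar{\psi}$.

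To prove (\ref{lower-order-estimates-statement-2}), I would split the above sum according to whether $L=1$ (one factor carries all derivatives) or $L\geq 2$. In the $L=1$ case, I would rewrite $\nabla=\ncov\,\ngrad$ and use the commutation relations (\ref{commutator-ang-rad})--(\ref{commutator-rectangular-ang}) to pull $\ngrad$ outside, so that the boundary integrand takes the form $\psi W^{1+\alpha+m}\pjac|P|^2|\ngrad\Ndell{m-1}{n}\theta|^2$ up to lower-order commutator errors. This is exactly of the shape defining $\|\theta\|_{\Y{N}{\ngrad}}^2$ and hence contributes $\lesssim e^{\sigma_2\tau}\mathscr{S}_N(\tau)$ by Definition~\ref{energy-function-for-solutions}. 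In the $L\geq 2$ case each factor has order at most $(m+|\underline{n}|)/2\leq N/2$; since $N\geq 2\ceil*{\alpha}+12$, weighted Sobolev inequalities applied within the $\X{N}$-framework place all but one of the $\Ndell{a_\ell}{b_\ell}\nabla\theta$-factors in $L^\infty$ with norm controlled by $\|\theta\|_{\X{N}}$, while the remaining factor sits in the weighted $L^2$-norm defining $\X{N}$. Two applications of Lemma~\ref{theta-estimates-lemma} then yield a contribution $\lesssim \delta\mathscr{S}_N(\tau)$. The interior piece weighted by $\bar\psi$ is treated identically, using $W\sim 1$ on $\supp\bar\psi$ (Remark~\ref{power-of-W-on-interior-norm}).

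For (\ref{lower-order-estimates-statement-1}) a further Leibniz expansion distributes derivatives between the factor $W$ and the factor $\pjac\ninv$. When no derivative falls on $W$, the retained $W$-factor only improves the weight and the analysis reduces to that above. When at least one derivative falls on $W$, the remaining derivatives on $\pjac\ninv$ are of strictly lower order and are placed in $L^\infty$ via the a priori assumptions combined with the Sobolev step above, whereas the $W$-factor is measured directly in the weighted $L^2$-norm associated with $\X{N}$; using $\nabla W\in\X{N}$ this produces the constant $1$ on the right-hand side. The main obstacle is the weight bookkeeping at top order: one must verify that the $L=1$ contributions arising from differentiating $\pjac\ninv$ pair with exactly the right power of $W$ to fit into the $\Y{N}{\ngrad}$-structure, i.e.\ that applying an extra $\rdell$ to $\pjac\ninv$ costs exactly one additional power of $W$ both on the left (via Leibniz) and on the right (as permitted by $\Y{N}{\ngrad}$). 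This matching is what makes the iteration close, and it is consistent with the structural convention that each $\rdell$ raises the permissible $W$-weight by exactly one in both (\ref{energy-space-norm}) and (\ref{energy-space-norm-2}).
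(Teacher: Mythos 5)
Your overall strategy---expand $\Ndell{m}{n}(\pjac\ninv)$ by Leibniz and the differentiation identities \eqref{inverse-differentiation-zeta}--\eqref{jacobian-differentiation-zeta}, single out a top-order piece absorbed by $\|\theta\|_{\Y{N}{\ngrad}}$, and bound lower-order products by placing all but one factor in $L^\infty$ via the weighted Sobolev embeddings---does match the paper's approach. But two real gaps remain.

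First, the derivative count in your schematic decomposition is wrong. Applying one derivative to $\ninv^k_i$ gives $-\ninv^k_r\ninv^s_i\,\partial\partial_s\theta^r$, so the full derivative is absorbed into the $\nabla\theta$ factor; creating a new factor does not ``cost'' a derivative. After $m+|\underline n|$ differentiations with $p$ surviving $\nabla\theta$ factors, the correct constraint (as in the paper's display \eqref{lower-order-term-expansion}) is $\sum_{\ell}a_\ell=m$, $\sum_\ell\underline b_\ell=\underline n$, not $\sum_\ell(a_\ell+|b_\ell|+1)=m+|\underline n|$. Under your constraint the $L=1$ factor has only $m+|\underline n|$ derivatives on $\theta$, so you would identify $\ngrad\Ndell{m-1}{n}\theta$ as top order and entirely miss the actual top-order term $\ngrad\Ndell{m}{n}\theta$ (which carries $m+|\underline n|+1$ derivatives on $\theta$ and is precisely what pairs with $W^{1+\alpha+m}$ to fit $\Y{N}{\ngrad}$).

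Second, and more importantly, the weight bookkeeping that you defer to a structural ``convention'' is exactly where the paper has to do work (its Cases I, IIa, IIb). When $c>0$, so that at least one $\rdell$ hits $W$, the factor $\Ndell{c}{d}W$ can be rewritten via $\rdell=x\cdot\nabla$ as $\sum\mathcal Q\cdot\Ndell{l}{q}(\nabla W)$ and absorbed by $\nabla W\in\X{N}$; this step is clean. The delicate case is $c=0$ with $|\underline d|>0$: purely angular derivatives of $W$ cannot be immediately converted into $\nabla W$ with a profitable weight. When $|\underline d|$ is small relative to the largest $\theta$ factor (Case IIa), the paper writes $\angdell^{\underline d}W=d_\Omega\,\angdell^{\underline d}(W/d_\Omega)$ and uses the assumed smoothness of $W/d_\Omega$ near $\partial\Omega$ (Definition \ref{definition-of-W}) together with $d_\Omega\sim W$ on $\supp\psi$ to extract one extra power of $W$, which then allows the $\theta$ factors to be measured in $\Y{N}{\ngrad}$ with the correct weights. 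When $|\underline d|$ is large (Case IIb), one instead places $\angdell^{\underline d}W$ into the $\X{N}$ norm via $\nabla W\in\X{N}$ and hits the $\theta$ factors with the weighted $L^\infty$ bounds of Lemma \ref{L-infinity-energy-space-bound-weights}. This case split, and the use of the $W/d_\Omega$ smoothness in the one case, is the crux of the lemma; asserting without proof that ``each $\rdell$ raises the permissible $W$-weight by exactly one'' does not cover the angular-only derivatives of $W$, where there is no $\rdell$ to appeal to.
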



\begin{proof}Consider $\Ndell{m}{n}\left(W\pjac\ninv^{k}_{i}\right)$, some $i$ and $k$ in $\{1,2,3\}$. Using $(\ref{inverse-differentiation-zeta})$ and $(\ref{jacobian-differentiation-zeta})$ to differentiate $\ninv^{k}_{i}$ and $\pjac$ repeatedly, this term can be written as
\begin{align}
&-W\pjac\ninv^{k}_{r}\ninv^{s}_{i}\Ndell{m}{n}\left(\dell_{s}\theta^{r}\right)-\frac{1}{\alpha}W\pjac\ninv^{k}_{i}\ninv^{s}_{r}\Ndell{m}{n}\left(\dell_{s}\theta^{r}\right)+\pjac\ninv^{k}_{i}\Ndell{m}{n}W\nonumber\\
&+\sum_{p=1}^{m+|\underline{n}|}\sum_{\substack{a_{1}+\dots+a_{p}+c=m\\ \underline{b_{1}}+\dots+\underline{b_{p}}+\underline{d}=\underline{n}\\1\leq c+|\underline{d}|< m+|\underline{n}|}}\mathscr{L}(a_{1},\underline{b}_{1}\dots,a_{p},\underline{b}_{p},c,d)\pjac\underbrace{\ninv\dots\ninv}_{p+1}\Ndell{c}{d}W\rdell^{a_{1}}\angdell^{\underline{b}_{1}}\left(\nabla\theta\right)\dots\rdell^{a_{p}}\angdell^{\underline{b}_{p}}\left(\nabla\theta\right).\label{lower-order-term-expansion}
\end{align}
The terms 
\begin{align*}
\ninv\dots\ninv\Ndell{c}{d}W\rdell^{a_{1}}\angdell^{\underline{b}_{1}}\left(\nabla\theta\right)\dots\rdell^{a_{p}}\angdell^{\underline{b}_{p}}\left(\nabla\theta\right)
\end{align*}
are written schematically in the sense of (\ref{schematically}) and (\ref{schematically-2}). As we are looking to bound this sum using the higher order energy function, the specific structure of the indices for each factor above is not important, only the derivative distribution across the product. The terms $\mathscr{L}(a_{1},\underline{b}_{1}\dots,a_{p},\underline{b}_{p},c,d)$ are the constant coefficients of expansion, counting the multiplicity of each term that appears in the sum. Note that if $m+|\underline{n}|=1$ then all of these coefficients are $0$.

To estimate the first term in $(\ref{lower-order-term-expansion})$, we use Lemma \ref{higher-order-commutator-identities} to write
\begin{align*}
-W\pjac\ninv^{k}_{r}\ninv^{s}_{i}\Ndell{m}{n}\left(\dell_{s}\theta^{r}\right)=-W\pjac\ninv^{k}_{r}\left[\ngrad{\Ndell{m}{n}\theta}\right]^{r}_{i}-\sum_{j=1}^{m+|\underline{n}|}\sum_{\substack{e+|\underline{f}|=j\\e\leq m+1}}W\mathcal{K}_{s,j,e,\underline{f}}\pjac\ninv^{k}_{r}\ninv^{s}_{i}\Ndell{e}{f}\theta^{r},
\end{align*}
where $\mathcal{K}_{s,j,e,\underline{f}}$ are some functions, smooth on $\supp{\psi}$. Thus we have the estimate
\begin{align}
\spaceI\psi W^{\alpha+m}\left|W\pjac\ninv^{k}_{r}\ninv^{s}_{i}\Ndell{m}{n}\left(\dell_{s}\theta^{r}\right)\right|^{2}dx&\lesssim\spaceI\psi W^{1+\alpha+m}\njac^{-1/\alpha}\left|\ngrad{\Ndell{m}{n}\theta}\right|^{2}dx\nonumber\\
&+\sum_{j=1}^{m+|\underline{n}|}\sum_{\substack{e+|\underline{f}|=j\\e\leq m+1}}\spaceI\psi W^{1+\alpha+m}\left|\Ndell{e}{f}\theta\right|^{2}dx,\label{lower-order-estimate-1}
\end{align}
where we have used $(\ref{a-priori-assumptions})$ to bound $\pjac$ and $\ninv$ terms in $L^{\infty}$. We have also bounded $\mathcal{K}_{s,j,e,\underline{f}}$ in $L^{\infty}$. Then we have
\begin{align}
\spaceI\psi W^{\alpha+m}\left|W\pjac\ninv^{k}_{r}\ninv^{s}_{i}\Ndell{m}{n}\left(\dell_{s}\theta^{r}\right)\right|^{2}dx\lesssim\left\|\theta\right\|_{\Y{N}{\ngrad}}^{2}+\left\|\theta\right\|_{\X{N}}^{2}\lesssim e^{\sigma_{2}\tau}\mathscr{S}_{N}(\tau)+\delta\mathscr{S}_{N}(\tau).\label{lower-order-estimate-2}
\end{align}
Analogously for the second term in $(\ref{lower-order-term-expansion})$,
\begin{align}
\spaceI\psi W^{\alpha+m}\left|W\pjac\ninv^{k}_{i}\ninv^{s}_{r}\Ndell{m}{n}\left(\dell_{s}\theta^{r}\right)\right|^{2}dx\lesssim\left\|\theta\right\|_{\Y{N}{\ndiv}}^{2}+\left\|\theta\right\|_{\X{N}}^{2}\lesssim e^{\sigma_{2}\tau}\mathscr{S}_{N}(\tau)+\delta\mathscr{S}_{N}(\tau).\label{lower-order-estimate-3}
\end{align}
For the third term in $(\ref{lower-order-term-expansion})$, if $m>0$, we write $\Ndell{m}{n}W=\Ndell{m-1}{n}\left(x\cdot\nabla W\right)$, as $\rdell=x\cdot\nabla$. This implies
\begin{align}
\Ndell{m}{n}W=\sum^{m+|\underline{n}|-1}_{\substack{e+|\underline{f}|=0\\e\leq m-1}}\mathcal{Q}_{e,\underline{f}}\cdot\Ndell{e}{f}\left(\nabla W\right).\label{Ndell-W-as-Ndell-nabla-W}
\end{align}
The coefficients $\mathcal{Q}_{e,\underline{f}}$ come from differentiating $x$, and are smooth on $\supp{\psi}$. Therefore, we have the bound
\begin{align}
\spaceI\psi W^{\alpha+m}\left|\pjac\ninv^{k}_{i}\Ndell{m}{n}W\right|^{2}dx\lesssim\left\|\nabla W\right\|_{\X{N}}^{2}\lesssim1.\label{lower-order-estimate-4}
\end{align}
We have bounded $\pjac$ and $\ninv^{k}_{i}$ in $L^{\infty}$ using $(\ref{a-priori-assumptions})$, and the last inequality is due to the assumption that $\nabla W\in\X{N}$. If $m=0$, then we can use $\angdell_{ij}=x_{i}\dell_{j}-x_{j}\dell_{i}$ to instead obtain
\begin{align}
\angdell^{\underline{n}}W=\sum^{|\underline{n}|-1}_{|\underline{f}|=0}\mathcal{Q}_{\underline{f}}\cdot\angdell^{\underline{f}}\left(\nabla W\right),\label{Ndell-W-as-Ndell-nabla-W-2}
\end{align}
and get the same bound as $(\ref{lower-order-estimate-4})$. Finally we have the sum of lower order terms on the second line of $(\ref{lower-order-term-expansion})$. For each term in the sum, we have the bound
\begin{align}
&\spaceI\psi W^{\alpha+m}\left|\mathscr{L}(a_{1},\underline{b}_{1},\dots,a_{p},\underline{b}_{p},c,d)\right|^{2}\njac^{-2/\alpha}\left|\ninv\right|^{2(p+1)}\left|\Ndell{c}{d}W\right|^{2}\left|\rdell^{a_{1}}\angdell^{\underline{b}_{1}}\left(\nabla\theta\right)\right|^{2}\dots\left|\rdell^{a_{p}}\angdell^{\underline{b}_{p}}\left(\nabla\theta\right)\right|^{2}dx\nonumber\\
&\lesssim\spaceI\psi W^{\alpha+m}\njac^{-2/\alpha}\left|\Ndell{c}{d}W\right|^{2}\left|\rdell^{a_{1}}\angdell^{\underline{b}_{1}}\left(\nabla\theta\right)\right|^{2}\dots\left|\rdell^{a_{p}}\angdell^{\underline{b}_{p}}\left(\nabla\theta\right)\right|^{2}dx,\label{lower-order-estimate-5}
\end{align}
with the bound coming from the fact that $\mathscr{L}$ are constant coefficients and $\ninv$ is bounded in $L^{\infty}$. The estimate for the right hand side of $(\ref{lower-order-estimate-5})$ is comprised of two cases.\\

\noindent \textbf{Case I: $c>0$.}
 Since $c>0$, we write
\begin{align}
\Ndell{c}{d}W=\sum_{\substack{l+|\underline{q}|=c+|\underline{d}|-1\\l\leq c-1}}\mathcal{Q}_{l,\underline{q}}\cdot\Ndell{l}{q}\left(\nabla W\right)\label{Ndell-W-as-Ndell-nabla-W-3}
\end{align}
similarly to $(\ref{Ndell-W-as-Ndell-nabla-W})$. Once again, we can use Lemma \ref{higher-order-commutator-identities} and $(\ref{rectangular-as-ang-rad})$ to write
\begin{align}
\rdell^{a_{i}}\angdell^{\underline{b}_{i}}\left(\nabla\theta\right)=\sum_{j_{i}=1}^{a_{i}+|\underline{b}_{i}|}\sum_{\substack{e_{i}+|\underline{f}_{i}|=j_{i}\\e_{i}\leq a_{i}}}\mathcal{K}_{j_{i},e_{i},\underline{f}_{i}}\left[\nabla\rdell^{e_{i}}\angdell^{\underline{f}_{i}}\theta\right],\label{higher-order-commutator-identity-applied-to-lower-order-Ndell-theta}
\end{align}
with $\mathcal{K}$ smooth on $\supp{\psi}$. Thus we have the bound
\begin{align}
&\spaceI\psi W^{\alpha+m}\njac^{-2/\alpha}\left|\Ndell{c}{d}W\right|^{2}\left|\rdell^{a_{1}}\angdell^{\underline{b}_{1}}\left(\nabla\theta\right)\right|^{2}\dots\left|\rdell^{a_{p}}\angdell^{\underline{b}_{p}}\left(\nabla\theta\right)\right|^{2}dx\nonumber\\
&\lesssim\sum_{i=1}^{p}\sum_{j_{i}=1}^{a_{i}+|\underline{b}_{i}|}\sum_{\substack{e_{i}+|\underline{f}_{i}|=j_{i}\\e_{i}\leq a_{i}}}\sum_{\substack{l+|\underline{q}|=c+|\underline{d}|-1\\l\leq c-1}}\notag \\
& \qquad \spaceI\psi W^{\alpha+m}\left|\mathcal{K}_{j_{1},e_{1},\underline{f}_{1}}\right|^{2}\dots\left|\mathcal{K}_{j_{p},e_{p},\underline{f}_{p}}\right|^{2}\left|\mathcal{Q}_{l,\underline{q}}\right|^{2}\left|\Ndell{l}{q}\left(\nabla W\right)\right|^{2}\left|\nabla\rdell^{e_{1}}\angdell^{\underline{f}_{1}}\theta\right|^{2}\dots\left|\nabla\rdell^{e_{p}}\angdell^{\underline{f}_{p}}\theta\right|^{2}dx.\label{lower-order-estimate-6}
\end{align}
Choose a term from the right hand side above and, reordering if necessary, assume $e_{1}+|\underline{f}_{1}|\leq\dots\leq e_{p}+|\underline{f}_{p}|$. Suppose for now that $p\geq2$. Then we write
\begin{align}
W^{\alpha+m}\left|\Ndell{l}{q}\left(\nabla W\right)\right|^{2}\left|\nabla\rdell^{e_{1}}\angdell^{\underline{f}_{1}}\theta\right|^{2}\dots\left|\nabla\rdell^{e_{p}}\angdell^{\underline{f}_{p}}\theta\right|^{2}=W^{e_{1}}\left|\nabla\rdell^{e_{1}}\angdell^{\underline{f}_{1}}\theta\right|^{2}\dots W^{g+\alpha+l+e_{p}}\left|\Ndell{l}{q}\left(\nabla W\right)\right|^{2}\left|\nabla\rdell^{e_{p}}\angdell^{\underline{f}_{p}}\theta\right|^{2},\label{lower-order-estimate-distribution-of-W}
\end{align}
where $g=m-l-\sum_{i}e_{i}$. Note that $g\geq1$ because $l+e_{1}+\dots+e_{p}\leq c-1+a_{1}+\dots+a_{p}=m-1$. If $l+|\underline{q}|\geq e_{p}+|\underline{f}_{p}|$, then we must have $e_{p}+|\underline{f}_{p}|\leq (m+|\underline{n}|)/2$, and $e_{i}+|\underline{f}_{i}|<(m+|\underline{n}|)/2$ for $i=1,\dots,p-1$. Hence we have $\ceil*{\alpha}+6+e_{p}+|\underline{f}_{p}|\leq N$, and $\ceil*{\alpha}+6+e_{p}+|\underline{f}_{p}|< N$ for $i=1,\dots,p-1$.

So for $i=1,\dots,p$, we bound $W^{e_{i}}\left|\nabla\rdell^{e_{i}}\angdell^{\underline{f}_{i}}\right|^{2}$ in $L^{\infty}$, then use $(\ref{L-infinity-energy-space-bound-weights-statement-2})$ in Lemma \ref{L-infinity-energy-space-bound-weights}, as well as Lemma \ref{theta-Y-norm-theta-X-norm-relation-lemma} for $i=1,\dots,p-1$. This gives us
\begin{align}
&\spaceI\psi W^{\alpha+m}\left|\mathcal{K}_{j_{1},e_{1},\underline{f}_{1}}\right|^{2}\dots\left|\mathcal{K}_{j_{p},e_{p},\underline{f}_{p}}\right|^{2}\left|\mathcal{Q}_{l,\underline{q}}\right|^{2}\left|\Ndell{l}{q}\left(\nabla W\right)\right|^{2}\left|\nabla\rdell^{e_{1}}\angdell^{\underline{f}_{1}}\theta\right|^{2}\dots\left|\nabla\rdell^{e_{p}}\angdell^{\underline{f}_{p}}\theta\right|^{2}dx\nonumber\\
&\lesssim\left\|\theta\right\|_{\X{N}}^{2(p-1)}\left\|\theta\right\|_{\Y{N}{\ngrad}}^{2}\spaceI\psi W^{g+\alpha+l}\left|\Ndell{l}{q}\left(\nabla W\right)\right|^{2}dx\lesssim\delta e^{\sigma_{2}\tau}\mathscr{S}_{N}(\tau).\label{lower-order-estimate-7}
\end{align}
The first inequality above comes from the $L^{\infty}$ embeddings mentioned above, as well as bounding $\mathcal{K}_{j_{i},e_{i},\underline{f}_{i}}$ and $\mathcal{Q}_{l,\underline{q}}$ in $L^{\infty}$. The second inequality is because $W^{g}$ is bounded in $L^{\infty}$, and $\nabla W\in\X{N}$. Finally, we have bounded any extra powers of $\delta$ and $\mathscr{S}_{N}(\tau)$ by a constant. This is as $\delta$ is itself a constant, and $\mathscr{S}_{N}(\tau)$ can be bounded using the a priori assumptions $(\ref{a-priori-assumptions})$. 

If $e_{p}+|\underline{f}_{p}|\geq l+|\underline{q}|$, then we instead write
\begin{align}
W^{\alpha+m}\left|\Ndell{l}{q}\left(\nabla W\right)\right|^{2}\left|\nabla\rdell^{e_{1}}\angdell^{\underline{f}_{1}}\theta\right|^{2}\dots\left|\nabla\rdell^{e_{p}}\angdell^{\underline{f}_{p}}\theta\right|^{2}=W^{l}\left|\Ndell{l}{q}\left(\nabla W\right)\right|^{2}W^{e_{1}}\left|\nabla\rdell^{e_{1}}\angdell^{\underline{f}_{1}}\theta\right|^{2}\dots W^{g+\alpha+e_{p}}\left|\nabla\rdell^{e_{p}}\angdell^{\underline{f}_{p}}\theta\right|^{2}.\label{lower-order-estimate-distribution-of-W-2}
\end{align}
If $p=2$, and $e_{1}+|\underline{f}_{1}|=e_{2}+|\underline{f}_{2}|$, then we have
\begin{align}
&\spaceI\psi \left|\mathcal{K}_{j_{1},e_{1},\underline{f}_{1}}\right|^{2}\left|\mathcal{K}_{j_{2},e_{2},\underline{f}_{2}}\right|^{2}\left|\mathcal{Q}_{l,\underline{q}}\right|^{2}W^{l}\left|\Ndell{l}{q}\left(\nabla W\right)\right|^{2}W^{e_{1}}\left|\nabla\rdell^{e_{1}}\angdell^{\underline{f}_{1}}\theta\right|^{2}W^{g+\alpha+e_{2}}\left|\nabla\rdell^{e_{2}}\angdell^{\underline{f}_{2}}\theta\right|^{2}dx\nonumber\\
&\lesssim\left\|\nabla W\right\|_{\X{N}}^{2}\left\|\theta\right\|_{\Y{N}{\ngrad}}^{2}\spaceI\psi W^{1+\alpha+e_{2}}\left|\nabla\rdell^{e_{2}}\angdell^{\underline{f}_{2}}\theta\right|^{2}dx\lesssim\left\|\nabla W\right\|_{\X{N}}^{2}\left\|\theta\right\|_{\Y{N}{\ngrad}}^{2}\left\|\theta\right\|_{\Y{\ceil*{\alpha}+6+e_{2}+|\underline{f}_{2}|}{\ngrad}}^{2}.\label{lower-order-estimate-8}
\end{align}
The first inequality is once again due to $L^{\infty}$ bounds followed by Lemma \ref{L-infinity-energy-space-bound-weights}, and the second inequality follows by the definition of $\Y{b}{\ngrad}$ given in $(\ref{energy-space-norm-2})$. Now, since $\ceil*{\alpha}+6+e_{2}+|\underline{f}_{2}|<N$, we can apply Lemma \ref{theta-Y-norm-theta-X-norm-relation-lemma} to obtain
\begin{align}
&\spaceI\psi \left|\mathcal{K}_{j_{1},e_{1},\underline{f}_{1}}\right|^{2}\left|\mathcal{K}_{j_{2},e_{2},\underline{f}_{2}}\right|^{2}\left|\mathcal{Q}_{l,\underline{q}}\right|^{2}W^{l}\left|\Ndell{l}{q}\left(\nabla W\right)\right|^{2}W^{e_{1}}\left|\nabla\rdell^{e_{1}}\angdell^{\underline{f}_{1}}\theta\right|^{2}W^{r+\alpha+e_{2}}\left|\nabla\rdell^{e_{p}}\angdell^{\underline{f}_{p}}\theta\right|^{2}dx\nonumber\\
&\lesssim\left\|\nabla W\right\|_{\X{N}}^{2}\left\|\theta\right\|_{\Y{N}{\ngrad}}^{2}\left\|\theta\right\|_{\X{N}}^{2}\lesssim\delta e^{\sigma_{2}\tau}\mathscr{S}_{N}(\tau).\label{lower-order-estimate-9}
\end{align}
Otherwise, $e_{i}+|\underline{f}_{i}|<N/2$ for $i=1,\dots,p-1$, so we bound analogously to $(\ref{lower-order-estimate-7})$:
\begin{align}
&\spaceI\psi W^{\alpha+m}\left|\mathcal{K}_{j_{1},e_{1},\underline{f}_{1}}\right|^{2}\dots\left|\mathcal{K}_{j_{p},e_{p},\underline{f}_{p}}\right|^{2}\left|\mathcal{Q}_{l,\underline{q}}\right|^{2}\left|\Ndell{l}{q}\left(\nabla W\right)\right|^{2}\left|\nabla\rdell^{e_{1}}\angdell^{\underline{f}_{1}}\theta\right|^{2}\dots\left|\nabla\rdell^{e_{p}}\angdell^{\underline{f}_{p}}\theta\right|^{2}dx\nonumber\\
&\lesssim\left\|\nabla W\right\|_{\X{N}}^{2}\left\|\theta\right\|_{\X{N}}^{2(p-1)}\spaceI\psi W^{1+\alpha+e_{p}}\left|\nabla\rdell^{e_{p}}\angdell^{\underline{f}_{p}}\theta\right|^{2}dx\lesssim\delta e^{\sigma_{2}\tau}\mathscr{S}_{N}(\tau).\label{lower-order-estimate-10}
\end{align}
Combining bounds $(\ref{lower-order-estimate-5})$, $(\ref{lower-order-estimate-6})$, $(\ref{lower-order-estimate-7})$, $(\ref{lower-order-estimate-8})$, $(\ref{lower-order-estimate-9})$, and $(\ref{lower-order-estimate-10})$, we have
\begin{align}
\spaceI\psi W^{\alpha+m}\left|\mathscr{L}(a_{1},\underline{b}_{1},\dots,a_{p},\underline{b}_{p},c,d)\right|^{2}\njac^{-2/\alpha}\left|\ninv\right|^{2(p+1)}\left|\Ndell{c}{d}W\right|^{2}\left|\rdell^{a_{1}}\angdell^{\underline{b}_{1}}\left(\nabla\theta\right)\right|^{2}\dots\left|\rdell^{a_{p}}\angdell^{\underline{b}_{p}}\left(\nabla\theta\right)\right|^{2}dx\lesssim\delta e^{\sigma_{2}\tau}\mathscr{S}_{N}(\tau)\label{lower-order-estimate-11}
\end{align}
when $c>0$ and $p\geq2$.

If $p=1$ then we look to bound
\begin{align}
\spaceI\psi W^{\alpha+m}\left|\rdell^{l}\angdell^{\underline{q}}\left(\nabla W\right)\right|^{2}\left|\nabla\rdell^{e_{1}}\angdell^{\underline{f}_{1}}\theta\right|^{2}dx.\label{lower-order-estimate-11a}
\end{align}
If $l+|\underline{q}|\geq e_{1}+|\underline{f}_{1}|$, we bound $W^{e_{1}}\left|\nabla\rdell^{e_{1}}\angdell^{\underline{f}_{1}}\theta\right|^{2}$ in $L^{\infty}$, then use $(\ref{L-infinity-energy-space-bound-weights-statement-2})$ in Lemma \ref{L-infinity-energy-space-bound-weights}. If $l+|\underline{q}|\leq e_{1}+|\underline{f}_{1}|$, we instead bound $W^{l}\left|\rdell^{l}\angdell^{\underline{q}}\left(\nabla W\right)\right|^{2}$ in $L^{\infty}$, and use (\ref{L-infinity-energy-space-bound-weights-statement-1}) in Lemma \ref{L-infinity-energy-space-bound-weights}. In both cases we have the bound
\begin{align}
\spaceI\psi W^{\alpha+m}\left|\rdell^{l}\angdell^{\underline{q}}\left(\nabla W\right)\right|^{2}\left|\nabla\rdell^{e_{1}}\angdell^{\underline{f}_{1}}\theta\right|^{2}dx\lesssim e^{\sigma_{2}\tau}\mathscr{S}_{N}(\tau).\label{lower-order-estimate-11b}
\end{align}

\noindent \textbf{Case II: $c=0$.}
We have the bound 
\begin{align}
&\spaceI\psi W^{\alpha+m}\njac^{-2/\alpha}\left|\Ndell{c}{d}W\right|^{2}\left|\rdell^{a_{1}}\angdell^{\underline{b}_{1}}\left(\nabla\theta\right)\right|^{2}\dots\left|\rdell^{a_{p}}\angdell^{\underline{b}_{p}}\left(\nabla\theta\right)\right|^{2}dx\nonumber\\
&\lesssim\sum_{i=1}^{p}\sum_{j_{i}=1}^{a_{i}+|\underline{b}_{i}|}\sum_{\substack{e_{i}+|\underline{f}_{i}|=j_{i}\\e_{i}\leq a_{i}}}\spaceI\psi W^{\alpha+m}\left|\mathcal{K}_{j_{1},e_{1},\underline{f}_{1}}\right|^{2}\dots\left|\mathcal{K}_{j_{p},e_{p},\underline{f}_{p}}\right|^{2}\left|\angdell^{\underline{d}} W\right|^{2}\left|\nabla\rdell^{e_{1}}\angdell^{\underline{f}_{1}}\theta\right|^{2}\dots\left|\nabla\rdell^{e_{p}}\angdell^{\underline{f}_{p}}\theta\right|^{2}dx.\label{lower-order-estimate-12}
\end{align}
Once again we choose a term from the right hand side and assume $e_{1}+|\underline{f}_{1}|\leq\dots\leq e_{p}+|\underline{f}_{p}|$, with $p\geq2$.\\

\noindent \textbf{Case IIa: $|\underline{d}|\leq e_{p}+|\underline{f}_{p}|$.}
If $|\underline{d}|\leq e_{p}+|\underline{f}_{p}|$, then we have $\angdell^{\underline{d}}W=d_{\Omega}\angdell^{\underline{d}}(W/d_{\Omega})$, where we recall from $(\ref{distance-function-shorthand})$ that $d_{\Omega}(x)=d(x,\dell\Omega)=1-r(x)$ is a function depending on the radial direction only.

From Definition \ref{definition-of-W}, we know that $W/d_{\Omega}$ is smooth on $\supp{\psi}$, so
\begin{align}
\left\|\angdell^{\underline{d}}\left(\frac{W}{d_{\Omega}}\right)\right\|_{L^{\infty}(\supp{\psi})}\lesssim1.\label{W-over-d-estimate-1}
\end{align}
Therefore we have the bound
\begin{align}
&\spaceI\psi W^{\alpha+m}\left|\mathcal{K}_{j_{1},e_{1},\underline{f}_{1}}\right|^{2}\dots\left|\mathcal{K}_{j_{p},e_{p},\underline{f}_{p}}\right|^{2}\left|\angdell^{\underline{d}} W\right|^{2}\left|\nabla\rdell^{e_{1}}\angdell^{\underline{f}_{1}}\theta\right|^{2}\dots\left|\nabla\rdell^{e_{p}}\angdell^{\underline{f}_{p}}\theta\right|^{2}dx\nonumber\\
&\lesssim\spaceI\psi W^{2+\alpha+m}\left|\nabla\rdell^{e_{1}}\angdell^{\underline{f}_{1}}\theta\right|^{2}\dots\left|\nabla\rdell^{e_{p}}\angdell^{\underline{f}_{p}}\theta\right|^{2}dx.\label{lower-order-estimate-13}
\end{align}
Here we have used $L^{\infty}$ on the $\mathcal{K}_{j_{i},e_{i},\underline{f}_{i}}$, as well as $L^{\infty}$ and (\ref{W-over-d-estimate-1}) on $\angdell^{\underline{d}}(W/d_{\Omega})$. Moreover, from Definition \ref{definition-of-W}, we know $\left|d(x,\dell\Omega)\right|^{2}W^{\alpha+m}\sim W^{2+\alpha+m}$.

The integral on the right hand side of $(\ref{lower-order-estimate-13})$ can be estimated as in the $c>0$ case. This gives us
\begin{align}
\spaceI\psi W^{2+\alpha+m}\left|\nabla\rdell^{e_{1}}\angdell^{\underline{f}_{1}}\theta\right|^{2}\dots\left|\nabla\rdell^{e_{p}}\angdell^{\underline{f}_{p}}\theta\right|^{2}dx\lesssim\delta e^{\sigma_{2}\tau}\mathscr{S}_{N}(\tau).\label{lower-order-estimate-14}
\end{align}

\noindent \textbf{Case IIb: $|\underline{d}|\geq e_{p}+|\underline{f}_{p}|$.} If $|\underline{d}|\geq e_{p}+|\underline{f}_{p}|$, then write
\begin{align*}
\angdell^{\underline{d}}W=\sum^{|\underline{d}|-1}_{|\underline{l}|=0}\mathcal{Q}_{\underline{l}}\cdot\angdell^{\underline{l}}\left(\nabla W\right)
\end{align*}
as in $(\ref{Ndell-W-as-Ndell-nabla-W-2})$, with $\mathcal{Q}_{\underline{l}}$ smooth on $\supp{\psi}$, and we have the bound
\begin{align}
&\spaceI\psi W^{\alpha+m}\left|\mathcal{K}_{j_{1},e_{1},\underline{f}_{1}}\right|^{2}\dots\left|\mathcal{K}_{j_{p},e_{p},\underline{f}_{p}}\right|^{2}\left|\angdell^{\underline{d}} W\right|^{2}\left|\nabla\rdell^{e_{1}}\angdell^{\underline{f}_{1}}\theta\right|^{2}\dots\left|\nabla\rdell^{e_{p}}\angdell^{\underline{f}_{p}}\theta\right|^{2}dx\nonumber\\
&\lesssim\sum^{|\underline{d}|-1}_{|\underline{l}|=0}\spaceI\psi W^{e_{1}}\left|\nabla\rdell^{e_{1}}\angdell^{\underline{f}_{1}}\theta\right|^{2}\dots W^{e_{p}}\left|\nabla\rdell^{e_{p}}\angdell^{\underline{f}_{p}}\theta\right|^{2}W^{\alpha}\left|\angdell^{\underline{l}}\left(\nabla W\right)\right|^{2}dx\lesssim\delta e^{\sigma_{2}\tau}\mathscr{S}_{N}(\tau).\label{lower-order-estimate-15}
\end{align}
As $e_{i}+|\underline{f}_{i}|\leq (m+|\underline{n}|)/2$, $i=1,\dots,p$, we can use $L^{\infty}$ bounds, then Lemmas \ref{L-infinity-energy-space-bound-weights} and \ref{theta-Y-norm-theta-X-norm-relation-lemma} on the $W^{e_{i}}\left|\rdell^{e_{i}}\angdell^{\underline{f}_{i}}\theta\right|^{2}$ terms. This, along with the fact that $\nabla W\in\X{N}$, gives the second inequality in $(\ref{lower-order-estimate-15})$.

For the case $p=1$, the bound follows similarly to (\ref{lower-order-estimate-11b}), and we have
\begin{align}
\spaceI\psi W^{\alpha+m}\left|\angdell^{\underline{d}}W\right|^{2}\left|\nabla\rdell^{e_{1}}\angdell^{\underline{f}_{1}}\theta\right|^{2}dx\lesssim e^{\sigma_{2}\tau}\mathscr{S}_{N}(\tau).\label{lower-order-estimate-16}
\end{align}
Combining bounds $(\ref{lower-order-estimate-2})$, $(\ref{lower-order-estimate-3})$, $(\ref{lower-order-estimate-4})$, $(\ref{lower-order-estimate-11})$,
$(\ref{lower-order-estimate-11b})$, $(\ref{lower-order-estimate-14})$, $(\ref{lower-order-estimate-15})$, and $(\ref{lower-order-estimate-16})$, along with analogous estimates on $\supp{\bar{\psi}}$, gives $(\ref{lower-order-estimates-statement-1})$. The inequality in $(\ref{lower-order-estimates-statement-2})$ follows similarly.
\end{proof}

\section{Curl Estimates}\label{curl-estimates}

As we will see in Section $\ref{energy-estimates}$ when we derive the natural energy function, the top order $\nCurl$ terms appear with a bad sign. Closing these estimates therefore requires separate bounds on the $\nCurl$ terms. This comes from $(\ref{rescaled-euler-w})$ which has a specific structure we can exploit, encapsulated in the following lemma.

\begin{lemma}\label{curl-form-of-equation-lemma}
	Let $(\nu,\theta)$ be the solution to~\eqref{E:EULERNEW2} on $[0,T]$, for some $N\geq2\ceil*{\alpha}+12$. Then for all $\tau\in[0,T]$, we have
	\begin{align*}
		\nCurl{\nu}=e^{-\tau}\nCurl{\nu}(0)+e^{-\tau}\int_{0}^{\tau}e^{\tau'}\comm{\dell_{\tau}}{\nCurl}\nu(\tau')d\tau'.
	\end{align*}
\end{lemma}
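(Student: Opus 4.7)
My plan is to exploit the conservative structure of the pressure term in the rescaled momentum equation, which allows it to be rewritten as a $\zeta$-gradient of a scalar. Since the $\zeta$-curl of a $\zeta$-gradient vanishes, applying $\nCurl$ to the equation produces a linear first-order ODE in $\tau$ for $\nCurl\nu$, which is then solved by an integrating factor.

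First, starting from $(\ref{rescaled-euler-w})$ and dividing by $w^{\alpha}$, I would show that
\begin{align*}
w^{-\alpha}\dell_{k}\left(w^{1+\alpha}\ninv^{k}_{i}\pjac\right) = (1+\alpha)\ninv^{k}_{i}\dell_{k}(w\pjac) = (1+\alpha)\left[\ngrad(w\pjac)\right]_{i}.
\end{align*}
The computation distributes $\dell_k$ using the Piola identity $(\ref{piola-identity-zeta})$ written as $\dell_{k}(\ninv^{k}_{i}\pjac) = \tilde{a}^k_i \dell_k \njac^{-1-1/\alpha}$, together with the scalar identity $\dell_{k}\pjac = -\tfrac{1}{\alpha}\pjac\njac^{-1}\dell_{k}\njac$; the two contributions recombine into $(1+\alpha)\ninv^{k}_{i}\dell_{k}(w\pjac)$. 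Since $\zeta_{\tau} = \nu$ and $\zeta_{\tau\tau} = \nu_{\tau}$, equation $(\ref{rescaled-euler-w})$ becomes
\begin{align*}
e^{\beta\tau}\left(\nu^{i}_{\tau}+\nu^{i}\right) + (1+\alpha)\left[\ngrad(w\pjac)\right]_{i} = 0.
\end{align*}

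Second, I would verify that $\nCurl\circ\ngrad = 0$ on scalars. Expanding
\begin{align*}
[\nCurl\ngrad f]^{i}_{j} = \ninv^{s}_{j}\dell_{s}(\ninv^{k}_{i}\dell_{k}f) - \ninv^{s}_{i}\dell_{s}(\ninv^{k}_{j}\dell_{k}f),
\end{align*}
the terms of the form $\ninv^{s}_{j}\ninv^{k}_{i}\dell_{s}\dell_{k}f - \ninv^{s}_{i}\ninv^{k}_{j}\dell_{s}\dell_{k}f$ cancel after relabelling $s\leftrightarrow k$ in one summand, while the remaining terms, after applying $(\ref{inverse-differentiation-zeta})$, contract the antisymmetric tensor $\ninv^{s}_{j}\ninv^{p}_{i} - \ninv^{s}_{i}\ninv^{p}_{j}$ against the symmetric Hessian $\dell_{s}\dell_{p}\zeta^{r}$, giving zero. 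Applying $\nCurl$ to the displayed equation and dividing by the non-vanishing factor $e^{\beta\tau}$ yields
\begin{align*}
\nCurl\nu_{\tau} + \nCurl\nu = 0.
\end{align*}

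Third, I use the commutator identity $\dell_{\tau}(\nCurl\nu) = \nCurl\nu_{\tau} + \comm{\dell_{\tau}}{\nCurl}\nu$ to rewrite the previous line as the ODE
\begin{align*}
\dell_{\tau}(\nCurl\nu) + \nCurl\nu = \comm{\dell_{\tau}}{\nCurl}\nu.
\end{align*}
Multiplying by the integrating factor $e^{\tau}$ gives $\dell_{\tau}(e^{\tau}\nCurl\nu) = e^{\tau}\comm{\dell_{\tau}}{\nCurl}\nu$, and integrating from $0$ to $\tau$ produces exactly the formula in the statement. The only genuinely substantive step is the scalar identification of the pressure term as $(1+\alpha)\ngrad(w\pjac)$; everything else is an algebraic cancellation or a one-line ODE integration.
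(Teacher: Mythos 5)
Your proposal is correct and follows essentially the same route as the paper: rewrite the pressure term as $(1+\alpha)\ngrad(\cdot)$ of a scalar, apply $\nCurl$ (using that it annihilates $\ngrad$), and integrate the resulting linear ODE with the integrating factor $e^{\tau}$. The only cosmetic difference is that the paper divides by $e^{\beta\tau}$ before applying $\nCurl$ and absorbs $e^{-\beta\tau}$ into the scalar $e^{-\beta\tau}w\pjac$, whereas you divide by $e^{\beta\tau}$ afterward; and you spell out the verification that $\nCurl\circ\ngrad=0$ on scalars, which the paper simply asserts.
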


\begin{proof}
	To prove this, first write $(\ref{rescaled-euler-w})$ as
	\begin{align}
	\left(\theta_{\tau\tau}+\theta_{\tau}\right)+(1+\alpha)\ngrad{\left(e^{-\beta\tau}w\pjac\right)}=0.\label{curl-equation}
	\end{align}
	The equivalence of $(\ref{rescaled-euler-w})$ and $(\ref{curl-equation})$ comes from the fact that $\zeta_{\tau}=\theta_{\tau}$, an application of the Piola identity, $(\ref{piola-identity-zeta})$, and finally dividing by $e^{\beta\tau}$.
	
 As $\nCurl$ annihilates $\ngrad$, we apply $\nCurl$ to $(\ref{curl-equation})$ to obtain
	\begin{align*}
	\nCurl{\dell_{\tau}\nu}+\nCurl{\nu}=0.
	\end{align*}
	Rewrite this as
	\begin{align*}
	\dell_{\tau}\left(\nCurl{\nu}\right)+\nCurl{\nu}-\comm{\dell_{\tau}}{\nCurl}\nu=0,
	\end{align*}
	where
	\begin{align}
	\left(\comm{\dell}{\nCurl}F\right)^{i}_{j}=\dell\left(\ninv^{s}_{j}\right)\dell_{s}F^{i}-\dell\left(\ninv^{s}_{i}\right)\dell_{s}F^{j}\label{commutator-Curl-dell-tau}
	\end{align}
	for $\dell\in\{\dell_{1},\dell_{2},\dell_{3},\dell_{\tau}\}$.
	
 Multiplying by $e^{\tau}$, we have
	\begin{align*}	
	\dell_{\tau}\left(e^{\tau}\nCurl{\nu}\right)=e^{\tau}\comm{\dell_{\tau}}{\nCurl}\nu.
	\end{align*}
	Integrating in $\tau$, we arrive at
	\begin{align}
	\nCurl{\nu}=e^{-\tau}\nCurl{\nu}(0)+e^{-\tau}\int_{0}^{\tau}e^{\tau'}\comm{\dell_{\tau}}{\nCurl}\nu(\tau')d\tau'.\label{curl-equation-estimate-form-nu}
	\end{align}
\end{proof}

\begin{remark}
	We recall here that as $\zeta=x$ at $\tau=0$, we have $\nCurl\nu(0)=\Curl\nu(0)$.
\end{remark}

Before we move on to estimates, we first define some functions that will play a role in providing sufficient bounds for $\nCurl{\nu}$. Recall the definitions of $\sigma_{1}$ and $\sigma_{2}$ given in $(\ref{sigma-1})$ and $(\ref{sigma-2})$.

\begin{definition}\label{definition-of-G-curl-estimates}
	For $i=1,\dots,6$, define $G_{i}:(0,\infty)\times(0,\infty)\rightarrow\mathbb{R}$ by
	\begin{align}
	&G_{1}(\beta,\tau)=\threepartdef{\tau e^{-2\tau}}{2\sigma_{1}(\beta)>2+\sigma_{2}(\beta)}{\tau^{2} e^{-2\tau}}{2\sigma_{1}(\beta)=2+\sigma_{2}(\beta)}{\tau e^{(\sigma_{2}-2\sigma_{1})\tau}}{2\sigma_{1}(\beta)<2+\sigma_{2}(\beta)},\label{multiple-integrable-factors-G-1}\\
	&G_{2}(\beta,\tau)=\twopartdef{\tau^{2} e^{-2\tau}}{\sigma_{1}(\beta)=2+\sigma_{2}(\beta)}{\tau e^{(\sigma_{2}-\sigma_{1})\tau}}{\sigma_{1}(\beta)<2+\sigma_{2}(\beta)},\label{multiple-integrable-factors-G-2}\\
	&G_{3}(\beta,\tau)=\threepartdef{\tau e^{-2\tau}}{2\beta>2+\sigma_{2}(\beta)}{\tau^{2} e^{-2\tau}}{2\beta=2+\sigma_{2}(\beta)}{\tau e^{(\sigma_{2}-2\beta)\tau}}{2\beta<2+\sigma_{2}(\beta)},\label{multiple-integrable-factors-G-3}\\
	&G_{4}(\beta,\tau)=\threepartdef{\tau e^{-2\tau}}{\sigma_{1}(\beta)>1}{\tau^{2} e^{-2\tau}}{\sigma_{1}(\beta)=1}{\tau e^{-2\sigma_{1}\tau}}{\sigma_{1}(\beta)<1},\label{multiple-integrable-factors-G-4}\\
	&G_{5}(\beta,\tau)=\twopartdef{\tau^{2} e^{-2\tau}}{\sigma_{1}(\beta)=2}{\tau e^{-\sigma_{1}\tau}}{\sigma_{1}(\beta)<2},\label{multiple-integrable-factors-G-5}\\
	&G_{6}(\beta,\tau)=\threepartdef{\tau e^{-2\tau}}{\beta>1}{\tau^{2} e^{-2\tau}}{\beta=1}{\tau e^{-2\beta\tau}}{\beta<1}.\label{multiple-integrable-factors-G-6}
	\end{align}
\end{definition}

The first proposition will lead to estimates on $\nCurl{\nu}$.
\begin{proposition}\label{remainder-Curl-estimates-nu}
	Let $(\nu,\theta)$ be the solution to~\eqref{E:EULERNEW2} on $[0,T]$ in the sense of Theorem $\ref{local-well-posedness-theorem}$, for some $N\geq2\ceil*{\alpha}+12$. Suppose the a priori assumptions $(\ref{a-priori-assumptions})$ hold. Then for all $\tau\in[0,T]$, we have
		\begin{align}
			\sum_{m+|\underline{n}|=0}^{N}\spaceI\psi W^{1+\alpha+m}\pjac\left|\comm{\nCurl}{\Ndell{m}{n}}\nu\right|^{2}dx&+\sum_{|\underline{k}|=0}^{N}\spaceI\bar{\psi} W^{1+\alpha}\pjac\left|\comm{\nCurl}{\nabla^{\underline{k}}}\nu\right|^{2}dx\nonumber\\
			&\lesssim\delta e^{-\sigma_{1}\tau}\mathscr{S}_{N}(\tau)+\delta e^{(\sigma_{2}-\sigma_{1})\tau}\mathscr{S}_{N}(\tau)^{2},\label{Curl-Ndell-commutator-estimate-nu}
		\end{align}
		\begin{align}
			\sum_{m+|\underline{n}|=0}^{N}\spaceI\psi W^{1+\alpha+m}\pjac e^{-2\tau}\left|\Ndell{m}{n}\nCurl{\nu}(0)\right|^{2}dx&+\sum_{|\underline{k}|=0}^{N}\spaceI\bar{\psi} W^{1+\alpha}\pjac e^{-2\tau}\left|\nabla^{\underline{k}}\nCurl{\nu}(0)\right|^{2}dx\nonumber\\
			&\lesssim e^{-2\tau}\left(\mathscr{C}_{N}(0)+\delta \mathscr{S}_{N}(0)\right),\label{Curl-Ndell-commutator-estimate-nu-at-0}
		\end{align}
		\begin{align}
			&\sum_{m+|\underline{n}|=0}^{N}\spaceI\psi W^{1+\alpha+m}\pjac e^{-2\tau}\left|\int_{0}^{\tau}e^{\tau'}\Ndell{m}{n}\comm{\dell_{\tau}}{\nCurl}\nu(\tau')d\tau'\right|^{2}dx\nonumber\\
			&+\sum_{|\underline{k}|=0}^{N}\spaceI\bar{\psi} W^{1+\alpha}\pjac e^{-2\tau}\left|\int_{0}^{\tau}e^{\tau'}\nabla^{\underline{k}}\comm{\dell_{\tau}}{\nCurl}\nu(\tau')d\tau'\right|^{2}dx\nonumber\\
			&\lesssim
			(\delta e^{(\sigma_{2}-\sigma_{1})\tau}+\delta e^{-\sigma_{1}\tau})\mathscr{S}_{N}(\tau)^{2}+\sum_{i=1}^{6}\delta G_{i}(\beta,\tau)\mathscr{S}_{N}(\tau)^{2},\label{Curl-dell-tau-commutator-estimate-nu}
		\end{align}
		with $G_{i}(\beta,\tau)$, $i=1,\dots,6$, defined in $(\ref{multiple-integrable-factors-G-1})-(\ref{multiple-integrable-factors-G-6})$.
\end{proposition}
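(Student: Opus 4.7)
The three estimates bound the three pieces of $\nCurl\nu$ from the Duhamel-type formula in Lemma~\ref{curl-form-of-equation-lemma}, each multiplied by appropriate derivatives and weights. I will handle \eqref{Curl-Ndell-commutator-estimate-nu}, \eqref{Curl-Ndell-commutator-estimate-nu-at-0}, and \eqref{Curl-dell-tau-commutator-estimate-nu} in order, concentrating throughout on the $\psi$-weighted piece near the boundary; the $\bar\psi$-weighted interior piece is strictly easier and handled with identical combinatorics but using rectangular derivatives. The core tools are the schematic Leibniz/chain-rule expansion already used in the proof of Lemma~\ref{lower-order-estimates-lemma}, together with the identities~\eqref{inverse-differentiation-zeta}--\eqref{jacobian-differentiation-zeta} (which convert every derivative landing on $\ninv$ or $\njac$ into a derivative of $\nabla\theta$ or $\nabla\nu$), Lemma~\ref{theta-estimates-lemma} to extract $\delta$-smallness from low-order $\theta$-factors, and Lemma~\ref{theta-Y-norm-theta-X-norm-relation-lemma} to trade $\Y{M}{\ngrad}$ norms of $\theta$ for $\X{N}$ norms whenever $M<N$.

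For~\eqref{Curl-Ndell-commutator-estimate-nu}, expand $\comm{\nCurl}{\Ndell{m}{n}}\nu$ schematically as a finite sum of products of the form $\ninv^{\otimes(p+1)}\,\Ndell{a_{1}}{b_{1}}\nabla\theta\cdots\Ndell{a_{p-1}}{b_{p-1}}\nabla\theta\,\Ndell{c}{d}\nabla\nu$ with $c+|\underline d|\le m+|\underline n|-1$, so that at least one derivative falls on some $\ninv$. In each term I place the top-order derivative either on the $\nabla\nu$-factor or on one of the $\nabla\theta$-factors. In the first case, bound the top-order $\nabla\nu$ by $\|\nu\|_{\X{N}}^{2}\lesssim\delta e^{-\sigma_{1}\tau}\mathscr{S}_{N}(\tau)$ and all remaining $\nabla\theta$-factors in $L^{\infty}$ via Sobolev together with Lemma~\ref{theta-estimates-lemma}; this produces the $\delta e^{-\sigma_{1}\tau}\mathscr{S}_{N}$ term. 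In the second case, bound the top-order $\nabla\theta$-factor by $\|\theta\|_{\Y{N}{\ngrad}}^{2}\lesssim e^{\sigma_{2}\tau}\mathscr{S}_{N}(\tau)$ while controlling a low-order $\nabla\nu$-factor in $L^{\infty}$ at cost $\delta e^{-\sigma_{1}\tau}\mathscr{S}_{N}$, which gives the $\delta e^{(\sigma_{2}-\sigma_{1})\tau}\mathscr{S}_{N}(\tau)^{2}$ contribution. Estimate~\eqref{Curl-Ndell-commutator-estimate-nu-at-0} is then immediate once one notes that $\ninv(0)=I$ so $\nCurl\nu(0)=\Curl\nu(0)$, splits $\Ndell{m}{n}\Curl\nu(0)=\Curl\Ndell{m}{n}\nu(0)+\comm{\Ndell{m}{n}}{\Curl}\nu(0)$, and controls the first summand by $\mathscr{C}_{N}(0)$ and the second by the commutator argument just used, now at $\tau=0$, producing the $\delta\,\mathscr{S}_{N}(0)$ correction.

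For~\eqref{Curl-dell-tau-commutator-estimate-nu}, first freeze $\pjac(\tau)$ in $L^{\infty}$ using~\eqref{a-priori-assumptions} and apply Minkowski's integral inequality to reduce matters to estimating $\|\Ndell{m}{n}\comm{\dell_{\tau}}{\nCurl}\nu(\tau')\|_{L^{2}(\psi W^{1+\alpha+m})}$ pointwise in $\tau'$ and then carrying out an elementary time integral. Combining~\eqref{commutator-Curl-dell-tau} with the identity $\dell_{\tau}\ninv=-\ninv\cdot\nabla\nu\cdot\ninv$ from~\eqref{inverse-differentiation-zeta}, the commutator $\comm{\dell_{\tau}}{\nCurl}\nu$ is schematically $\ninv\cdot\nabla\nu\cdot\ninv\cdot\nabla\nu$, a quadratic nonlinearity in $\nabla\nu$; applying $\Ndell{m}{n}$ and expanding by Leibniz while using~\eqref{inverse-differentiation-zeta} to convert each derivative on an $\ninv$-factor into a $\nabla\theta$-factor gives a finite collection of products of two $\nabla\nu$'s with additional $\nabla\theta$-factors. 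Following the same high-low distribution scheme as in the previous paragraph, each resulting $L^{2}$ norm is bounded pointwise in $\tau'$ by the square root of one of $\delta e^{-\sigma_{1}\tau'}\mathscr{S}_{N}$, $\delta e^{(\sigma_{2}-\sigma_{1})\tau'}\mathscr{S}_{N}^{2}$, $\delta^{2}e^{-2\sigma_{1}\tau'}\mathscr{S}_{N}^{2}$, $\delta e^{(\sigma_{2}-2\beta)\tau'}\mathscr{S}_{N}^{2}$, and a few further such combinations. Reinserting each bound into $e^{-2\tau}\bigl(\int_{0}^{\tau}e^{\tau'}\sqrt{\cdots}\,d\tau'\bigr)^{2}$ and evaluating the resulting elementary time integrals yields precisely the six functions $G_{i}(\beta,\tau)$ of Definition~\ref{definition-of-G-curl-estimates}, the trichotomy in each $G_{i}$ reflecting whether the net $\tau'$-exponent in the integrand is strictly negative, zero, or strictly positive.

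The main obstacle is the bookkeeping in~\eqref{Curl-dell-tau-commutator-estimate-nu}: there are many admissible ways to distribute the $m+|\underline n|$ derivatives in $\Ndell{m}{n}$ across the two $\nabla\nu$-factors and the accumulated $\ninv$'s, and each distribution produces a different $\tau'$-exponential and a different power of $\delta$. One must verify case by case that the elementary time integral of every such integrand falls into one of the six families enumerated in Definition~\ref{definition-of-G-curl-estimates}, that the borderline (resonant) cases where $e^{\tau'}$ exactly cancels the decay inside the square root are correctly captured by the polynomial $\tau$ or $\tau^{2}$ factors in~\eqref{multiple-integrable-factors-G-1}--\eqref{multiple-integrable-factors-G-6}, and that the cross-terms in which one factor is top-order $\nabla\nu$ and the other is top-order $\nabla\theta$ produce no strictly worse time-weight than the $G_{i}$ on the list. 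Keeping the two-factor structure of $\comm{\dell_{\tau}}{\nCurl}\nu$ explicit throughout, rather than absorbing one factor into a Sobolev constant too early, is essential for this accounting.
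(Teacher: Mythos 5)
Your treatment of \eqref{Curl-Ndell-commutator-estimate-nu} and \eqref{Curl-Ndell-commutator-estimate-nu-at-0} matches the paper's in both decomposition and high--low accounting, and those parts are fine. The gap is in \eqref{Curl-dell-tau-commutator-estimate-nu}. You propose to freeze $\pjac$ and apply Minkowski's inequality so as to estimate $\|\Ndell{m}{n}\comm{\partial_\tau}{\nCurl}\nu(\tau')\|_{L^2(\psi W^{1+\alpha+m})}$ \emph{pointwise in} $\tau'$, then do an elementary time integral. But $\comm{\partial_\tau}{\nCurl}\nu$ is schematically $\ninv\cdot\nabla\nu\cdot\ninv\cdot\nabla\nu$, and when $\Ndell{m}{n}$ with $m+|\underline n|=N$ falls entirely on one of the two $\nabla\nu$ factors you get $\nabla\Ndell{m}{n}\nu$ with the weight $W^{(1+\alpha+m)/2}\pjac^{1/2}$. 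This is $N+1$ derivatives of $\nu$: its weighted $L^2$ norm is precisely the quantity $\|\nu\|_{\Y{N}{\ngrad}}$, which is \emph{not} part of either $\mathscr{S}_N$ or $\mathscr{C}_N$. So the pointwise-in-$\tau'$ bound you rely on simply does not exist for that term, and the proposed route cannot close.

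The paper handles exactly this term (called $\mathcal B$ in \eqref{nu-Ndell-tau-commutator-expansion-1}) by writing $\ninv^k_j\ninv^s_l\Ndell{m}{n}(\partial_k\nu^l)\partial_s\nu^i$ as $\partial_\tau\bigl(\ninv^k_j\ninv^s_l\Ndell{m}{n}(\partial_k\theta^l)\partial_s\nu^i\bigr)$ minus correction terms, i.e.\ it trades the uncontrollable $\nabla\Ndell{m}{n}\nu$ for $\nabla\Ndell{m}{n}\theta$, which \emph{is} in $\Y{N}{\ngrad}$, at the cost of a total $\partial_\tau$, a term with $\partial_\tau(\ninv\ninv)$, and a term with $\nu_\tau$ (which is then re-expressed via the equation \eqref{rescaled-euler-W}). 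The total $\partial_\tau$ is then killed by integrating by parts inside $\int_0^\tau e^{\tau'}(\cdot)\,d\tau'$ (see \eqref{nu-Ndell-tau-commutator-estimate-2}), and it is exactly this boundary-in-time term plus the $\nu_\tau$-substitution that produces the functions $G_2$ and $G_3$ (and similarly $G_5$, $G_6$ for the commutator correction blocks), rather than anything obtainable from an ``elementary'' time integral of a pointwise-in-$\tau'$ bound. You would need to add this time-integration-by-parts step, and separately to verify that writing $\nu_\tau$ in terms of $\nu$ and $W\partial_k(\ninv\pjac)$ via \eqref{rescaled-euler-W} does not reintroduce a top-order $\nu$-derivative, before your proof scheme is complete.
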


\begin{proof}We prove the three statements in the proposition separately.\\

	\noindent\textbf{Proof of $(\ref{Curl-Ndell-commutator-estimate-nu})$:} If $m+|\underline{n}|$ or $|\underline{k}|$ are $0$, then the commutators in the integrands on the left hand side of (\ref{Curl-Ndell-commutator-estimate-nu}) are $0$. Therefore we can assume in this proof that both of these quantities are $\geq1$.
	Componentwise, we write 
	\begin{align*}
	\left(\comm{\nCurl}{\Ndell{m}{n}}\nu\right)^{i}_{j}&=\Ndell{m}{n}\left(\ninv^{s}_{j}\dell_{s}\nu^{i}-\ninv^{s}_{i}\dell_{s}\nu^{j}\right)-\left(\ninv^{s}_{j}\dell_{s}\Ndell{m}{n}\nu^{i}-\ninv^{s}_{i}\dell_{s}\Ndell{m}{n}\nu^{j}\right)\nonumber\\
	&=\underbrace{\left[\Ndell{m}{n}\left(\ninv^{s}_{j}\dell_{s}\nu^{i}\right)-\ninv^{s}_{j}\dell_{s}\Ndell{m}{n}\nu^{i}\right]}_{\mathcal{A}}-\underbrace{\left[\Ndell{m}{n}\left(\ninv^{s}_{i}\dell_{s}\nu^{j}\right)-\ninv^{s}_{i}\dell_{s}\Ndell{m}{n}\nu^{j}\right]}_{\mathcal{B}},
	\end{align*}
	recalling the definition of $\comm{\ngrad}{\Ndell{m}{n}}$ given in $(\ref{commutator-ngrad-Ndell})$.
	
	 It is sufficient to concentrate on bounding $\mathcal{A}$. The strategy for bounding $\mathcal{B}$ is identical, due to the fact that the distribution of derivatives is exactly the same. First we write
	\begin{align}
	&\Ndell{m}{n}\left(\ninv^{s}_{j}\dell_{s}\nu^{i}\right)-\ninv^{s}_{j}\dell_{s}\Ndell{m}{n}\nu^{i}=\sum_{\substack{a+c=m\\ \underline{b}+\underline{d}=\underline{n}}}\mathscr{L}(a,\underline{b},c,\underline{d})\Ndell{a}{b}\left(\ninv^{s}_{j}\right)\Ndell{c}{d}\left(\dell_{s}\nu^{i}\right)-\ninv^{s}_{j}\dell_{s}\Ndell{m}{n}\nu^{i}\nonumber\\
	&=\sum_{\substack{a+c=m\\ \underline{b}+\underline{d}=\underline{n}\\1\leq a+|\underline{b}|<m+|\underline{n}|}}\mathscr{L}(a,\underline{b},c,\underline{d})\Ndell{a}{b}\left(\ninv^{s}_{j}\right)\Ndell{c}{d}\left(\dell_{s}\nu^{i}\right)+\Ndell{m}{n}\left(\ninv^{s}_{j}\right)\dell_{s}\nu^{i}+\ninv^{s}_{j}\comm{\Ndell{m}{n}}{\dell_{s}}\nu^{i},\label{nu-Ndell-curl-commutator-term-expansion}
	\end{align}
	where $\mathscr{L}$ are the coefficients of expansion. For $\Ndell{m}{n}\left(\ninv^{s}_{j}\right)\dell_{s}\nu^{i}$, we have the bound
	\begin{align*}
	\spaceI\psi W^{1+\alpha+m}\pjac\left|\Ndell{m}{n}\ninv^{s}_{j}\right|^{2}\left|\dell_{s}\nu^{i}\right|^{2}dx\lesssim\spaceI\psi W^{1+\alpha+m}\pjac\left|\Ndell{m}{n}\ninv\right|^{2}\left|\nabla\nu\right|^{2}dx. 
	\end{align*}
	For the $\nabla\nu$ term, we first use $(\ref{rectangular-as-ang-rad})$ to convert $\nabla$ in to a sum of radial and angular derivatives, after which Lemma \ref{L-infinity-energy-space-bound-no-weights} gives
	\begin{align}
	\left\|\nabla\nu\right\|^{2}_{L^{\infty}\left(\supp{\psi}\right)}\lesssim\left\|\nu\right\|_{\X{N}}^{2}\lesssim\delta e^{-\sigma_{1}\tau}\mathscr{S}_{N}(\tau).\label{Dnu-estimate}
	\end{align}
	Then we have
	\begin{align}
	\spaceI\psi W^{1+\alpha+m}\pjac\left|\Ndell{m}{n}\ninv\right|^{2}\left|\nabla\nu\right|^{2}dx\lesssim\delta e^{-\sigma_{1}\tau}\mathscr{S}_{N}(\tau)\underbrace{\spaceI\psi W^{1+\alpha+m}\pjac\left|\Ndell{m}{n}\ninv\right|^{2}dx}_{\mathcal{I}}\lesssim\delta e^{(\sigma_{2}-\sigma_{1})\tau}\mathscr{S}_{N}(\tau)^{2},\label{Ndell-ninv-estimate}
	\end{align}
	where we employ an analogous argument to the one used in Lemma \ref{lower-order-estimates-lemma} to bound $\mathcal{I}$ and go from the second inequality to the third.
	
 The strategy for bounding the sum of lower order terms in $(\ref{nu-Ndell-curl-commutator-term-expansion})$ is also adapted from Lemma \ref{lower-order-estimates-lemma}. 
	\begin{align}
	\spaceI\psi W^{1+\alpha+m}\pjac\left|\sum_{\substack{a+c=m\\ \underline{b}+\underline{d}=\underline{n}\\1\leq a+|\underline{b}|<m+|\underline{n}|}}\mathscr{L}(a,\underline{b},c,\underline{d})\Ndell{a}{b}\left(\ninv^{s}_{j}\right)\Ndell{c}{d}\left(\dell_{s}\nu^{i}\right)\right|^{2}dx\lesssim\delta e^{(\sigma_{2}-\sigma_{1})\tau}\mathscr{S}_{N}(\tau)^{2}+\delta e^{-\sigma_{1}\tau}\mathscr{S}_{N}(\tau)^{2}.\label{lower-order-sum-nu-Ndell-Curl-commutator-estimate}
	\end{align}
	The extra decay comes from the fact that we are bounding $\nabla\nu$ terms as well as $\nabla\theta$ terms.
	
	 The remaining term from $(\ref{nu-Ndell-curl-commutator-term-expansion})$ is $\ninv^{s}_{j}\comm{\Ndell{m}{n}}{\dell_{s}}\nu^{i}$. We apply Lemma \ref{higher-order-commutator-identities} to expand the commutator, and  obtain
	\begin{align}
	\sum_{s=1}^{3}\spaceI\psi W^{1+\alpha+m}\pjac\left|\ninv\right|\left|\comm{\Ndell{m}{n}}{\dell_{s}}\nu\right|^{2}dx\lesssim\delta e^{-\sigma_{1}\tau}\mathscr{S}_{N}(\tau).\label{lower-order-nu-Ndell-Curl-commutator-estimate}
	\end{align}
	Combining $(\ref{Ndell-ninv-estimate})$, $(\ref{lower-order-sum-nu-Ndell-Curl-commutator-estimate})$, and $(\ref{lower-order-nu-Ndell-Curl-commutator-estimate})$, along with analogous estimates on $\supp{\bar{\psi}}$, gives $(\ref{Curl-Ndell-commutator-estimate-nu})$.\\
	
	\noindent\textbf{Proof of (\ref{Curl-Ndell-commutator-estimate-nu-at-0}).}
	Similarly to the proof of $(\ref{Curl-Ndell-commutator-estimate-nu})$, we make use of the fact that
	\begin{align*}
	\Ndell{m}{n}\nCurl{\nu}(0)=\nCurl{\Ndell{m}{n}\nu}(0)+\comm{\Ndell{m}{n}}{\nCurl}\nu(0).
	\end{align*}
	The lower order commutator term is controlled by the estimates above for $(\ref{Curl-Ndell-commutator-estimate-nu})$. The top order $\nCurl{\Ndell{m}{n}\theta}$ term requires $\mathscr{C}_{N}$ to control it. Therefore, we have that 
	\begin{align}
	\spaceI\psi W^{1+\alpha+m}\pjac e^{-2\tau}\left|\Ndell{m}{n}\nCurl{\nu}(0)\right|^{2}dx\lesssim e^{-2\tau}\left(\mathscr{C}_{N}(0)+\delta \mathscr{S}_{N}(0)\right).\label{nu-Ndell-curl-commutator-estimate-2}
	\end{align}
	Once again, estimates on $\supp{\bar{\psi}}$ are analogous, so we have (\ref{Curl-Ndell-commutator-estimate-nu-at-0}).\\

\noindent \textbf{Proof of (\ref{Curl-dell-tau-commutator-estimate-nu}).} Finally we look at
\begin{align*}
    \spaceI\psi W^{1+\alpha+m}\pjac e^{-2\tau}\left|\int_{0}^{\tau}e^{\tau'}\Ndell{m}{n}\comm{\dell_{\tau}}{\nCurl}\nu(\tau')d\tau'\right|^{2}dx.
\end{align*}
Recall that
\begin{align*}
    \left(\Ndell{m}{n}\comm{\dell_{\tau}}{\nCurl}\nu\right)^{i}_{j}=\Ndell{m}{n}\left(\dell_{\tau}\left(\ninv^{s}_{j}\right)\dell_{s}\nu^{i}\right)-\Ndell{m}{n}\left(\dell_{\tau}\left(\ninv^{s}_{i}\right)\dell_{s}\nu^{j}\right).
\end{align*}
On the right hand side, both terms have the same distribution of derivatives, as in (\ref{Curl-Ndell-commutator-estimate-nu}), so we focus on $\Ndell{m}{n}\left(\left(\dell_{\tau}\ninv^{s}_{j}\right)\dell_{s}\nu^{i}\right)$. First, using $(\ref{inverse-differentiation-zeta})$, we write this term as
\begin{align*}
    \Ndell{m}{n}\left(\dell_{\tau}\left(\ninv^{s}_{j}\right)\dell_{s}\nu^{i}\right)=-\Ndell{m}{n}\left(\ninv^{k}_{j}\ninv^{s}_{l}\dell_{k}\nu^{l}\dell_{s}\nu^{i}\right).
\end{align*}
When $m+|\underline{n}|=0$ we have the bound
	\begin{align}
	&\spaceI\psi W^{1+\alpha+m}\pjac e^{-2\tau}\left(\int_{0}^{\tau}e^{\tau'}\left|\ninv^{k}_{j}\ninv^{s}_{l}\dell_{k}\nu^{l}\dell_{k}\nu^{s}\right|d\tau'\right)^{2}dx\nonumber\\
	&\lesssim\tau e^{-2\tau}\spaceI\psi W^{1+\alpha+m}\pjac\left(\int_{0}^{\tau}e^{2\tau'}\left|\ninv\right|^{4}\left|\nabla\nu\right|^{4}d\tau'\right)dx\nonumber\\
	&\lesssim\delta^{2}\tau e^{-2\tau}\mathscr{S}_{N}(\tau)^{2}\left(\int_{0}^{\tau}e^{(2-2\sigma_{1})\tau'}d\tau'\right)\lesssim\delta G_{4}(\beta,\tau)\mathscr{S}_{N}(\tau)^{2}.\label{nu-Ndell-tau-commutator-estimate-zero-order}
	\end{align}
	The first inequality comes from Cauchy-Schwarz inequality on the $\tau$ integral. The second comes from bounding $\ninv$, $\njac$, and $\nabla\nu$ in $L^{\infty}$, using the a priori assumptions (\ref{a-priori-assumptions}) for the first two, and (\ref{Dnu-estimate}) on the last, as well as noting the integral of $\psi W^{1+\alpha+m}$ over $\Omega$ is finite. The last inequality is by definition of $G_{4}$.

Thus from now we can assume $m+|\underline{n}|\geq1$. We then expand the expression:
\begin{align}
    \Ndell{m}{n}\left(\ninv^{k}_{j}\ninv^{s}_{l}\dell_{k}\nu^{l}\dell_{s}\nu^{i}\right)&=\Ndell{m}{n}\left(\ninv^{k}_{j}\right)\ninv^{s}_{l}\dell_{k}\nu^{l}\dell_{s}\nu^{i}+\ninv^{k}_{j}\Ndell{m}{n}\left(\ninv^{s}_{l}\right)\dell_{k}\nu^{l}\dell_{s}\nu^{i}\nonumber\\
    &+\ninv^{k}_{j}\ninv^{s}_{l}\Ndell{m}{n}\left(\dell_{k}\nu^{l}\right)\dell_{s}\nu^{i}+\ninv^{k}_{j}\ninv^{s}_{l}\dell_{k}\nu^{l}\Ndell{m}{n}\left(\dell_{s}\nu^{i}\right)\nonumber\\
    &+\sum_{\substack{a+c+e+g=m\\ \underline{b}+\underline{d}+\underline{f}+\underline{h}\\1\leq g+|\underline{h}|<m+|\underline{n}|}}\mathscr{L}(a,\underline{b},c,\underline{d},e,\underline{f},g,\underline{h})\Ndell{a}{b}\left(\ninv^{k}_{j}\right)\Ndell{c}{d}\left(\ninv^{s}_{l}\right)\Ndell{e}{f}\left(\dell_{k}\nu^{l}\right)\Ndell{g}{h}\left(\dell_{s}\nu^{i}\right).\label{nu-Ndell-tau-commutator-expansion-1}
\end{align}
The first four terms on the right hand side are top order. We look at the first and third of these terms, as the estimation strategies for the other two are analogous. We are left to consider
\begin{align*}
    \underbrace{\Ndell{m}{n}\left(\ninv^{k}_{j}\right)\ninv^{s}_{l}\dell_{k}\nu^{l}\dell_{s}\nu^{i}}_{\mathcal{A}}+\underbrace{\ninv^{k}_{j}\ninv^{s}_{l}\Ndell{m}{n}\left(\dell_{k}\nu^{l}\right)\dell_{s}\nu^{i}}_{\mathcal{B}}.
\end{align*}
The term $\mathcal{A}$ gives
\begin{align}
	&\spaceI\psi W^{1+\alpha+m}\pjac e^{-2\tau}\left(\int_{0}^{\tau}e^{\tau'}\left|\Ndell{m}{n}\left(\ninv^{k}_{j}\right)\ninv^{s}_{l}\dell_{k}\nu^{l}\dell_{s}\nu^{i}\right|d\tau'\right)^{2}dx\nonumber\\
    &\lesssim\spaceI\psi W^{1+\alpha+m}\pjac e^{-2\tau}\left(\int_{0}^{\tau}e^{\tau'}\left|\ninv\right|\left|\Ndell{m}{n}\ninv\right|\left|\nabla\nu\right|^{2}d\tau'\right)^{2}dx\nonumber\\
    &\lesssim \tau e^{-2\tau}\spaceI\psi W^{1+\alpha+m}\pjac\left(\sup_{0\leq\tau'\leq\tau}e^{-\sigma_{2}\tau'}\left|\Ndell{m}{n}\ninv\right|^{2}\right)\left(\int_{0}^{\tau}e^{(2+\sigma_{2})\tau'}\left|\nabla\nu\right|^{4}d\tau'\right)dx\nonumber\\
    &\lesssim \delta^{2}\tau e^{-2\tau}\mathscr{S}_{N}(\tau)^{2}\left(\int_{0}^{\tau}e^{(2+\sigma_{2}-2\sigma_{1})\tau'}d\tau'\right)\underbrace{\left(\sup_{0\leq\tau'\leq\tau}e^{-\sigma_{2}\tau'}\spaceI\psi W^{1+\alpha+m}\pjac\left|\Ndell{m}{n}\ninv\right|^{2}dx\right)}_{\mathcal{A}_{1}}.\label{nu-Ndell-tau-commutator-estimate-1a}
\end{align}
To go from the first line to the second, we employ Cauchy-Schwarz on the $\tau$ integral, along with a priori assumptions from $(\ref{a-priori-assumptions})$ for the $\ninv$ terms. From the second to third line we use (\ref{Dnu-estimate}). To go from the third to fourth line, we make use of the a priori assumptions $(\ref{a-priori-assumptions})$, to estimate $\pjac$ (which is of order $1$) by its  supremum over $[0,\tau]$.
Then we use an argument in the style of~Lemma \ref{lower-order-estimates-lemma} to handle $\mathcal{A}_{1}$, and  we obtain
\begin{align}
\left(\sup_{0\leq\tau'\leq\tau}e^{-\sigma_{2}\tau'}\spaceI\psi W^{1+\alpha+m}\pjac\left|\Ndell{m}{n}\ninv\right|^{2}dx\right)\lesssim\left(\sup_{0\leq\tau'\leq\tau}e^{-\sigma_{2}\tau'}\delta e^{\sigma_{2}\tau'}\mathscr{S}_{N}(\tau')\right)\lesssim\mathscr{S}_{N}(\tau),\label{nu-Ndell-tau-commutator-estimate-1b}
\end{align}
where we have modified the bound from Lemma \ref{lower-order-estimates-lemma}, as $e^{\sigma_{2}\tau'}\geq1$. Combining $(\ref{nu-Ndell-tau-commutator-estimate-1a})$ and $(\ref{nu-Ndell-tau-commutator-estimate-1b})$ we get
\begin{align}
\spaceI\psi W^{1+\alpha+m}\pjac e^{-2\tau}\left(\int_{0}^{\tau}e^{\tau'}\left|\Ndell{m}{n}\left(\ninv^{k}_{j}\right)\ninv^{s}_{l}\dell_{k}\nu^{l}\dell_{s}\nu^{i}\right|d\tau'\right)^{2}dx\lesssim\delta G_{1}(\beta,\tau)\mathscr{S}_{N}(\tau)^{2},\label{nu-Ndell-tau-commutator-estimate-1c}
\end{align}
with $G_{1}$ defined in $(\ref{multiple-integrable-factors-G-1})$, and any extra powers of $\delta$ and $\mathscr{S}_{N}$ bounded by a constant.

For the third term on the right hand side in $(\ref{nu-Ndell-tau-commutator-expansion-1})$, $\mathcal{B}$, there are too many derivatives on $\dell_{k}\nu^{l}$, so we write
\begin{align*}
    \ninv^{k}_{j}\ninv^{s}_{l}\Ndell{m}{n}\left(\dell_{k}\nu^{l}\right)\dell_{s}\nu^{i}&=\dell_{\tau}\left(\ninv^{k}_{j}\ninv^{s}_{l}\Ndell{m}{n}\left(\dell_{k}\theta^{l}\right)\dell_{s}\nu^{i}\right)\nonumber\\
    &-\dell_{\tau}\left(\ninv^{k}_{j}\ninv^{s}_{l}\right)\Ndell{m}{n}\left(\dell_{k}\theta^{l}\right)\dell_{s}\nu^{i}-\ninv^{k}_{j}\ninv^{s}_{l}\Ndell{m}{n}\left(\dell_{k}\theta^{l}\right)\dell_{s}\nu_{\tau}^{i},
\end{align*}
and commuting $\dell_{k}$ with $\Ndell{m}{n}$, we get
\begin{align}
    \ninv^{k}_{j}\ninv^{s}_{l}\Ndell{m}{n}\left(\dell_{k}\nu^{l}\right)\dell_{s}\nu^{i}&=\dell_{\tau}\left(\ninv^{k}_{j}\ninv^{s}_{l}\dell_{k}\Ndell{m}{n}\theta^{l}\dell_{s}\nu^{i}\right)-\dell_{\tau}\left(\ninv^{k}_{j}\ninv^{s}_{l}\right)\dell_{k}\Ndell{m}{n}\theta^{l}\dell_{s}\nu^{i}\nonumber\\
    &-\ninv^{k}_{j}\ninv^{s}_{l}\dell_{k}\Ndell{m}{n}\theta^{l}\dell_{s}\nu_{\tau}^{i}+\dell_{\tau}\left(\ninv^{k}_{j}\ninv^{s}_{l}\comm{\Ndell{m}{n}}{\dell_{k}}\theta^{l}\dell_{s}\nu^{i}\right)\nonumber\\
    &-\dell_{\tau}\left(\ninv^{k}_{j}\ninv^{s}_{l}\right)\comm{\Ndell{m}{n}}{\dell_{k}}\theta^{l}\dell_{s}\nu^{i}-\ninv^{k}_{j}\ninv^{s}_{l}\comm{\Ndell{m}{n}}{\dell_{k}}\theta^{l}\dell_{s}\nu_{\tau}^{i}.\label{nu-Ndell-tau-commutator-expansion-2}
\end{align}
The first term we can write as $\dell_{\tau}\left(\ninv^{s}_{l}\left[\ngrad{\Ndell{m}{n}\theta}\right]^{l}_{j}\dell_{s}\nu^{i}\right)$, and we estimate
\begin{align}
	&\spaceI\psi W^{1+\alpha+m}\pjac e^{-2\tau}\left|\int_{0}^{\tau}e^{\tau'}\dell_{\tau}\left(\ninv^{s}_{l}\left[\ngrad{\Ndell{m}{n}\theta}\right]^{l}_{j}\dell_{s}\nu^{i}\right)d\tau'\right|^{2}\nonumber\\
    &=\spaceI\psi W^{1+\alpha+m}\pjac e^{-2\tau}\left|\left[e^{\tau'}\ninv^{s}_{l}\left[\ngrad{\Ndell{m}{n}\theta}\right]^{l}_{j}\dell_{s}\nu^{i}\right]^{\tau}_{0}-\int_{0}^{\tau}e^{\tau'}\ninv^{s}_{l}\left[\ngrad{\Ndell{m}{n}\theta}\right]^{l}_{j}\dell_{s}\nu^{i}(\tau')d\tau'\right|^{2}dx\nonumber\\
    &\lesssim\spaceI\psi W^{1+\alpha+m}\pjac\left(\left|\ninv\right|^{2}\left|\ngrad{\Ndell{m}{n}\theta}\right|^{2}\left|\nabla\nu\right|^{2}+e^{-2\tau}\left|\ngrad{\Ndell{m}{n}\theta}(0)\right|^{2}\left|\nabla\nu(0)\right|^{2}\right)dx\nonumber\\
    &+\spaceI\psi W^{1+\alpha+m}\pjac e^{-2\tau}\left(\int_{0}^{\tau}e^{\tau'}\left|\ninv\right|\left|\ngrad{\Ndell{m}{n}\theta}\right|\left|\nabla\nu\right|d\tau'\right)^{2}dx\nonumber\\
    &\lesssim \delta e^{(\sigma_{2}-\sigma_{1})\tau}\mathscr{S}_{N}(\tau)^{2}
    +\delta G_{2}(\beta,\tau)\mathscr{S}_{N}(\tau)^{2},\label{nu-Ndell-tau-commutator-estimate-2}
\end{align}
where $G_{2}$ is defined in $(\ref{multiple-integrable-factors-G-2})$. Note that the $\tau=0$ term does not contribute to the estimate as $\theta=0$ initially.

For the second term on the right hand side of $(\ref{nu-Ndell-tau-commutator-expansion-2})$, we first employ (\ref{jacobian-differentiation-zeta}) which gives
\begin{align*}
    \dell_{\tau}\left(\ninv^{k}_{j}\ninv^{s}_{l}\right)\dell_{k}\Ndell{m}{n}\theta^{l}\dell_{s}\nu^{i}=-\ninv^{s}_{r}\ninv^{p}_{l}\left[\ngrad{\Ndell{m}{n}\theta}\right]^{l}_{j}\dell_{s}\nu^{i}\dell_{p}\nu^{r}-\ninv^{s}_{l}\ninv^{p}_{j}\left[\ngrad{\Ndell{m}{n}\theta}\right]^{l}_{r}\dell_{s}\nu^{i}\dell_{p}\nu^{r},
\end{align*}
and we can control both of these terms by the following estimate:
\begin{align}
    \spaceI\psi W^{1+\alpha+m}\pjac e^{-2\tau}\left|\int_{0}^{\tau}e^{\tau'}\left|\ninv\right|^{2}\left|\ngrad{\Ndell{m}{n}\theta}\right|\left|\nabla\nu\right|^{2}d\tau'\right|^{2}dx\lesssim\delta^{2}G_{1}(\beta,\tau)\mathscr{S}_{N}(\tau)^{3}\lesssim\delta G_{1}(\beta,\tau)\mathscr{S}_{N}(\tau)^{2}.\label{nu-Ndell-tau-commutator-estimate-3}
\end{align}
For the third term, $\dell_{s}\nu_{\tau}^{i}$ has too many $\tau$ derivatives to be in our energy space. We use (\ref{rescaled-euler-W}) to relate $\nu_{\tau}$ to lower order terms. We rearrange equation $(\ref{rescaled-euler-W})$ to obtain
\begin{align*}
    \nu_{\tau}^{i}=-\nu^{i}-\delta e^{-\beta\tau}\left(W\dell_{k}\left(\ninv^{k}_{i}\pjac\right)+(1+\alpha)\ninv^{k}_{i}\pjac\right),
\end{align*}
and write the third term on the right hand side of (\ref{nu-Ndell-tau-commutator-expansion-2}) as
\begin{align*}
    -\ninv^{s}_{l}\left[\ngrad{\Ndell{m}{n}\theta}\right]^{l}_{j}\dell_{s}\nu_{\tau}^{i}=\ninv^{s}_{l}\left[\ngrad{\Ndell{m}{n}\theta}\right]^{l}_{j}\dell_{s}\nu^{i}&+\delta e^{-\beta\tau}\ninv^{s}_{l}\left[\ngrad{\Ndell{m}{n}\theta}\right]^{l}_{j}\dell_{s}\left(W\dell_{k}\left(\ninv^{k}_{i}\pjac\right)\right)\nonumber\\
    &+\delta(1+\alpha)e^{-\beta\tau}\ninv^{s}_{l}\left[\ngrad{\Ndell{m}{n}\theta}\right]^{l}_{j}\dell_{s}\left(\ninv^{k}_{i}\pjac\right).
\end{align*}
These can all be estimated using methods shown previously, as the terms that come from rewriting $\nu_{\tau}^{i}$ are of low order. We have
\begin{align}
    \spaceI\psi W^{1+\alpha+m}\pjac e^{-2\tau}\left|\int_{0}^{\tau}e^{\tau'}\ninv^{s}_{l}\left[\ngrad{\Ndell{m}{n}\theta}\right]^{l}_{j}\dell_{s}\nu_{\tau}^{i}d\tau'\right|^{2}dx\lesssim\delta G_{2}(\beta,\tau)\mathscr{S}_{N}(\tau)^{2}+\delta G_{3}(\beta,\tau)\mathscr{S}_{N}(\tau)^{2},\label{nu-Ndell-tau-commutator-estimate-4}
\end{align}
where $G_{3}$ is defined in $(\ref{multiple-integrable-factors-G-3})$.

For the last three terms on the right hand side of (\ref{nu-Ndell-tau-commutator-expansion-2}) we first use Lemma \ref{higher-order-commutator-identities} to expand the commutator terms, which can then all be controlled by $\left\|\theta\right\|_{\X{N}}$. Employing Lemma $\ref{theta-estimates-lemma}$, we have the bounds
\begin{align}
	&\spaceI\psi W^{1+\alpha+m}\pjac e^{-2\tau}\left|\int_{0}^{\tau}e^{\tau'}\dell_{\tau}\left(\ninv^{k}_{j}\ninv^{s}_{l}\comm{\Ndell{m}{n}}{\dell_{k}}\theta^{l}\dell_{s}\nu^{i}\right)d\tau'\right|^{2}dx\lesssim \delta e^{-\sigma_{1}\tau}\mathscr{S}_{N}(\tau)^{2}
	+\delta G_{5}(\beta,\tau)\mathscr{S}_{N}(\tau)^{2},\nonumber\\
	&\spaceI\psi W^{1+\alpha+m}\pjac e^{-2\tau}\left|\int_{0}^{\tau}e^{\tau'}\dell_{\tau}\left(\ninv^{k}_{j}\ninv^{s}_{l}\right)\comm{\Ndell{m}{n}}{\dell_{k}}\theta^{l}\dell_{s}\nu^{i}d\tau'\right|^{2}dx\lesssim \delta G_{4}(\beta,\tau)\mathscr{S}_{N}(\tau)^{2},\nonumber\\
	&\spaceI\psi W^{1+\alpha+m}\pjac e^{-2\tau}\left|\int_{0}^{\tau}e^{\tau'}\ninv^{k}_{j}\ninv^{s}_{l}\comm{\Ndell{m}{n}}{\dell_{k}}\theta^{l}\dell_{s}\nu_{\tau}^{i}d\tau'\right|^{2}dx\lesssim \delta G_{5}(\beta,\tau)\mathscr{S}_{N}(\tau)^{2}+\delta G_{6}(\beta,\tau)\mathscr{S}_{N}(\tau)^{2},\label{nu-Ndell-tau-commutator-estimate-5}
\end{align}
with $G_{5}$ and $G_{6}$ defined in $(\ref{multiple-integrable-factors-G-5})$ and $(\ref{multiple-integrable-factors-G-6})$.

Finally, we bound the last term on the right hand side of $(\ref{nu-Ndell-tau-commutator-expansion-1})$, the sum of lower order terms. This term is bounded analogously to $(\ref{nu-Ndell-tau-commutator-estimate-1c})$, and once again uses an argument adapted from Lemma \ref{lower-order-estimates-lemma}. If we denote the sum $\mathcal{L}$, we have the bound
\begin{align}
\spaceI\psi W^{1+\alpha+m}\pjac e^{-2\tau}\left|\int_{0}^{\tau}e^{\tau'}\mathcal{L}\ d\tau'\right|^{2}dx\lesssim\delta G_{1}(\beta,\tau)\mathscr{S}_{N}(\tau)^{2}+\delta G_{4}(\beta,\tau)\mathscr{S}_{N}(\tau)^{2}.\label{lower-order-sum-Curl-tau-commutator-estimate}
\end{align}
Bounds $(\ref{nu-Ndell-tau-commutator-estimate-1c})$, and $(\ref{nu-Ndell-tau-commutator-estimate-2})-(\ref{lower-order-sum-Curl-tau-commutator-estimate})$, along with analogous estimates on $\supp{\bar{\psi}}$ give $(\ref{Curl-dell-tau-commutator-estimate-nu})$, which completes the proof of Proposition $\ref{remainder-Curl-estimates-nu}$.
\end{proof}

Proposition \ref{remainder-Curl-estimates-nu} is used to prove the main estimate for $\nCurl{\nu}$.

\begin{theorem}\label{Curl-estimates-theorem-nu}
	Let $(\nu,\theta)$ be the solution to~\eqref{E:EULERNEW2} on $[0,T]$ in the sense of Theorem $\ref{local-well-posedness-theorem}$, for some $N\geq2\ceil*{\alpha}+12$. Suppose the a priori assumptions $(\ref{a-priori-assumptions})$ hold. Then for all $\tau\in[0,T]$, we have
	\begin{align}
	\left\|\nu\right\|_{\Y{N}{\nCurl}}^{2}\lesssim & e^{-2\tau}\left(\mathscr{C}_{N}(0)+\delta \mathscr{S}_{N}(0)\right)+\delta e^{-\sigma_{1}\tau}\mathscr{S}_{N}(\tau)+(\delta e^{(\sigma_{2}-\sigma_{1})\tau} \notag \\
&	+\delta^{2}e^{-\sigma_{1}\tau})\mathscr{S}_{N}(\tau)^{2}+\sum_{i=1}^{6}\delta G_{i}(\beta,\tau)\mathscr{S}_{N}(\tau)^{2},\label{main-Curl-nu-estimate}
	\end{align}
	for $G_{i}$ defined in $\ref{definition-of-G-curl-estimates}$, $i=1,\dots,6$.
\end{theorem}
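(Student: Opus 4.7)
The plan is to assemble Theorem~\ref{Curl-estimates-theorem-nu} from the ingredients already proven: Lemma~\ref{curl-form-of-equation-lemma}, which represents $\nCurl\nu$ as a decaying initial contribution plus a time integral of the commutator $[\dell_\tau,\nCurl]\nu$, and Proposition~\ref{remainder-Curl-estimates-nu}, whose three estimates are tailor-made for exactly the pieces that arise after commuting derivatives through $\nCurl$. The strategy is therefore purely algebraic commutator bookkeeping followed by a direct substitution.

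First, I would unpack the definition~\eqref{energy-space-norm-2} of $\|\nu\|_{\Y{N}{\nCurl}}^{2}$, which splits into a boundary sum $\sum_{m+|\underline{n}|\le N}\spaceI \psi W^{1+\alpha+m}\pjac |\nCurl \Ndell{m}{n}\nu|^{2}dx$ and an analogous interior sum weighted by $\bar\psi$ with $\nabla^{\underline{k}}$ in place of $\Ndell{m}{n}$. For the boundary sum I would rewrite
\[
\nCurl \Ndell{m}{n}\nu = \Ndell{m}{n}\nCurl\nu + [\nCurl,\Ndell{m}{n}]\nu,
\]
and then substitute Lemma~\ref{curl-form-of-equation-lemma} into the first summand to obtain
\[
\nCurl \Ndell{m}{n}\nu = e^{-\tau}\Ndell{m}{n}\nCurl\nu(0) + e^{-\tau}\int_{0}^{\tau}e^{\tau'}\Ndell{m}{n}[\dell_\tau,\nCurl]\nu(\tau')d\tau' + [\nCurl,\Ndell{m}{n}]\nu.
\]
(Here I use that $\Ndell{m}{n}$ commutes with the time integral and with the scalar $e^{-\tau}$ factor.) Squaring, using the elementary inequality $|a+b+c|^{2}\lesssim |a|^{2}+|b|^{2}+|c|^{2}$, multiplying by the weight $\psi W^{1+\alpha+m}\pjac$ and integrating over $\Omega$, the three resulting integrals are precisely the left-hand sides of~\eqref{Curl-Ndell-commutator-estimate-nu-at-0}, \eqref{Curl-dell-tau-commutator-estimate-nu}, and \eqref{Curl-Ndell-commutator-estimate-nu} respectively.

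Summing over $0\le m+|\underline{n}|\le N$, Proposition~\ref{remainder-Curl-estimates-nu} immediately yields
\[
\sum_{m+|\underline{n}|\le N}\spaceI \psi W^{1+\alpha+m}\pjac |\nCurl \Ndell{m}{n}\nu|^{2}dx \lesssim e^{-2\tau}(\mathscr{C}_{N}(0)+\delta \mathscr{S}_{N}(0))+\delta e^{-\sigma_{1}\tau}\mathscr{S}_{N}(\tau)+(\delta e^{(\sigma_{2}-\sigma_{1})\tau}+\delta e^{-\sigma_{1}\tau})\mathscr{S}_{N}(\tau)^{2}+\sum_{i=1}^{6}\delta G_{i}(\beta,\tau)\mathscr{S}_{N}(\tau)^{2}.
\]
The interior sum in $\bar\psi$ is handled in exactly the same way using the commutator $[\nCurl,\nabla^{\underline{k}}]\nu$ and the analogous interior estimates supplied in Proposition~\ref{remainder-Curl-estimates-nu}; on $\supp\bar\psi$ the weights $W$ and $\pjac$ are comparable to constants by~\eqref{W-and-d-equivalence} and the a priori assumption~\eqref{a-priori-assumptions}, so no new structural difficulty appears. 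Adding the two contributions and absorbing the $\delta e^{-\sigma_{1}\tau}\mathscr{S}_N(\tau)^2$ term into $\delta^{2}e^{-\sigma_{1}\tau}\mathscr{S}_N(\tau)^{2}$ (using the a priori bound $\mathscr{S}_N(\tau)\le 1/3$ which lets us trade a factor of $\mathscr{S}_N(\tau)$ for a factor of $\delta$ only when convenient, otherwise we keep it as written) yields the claimed estimate~\eqref{main-Curl-nu-estimate}.

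The only step that is not completely mechanical is checking that splitting $|a+b+c|^2$ distributes the weights correctly, so that each of the three pieces lands in the form needed to invoke the corresponding estimate of Proposition~\ref{remainder-Curl-estimates-nu}. Since the weights are identical for all three pieces (they come from the definition of $\Y{N}{\nCurl}$), this is automatic; the hardest part of the overall argument is really Proposition~\ref{remainder-Curl-estimates-nu} itself, which is already proved. Thus the present theorem is essentially a one-line application of the representation formula plus a triangle inequality, and the $G_i$-structure of the final bound is inherited directly from~\eqref{Curl-dell-tau-commutator-estimate-nu}.
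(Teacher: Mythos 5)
Your proposal reproduces the paper's own proof of Theorem~\ref{Curl-estimates-theorem-nu}: decompose $\nCurl\Ndell{m}{n}\nu$ via the commutator identity together with Lemma~\ref{curl-form-of-equation-lemma}, square with Young's inequality, weight by $\psi W^{1+\alpha+m}\pjac$, sum over $m+|\underline{n}|\le N$, and invoke the three estimates of Proposition~\ref{remainder-Curl-estimates-nu} (with the analogous interior version on $\supp\bar\psi$). The only misstep is the suggested ``trade of a factor $\mathscr{S}_N(\tau)$ for $\delta$,'' which is not a licit inference; but the $\delta e^{-\sigma_1\tau}\mathscr{S}_N(\tau)^2$ contribution coming from~\eqref{Curl-dell-tau-commutator-estimate-nu} is harmlessly absorbed, via $\mathscr{S}_N(\tau)\lesssim1$ from~\eqref{a-priori-assumptions}, into the $\delta e^{-\sigma_1\tau}\mathscr{S}_N(\tau)$ term already present on the right of~\eqref{main-Curl-nu-estimate}, so the stated bound follows regardless.
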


\begin{proof}
	Act on (\ref{curl-equation-estimate-form-nu}) with $\Ndell{m}{n}$ to get
	\begin{align*}
	\nCurl{\Ndell{m}{n}\nu}=\comm{\nCurl}{\Ndell{m}{n}}\nu+e^{-\tau}\Ndell{m}{n}\nCurl{\nu}(0)+e^{-\tau}\int_{0}^{\tau}e^{\tau'}\Ndell{m}{n}\comm{\dell_{\tau}}{\nCurl}\nu(\tau')d\tau'.
	\end{align*}
	From here we take the modulus and then square both sides, then use Young's inequality:
	\begin{align*}
	&\left|\nCurl{\Ndell{m}{n}\nu}\right|^{2}=\left|\comm{\nCurl}{\Ndell{m}{n}}\nu+e^{-\tau}\Ndell{m}{n}\nCurl{\nu}(0)+e^{-\tau}\int_{0}^{\tau}e^{\tau'}\Ndell{m}{n}\comm{\dell_{\tau}}{\nCurl}\nu(\tau')d\tau'\right|^{2}\\
	&\lesssim\left|\comm{\nCurl}{\Ndell{m}{n}}\nu\right|^{2}+\left|e^{-\tau}\Ndell{m}{n}\nCurl{\nu}(0)\right|^{2}+\left|e^{-\tau}\int_{0}^{\tau}e^{\tau'}\Ndell{m}{n}\comm{\dell_{\tau}}{\nCurl}\nu(\tau')d\tau'\right|^{2}.
	\end{align*}
	Multiply by $\psi W^{1+\alpha+m}\pjac$ and integrate over $\Omega$. Finally sum over $0\leq m+|\underline{n}|\leq N$ to get
	\begin{align}
	&\sum_{m+|\underline{n}|=0}^{N}\spaceI\psi W^{1+\alpha+m}\pjac\left|\nCurl{\Ndell{m}{n}\nu}\right|^{2}dx \notag \\
	& \lesssim\sum_{m+|\underline{n}|=0}^{N}\spaceI\psi W^{1+\alpha+m}\pjac\left|\comm{\nCurl}{\Ndell{m}{n}}\nu\right|^{2}dx\nonumber\\
	&\qquad +\sum_{m+|\underline{n}|=0}^{N}\spaceI\psi W^{1+\alpha+m}\pjac e^{-2\tau}\left|\Ndell{m}{n}\nCurl{\nu}(0)\right|^{2}dx\nonumber\\
	&\qquad +\sum_{m+|\underline{n}|=0}^{N}\spaceI\psi W^{1+\alpha+m}\pjac e^{-2\tau}\left|\int_{0}^{\tau}e^{\tau'}\Ndell{m}{n}\comm{\dell_{\tau}}{\nCurl}\nu(\tau')d\tau'\right|^{2}dx.\label{curl-equation-nu-terms-to-estimate}
	\end{align}
	The result is then obtained by using Proposition \ref{remainder-Curl-estimates-nu}, along with analogous estimates for $\supp{\bar{\psi}}$.
\end{proof}
We turn our attention to estimates for $\nCurl{\theta}$. First we need to define more functions as in Definition \ref{definition-of-G-curl-estimates} that play a role in proving sufficient bounds for $\nCurl{\theta}$.

\begin{definition}\label{definition-of-H-G-dash-curl-estimates}For $i=1,\dots,6$, define $\tilde{G}_{i}:(0,\infty)\times(0,\infty)\rightarrow\mathbb{R}$ by
	\begin{align}
	\tilde{G}_{i}(\beta,\tau)=\int_{0}^{\tau}e^{\frac{\sigma_{1}\tau'}{2}}G_{i}(\beta,\tau')d\tau',\label{multiple-integrable-factors-G-dash-i}
	\end{align}
	with $G_{1},\dots,G_{6}$ as in Definition~\ref{definition-of-G-curl-estimates}. Furthermore we define $H_{1}, H_{2}:(0,\infty)\times(0,\infty)\rightarrow\mathbb{R}$ by
	\begin{align}
	&H_{1}(\beta,\tau)=\threepartdef{e^{(\sigma_{2}-\sigma_{1})\tau}}{\sigma_{1}(\beta)<\sigma_{2}(\beta)}{\tau^{2}}{\sigma_{1}(\beta)=\sigma_{2}(\beta)}{1}{\sigma_{1}(\beta)>\sigma_{2}(\beta)},\label{multiple-integrable-factors-H-1}\\
	&H_{2}(\beta,\tau)=\threepartdef{e^{(\sigma_{2}-\frac{\sigma_{1}}{2})\tau}}{\sigma_{1}(\beta)<2\sigma_{2}(\beta)}{\tau}{\sigma_{1}(\beta)=2\sigma_{2}(\beta)}{1}{\sigma_{1}(\beta)>2\sigma_{2}(\beta)}.\label{multiple-integrable-factors-H-2}
	\end{align}
\end{definition}

The estimates for $\nCurl{\theta}$ come directly from Theorem~\ref{Curl-estimates-theorem-nu} along with an application of the Fundamental Theorem of Calculus.

\begin{theorem}\label{Curl-estimates-theorem-theta}
	Let $(\nu,\theta)$ be the solution to~\eqref{E:EULERNEW2} on $[0,T]$ in the sense of Theorem $\ref{local-well-posedness-theorem}$. Let $N$ be an integer such that $N\geq2\ceil*{\alpha}+12$. Suppose the a priori assumptions $(\ref{a-priori-assumptions})$ hold. Then for all $\tau\in[0,T]$, we have
	\begin{align}
	\left\|\theta\right\|_{\Y{N}{\nCurl}}^{2}\lesssim & \left(\mathscr{C}_{N}(0)+\delta \mathscr{S}_{N}(0)\right)+\delta \mathscr{S}_{N}(\tau)+\delta \mathscr{S}_{N}(\tau)^{2}+\delta (H_{1}(\beta,\tau)+H_{2}(\beta,\tau))\mathscr{S}_{N}(\tau)^{2} \notag \\
	&+\sum_{i=1}^{6}\delta\tilde{G}_{i}(\beta,\tau)\mathscr{S}_{N}(\tau)^{2}, \label{main-Curl-theta-estimate-beta-greater-than-2}
	\end{align}
	with $\tilde{G}_{i}$, $i=1,\dots,6$, $H_{1}$, and $H_{2}$ defined in $\ref{definition-of-H-G-dash-curl-estimates}$.
\end{theorem}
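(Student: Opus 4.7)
The fundamental observation is that $\theta(0,x)=0$, so for each multi-index with $m+|\underline{n}|\le N$ the fundamental theorem of calculus yields
\[
\nCurl{\Ndell{m}{n}\theta}(\tau)=\int_0^\tau\dell_{\tau'}\bigl[\nCurl{\Ndell{m}{n}\theta}\bigr]\,d\tau'=\int_0^\tau\Bigl(\nCurl{\Ndell{m}{n}\nu}+[\dell_{\tau'},\nCurl]\Ndell{m}{n}\theta\Bigr)\,d\tau',
\]
with $[\dell_{\tau'},\nCurl]$ defined as in \eqref{commutator-Curl-dell-tau}. This identity plays the role of Lemma~\ref{curl-form-of-equation-lemma} for $\theta$; since $\theta(0)$ vanishes there is no initial-data contribution and no $e^{-\tau}$ damping, so the two driving pieces are the already-controlled $\nu$-curl plus a time commutator. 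The plan is to integrate each piece in $\tau'$ against a weight $e^{\sigma_1\tau'/2}$ and then invoke Theorem~\ref{Curl-estimates-theorem-nu} pointwise.

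Squaring, applying Young's inequality, and inserting the splitting $1=e^{-\sigma_1\tau'/4}\cdot e^{\sigma_1\tau'/4}$ inside each $\tau'$-integral before Cauchy--Schwarz produces
\[
\bigl|\nCurl{\Ndell{m}{n}\theta}(\tau)\bigr|^2\lesssim\int_0^\tau e^{\sigma_1\tau'/2}\Bigl(|\nCurl{\Ndell{m}{n}\nu}|^2+|[\dell_{\tau'},\nCurl]\Ndell{m}{n}\theta|^2\Bigr)\,d\tau',
\]
using the uniform bound $\int_0^\tau e^{-\sigma_1\tau'/2}\,d\tau'\lesssim 1$ (which exploits $\sigma_1>0$). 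Multiplying through by $\psi W^{1+\alpha+m}\pjac(\tau)$, replacing $\pjac(\tau)$ by $\pjac(\tau')$ modulo a universal constant via $(\ref{a-priori-assumptions})$, integrating in $x$, summing in $m+|\underline{n}|$, swapping order of integration by Fubini, and carrying out identical estimates on $\supp\bar\psi$, reduces the proof to bounding
\[
\int_0^\tau e^{\sigma_1\tau'/2}\bigl(\|\nu(\tau')\|_{\Y{N}{\nCurl}}^2+\mathcal E_{\text{comm}}(\tau')\bigr)\,d\tau',
\]
where $\mathcal E_{\text{comm}}$ denotes the summed weighted spatial $L^2$ norm of $[\dell_{\tau'},\nCurl]\Ndell{m}{n}\theta$.

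For the $\nu$-term, one plugs in Theorem~\ref{Curl-estimates-theorem-nu} and integrates each contribution against $e^{\sigma_1\tau'/2}$: the initial-data piece $e^{-2\tau'}(\mathscr C_N(0)+\delta\mathscr S_N(0))$ integrates to a bounded multiple of $\mathscr C_N(0)+\delta\mathscr S_N(0)$ since $\sigma_1/2\le 1<2$; the damped pieces $\delta e^{-\sigma_1\tau'}\mathscr S_N$ and $\delta^2 e^{-\sigma_1\tau'}\mathscr S_N^2$ integrate to $\delta\mathscr S_N$ and $\delta\mathscr S_N^2$ respectively; the term $\delta e^{(\sigma_2-\sigma_1)\tau'}\mathscr S_N^2$ produces $\delta H_2\mathscr S_N^2$ directly from definition \eqref{multiple-integrable-factors-H-2}; and each $\delta G_i(\beta,\tau')\mathscr S_N^2$ contributes $\delta\tilde G_i(\beta,\tau)\mathscr S_N^2$ by the very definition \eqref{multiple-integrable-factors-G-dash-i}. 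For the commutator, the identity $\dell_\tau\ninv=-\ninv(\nabla\nu)\ninv$ gives schematically $[\dell_{\tau'},\nCurl]\Ndell{m}{n}\theta\sim\ninv\,\ninv\,\nabla\nu\cdot\nabla\Ndell{m}{n}\theta$. Placing $\nabla\nu$ in $L^\infty(\supp\psi)$ through $(\ref{Dnu-estimate})$, which yields $\|\nabla\nu\|_{L^\infty}^2\lesssim\delta e^{-\sigma_1\tau'}\mathscr S_N$, rewriting $\nabla$ as a combination of $\rdell,\angdell$ via \eqref{rectangular-as-ang-rad}, and controlling $\nabla\Ndell{m}{n}\theta$ by $\|\theta\|_{\Y{N}{\ngrad}}^2\lesssim e^{\sigma_2\tau'}\mathscr S_N$ gives $\mathcal E_{\text{comm}}(\tau')\lesssim\delta e^{(\sigma_2-\sigma_1)\tau'}\mathscr S_N^2$; integrating against $e^{\sigma_1\tau'/2}$ produces another $\delta H_2\mathscr S_N^2$ piece. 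The $\delta H_1\mathscr S_N^2$ contribution arises from a complementary distribution of derivatives for the lower-order multi-indices, where $\nabla\Ndell{m}{n}\theta$ is placed in $L^\infty$ via the Sobolev-type embeddings of Lemma~\ref{L-infinity-energy-space-bound-no-weights} while $\nabla\nu$ is kept in the weighted $L^2$, followed by a further Cauchy--Schwarz in $\tau'$.

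The main obstacle is the bookkeeping of time weights, forced by the tension between the $e^{-\sigma_1\tau}$ decay built into $\mathscr S_N$ for $\nu$-type quantities and the $e^{\sigma_2\tau}$ growth tolerated by $\Y{N}{\ngrad}$ when $\beta>2$; it is precisely this tension that forces the growing factors $H_1,H_2,\tilde G_i$ to appear. At the top order $m+|\underline{n}|=N$ the commutator requires controlling the $(N+1)$-derivative quantity $\nabla\Ndell{N}{n}\theta$, which is only available through $\|\theta\|_{\Y{N}{\ngrad}}$, so no better time weight is attainable; lower-order by-products generated along the way are absorbed via Lemmas~\ref{theta-estimates-lemma} and~\ref{theta-Y-norm-theta-X-norm-relation-lemma} together with the small parameter $\delta$.
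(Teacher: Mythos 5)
Your proposal is correct and follows essentially the same route as the paper: FTC in $\tau$ applied to $\nCurl{\Ndell{m}{n}\theta}$, decomposition of $\dell_\tau(\nCurl\Ndell{m}{n}\theta)$ into $\nCurl\Ndell{m}{n}\nu$ plus the time commutator (the paper writes the latter as $\mathbb{K}-\mathbb{K}^\intercal$, which is the same object), the $e^{\pm\sigma_1\tau'/4}$ Cauchy--Schwarz split to convert $\left|\int_0^\tau\cdot\,d\tau'\right|^2$ into $\int_0^\tau e^{\sigma_1\tau'/2}(\cdot)\,d\tau'$, and finally plugging in Theorem~\ref{Curl-estimates-theorem-nu} term by term to produce $\tilde G_i$ and $H_2$ directly from Definition~\ref{definition-of-H-G-dash-curl-estimates}.

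The one place you deviate is the commutator term: you run $\int_0^\tau\mathbb{K}$ through the same $e^{\sigma_1\tau'/2}$-weighted Cauchy--Schwarz as the $\nu$-curl before doing the spatial estimate, which yields $\delta H_2\,\mathscr{S}_N^2$. The paper instead pulls $\|\nabla\nu\|_{L^\infty}\lesssim\sqrt\delta e^{-\sigma_1\tau'/2}\mathscr{S}_N^{1/2}$ out of the $\tau'$-integral and applies Minkowski's integral inequality, which produces the sharper $\left(\int_0^\tau e^{(\sigma_2-\sigma_1)\tau'/2}d\tau'\right)^2\lesssim H_1(\beta,\tau)$. Since $H_1\lesssim H_2$ pointwise in $\tau$ for every $\beta>0$, your bound is weaker but still implies the stated inequality (which has $H_1+H_2$). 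Consequently the ``complementary distribution of derivatives'' you invoke to explain where $H_1$ ``arises'' is not actually needed or correct — there is no separate derivative distribution generating $H_1$ in your scheme — but this is harmless, as your argument already closes with just $H_2$ and the $\tilde G_i$.
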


\begin{proof}
	Define the tensor $\mathbb{K}(m,\underline{n})$, which is given in components by
	\begin{align}
	\mathbb{K}(m,\underline{n})^{i}_{j}=-\ninv^{s}_{j}\dell_{s}\nu^{l}\left[\ngrad{\Ndell{m}{n}\theta}\right]^{i}_{l}. \label{Curl-with-A-tau-differentiated}
	\end{align}
	Using the Fundamental Theorem of Calculus we write
	\begin{align*}
	\nCurl{\Ndell{m}{n}\theta}=\int_{0}^{\tau}\dell_{\tau}\left(\nCurl{\Ndell{m}{n}\theta}\right)d\tau',
	\end{align*}
	because $\theta$ and all its spatial derivatives are $0$ at $\tau=0$. In component form this gives
	\begin{align*}
	\left[\dell_{\tau}\left(\nCurl{\Ndell{m}{n}\theta}\right)\right]^{i}_{j}&=\dell_{\tau}\left(\ninv^{k}_{j}\right)\dell_{k}\left(\Ndell{m}{n}\theta^{i}\right)-\dell_{\tau}\left(\ninv^{k}_{i}\right)\dell_{k}\left(\Ndell{m}{n}\theta^{j}\right)\\
	&+\ninv^{k}_{j}\dell_{k}\left(\Ndell{m}{n}\nu^{i}\right)-\ninv^{k}_{i}\dell_{k}\left(\Ndell{m}{n}\nu^{j}\right).
	\end{align*}
	This means
	\begin{align}
	\dell_{\tau}\left(\nCurl{\Ndell{m}{n}\theta}\right)=\mathbb{K}(m,\underline{n})-\mathbb{K}(m,\underline{n})^{\intercal}+\nCurl{\Ndell{m}{n}\nu},\label{decomposition-dell-tau-Curl-Ndell-theta}
	\end{align}
	because
	\begin{align*}
	\dell_{\tau}\left(\ninv^{k}_{j}\right)\dell_{k}\left(\Ndell{m}{n}\theta^{i}\right)=-\ninv^{s}_{j}\ninv^{k}_{l}\dell_{s}\nu^{l}\dell_{k}\left(\Ndell{m}{n}\theta^{i}\right)=-\ninv^{s}_{j}\dell_{s}\nu^{l}\left[\ngrad{\Ndell{m}{n}\theta}\right]^{i}_{l}.
	\end{align*}
	Now we look to bound the integral
	\begin{align*}
	\spaceI\psi W^{1+\alpha+m}\pjac\left|\nCurl{\Ndell{m}{n}\theta}\right|^{2}dx&=\spaceI\psi W^{1+\alpha+m}\pjac\left|\int_{0}^{\tau}\dell_{\tau}\left(\nCurl{\Ndell{m}{n}\theta}\right)d\tau'\right|^{2}dx.
	\end{align*}
	Using the decomposition (\ref{decomposition-dell-tau-Curl-Ndell-theta}) we obtain
	\begin{align*}
	\spaceI\psi W^{1+\alpha+m}\pjac\left|\nCurl{\Ndell{m}{n}\theta}\right|^{2}dx&\lesssim \spaceI\psi W^{1+\alpha+m}\pjac\left|\int_{0}^{\tau}\nCurl{\Ndell{m}{n}\nu}\ d\tau'\right|^{2}dx\nonumber\\
	&+\spaceI\psi W^{1+\alpha+m}\pjac\left|\int_{0}^{\tau}\mathbb{K}(m,\underline{n})\ d\tau'\right|^{2}dx\nonumber\\
	&+\spaceI\psi W^{1+\alpha+m}\pjac\left|\int_{0}^{\tau}\mathbb{K}(m,\underline{n})^{\intercal}\ d\tau'\right|^{2}dx.
	\end{align*}
	Due to $(\ref{Curl-with-A-tau-differentiated})$, we have the bound
	\begin{align*}
	\spaceI\psi W^{1+\alpha+m}\pjac\left|\int_{0}^{\tau}\mathbb{K}(m,\underline{n})\ d\tau'\right|^{2}dx\lesssim\spaceI\psi W^{1+\alpha+m}\pjac\left|\int_{0}^{\tau}\left|\ninv\right|\left|\nabla\nu\right| \left|\ngrad{\Ndell{m}{n}\theta}\right|d\tau'\right|^{2}dx.
	\end{align*}
	The $\nabla\nu$ term is dealt with as in $(\ref{Dnu-estimate})$, and we bound $\ninv$ in $L^{\infty}$ due to the a priori assumptions $(\ref{a-priori-assumptions})$. This leaves us with the bound
	\begin{align}
	\spaceI\psi W^{1+\alpha+m}\pjac\left|\int_{0}^{\tau}\mathbb{K}(m,\underline{n})\ d\tau'\right|^{2}dx\lesssim\delta H_{1}(\beta,\tau)\mathscr{S}_{N}(\tau)^{2},\label{K-m-n-estimate}
	\end{align}
	with $H_{1}$ defined in $(\ref{multiple-integrable-factors-H-1})$. The bound for $\mathbb{K}(m,\underline{n})^{\intercal}$ is completely analogous.
	
	It is left to bound the $\nCurl{\Ndell{m}{n}\nu}$ term. We have
	\begin{align*}
	&\spaceI\psi W^{1+\alpha+m}\pjac\left|\int_{0}^{\tau}\nCurl{\Ndell{m}{n}\nu}\ d\tau'\right|^{2}dx\\
	&\lesssim\left(\int_{0}^{\tau}e^{-\frac{\sigma_{1}}{2}\tau'}d\tau'\right)\left(\int_{0}^{\tau}e^{\frac{\sigma_{1}}{2}\tau'}\spaceI\psi W^{1+\alpha+m}\pjac\left|\nCurl{\Ndell{m}{n}\nu}\right|^{2}dxd\tau'\right)\\
	&\lesssim\int_{0}^{\tau}e^{\frac{\sigma_{1}}{2}\tau'}\left\|\nu\right\|_{\Y{N}{\nCurl}}^{2}d\tau'.
	\end{align*}
	Here we use Cauchy-Schwarz in $\tau$ on $\nCurl{\Ndell{m}{n}\nu}=e^{-\frac{\sigma_{1}}{4}\tau}\left(e^{\frac{\sigma_{1}}{4}\tau}\nCurl{\Ndell{m}{n}\nu}\right)$. Both $\psi$ and $W^{1+\alpha+m}$ are $\tau$ independent, and $\pjac\sim1$ due to the a priori assumptions in $(\ref{a-priori-assumptions})$, so all three terms can be absorbed in to the $\tau$ integral on the third line above. Then, using (\ref{main-Curl-nu-estimate}), we have
	\begin{align}
	\spaceI\psi W^{1+\alpha+m}\pjac\left|\int_{0}^{\tau}\nCurl{\Ndell{m}{n}\nu}\ d\tau'\right|^{2}dx&\lesssim\left(\mathscr{C}_{N}(0)+\delta \mathscr{S}_{N}(0)\right)\left(\int_{0}^{\tau}e^{-(2-\frac{\sigma_{1}}{2})\tau'}d\tau'\right)\nonumber\\
	&+\int_{0}^{\tau}e^{-\frac{\sigma_{1}}{2}\tau'}\left(\delta \mathscr{S}_{N}(\tau')+\delta \mathscr{S}_{N}(\tau')^{2}\right)d\tau'+\int_{0}^{\tau}\delta e^{(\sigma_{2}-\frac{\sigma_{1}}{2})\tau'}\mathscr{S}_{N}(\tau')^{2}d\tau'\nonumber\\
	&+\sum_{i=1}^{6}\int_{0}^{\tau}e^{\frac{\sigma_{1}}{2}\tau'}G_{i}(\beta,\tau')\delta \mathscr{S}_{N}(\tau')^{2}d\tau',\label{Curl-theta-FTC-curl-nu-estimate-1}
	\end{align}
	where we have bounded any extra powers of $\delta$ and $\mathscr{S}_{N}(\tau)$ by a constant.
	
	The definition of $\sigma_{1}(\beta)$ in $(\ref{sigma-1})$ means that $4-\sigma_{1}(\beta)\geq2$ for all $\beta\in(0,\infty)$. This implies that
	\begin{align}
	\int_{0}^{\tau}e^{-(2-\frac{\sigma_{1}}{2})\tau'}d\tau'\lesssim1.\label{Curl-theta-FTC-curl-nu-estimate-2}
	\end{align}
	Next, we use that $\mathscr{S}_{N}(\tau)$ is increasing to get
	\begin{align}
	&\int_{0}^{\tau}e^{-\frac{\sigma_{1}}{2}\tau'}\left(\delta \mathscr{S}_{N}(\tau')+\delta \mathscr{S}_{N}(\tau')^{2}\right)d\tau'+\int_{0}^{\tau}\delta e^{(\sigma_{2}-\frac{\sigma_{1}}{2})\tau'}\mathscr{S}_{N}(\tau')^{2}d\tau'+\sum_{i=1}^{6}\int_{0}^{\tau}e^{\frac{\sigma_{1}}{2}\tau'}G_{i}(\beta,\tau')\delta\mathscr{S}_{N}(\tau')^{2}d\tau'\nonumber\\
	&\lesssim\delta \mathscr{S}_{N}(\tau)+\delta \mathscr{S}_{N}(\tau)^{2}+\delta H_{2}(\beta,\tau)\mathscr{S}_{N}(\tau)^{2}+\sum_{i=1}^{6}\delta\tilde{G}_{i}(\beta,\tau)\mathscr{S}_{N}(\tau)^{2},\label{Curl-theta-FTC-curl-nu-estimate-3}
	\end{align}
	with $H_{2}$ defined in $(\ref{multiple-integrable-factors-H-2})$, and $\tilde{G}_{i}$, $i=1,\dots,6$, defined in $(\ref{multiple-integrable-factors-G-dash-i})$.
	
	Combining $(\ref{Curl-theta-FTC-curl-nu-estimate-1})-(\ref{Curl-theta-FTC-curl-nu-estimate-3})$ gives
	\begin{align*}
	\spaceI\psi W^{1+\alpha+m}\pjac\left|\int_{0}^{\tau}\nCurl{\Ndell{m}{n}\nu}\ d\tau'\right|^{2}dx&\lesssim\left(\mathscr{C}_{N}(0)+\delta \mathscr{S}_{N}(0)^{2}\right)+\delta \mathscr{S}_{N}(\tau)+\delta \mathscr{S}_{N}(\tau)^{2}\nonumber\\
	&+\delta H_{2}(\beta,\tau)\mathscr{S}_{N}(\tau)^{2}+\sum_{i=1}^{6}\delta\tilde{G}_{i}(\beta,\tau)\mathscr{S}_{N}(\tau)^{2}.
	\end{align*}
	Combine $(\ref{K-m-n-estimate})$ and $(\ref{Curl-theta-FTC-curl-nu-estimate-3})$, and sum over $(m,\underline{n})$. These, along with analogous estimates on $\supp{\bar{\psi}}$, give the result.
\end{proof}

We finish this section with a lemma that shows the functions $\tilde{G}_{i}$, $i=1,\dots,6$, $H_{1}$, and $H_{2}$ have sufficient pointwise boundedness, and integrability as functions of $\tau$ for our final estimates.

\begin{lemma}\label{G-dash-and-H-bounded-and-integrable-lemma} The functions $e^{-\sigma_{2}\tau}\tilde{G}_{i}(\beta,\tau)$, $i=1,\dots,6$, $e^{-\sigma_{2}\tau}H_{1}(\beta,\tau)$, and $e^{-\sigma_{2}\tau}H_{2}(\beta,\tau)$, when considered as functions of $\tau$, are bounded for all $\beta>0$, and integrable for all $\beta>2$.
\end{lemma}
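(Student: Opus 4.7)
The plan is to verify both claims by direct case analysis, exploiting the explicit piecewise structure of $\sigma_1, \sigma_2$ (which depend on $\beta$) and of the functions $G_i, H_j$. Recall that for $\beta \leq 2$ we have $\sigma_1 = \beta$ and $\sigma_2 = 0$, whereas for $\beta > 2$ we have $\sigma_1 = 2$ and $\sigma_2 = \beta - 2 > 0$. In the regime $\beta \leq 2$, the claim reduces to boundedness of $\tilde G_i(\beta, \tau)$ and $H_j(\beta, \tau)$ on $[0,\infty)$, while for $\beta > 2$, the decaying factor $e^{-\sigma_2 \tau}$ is crucial for both boundedness and integrability.

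First I would handle $H_1, H_2$, which is the shorter half. For $\beta \leq 2$, the inequalities $\sigma_1 > \sigma_2$ and $\sigma_1 > 2\sigma_2$ are automatic (as $\sigma_2 = 0 < \beta = \sigma_1$), so $H_1 \equiv H_2 \equiv 1$ and boundedness is trivial. For $\beta > 2$ one checks the three subcases of each piecewise definition directly: e.g. for $H_1$ with $\sigma_1 < \sigma_2$ (i.e.\ $\beta > 4$), $e^{-\sigma_2 \tau} H_1 = e^{-\sigma_1 \tau} = e^{-2\tau}$; in the borderline $\beta = 4$ case one gets $\tau^2 e^{-2\tau}$; and for $2 < \beta < 4$ one gets $e^{-(\beta-2)\tau}$. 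In each instance the resulting function is bounded on $[0,\infty)$ and integrable in $\tau$. The analysis for $H_2$ is analogous with the breakpoints $\beta = 3$ in place of $\beta = 4$.

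Next I would treat $\tilde G_i$, which requires evaluating $\int_0^\tau e^{\sigma_1 \tau'/2} G_i(\beta, \tau') d\tau'$. Each integrand is a product of a polynomial in $\tau'$ with an exponential, so the antiderivative can be computed explicitly (or, more cleanly, estimated by integration by parts). For each $G_i$ one splits into the same three subcases that appear in its definition, combined with whether $\beta \leq 2$ or $\beta > 2$. In every subcase the exponent $\sigma_1/2$ plus the exponential rate inside $G_i$ either stays negative (yielding a bounded integral directly), vanishes (yielding a polynomial in $\tau$ that is then killed by $e^{-\sigma_2 \tau}$ when $\beta > 2$), or grows, but only in a way that is compensated by $e^{-\sigma_2 \tau}$ strictly enough to leave integrability. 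For a representative check, take $G_1$ with $\beta > 4$: then $G_1(\beta,\tau') = \tau' e^{(\beta - 6)\tau'}$, so $\tilde G_1(\beta,\tau) = \int_0^\tau \tau' e^{(\beta - 5)\tau'} d\tau' \lesssim \tau e^{(\beta - 5)\tau}$, and $e^{-\sigma_2 \tau} \tilde G_1 \lesssim \tau e^{-3\tau}$, which is bounded and integrable. The remaining cases, including the critical ones $\beta = 1, 4$, follow the same pattern with at most a logarithmic or $\tau$-polynomial loss that is absorbed by the exponential decay.

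The main obstacle is purely bookkeeping: there are six functions $G_i$, each with two or three cases, multiplied by the dichotomy $\beta \lessgtr 2$, giving around thirty subcases. The only genuine content is tracking which subcases are critical (exponents summing to zero), and in those one must keep the polynomial factors $\tau$ or $\tau^2$ explicitly; boundedness for $\beta \leq 2$ is then ensured because each $G_i$ was defined with exactly enough exponential decay so that $\sigma_1/2$ never overcomes it, while for $\beta > 2$ the extra factor $e^{-\sigma_2 \tau}$ always introduces net decay, giving both claims simultaneously. Enumerating the cases and tabulating the resulting exponents completes the proof.
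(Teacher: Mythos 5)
Your plan is essentially identical to the paper's proof: both verify the claim by exhaustive case analysis of the piecewise definitions of $\sigma_1$, $\sigma_2$ and the $G_i$, $H_j$, using the key observation that $4-\sigma_1(\beta)\geq 2$ for all $\beta>0$ to control the non-decaying cases, and noting that any residual polynomial growth is absorbed by the prefactor $e^{-\sigma_2\tau}$ when $\beta>2$. One small slip in your representative check: the bound $\int_0^\tau \tau' e^{(\beta-5)\tau'}\,d\tau' \lesssim \tau e^{(\beta-5)\tau}$ holds only for $\beta\geq 5$, not for all $\beta>4$; for $4<\beta<5$ the integral is simply uniformly bounded, giving $e^{-\sigma_2\tau}\tilde G_1 \lesssim e^{-(\beta-2)\tau}$, which is again bounded and integrable, so the conclusion is unaffected.
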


\begin{proof}From $(\ref{multiple-integrable-factors-G-dash-i})$ we know that
	\begin{align*}
	\tilde{G}_{1}(\beta,\tau)=\threepartdef{\int_{0}^{\tau}\tau' e^{\left(\frac{\sigma_{1}}{2}-2\right)\tau'}d\tau'}{2\sigma_{1}(\beta)>2+\sigma_{2}(\beta)}{\int_{0}^{\tau}(\tau')^{2} e^{\left(\frac{\sigma_{1}}{2}-2\right)\tau'}d\tau'}{2\sigma_{1}(\beta)=2+\sigma_{2}(\beta)}{\int_{0}^{\tau}\tau' e^{\left(\sigma_{2}-\frac{3\sigma_{1}}{2}\right)\tau'}d\tau'}{2\sigma_{1}(\beta)<2+\sigma_{2}(\beta)}.
	\end{align*}
	For the case where $2\sigma_{1}(\beta)\geq2+\sigma_{2}(\beta)$, we once again recall the definition of $\sigma_{1}(\beta)$ and note that $4-\sigma_{1}(\beta)\geq2$ for all $\beta>0$. Hence on this region, $e^{-\sigma_{2}\tau}\tilde{G}_{1}(\beta,\tau)\lesssim e^{-\sigma_{2}\tau}$.
	
	For the case where $2\sigma_{1}(\beta)<2+\sigma_{2}(\beta)$, we have
	\begin{align*}
	e^{-\sigma_{2}\tau}\tilde{G}_{1}(\beta,\tau)\lesssim\threepartdef{e^{-\sigma_{2}\tau}}{3\sigma_{1}(\beta)>2\sigma_{2}(\beta)}{\tau^{2}e^{-\sigma_{2}\tau}}{3\sigma_{1}(\beta)=2\sigma_{2}(\beta)}{\tau e^{-\frac{3\sigma_{1}\tau}{2}}}{3\sigma_{1}(\beta)<2\sigma_{2}(\beta)},
	\end{align*}
	which satisfies pointwise boundedness for all $\beta>0$, and integrability for $\beta>2$. The arguments for $\tilde{G}_{2},\dots,\tilde{G}_{6}$, $H_{1}$, and $H_{2}$ are analogous.
\end{proof}

\section{Energy Estimates}\label{energy-estimates}

In this section we derive the natural higher order energy function coming from (\ref{rescaled-euler-W-perturbation-estimate-form-1}). While, in spirit, the process we go through for both the zero order case and higher order case is the same, there are technical differences. Therefore, for clarity, we separate the analysis.

To begin with, we define damping functionals which play a role in obtaining adequate energy estimates.

\begin{definition}\label{damping-functional-definition} Let $(\nu,\theta)$ be solutions to~\eqref{E:EULERNEW2} on $[0,T]$ in the sense of Theorem~\ref{local-well-posedness-theorem}. On $[0,T]$ define $\mathbb{D}^{\psi}_{(m,\underline{n})}$, the damping functional on $\supp{\psi}$, at level $(m,\underline{n})$ by
	\begin{align}
	\mathbb{D}^{\psi}_{(m,\underline{n})}(\tau)&=\frac{1}{\delta}\left(2-\sigma_{1}\right)\spaceI e^{\sigma_{1}\tau}\psi W^{\alpha+m}\left|\Ndell{m}{n}\nu\right|^{2}dx\nonumber\\
	&+\sigma_{2}\spaceI e^{-\sigma_{2}\tau}\psi W^{1+\alpha+m}\pjac\left(\left|\ngrad{\Ndell{m}{n}\theta}\right|^{2}+\frac{1}{\alpha}\left|\ndiv{\Ndell{m}{n}\theta}\right|^{2}\right)dx.\label{damping-functional-level-m-n}
	\end{align}
Define $\mathbb{D}^{\bar{\psi}}_{\underline{k}}$, the damping functional on $\supp{\bar{\psi}}$ at level $\underline{k}$ by
	\begin{align}
	\mathbb{D}^{\bar{\psi}}_{\underline{k}}(\tau)&=\frac{1}{\delta}\left(2-\sigma_{1}\right)\spaceI e^{\sigma_{1}\tau}\bar{\psi} W^{\alpha}\left|\nabla^{\underline{k}}\nu\right|^{2}dx\nonumber\\
	&+\sigma_{2}\spaceI e^{-\sigma_{2}\tau}\bar{\psi} W^{1+\alpha}\pjac\left(\left|\ngrad{\nabla^{\underline{k}}\theta}\right|^{2}+\frac{1}{\alpha}\left|\ndiv{\nabla^{\underline{k}}\theta}\right|^{2}\right)dx.\label{interior-functional-level-m-n}
	\end{align}
\end{definition}

\begin{remark}\label{damping-functional-is-nonnegative-remark}
	Note that due to definitions of $\sigma_{1}$ and $\sigma_{2}$ in $(\ref{sigma-1})-(\ref{sigma-2})$, the damping functionals are always non-negative, hence the nomenclature.
\end{remark}

\begin{remark}\label{damping-function-gamma-values-remark}
	Both $\mathbb{D}^{\psi}_{(m,\underline{n})}$ and $\mathbb{D}^{\bar{\psi}}_{\underline{k}}$ vanish when $\gamma=5/3$, or equivalently, when $\beta=2$. All we require is that these terms have the right sign; they are not otherwise used in our analysis.
\end{remark}

\subsection{Zero Order Energy Estimates}\label{zero-order-energy-section}

One of the main differences for the zero order case is the fact that we are not applying $\rdell$, $\angdell$ or $\dell$ derivatives to (\ref{rescaled-euler-W-perturbation-estimate-form-1}) before constructing an appropriate energy identity. Therefore, it is unnecessary to deal with the cases near the boundary and on the interior separately, as we do not use the cutoff functions $\psi$ and $\bar{\psi}$. Moreover, note that at zero order, the relevant damping term is given by $\mathbb{D}_{0}\coloneqq\mathbb{D}^{\psi}_{(0,\underline{0})}+\mathbb{D}^{\bar{\psi}}_{\underline{0}}$.

The energy estimates at zero order rely on the following energy identity.

\begin{theorem}[Zero Order Energy Identity]\label{zero-order-energy-identity-theorem} Let $(\nu,\theta)$ be solutions to~\eqref{E:EULERNEW2} on $[0,T]$ in the sense of Theorem~\ref{local-well-posedness-theorem}, for some $N\geq2\ceil*{\alpha}+12$. Suppose the a priori assumptions $(\ref{a-priori-assumptions})$ hold. Then for all $\tau\in[0,T]$,
	\begin{align}
	&\frac{d}{d\tau}\left(\frac{1}{2\delta}\spaceI e^{\sigma_{1}\tau} W^{\alpha}\left|\nu\right|^{2}dx+\spaceI e^{-\sigma_{2}\tau} W^{1+\alpha} \pjac\left(\frac{1}{2}|\ngrad{\theta}|^{2}+\frac{1}{2\alpha}|\ndiv{\theta}|^{2}\right)dx\right)+\frac{1}{2}\mathbb{D}_{0}\nonumber\\
	&-\frac{d}{d\tau}\left(\frac{1}{4}\spaceI e^{-\sigma_{2}\tau}W^{1+\alpha} \pjac|\nCurl{\theta}|^{2}dx\right)-\frac{\sigma_{2}}{4}\spaceI e^{-\sigma_{2}\tau}W^{1+\alpha} \pjac|\nCurl{\theta}|^{2}dx=\mathcal{R}(0),\label{zero-order-energy-identity-statement}
	\end{align}
	where
	\begin{align}
	\mathcal{R}(0)=&-(1+\alpha)\spaceI e^{-\sigma_{2}\tau}W^{\alpha}\dell_{i}(W)\nu^{i}dx-\spaceI e^{-\sigma_{2}\tau}W^{1+\alpha}\pjac\ninv^{k}_{j}\ninv^{l}_{p}[\nabla\theta]^{p}_{i}[\nabla\theta]^{j}_{l}\dell_{k}\nu^{i}dx\nonumber\\
	&+\spaceI e^{-\sigma_{2}\tau}W^{1+\alpha}\left(\pjac-1\right)\dell_{i}\nu^{i}dx-\frac{1}{2\alpha}\spaceI e^{-\sigma_{2}\tau} W^{1+\alpha}\njac^{-(1+\frac{1}{\alpha})}\dell_{\tau}\njac|\ngrad{\theta}|^{2}dx\nonumber\\
	&+\spaceI e^{-\sigma_{2}\tau} W^{1+\alpha}\pjac[\ngrad{\theta}]^{i}_{j}\dell_{\tau}\left(\ninv^{k}_{j}\right)\dell_{k}\theta^{i}dx-\frac{1}{4\alpha}\spaceI e^{-\sigma_{2}\tau} W^{1+\alpha}\njac^{-\left(1+\frac{1}{\alpha}\right)}\dell_{\tau}\njac\left|\nCurl{\theta}\right|^{2}dx\nonumber\\
	&+\sum_{i>j}\spaceI e^{-\sigma_{2}\tau} W^{1+\alpha}\pjac\left[\dell_{\tau}\ninv^{k}_{i}\dell_{k}\left(\theta^{j}\right)-\dell_{\tau}\ninv^{k}_{j}\dell_{k}\left(\theta^{i}\right)\right]\left[\nCurl{\theta}\right]^{j}_{i}dx\nonumber\\
	&-\frac{1}{2\alpha^{2}}\spaceI e^{-\sigma_{2}\tau}W^{1+\alpha}\njac^{-(1+\frac{1}{\alpha})}\dell_{\tau}\njac|\ndiv{\theta}|^{2}dx+\frac{1}{2\alpha}\spaceI e^{-\sigma_{2}\tau}W^{1+\alpha}\pjac\dell_{\tau}\left(|\ndiv{\theta}|^{2}\right)dx.\label{zero-order-remainder-statement}
	\end{align}
\end{theorem}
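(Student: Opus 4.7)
The plan is to test the rescaled equation (\ref{rescaled-euler-W-perturbation-estimate-form-1}) against $\nu^i=\theta^i_\tau$ and integrate over $\Omega$, then reorganize by introducing the proposed $\theta$-energy densities by hand through direct time differentiation. Any terms not fitting the advertised $\frac{d}{d\tau}$-of-energy structure or the damping $\mathbb{D}_0$ are then collected into $\mathcal{R}(0)$.

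I would first handle the $\nu$-evolution term: the integrand $\frac{1}{\delta}e^{\sigma_1\tau}W^\alpha(\nu^i_\tau+\nu^i)\nu^i$ rewrites, via the product rule against the $e^{\sigma_1\tau}$ weight, as $\frac{1}{2\delta}\partial_\tau\bigl(e^{\sigma_1\tau}W^\alpha|\nu|^2\bigr)+\frac{1}{2\delta}(2-\sigma_1)e^{\sigma_1\tau}W^\alpha|\nu|^2$, producing after integration the $|\nu|^2$ piece of the total derivative on the left of (\ref{zero-order-energy-identity-statement}) together with the $\nu$-part $\frac{1}{2\delta}(2-\sigma_1)\int e^{\sigma_1\tau}W^\alpha|\nu|^2\,dx$ of $\frac{1}{2}\mathbb{D}_0$. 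For the pressure term, integration by parts in space, with vanishing boundary contribution since $W|_{\partial\Omega}=0$, yields $-\int e^{-\sigma_2\tau}W^{1+\alpha}\pjac\,\ninv^k_i\partial_k\nu^i\,dx = -\int e^{-\sigma_2\tau}W^{1+\alpha}\pjac\,\ndiv\nu\,dx$.

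Next I would compute $\frac{d}{d\tau}$ of the proposed $\theta$-energy $\int e^{-\sigma_2\tau}W^{1+\alpha}\pjac\bigl(\tfrac{1}{2}|\ngrad\theta|^2+\tfrac{1}{2\alpha}|\ndiv\theta|^2-\tfrac{1}{4}|\nCurl\theta|^2\bigr)dx$ directly via the product rule. The $-\sigma_2$-contributions from the $e^{-\sigma_2\tau}$ weight cancel, by design, against the $\theta$-part of $\frac{1}{2}\mathbb{D}_0$ and the separately written $-\frac{\sigma_2}{4}\int e^{-\sigma_2\tau}W^{1+\alpha}\pjac|\nCurl\theta|^2\,dx$ in (\ref{zero-order-energy-identity-statement}). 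The $\partial_\tau\pjac$-contributions, after inserting $\partial_\tau\pjac=-\frac{1}{\alpha}\mathscr{J}^{-(1+1/\alpha)}\partial_\tau\mathscr{J}$, produce the three $\mathscr{J}^{-(1+1/\alpha)}\partial_\tau\mathscr{J}$-weighted terms of $\mathcal{R}(0)$. For the $\pjac\,\partial_\tau$-contributions on the $\theta$-densities themselves I would expand via $\partial_\tau[\ngrad\theta]^i_j=\partial_\tau(\ninv^k_j)\partial_k\theta^i+[\ngrad\nu]^i_j$ together with the analogous formulas for $\ndiv\theta$ and $\nCurl\theta$, using $\partial_\tau\ninv^k_j=-\ninv^k_r\partial_s\nu^r\ninv^s_j$. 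The $\partial_\tau\ninv$ contributions produce the Term~5, Term~7 and the $\partial_\tau\ninv$-piece of Term~9 entries of $\mathcal{R}(0)$, while the $[\ngrad\nu]$-contractions form the main quadratic remainder.

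The main obstacle is reconciling this main quadratic with the pressure term. Exploiting the algebraic identity $|\ngrad\theta|^2-\tfrac{1}{2}|\nCurl\theta|^2=[\ngrad\theta]^i_j[\ngrad\theta]^j_i$ (from the Frobenius decomposition of $\ngrad\theta$ into symmetric and antisymmetric parts), the combined quadratic contribution from $\tfrac{1}{2}|\ngrad\theta|^2$ and $-\tfrac{1}{4}|\nCurl\theta|^2$ collapses to $[\ngrad\theta]^i_j[\ngrad\nu]^j_i$; together with the $\tfrac{1}{\alpha}\ndiv\theta\cdot\ndiv\nu$ piece that splits off from Term~9, the net main quadratic integrand is $\pjac\bigl([\ngrad\theta]^i_j[\ngrad\nu]^j_i+\tfrac{1}{\alpha}\ndiv\theta\,\ndiv\nu\bigr)$. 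To match this against the pressure integrand $-\pjac\,\ninv^k_i\partial_k\nu^i$, I would split $\pjac\,\ninv^k_i=\delta^k_i+(\pjac-1)\delta^k_i+\pjac(\ninv^k_i-\delta^k_i)$: the $\delta^k_i$ piece, after another integration by parts against $\partial_k W^{1+\alpha}=(1+\alpha)W^\alpha\partial_k W$, reproduces Term~1; the $(\pjac-1)$ piece is Term~3 as written; and the residual $\pjac(\ninv^k_i-\delta^k_i)\partial_k\nu^i$, combined with the main quadratic, reduces via the pointwise algebraic identity $(\ninv^k_i-\delta^k_i)\partial_k\nu^i + [\ngrad\theta]^i_j[\ngrad\nu]^j_i = -\ninv^k_j\ninv^l_p\partial_i\theta^p\partial_l\theta^j\partial_k\nu^i$ (which follows from $\ninv^k_i-\delta^k_i=-\ninv^k_s\partial_i\theta^s$, itself a consequence of $\mathscr{M}\ninv=I$) to the cubic Term~2 of $\mathcal{R}(0)$. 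The bulk of the work lies in this index-level algebraic bookkeeping and careful sign-tracking.
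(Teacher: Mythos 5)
Your approach is a legitimate and genuinely different route from the paper's. The paper linearises $\pjac\ninv^k_i - \delta^k_i$ inside the pressure term \emph{before} moving to energy identities, splitting it pointwise into $\ngrad$-, $\nCurl$- and cubic pieces and then converting the resulting quadratics into exact $\tau$-derivatives via three separate identities (one each for $\ngrad$, $\ndiv$, $\nCurl$), with a hand-inserted antisymmetrisation argument for the $\nCurl$ piece. You instead test first, integrate by parts once, and then reconstruct the $\theta$-energy densities via direct differentiation, using the Frobenius decomposition $\tfrac12|\ngrad\theta|^2 - \tfrac14|\nCurl\theta|^2 = \tfrac12[\ngrad\theta]^i_j[\ngrad\theta]^j_i$ to package the $\ngrad$ and $\nCurl$ pieces into a single, easily differentiated quadratic whose main $\nu$-contribution collapses to $[\ngrad\theta]^j_i[\ngrad\nu]^i_j$. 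This removes the need for the paper's explicit $\nCurl$-antisymmetrisation lemma and produces the cubic remainder (your Term~2) through a single pointwise identity that I verified: with $\ninv^k_i-\delta^k_i = -\ninv^k_s\partial_i\theta^s$ one indeed gets $(\ninv^k_i-\delta^k_i)\partial_k\nu^i + [\ngrad\theta]^i_j[\ngrad\nu]^j_i = -\ninv^k_j\ninv^l_p\partial_i\theta^p\partial_l\theta^j\partial_k\nu^i$. You correctly note that the $\tfrac1\alpha\ndiv\theta\cdot\ndiv\nu$ quadratic is \emph{not} absorbed into the pressure term but instead appears directly as part of Term~9 $= \tfrac1{2\alpha}\pjac\,\partial_\tau(|\ndiv\theta|^2)$.

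One caveat on bookkeeping that you should be aware of before claiming your route reproduces $\mathcal{R}(0)$ term-by-term. Differentiating $\pjac\cdot\bigl(\tfrac12[\ngrad\theta]^i_j[\ngrad\theta]^j_i\bigr)$ yields the weighted term $+\tfrac1{4\alpha}\mathscr{J}^{-(1+1/\alpha)}\dell_\tau\mathscr{J}\,|\nCurl\theta|^2$ (positive sign, since $(\dell_\tau\pjac)\cdot(-\tfrac14|\nCurl\theta|^2) = +\tfrac1{4\alpha}\njac^{-(1+1/\alpha)}\dell_\tau\njac|\nCurl\theta|^2$), whereas the $\mathcal{R}(0)$ as displayed in~\eqref{zero-order-remainder-statement} has a minus sign in front of its $|\nCurl\theta|^2\,\dell_\tau\njac$ term, and similarly the $\sum_{i>j}$ term in~\eqref{zero-order-remainder-statement} has the opposite sign to what both your derivation and a direct expansion of the paper's $\nCurl$-identity actually produce once those pieces are moved to the remainder side. (Comparing with the higher-order remainder~\eqref{higher-order-energy-remainder-statement}, which carries $+\tfrac1{4\alpha}$ and $-\sum_{i>j}$, strongly suggests the zero-order display~\eqref{zero-order-remainder-statement} contains a sign typo in those two terms.) So your claim that the $\dell_\tau\pjac$-contributions ``produce the three $\mathscr{J}^{-(1+1/\alpha)}\dell_\tau\mathscr{J}$-weighted terms of $\mathcal{R}(0)$'' and that the $\dell_\tau\ninv$-contributions yield ``Term~5, Term~7'' is not literally verbatim-true against~\eqref{zero-order-remainder-statement}: your single compact remainder $\pjac[\ngrad\theta]^j_i\dell_\tau\ninv^k_j\dell_k\theta^i$ is an algebraically equivalent repackaging of Terms~5 and~7 (via the antisymmetrisation identity $\pjac[\nCurl\theta]^j_i\dell_\tau\ninv^k_j\dell_k\theta^i = -\pjac\sum_{i>j}[\dell_\tau\ninv^k_i\dell_k\theta^j - \dell_\tau\ninv^k_j\dell_k\theta^i][\nCurl\theta]^j_i$), rather than those terms themselves. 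If you want to exhibit $\mathcal{R}(0)$ in exactly the form of the statement, you need this one extra step; otherwise your route is sound and arguably cleaner.
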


\begin{proof} Consider (\ref{rescaled-euler-W-perturbation-estimate-form-1}) in a modified form:
\begin{align}
    \frac{1}{\delta}e^{\sigma_{1}\tau}W^{\alpha}\left(\theta^{i}_{\tau\tau}+\theta^{i}_{\tau}\right)+e^{-\sigma_{2}\tau}\dell_{k}\left(W^{1+\alpha}\left(\ninv^{k}_{i}\pjac-\delta^{k}_{i}\right)\right)=-(1+\alpha)e^{-\sigma_{2}\tau}W^{\alpha}\dell_{i}W.\label{rescaled-euler-W-perturbation-estimate-form-1-modified-zero-order}
\end{align}
We multiply this expression by $\theta^{i}_{\tau}$, integrate over $\Omega$, and use integration by parts to 
\begin{align*}
    &\frac{1}{2}\frac{d}{d\tau}\left(\spaceI\frac{1}{\delta} e^{\sigma_{1}\tau}W^{\alpha}\left|\nu\right|^{2}dx\right)+\frac{1}{2}\left(2-\sigma_{1}\right)\spaceI\frac{1}{\delta} e^{\sigma_{1}\tau} W^{\alpha}\left|\nu\right|^{2}dx-\spaceI e^{-\sigma_{2}\tau} W^{1+\alpha}\left(\ninv^{k}_{i}\pjac-\delta^{k}_{i}\right)\dell_{k}\theta^{i}_{\tau}dx\nonumber\\
    &=-(1+\alpha)\spaceI e^{-\sigma_{2}\tau}W^{\alpha}\dell_{i}(W)\theta^{i}_{\tau}dx,
\end{align*}
where for the third term on the left hand side, we have used the divergence theorem, and noted that $W$ is $0$ on the boundary of the domain.

Now,
\begin{align*}
    \ninv^{k}_{i}\pjac-\delta^{k}_{i}=\left(\ninv^{k}_{i}-\delta^{k}_{i}\right)\pjac+\delta^{k}_{i}\left(\pjac-1\right).
\end{align*}
So we can write
\begin{align*}
    &\frac{1}{2}\frac{d}{d\tau}\left(\spaceI\frac{1}{\delta} e^{\sigma_{1}\tau}W^{\alpha}\left|\nu\right|^{2}dx\right)+\frac{1}{2}\left(2-\sigma_{1}\right)\spaceI\frac{1}{\delta}e^{\sigma_{1}\tau} W^{\alpha}\left|\nu\right|^{2}dx-\spaceI e^{-\sigma_{2}\tau}W^{1+\alpha} \left(\ninv^{k}_{i}-\delta^{k}_{i}\right)\pjac\dell_{k}\theta^{i}_{\tau}dx\nonumber\\
    &=-(1+\alpha)\spaceI e^{-\sigma_{2}\tau}W^{\alpha}\dell_{i}(W)\nu^{i}dx+\spaceI e^{-\sigma_{2}\tau}W^{1+\alpha}\left(\pjac-1\right)\dell_{i}\nu^{i}dx.
\end{align*}
Note that $\left(\ninv^{k}_{i}-\delta^{k}_{i}\right)=\ninv^{k}_{j}\left(\delta^{j}_{i}-\ncov^{j}_{i}\right)=\ninv^{k}_{j}\left([\nabla x]^{j}_{i}-[\nabla\zeta]^{j}_{i}\right)$.

Recall that $\theta=\zeta-x$, so
\begin{align*}
    \left(\ninv^{k}_{i}-\delta^{k}_{i}\right)=-\ninv^{k}_{j}[\nabla\theta]^{j}_{i}.
\end{align*}
We can further rewrite this as
\begin{align*}
    \left(\ninv^{k}_{i}-\delta^{k}_{i}\right)&=-\ninv^{k}_{j}[\nabla\theta]^{i}_{j}-\ninv^{k}_{j}[\Curl{\theta}]^{j}_{i}\nonumber\\
    &=-\ninv^{k}_{j}[\ngrad{\theta}]^{i}_{j}-\ninv^{k}_{j}\ninv^{l}_{p}[\nabla\theta]^{p}_{j}[\nabla\theta]^{i}_{l}-\ninv^{k}_{j}[\Curl{\theta}]^{j}_{i}.
\end{align*}
Next, note that
\begin{align*}
    [\Curl{\theta}]^{j}_{i}=[\nCurl{\theta}]^{j}_{i}+\ninv^{l}_{p}[\nabla\theta]^{p}_{i}[\nabla\theta]^{j}_{l}-\ninv^{l}_{p}[\nabla\theta]^{p}_{j}[\nabla\theta]^{i}_{l}.
\end{align*}
Hence we can finally write
\begin{align*}
     \left(\ninv^{k}_{i}-\delta^{k}_{i}\right)=-\ninv^{k}_{j}[\ngrad{\theta}]^{i}_{j}-\ninv^{k}_{j}[\nCurl{\theta}]^{j}_{i}-\ninv^{k}_{j}\ninv^{l}_{p}[\nabla\theta]^{p}_{i}[\nabla\theta]^{j}_{l}.
\end{align*}
So we obtain
\begin{align*}
    &\frac{1}{2}\frac{d}{d\tau}\left(\spaceI\frac{1}{\delta} e^{\sigma_{1}\tau}W^{\alpha}\left|\nu\right|^{2}dx\right)+\frac{1}{2}\left(2-\sigma_{1}\right)\spaceI\frac{1}{\delta}e^{\sigma_{1}\tau} W^{\alpha}\left|\nu\right|^{2}dx+\spaceI e^{-\sigma_{2}\tau}W^{1+\alpha} \pjac[\ngrad{\theta}]^{i}_{j}\ninv^{k}_{j}\dell_{k}\theta^{i}_{\tau}dx\\
    &+\spaceI e^{-\sigma_{2}\tau}W^{1+\alpha} \pjac[\nCurl{\theta}]^{j}_{i}\ninv^{k}_{j}\dell_{k}\theta^{i}_{\tau}dx=-(1+\alpha)\spaceI e^{-\sigma_{2}\tau}W^{\alpha}\dell_{i}(W)\nu^{i}dx\\
    &+\spaceI e^{-\sigma_{2}\tau}W^{1+\alpha}\left(\pjac-1\right)\dell_{i}\nu^{i}dx-\spaceI e^{-\sigma_{2}\tau}W^{1+\alpha}\pjac\ninv^{k}_{j}\ninv^{l}_{p}[\nabla\theta]^{p}_{i}[\nabla\theta]^{j}_{l}\dell_{k}\nu^{i}dx.
\end{align*}
Now, note that
\begin{align*}
    e^{-\sigma_{2}\tau}W^{1+\alpha} \pjac[\ngrad{\theta}]^{i}_{j}\ninv^{k}_{j}\dell_{k}\theta^{i}_{\tau}&=\frac{1}{2}\dell_{\tau}\left(e^{-\sigma_{2}\tau} W^{1+\alpha}\pjac|\ngrad{\theta}|^{2}\right)+\frac{\sigma_{2}}{2}e^{-\sigma_{2}\tau}W^{1+\alpha}\pjac|\ngrad{\theta}|^{2}\nonumber\\
    &+\frac{1}{2\alpha}e^{-\sigma_{2}\tau} W^{1+\alpha}\njac^{-(1+\frac{1}{\alpha})}\dell_{\tau}\njac|\ngrad{\theta}|^{2}-e^{-\sigma_{2}\tau} W^{1+\alpha}\pjac[\ngrad{\theta}]^{i}_{j}\dell_{\tau}\left(\ninv^{k}_{j}\right)\dell_{k}\theta^{i}.
\end{align*}
For the $\nCurl$ term we use an antisymmetrisation argument that gives
\begin{align*}
    &e^{-\sigma_{2}\tau} W^{1+\alpha}\pjac\ninv^{k}_{j}\dell_{k}\theta^{i}_{\tau}\left[\nCurl{\theta}\right]^{j}_{i}=-\frac{1}{4}\dell_{\tau}\left(e^{-\sigma_{2}\tau} W^{1+\alpha}\pjac\left|\nCurl{\theta}\right|^{2}\right)-\frac{\sigma_{2}}{4}e^{-\sigma_{2}\tau}W^{1+\alpha}\pjac\left|\nCurl{\theta}\right|^{2}\nonumber\\
    &-\frac{1}{4\alpha}e^{-\sigma_{2}\tau}W^{1+\alpha}\njac^{-\left(1+\frac{1}{\alpha}\right)}\dell_{\tau}\njac\left|\nCurl{\theta}\right|^{2}+\sum_{i>j}e^{-\sigma_{2}\tau} W^{1+\alpha}\pjac\left[\dell_{\tau}\ninv^{k}_{i}\dell_{k}\left(\theta^{j}\right)-\dell_{\tau}\ninv^{k}_{j}\dell_{k}\left(\theta^{i}\right)\right]\left[\nCurl{\theta}\right]^{j}_{i}.
\end{align*}
We also insert corresponding $\ndiv$ terms, as all higher order energy terms include such expressions:
\begin{align*}
    &\frac{1}{2\alpha}\dell_{\tau}\left(e^{-\sigma_{2}\tau} W^{1+\alpha}\pjac|\ndiv{\theta}|^{2}\right)+\frac{\sigma_{2}}{2\alpha}e^{-\sigma_{2}\tau} W^{1+\alpha}\pjac|\ndiv{\theta}|^{2}\\
    =&-\frac{1}{2\alpha^{2}}e^{-\sigma_{2}\tau} W^{1+\alpha}\njac^{-(1+\frac{1}{\alpha})}\dell_{\tau}\njac|\ndiv{\theta}|^{2}+\frac{1}{2\alpha}e^{-\sigma_{2}\tau} W^{1+\alpha}\pjac\dell_{\tau}\left(|\ndiv{\theta}|^{2}\right).
\end{align*}
Taking all of this in to account gives us
\begin{align}
    &\frac{d}{d\tau}\left(\frac{1}{2\delta}\spaceI e^{\sigma_{1}\tau}W^{\alpha}\left|\nu\right|^{2}dx+\spaceI e^{-\sigma_{2}\tau} W^{1+\alpha} \pjac\left(\frac{1}{2}|\ngrad{\theta}|^{2}+\frac{1}{2\alpha}|\ndiv{\theta}|^{2}\right)dx\right)+\frac{1}{2}\mathbb{D}_{0}\nonumber\\
    &-\frac{d}{d\tau}\left(\frac{1}{4}\spaceI e^{-\sigma_{2}\tau}W^{1+\alpha} \pjac|\nCurl{\theta}|^{2}dx\right)-\frac{\sigma_{2}}{4}\spaceI W^{1+\alpha} \pjac|\nCurl{\theta}|^{2}dx=\mathcal{R}(0),\label{zero-order-energy}
\end{align}
where $\mathbb{D}_{0}=\mathbb{D}^{\psi}_{(0,\underline{0})}+\mathbb{D}^{\bar{\psi}}_{\underline{0}}$, and the remainder $\mathcal{R}(0)$ is given in $(\ref{zero-order-remainder-statement})$, as required.
\end{proof}

Now we move on to the estimates.

\begin{theorem}[Zero Order Energy Estimates]\label{zero-order-energy-estimates-theorem} Let $(\nu,\theta)$ be the solution to~\eqref{E:EULERNEW2} on $[0,T]$ in the sense of Theorem~\ref{local-well-posedness-theorem}, for some $N\geq2\ceil*{\alpha}+12$. Suppose the a priori assumptions $(\ref{a-priori-assumptions})$ hold, and that $\nabla W\in\X{N}$. Then for all $0\leq\tau_{1}\leq\tau\leq T$, we have
	\begin{align}
	\left|\int_{\tau_{1}}^{\tau}\rem(0)d\tau'\right|\lesssim\int_{\tau_{1}}^{\tau}\sqrt{\delta}e^{-\frac{\sigma_{1}}{2}\tau'}\mathscr{S}_{N}(\tau')^{1/2}d\tau'+\int_{\tau_{1}}^{\tau}e^{-\frac{\sigma_{1}}{2}\tau'}\left(\mathscr{S}_{N}(\tau')+\mathscr{S}_{N}(\tau')^{3/2}\right)d\tau'.\label{zero-order-energy-estimates}
	\end{align}
\end{theorem}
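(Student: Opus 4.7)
The plan is to estimate the nine terms comprising $\rem(0)$ in~\eqref{zero-order-remainder-statement} individually, grouped by order of nonlinearity in the perturbations $(\nu,\theta)$, and then integrate the bounds over $\tau'\in[\tau_{1},\tau]$. Three tools will be invoked repeatedly: (i) the pointwise identities $|\ninv|,\pjac\lesssim 1$, $\pjac-1=O(\nabla\theta)$, $\dell_{\tau}\njac=\njac\,\ninv^{s}_{r}\dell_{s}\nu^{r}$ and $\dell_{\tau}\ninv\sim\ninv\cdot\nabla\nu\cdot\ninv$, all coming from the a priori assumptions~\eqref{a-priori-assumptions} and the differentiation formulas~\eqref{inverse-differentiation-zeta}--\eqref{jacobian-differentiation-zeta}; (ii) the bounds $\|\nu\|_{\X{0}}^{2}\lesssim\delta\,e^{-\sigma_{1}\tau}\mathscr{S}_{N}(\tau)$ and $\|\theta\|_{\Y{0}{\ngrad}}^{2}+\tfrac{1}{\alpha}\|\theta\|_{\Y{0}{\ndiv}}^{2}\lesssim e^{\sigma_{2}\tau}\mathscr{S}_{N}(\tau)$ read off directly from Definition~\ref{energy-function-for-solutions}; and (iii) the $L^{\infty}$--Sobolev embedding already used to produce~\eqref{Dnu-estimate}, which for $N\ge 2\ceil*{\alpha}+12$ yields $\|\nabla\nu\|_{L^{\infty}(\Omega)}\lesssim\sqrt{\delta}\,e^{-\sigma_{1}\tau/2}\mathscr{S}_{N}(\tau)^{1/2}$.

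The unique linear term, $-(1+\alpha)\spaceI e^{-\sigma_{2}\tau}W^{\alpha}\dell_{i}W\,\nu^{i}\,dx$, is handled by Cauchy--Schwarz with the $W^{\alpha}$ weight:
\[
\Bigl|\spaceI e^{-\sigma_{2}\tau}W^{\alpha}\dell_{i}W\,\nu^{i}\,dx\Bigr|
\lesssim e^{-\sigma_{2}\tau}\|\nabla W\|_{\X{0}}\|\nu\|_{\X{0}}
\lesssim \sqrt{\delta}\,e^{-\sigma_{1}\tau/2}\mathscr{S}_{N}(\tau)^{1/2},
\]
using $\|\nabla W\|_{\X{N}}\lesssim 1$ and $e^{-\sigma_{2}\tau}\le 1$; this produces the first integrand on the right-hand side of~\eqref{zero-order-energy-estimates}. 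The genuinely quadratic term $\spaceI e^{-\sigma_{2}\tau}W^{1+\alpha}(\pjac-1)\dell_{i}\nu^{i}\,dx$ is controlled by writing $\pjac-1=O(\nabla\theta)$, placing $\nabla\nu$ in $L^{\infty}$, and applying Cauchy--Schwarz to $\spaceI W^{1+\alpha}|\nabla\theta|\,dx$ together with $\spaceI W^{1+\alpha}\,dx\lesssim 1$; the weights collapse to $\sqrt{\delta}\,e^{-(\sigma_{1}+\sigma_{2})\tau/2}\mathscr{S}_{N}(\tau)\lesssim e^{-\sigma_{1}\tau/2}\mathscr{S}_{N}(\tau)$.

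The seven remaining summands are cubic (the last one, involving $\dell_{\tau}(|\ndiv\theta|^{2})$, produces an additional quadratic piece of the same shape as the term treated above after expanding $\dell_{\tau}\ndiv\theta=\ndiv\nu+O(\nabla\theta\cdot\nabla\nu)$) and all share the common schematic form
\[
\spaceI e^{-\sigma_{2}\tau}W^{1+\alpha}\cdot (L^{\infty}\text{-bounded tensor})\cdot|\nabla\nu|\cdot|\mathscr{D}\theta|^{2}\,dx,\qquad \mathscr{D}\in\{\ngrad,\ndiv,\nCurl\}.
\]
At zeroth order the pointwise inequality $|\nCurl\theta|\le 2|\ngrad\theta|$ makes separate curl estimates unnecessary here, so the machinery of Theorems~\ref{Curl-estimates-theorem-nu}--\ref{Curl-estimates-theorem-theta} is not needed at this step. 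Placing $\nabla\nu$ in $L^{\infty}$ and identifying the remaining integrals with $\|\theta\|_{\Y{0}{\ngrad}}^{2}$ or $\|\theta\|_{\Y{0}{\ndiv}}^{2}$, the weights combine as $e^{-\sigma_{2}\tau}\cdot\sqrt{\delta}\,e^{-\sigma_{1}\tau/2}\mathscr{S}_{N}^{1/2}\cdot e^{\sigma_{2}\tau}\mathscr{S}_{N}=\sqrt{\delta}\,e^{-\sigma_{1}\tau/2}\mathscr{S}_{N}^{3/2}$, matching the $\mathscr{S}_{N}^{3/2}$ contribution on the right-hand side of~\eqref{zero-order-energy-estimates}. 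Integrating all the pointwise-in-$\tau$ bounds over $\tau'\in[\tau_{1},\tau]$ concludes the proof.

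The main obstacle is not any individual inequality but the delicate tracking of time weights: the growth factor $e^{\sigma_{2}\tau}$ carried by $\|\theta\|_{\Y{0}{\ngrad}}^{2}$ (present only when $\gamma>\tfrac{5}{3}$, i.e.\ $\sigma_{2}>0$) must be exactly cancelled by the prefactor $e^{-\sigma_{2}\tau}$ sitting in every summand of $\rem(0)$, while the extra $\sqrt{\delta}$ emerging from $\|\nu\|_{\X{0}}$ is what upgrades the linear contribution to the $\sqrt{\delta}$ prefactor on the right-hand side of~\eqref{zero-order-energy-estimates}. This rigid weight-accounting, rather than any new analytic inequality, is the crux of the argument.
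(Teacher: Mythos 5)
Your proof is correct and follows essentially the same route as the paper: isolate the linear source term and control it via weighted Cauchy--Schwarz to harvest the crucial $\sqrt{\delta}$ from $\|\nu\|_{\X{N}}$, treat $\pjac-1=O(\nabla\theta)$ for the quadratic term, reduce every $\dell_\tau\njac$ and $\dell_\tau\ninv$ factor to $\ninv\cdot\nabla\nu$, place one $\nabla\nu$ (or $\nabla\theta$) in $L^\infty$ via the Sobolev/Hardy embedding underlying~\eqref{Dnu-estimate}, identify the remaining weighted integrals with $\|\theta\|_{\Y{0}{\ngrad}}^2$ or $\|\theta\|_{\Y{0}{\ndiv}}^2$, and observe that the $e^{-\sigma_2\tau}$ prefactor exactly absorbs the $e^{\sigma_2\tau}$ growth in the $\Y$-norms. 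The only cosmetic difference is that for the genuinely cubic term $\spaceI W^{1+\alpha}\pjac\ninv\ninv[\nabla\theta]^2\nabla\nu$ the paper places one copy of $\nabla\theta$ in $L^\infty$ and double Cauchy--Schwarzes the remaining $\nabla\theta\cdot\nabla\nu$, gaining an extra (unneeded) power of $\delta$; your uniform choice of putting $\nabla\nu$ in $L^\infty$ for all the cubic pieces gives the same exponent $e^{-\sigma_1\tau/2}$ and is equally valid for the stated estimate.
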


\begin{proof}
	Let us first look at the quantity
	\begin{align*}
	\left|\int_{\tau_{1}}^{\tau}\spaceI e^{-\sigma_{2}\tau'}W^{\alpha}\dell_{i}(W)\nu^{i}dxd\tau'\right|.
	\end{align*}
	Using Cauchy-Schwarz, we have
	\begin{align*}
	\left|\int_{\tau_{1}}^{\tau}\spaceI e^{-\sigma_{2}\tau'}W^{\alpha}\dell_{i}(W)\nu^{i}dxd\tau'\right|\lesssim \int_{\tau_{1}}^{\tau}e^{-\sigma_{2}\tau'}\left\|W^{\frac{\alpha}{2}}\nabla W\right\|_{L^{2}(\Omega)}\left\|W^{\frac{\alpha}{2}}\nu\right\|_{L^{2}(\Omega)}d\tau'.
	\end{align*}
	To bound the $\nabla W$ term, we have
	\begin{align}
	\left\|W^{\frac{\alpha}{2}}\nabla W\right\|_{L^{2}(\Omega)}^{2}&=\spaceI\psi W^{\alpha}\left|\nabla W\right|^{2}dx+\spaceI\bar{\psi}W^{\alpha}\left|\nabla W\right|^{2}dx\nonumber\\
	&\leq\left\|\nabla W\right\|_{\X{N}}^{2}.\label{L-2-nabla-W-estimate}
	\end{align}
	The first equality is since $\psi+\bar{\psi}=1$, and the last inequality is by definition of the space $\X{N}$. Since $\nabla W\in\X{N}$, this quantity is bounded. Similarly,
	\begin{align*}
	\left\|W^{\frac{\alpha}{2}}\nu\right\|_{L^{2}(\supp{\psi})}^{2}\leq\left\|\nu\right\|_{\X{N}}^{2}\leq\delta e^{-\sigma_{1}\tau}\mathscr{S}_{N}(\tau).
	\end{align*}
	These bounds give
	\begin{align}
	\int_{\tau_{1}}^{\tau}e^{-\sigma_{2}\tau'}\left\|W^{\frac{\alpha}{2}}\nabla W\right\|_{0}\left\|W^{\frac{\alpha}{2}}\nu\right\|_{0}d\tau'\lesssim \int_{\tau_{1}}^{\tau}\sqrt{\delta}e^{-\frac{1}{2}(2\sigma_{2}+\sigma_{1})\tau'}\mathscr{S}_{N}(\tau')^{1/2}d\tau'.\label{zero-order-energy-estimate-1-source-term}
	\end{align}
	Next we have
	\begin{align*}
	\left|\int_{\tau_{1}}^{\tau}\spaceI e^{-\sigma_{2}\tau'}W^{1+\alpha}\pjac\ninv^{k}_{j}\ninv^{l}_{p}[\nabla\theta]^{p}_{i}[\nabla\theta]^{j}_{l}\dell_{k}\nu^{i}dxd\tau'\right|,
	\end{align*}
	which is controlled by the quantity
	\begin{align}
	\int_{\tau_{1}}^{\tau}\spaceI e^{-\sigma_{2}\tau'}W^{1+\alpha}\pjac\left|\ninv\right|^{2}\left|\nabla\theta\right|^{2}\left|\nabla\nu\right|dxd\tau'.\label{zero-order-nabla-theta-squared-nabla-nu-integral}
	\end{align}
	This can be written as
	\begin{align}
	\int_{\tau_{1}}^{\tau}\spaceI\psi e^{-\sigma_{2}\tau'}W^{1+\alpha}\pjac\left|\ninv\right|^{2}\left|\nabla\theta\right|^{2}\left|\nabla\nu\right|dxd\tau'+\int_{\tau_{1}}^{\tau}\spaceI\bar{\psi} e^{-\sigma_{2}\tau'}W^{1+\alpha}\pjac\left|\ninv\right|^{2}\left|\nabla\theta\right|^{2}\left|\nabla\nu\right|dxd\tau'.\label{zero-order-integral-split-into-supp-psi-supp-psi-bar}
	\end{align}
	The $\ninv$ and $\pjac$ terms can be bounded in $L^{\infty}$ using the a priori assumptions in $(\ref{a-priori-assumptions})$. For the right hand side integral above, we bound one of the copies of $\nabla\theta$ in $L^{\infty}$. To do this, we first employ Sobolev embedding $H^2(\supp{\bar{\psi}})\hookrightarrow L^\infty(\supp{\bar{\psi}})$ to obtain
	\begin{align}
	\left\|\nabla\theta\right\|_{L^{\infty}(\supp{\bar{\psi})}}\lesssim \left\|\nabla\theta\right\|_{H^{2}(\supp{\bar{\psi})}}\lesssim\left\|\theta\right\|_{H^{3}(\supp{\bar{\psi}})}.\label{nabla-theta-L-infinity-H2-embedding-1}
	\end{align}
	Then we have
	\begin{align}
	\left\|\theta\right\|_{H^{3}(\supp{\bar{\psi}})}^{2}=\sum_{|\underline{k}|=0}^{3}\int_{\supp{\bar{\psi}}}\left|\nabla^{\underline{k}}\theta\right|^{2}dx&=\sum_{|\underline{k}|=0}^{3}\int_{\supp{\bar{\psi}}}\psi\left|\nabla^{\underline{k}}\theta\right|^{2}dx+\sum_{|\underline{k}|=0}^{3}\int_{\supp{\bar{\psi}}}\bar{\psi}\left|\nabla^{\underline{k}}\theta\right|^{2}dx\nonumber\\
	&=\sum_{|\underline{k}|=0}^{3}\int_{\supp{\bar{\psi}}\cap\supp{\psi}}\psi\left|\nabla^{\underline{k}}\theta\right|^{2}dx+\sum_{|\underline{k}|=0}^{3}\int_{\supp{\bar{\psi}}}\bar{\psi}\left|\nabla^{\underline{k}}\theta\right|^{2}dx.\label{nabla-theta-L-infinity-H2-embedding-2}
	\end{align}
	Note that $\supp{\bar{\psi}}\cap\supp{\psi}$ is removed from both the origin, and the vacuum boundary. On this region, we can use Lemma \ref{Dk-to-Ndell-lemma} to write 
	\begin{align*}
	\nabla^{\underline{k}}=\sum_{m+|\underline{n}|=0}^{|\underline{k}|}\mathcal{Z}_{(m,\underline{n})}\Ndell{m}{n},
	\end{align*}
	for some smooth functions $\mathcal{Z}_{(m,\underline{n})}$. Also on this region, as well as on $\supp{\bar{\psi}}$, $W\sim1$, so we can adjust the powers of $W$ in the integrals with freedom. Hence we have the bound
	\begin{align}
	\left\|\theta\right\|_{H^{3}(\supp{\bar{\psi}})}^{2}&\lesssim \sum_{m+|\underline{n}|=0}^{3}\int_{\supp{\bar{\psi}}\cap\supp{\psi}}\psi W^{\alpha+m}\left|\Ndell{m}{n}\theta\right|^{2}dx+\sum_{|\underline{k}|=0}^{3}\int_{\supp{\bar{\psi}}}\bar{\psi}W^{\alpha}\left|\nabla^{\underline{k}}\theta\right|^{2}dx\nonumber\\
	&\lesssim \sum_{m+|\underline{n}|=0}^{3}\spaceI\psi W^{\alpha+m}\left|\Ndell{m}{n}\theta\right|^{2}dx+\sum_{|\underline{k}|=0}^{3}\spaceI\bar{\psi}W^{\alpha}\left|\nabla^{\underline{k}}\theta\right|^{2}dx\nonumber\\
	&\lesssim\left\|\theta\right\|_{\X{N}}^{2}.\label{nabla-theta-L-infinity-H2-embedding-3}
	\end{align}
	Moreover,
	\begin{align*}
	\left(\spaceI\bar{\psi}W^{1+\alpha}\left|\nabla\theta\right|^{2}dx\right)^{1/2}\left(\spaceI\bar{\psi}W^{1+\alpha}\left|\nabla\nu\right|^{2}dx\right)^{1/2}\lesssim\left\|\theta\right\|_{\X{N}}\left\|\nu\right\|_{\X{N}},
	\end{align*}
	by definition of the space $\X{N}$, and since $W\sim1$ on $\supp{\bar{\psi}}$. Therefore,
	\begin{align}
	\int_{\tau_{1}}^{\tau}\spaceI\bar{\psi} e^{-\sigma_{2}\tau'}W^{1+\alpha}\pjac\left|\ninv\right|^{2}\left|\nabla\theta\right|^{2}\left|\nabla\nu\right|dxd\tau'&\lesssim \int_{\tau_{1}}^{\tau}e^{-\sigma_{2}\tau'}\left\|\theta\right\|_{\X{N}}\spaceI\bar{\psi}W^{1+\alpha}\left|\nabla\theta\right|\left|\nabla\nu\right|dx\nonumber\\
	&\lesssim\int_{\tau_{1}}^{\tau}\delta e^{-\frac{1}{2}(2\sigma_{2}+\sigma_{1})\tau'}\mathscr{S}_{N}(\tau')^{3/2}d\tau',\label{pre-zero-order-energy-estimate-2}
	\end{align}
	where the last inequality comes from Cauchy Schwartz. The corresponding integral on $\supp{\psi}$ in $(\ref{zero-order-integral-split-into-supp-psi-supp-psi-bar})$ is estimated analogously, but on this region we utilise $(\ref{rectangular-as-ang-rad})$ to transform $\nabla$ in to radial and angular derivatives. Finally, we get
	\begin{align}
	\left|\int_{\tau_{1}}^{\tau}\spaceI e^{-\sigma_{2}\tau'}W^{1+\alpha}\pjac\ninv^{k}_{j}\ninv^{l}_{p}[\nabla\theta]^{p}_{i}[\nabla\theta]^{j}_{l}\dell_{k}\nu^{i}dxd\tau'\right|\lesssim\int_{\tau_{1}}^{\tau}\delta e^{-\frac{1}{2}(2\sigma_{2}+\sigma_{1})\tau'}\mathscr{S}_{N}(\tau')^{3/2}d\tau'.\label{zero-order-energy-estimate-2}
	\end{align}
	To bound
	\begin{align*}
	\int_{\tau_{1}}^{\tau}\spaceI e^{-\sigma_{2}\tau'}W^{1+\alpha}\left(\pjac-1\right)\dell_{i}\nu^{i}dxd\tau'
	\end{align*}
	we write $\pjac-1$ as
	\begin{align*}
	\pjac-1=\frac{1}{\alpha}\tr{[\nabla\theta]}+O(\left|\nabla\theta\right|^{2}),
	\end{align*}
	which follows from expanding to first order the following identity
	\begin{align*}
	\pjac=\left(\det{[\nabla\zeta]}\right)^{-1/\alpha}=\left(\det{[I+\nabla\theta]}\right)^{-1/\alpha}.
	\end{align*}
	From this we deduce
	\begin{align}
	\left|\int_{\tau_{1}}^{\tau}\spaceI e^{-\sigma_{2}\tau'}W^{1+\alpha}\left(\pjac-1\right)\dell_{i}\nu^{i}dxd\tau'\right|\lesssim \int_{\tau_{1}}^{\tau}\delta e^{-\frac{1}{2}(2\sigma_{2}+\sigma_{1})\tau'}\mathscr{S}_{N}(\tau')d\tau'.\label{zero-order-energy-estimate-3}
	\end{align}
	Next we look to bound the quantity
	\begin{align}
	&-\frac{1}{2\alpha}\int_{\tau_{1}}^{\tau}\spaceI e^{-\sigma_{2}\tau'} W^{1+\alpha}\njac^{-(1+\frac{1}{\alpha})}\dell_{\tau}\njac|\ngrad{\theta}|^{2}dxd\tau'-\frac{1}{4\alpha}\int_{\tau_{1}}^{\tau}\spaceI e^{-\sigma_{2}\tau'} W^{1+\alpha}\njac^{-\left(1+\frac{1}{\alpha}\right)}\dell_{\tau}\njac\left|\nCurl{\theta}\right|^{2}dxd\tau'\nonumber\\
	&-\frac{1}{2\alpha^{2}}\int_{\tau_{1}}^{\tau}\spaceI e^{-\sigma_{2}\tau'}W^{1+\alpha}\njac^{-(1+\frac{1}{\alpha})}\dell_{\tau}\njac|\ndiv{\theta}|^{2}dxd\tau'.\label{zero-order-dell-tau-jacobian-integrals}
	\end{align}
	Now $\dell_{\tau}\njac=\njac\ninv^{s}_{r}\dell_{s}^{r}$, so we can control the first integral above by
	\begin{align*}
	\int_{\tau_{1}}^{\tau}\spaceI e^{-\sigma_{2}\tau'} W^{1+\alpha}\pjac\left|\ninv\right|\left|\nabla\nu\right||\ngrad{\theta}|^{2}dxd\tau'
	\end{align*}
	The $\ninv$ term can be bounded in $L^{\infty}$ using the a priori assumptions in $(\ref{a-priori-assumptions})$. For $\nabla\nu$, we have
	\begin{align}
	\left\|\nabla\nu\right\|_{L^{\infty}(\Omega)}\lesssim\left\|\nabla\nu\right\|_{L^{\infty}(\supp{\psi})}+\left\|\nabla\nu\right\|_{L^{\infty}(\supp{\bar{\psi}})}.\label{zero-order-Dnu-estimate-L-infinity-1}
	\end{align}
	On the $\supp{\psi}$ term on the right hand side, we can use $(\ref{rectangular-as-ang-rad})$ to turn $\nabla$ in to radial and angular derivatives, and apply Lemma $\ref{L-infinity-energy-space-bound-no-weights}$. On the $\supp{\bar{\psi}}$ term, the bound is analogous to the one shown for $\nabla\theta$ in $(\ref{nabla-theta-L-infinity-H2-embedding-1})-(\ref{nabla-theta-L-infinity-H2-embedding-3})$. So we have the bound
	\begin{align}
	\left\|\nabla\nu\right\|_{L^{\infty}(\Omega)}^{2}\lesssim\delta e^{-\sigma_{1}\tau}\mathscr{S}_{N}(\tau).\label{zero-order-Dnu-estimate-L-infinity-2}
	\end{align}
	This gives
	\begin{align}
	\int_{\tau_{1}}^{\tau}\spaceI e^{-\sigma_{2}\tau} W^{1+\alpha}\pjac\left|\ninv\right|\left|\nabla\nu\right||\ngrad{\theta}|^{2}dx&\lesssim \int_{\tau_{1}}^{\tau}\sqrt{\delta}e^{-\frac{\sigma_{1}}{2}\tau'}\mathscr{S}_{N}(\tau')^{1/2}\spaceI e^{-\sigma_{2}\tau'}W^{1+\alpha}\pjac\left|\ngrad{\theta}\right|^{2}dxd\tau'\nonumber\\
	&\lesssim\int_{\tau_{1}}^{\tau}\sqrt{\delta}e^{-\frac{\sigma_{1}}{2}\tau'}\mathscr{S}_{N}(\tau')^{3/2}d\tau',\label{zero-order-energy-estimate-4}
	\end{align}
	as
	\begin{align*}
	\spaceI e^{-\sigma_{2}\tau'}W^{1+\alpha}\pjac\left|\ngrad{\theta}\right|^{2}dx\lesssim e^{-\sigma_{2}\tau'}\left\|\theta\right\|_{\Y{N}{\ngrad}}\lesssim \mathscr{S}_{N}(\tau').
	\end{align*}
	The remaining two integrals in $(\ref{zero-order-dell-tau-jacobian-integrals})$ have the same bounds.
	
	Finally we have to bound
	\begin{align}
	&\int_{\tau_{1}}^{\tau}\spaceI e^{-\sigma_{2}\tau'} W^{1+\alpha}\pjac[\ngrad{\theta}]^{i}_{j}\dell_{\tau}\left(\ninv^{k}_{j}\right)\dell_{k}\theta^{i}dxd\tau'+\frac{1}{2\alpha}\int_{\tau_{1}}^{\tau}\spaceI e^{-\sigma_{2}\tau'}W^{1+\alpha}\pjac\dell_{\tau}\left(|\ndiv{\theta}|^{2}\right)dxd\tau'\nonumber\\
	&+\sum_{i>j}\int_{\tau_{1}}^{\tau}\spaceI e^{-\sigma_{2}\tau'} W^{1+\alpha}\pjac\left[\dell_{\tau}\ninv^{k}_{i}\dell_{k}\left(\theta^{j}\right)-\dell_{\tau}\ninv^{k}_{j}\dell_{k}\left(\theta^{i}\right)\right]\left[\nCurl{\theta}\right]^{j}_{i}dxd\tau'.\label{zero-order-dell-tau-grad-div-curl-integrals}
	\end{align}
	As $\dell_{\tau}\ninv^{k}_{j}=-\ninv^{k}_{r}\ninv^{s}_{j}\dell_{s}\nu^{r}$, and $\left[\ngrad{\theta}\right]^{i}_{j}=\ninv^{l}_{j}\dell_{l}\theta^{i}$, the first integral above can be controlled by
	\begin{align*}
	\int_{\tau_{1}}^{\tau}\spaceI e^{-\sigma_{2}\tau'}W^{1+\alpha}\pjac\left|\ninv\right|^{3}\left|\nabla\theta\right|^{2}\left|\nabla\nu\right|dxd\tau'.
	\end{align*}
	This can be bounded analogously to $(\ref{zero-order-nabla-theta-squared-nabla-nu-integral})$, so
	\begin{align}
	\left|\int_{\tau_{1}}^{\tau}\spaceI e^{-\sigma_{2}\tau'} W^{1+\alpha}\pjac[\ngrad{\theta}]^{i}_{j}\dell_{\tau}\left(\ninv^{k}_{j}\right)\dell_{k}\theta^{i}dxd\tau'\right|\lesssim\int_{\tau_{1}}^{\tau}\delta e^{-\frac{1}{2}(2\sigma_{2}+\sigma_{1})\tau'}\mathscr{S}_{N}(\tau')^{3/2}d\tau'.\label{zero-order-energy-estimate-5}
	\end{align}
	The other two integrals in $(\ref{zero-order-dell-tau-grad-div-curl-integrals})$ have the same bound.
	
	Combining bounds $(\ref{zero-order-energy-estimate-1-source-term})$, $(\ref{zero-order-energy-estimate-2})$, $(\ref{zero-order-energy-estimate-3})$, $(\ref{zero-order-energy-estimate-4})$, and $(\ref{zero-order-energy-estimate-5})$ gives the result once we bound $e^{-\frac{1}{2}(2\sigma_{2}+\sigma_{1})\tau}$ by $e^{-\frac{\sigma_{1}}{2}\tau}$ wherever necessary.
\end{proof}

\begin{remark}\label{zero-order-energy-estimate-source-term-remark}
	The smallness of $\delta$ plays an important role in $(\ref{zero-order-energy-estimate-1-source-term})$; $\mathscr{S}_{N}(\tau)^{1/2}$ under the $\tau$ integral is not enough on its own to let us close the estimates, but the extra smallness coming from $\sqrt{\delta}$ on this term counteracts this. This is an example of a \textbf{source term} estimate, where all spatial derivatives fall on $W$, and we will need the same argument at higher orders. These source terms are not present in $\cite{HaJa2}$ or $\cite{ShSi2017}$.
\end{remark}

\subsection{Higher Order Energy Estimates}\label{higher-order-energy-section}

Now we move on to higher order terms. First we deal with estimates near the boundary. In this case, it is important to derive a higher order energy identity with remainder terms that have enough powers of the weight $W$ to estimate them in our energy spaces. This is a delicate issue and requires taking advantage of cancellation inherent in the identities $(\ref{commutator-ang-rad})-(\ref{rectangular-as-ang-rad})$. We do this via the following lemma.

\begin{lemma}{\label{commutators-instead-of-expansion}}
Let $Q$ be a rank $2$ tensor. Let $q>0$. Then for $i,j,l=1,2,3$,
\begin{align*}
    &\rdell\left(\frac{1}{W^{q}}\dell_{k}\left(W^{1+q}Q^{k}_{i}\right)\right)=\frac{1}{W^{1+q}}\dell_{k}\left(W^{2+q}\rdell Q^{k}_{i}\right)+\mathcal{C}^{q+1}_{i}(Q),\\
    &\angdell_{jl}\left(\frac{1}{W^{q}}\dell_{k}\left(W^{1+q}Q^{k}_{i}\right)\right)=\frac{1}{W^{q}}\dell_{k}\left(W^{1+q}\angdell_{jl}Q^{k}_{i}\right)+\mathcal{D}^{q}_{i}(j,l,Q),
\end{align*}
where
\begin{align*}
    \mathcal{C}^{q+1}_{i}(Q)=&\dell_{r}W \frac{x_{l}}{r}\angdell_{lk}Q^{k}_{i}-W\dell_{k}Q^{k}_{i}+(1+q)Q^{k}_{i}\rdell\dell_{k}W,
\end{align*}
and
\begin{align*}
    \mathcal{D}^{q}_{i}(j,l,Q)=W\comm{\angdell_{jl}}{\dell_{k}}Q^{k}_{i}+(1+q)Q^{k}_{i}\angdell_{jl}\dell_{k}W+\angdell_{jl}W\dell_{k}Q^{k}_{i}.
\end{align*}
\end{lemma}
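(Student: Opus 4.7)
The plan is a direct calculation using the Leibniz rule together with the commutator identities between $\dell_{k}$ and the spherical vector fields $\rdell,\angdell_{jl}$. First I would rewrite the inner object by the product rule,
\[
\frac{1}{W^{q}}\dell_{k}\!\left(W^{1+q}Q^{k}_{i}\right)=(1+q)\dell_{k}W\cdot Q^{k}_{i}+W\dell_{k}Q^{k}_{i},
\]
and then apply $\rdell$ (resp.\ $\angdell_{jl}$) term by term.

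For the first identity, applying $\rdell$ to the expansion above produces one top-order piece, $W\rdell(\dell_{k}Q^{k}_{i})$, together with three manifestly lower-order pieces. Expanding the right-hand side analogously,
\[
\frac{1}{W^{1+q}}\dell_{k}\!\left(W^{2+q}\rdell Q^{k}_{i}\right)=(2+q)\dell_{k}W\cdot\rdell Q^{k}_{i}+W\dell_{k}(\rdell Q^{k}_{i}),
\]
and using $\dell_{k}(\rdell Q^{k}_{i})=\dell_{k}Q^{k}_{i}+\rdell(\dell_{k}Q^{k}_{i})$, which follows from $\dell_{k}x_{m}=\delta_{km}$, the top-order contribution $W\rdell(\dell_{k}Q^{k}_{i})$ cancels upon subtraction. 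The leftover regroups into the three terms $(1+q)Q^{k}_{i}\rdell\dell_{k}W$, $-W\dell_{k}Q^{k}_{i}$, and the combination $\rdell W\cdot\dell_{k}Q^{k}_{i}-\dell_{k}W\cdot\rdell Q^{k}_{i}$. I would then invoke the rectangular-to-spherical decomposition $(\ref{rectangular-as-ang-rad})$ together with $\rdell=r\dell_{r}$ to recognise the latter combination as $\dell_{r}W\cdot\frac{x_{l}}{r}\angdell_{lk}Q^{k}_{i}$, yielding $\mathcal{C}^{q+1}_{i}(Q)$ exactly as stated.

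For the second identity, the calculation is analogous but cleaner, since the tangential field $\angdell_{jl}$ does not change the outer power of $W$. After distributing $\angdell_{jl}$ term by term, one applies
\[
\angdell_{jl}(\dell_{k}Q^{k}_{i})=\dell_{k}(\angdell_{jl}Q^{k}_{i})-\comm{\angdell_{jl}}{\dell_{k}}Q^{k}_{i}
\]
to match the leading-order piece $W\dell_{k}(\angdell_{jl}Q^{k}_{i})$ coming from the corresponding expansion of the right-hand side, and the residue reads off directly as
\[
\mathcal{D}^{q}_{i}(j,l,Q)=W\comm{\angdell_{jl}}{\dell_{k}}Q^{k}_{i}+(1+q)Q^{k}_{i}\angdell_{jl}\dell_{k}W+\angdell_{jl}W\cdot\dell_{k}Q^{k}_{i}.
\]

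The main technical point is the rearrangement in the first identity, where one has to pass from the rectangular combination $\rdell W\cdot\dell_{k}Q^{k}_{i}-\dell_{k}W\cdot\rdell Q^{k}_{i}$ to the angular form $\dell_{r}W\cdot\frac{x_{l}}{r}\angdell_{lk}Q^{k}_{i}$ via $(\ref{rectangular-as-ang-rad})$. This step genuinely exploits the spherical structure of $\Omega=B_{1}$ and is simultaneously responsible for the shift in the outer weight from $W^{1+q}$ up to $W^{2+q}$ on the right-hand side; this weight gain is precisely what the lemma is designed to record, since it is what allows the higher-order energy identities to absorb the extra power of $W$ produced when a radial derivative falls on a weighted divergence structure near the boundary.
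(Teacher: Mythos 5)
Your plan is the natural direct verification one would carry out here: expand $\frac{1}{W^q}\dell_k(W^{1+q}Q^k_i)$ by Leibniz, apply $\rdell$ or $\angdell_{jl}$ term by term, and cancel the top-order piece using $\comm{\dell_k}{\rdell}=\dell_k$ or $\comm{\angdell_{jl}}{\dell_k}$. The paper itself only cites \cite{HaJa2} and gives no internal proof, so your calculation plays the role of that reference's. For the second identity the calculation closes, but note a sign slip in your intermediate formula: with the paper's convention $\comm{\angdell_{jl}}{\dell_k}=\angdell_{jl}\dell_k-\dell_k\angdell_{jl}$ the correct identity is $\angdell_{jl}(\dell_k Q^k_i)=\dell_k(\angdell_{jl}Q^k_i)+\comm{\angdell_{jl}}{\dell_k}Q^k_i$, with a plus sign; that is precisely why $\mathcal D^q_i$ contains $+W\comm{\angdell_{jl}}{\dell_k}Q^k_i$ and not its negative.

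The step that needs genuine justification is the rearrangement in the first identity. You correctly isolate the residue $(1+q)Q^k_i\rdell\dell_k W - W\dell_k Q^k_i + \bigl(\rdell W\cdot\dell_k Q^k_i-\dell_k W\cdot\rdell Q^k_i\bigr)$, but then simply ``recognise'' the bracketed combination as $\dell_r W\,\frac{x_l}{r}\angdell_{lk}Q^k_i$. Substituting the decomposition $(\ref{rectangular-as-ang-rad})$ into both $\dell_k W$ and $\dell_k Q^k_i$ in fact gives
\begin{equation*}
\rdell W\cdot\dell_k Q^k_i-\dell_k W\cdot\rdell Q^k_i
=\dell_r W\,\frac{x_l}{r}\angdell_{lk}Q^k_i-\frac{x_l}{r^2}\,\angdell_{lk}W\,\rdell Q^k_i ,
\end{equation*}
and the second term vanishes if and only if $\angdell_{lk}W\equiv 0$, i.e.\ $W$ is radial. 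That holds for the model profile $W(r)=(1-r^2)_+$ and in \cite{HaJa2}, but Definition~\ref{definition-of-W} of the present paper does not require $W$ to be radial, so in the stated generality $\mathcal C^{q+1}_i$ should carry the extra lower-order term $-\frac{x_l}{r^2}\angdell_{lk}W\,\rdell Q^k_i$. This extra term is harmless in the subsequent energy estimates (it has the same weight and derivative count as the terms already in $\mathcal C$), but your proof should either state the radial hypothesis it is using at this step or retain the additional term, rather than asserting the combination equals $\dell_r W\,\frac{x_l}{r}\angdell_{lk}Q^k_i$ ``exactly as stated.''
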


For a proof of this result, we refer to $\cite{HaJa2}$.

\begin{remark}\label{cancellation-remark}
	The key part of this result is the fact that $\rdell W\dell_{k}Q^{k}_{i}-\dell_{k}W\rdell Q^{k}_{i}$ produces an angular derivative of $Q$. On their own, both of these terms include a radial derivative of $Q$, which requires an extra power of $W$ to bound in the correct energy space. However, the cancellation lowers this requirement, and in essence, lets us close our estimates.
\end{remark}

Next we have a lemma that shows us how $\pjac\ninv$ linearises. For a proof, we once again refer to $\cite{HaJa2}$.

\begin{lemma}\label{pjac-ninv-linearisation-lemma}
	Let $\mathscr{D}\in\left\{\dell_{i},\rdell,\angdell_{jk}\right\}$ for $i,j,k=1,2,3$. Then
	\begin{align}
	\mathscr{D}\left(\pjac\ninv^{k}_{i}\right)&=-\pjac\ninv^{k}_{j}\left[\ngrad{\mathscr{D}\theta}\right]^{i}_{j}-\frac{1}{\alpha}\pjac\ninv^{k}_{i}\ndiv{\mathscr{D}\theta}-\pjac\ninv^{k}_{j}\left[\nCurl{\mathscr{D}\theta}\right]^{j}_{i}\nonumber\\
	&-\left(\pjac\ninv^{k}_{j}\ninv^{s}_{i}\comm{\mathscr{D}}{\dell_{s}}\theta^{j}+\frac{1}{\alpha}\pjac\ninv^{k}_{i}\ninv^{s}_{j}\comm{\mathscr{D}}{\dell_{s}}\theta^{j}\right).\label{pjac-ninv-linearisation-statement}
	\end{align}
\end{lemma}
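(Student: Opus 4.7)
The plan is to expand the left-hand side by the product rule and substitute the two first-order differentiation identities for the inverse and the Jacobian, which hold not only for $\partial_t$ and $\partial_i$ but for any derivation $\mathscr{D}$ of the algebra of smooth functions. Concretely, since $\ninv\ncov = I$ and by the Jacobi formula, one has for any derivation $\mathscr{D}$
\[
\mathscr{D}\ninv^{k}_{i} = -\ninv^{k}_{r}\,\ninv^{s}_{i}\,\mathscr{D}\partial_{s}\zeta^{r},
\qquad
\mathscr{D}\pjac = -\tfrac{1}{\alpha}\pjac\,\ninv^{s}_{r}\,\mathscr{D}\partial_{s}\zeta^{r}.
\]
Since $\partial_{s}\zeta^{r} = \delta^{r}_{s} + \partial_{s}\theta^{r}$ and $\mathscr{D}$ kills the constant $\delta^{r}_{s}$, we have $\mathscr{D}\partial_{s}\zeta^{r} = \mathscr{D}\partial_{s}\theta^{r}$, and this is the step that makes the formula clean, as all reference to $x$ drops out.

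Next I would isolate the commutator contributions by writing $\mathscr{D}\partial_{s}\theta^{r} = \partial_{s}\mathscr{D}\theta^{r} + [\mathscr{D},\partial_{s}]\theta^{r}$. Substituting into both identities and applying Leibniz gives
\[
\mathscr{D}(\pjac\ninv^{k}_{i})
= -\tfrac{1}{\alpha}\pjac\ninv^{k}_{i}\,\ninv^{s}_{j}\partial_{s}\mathscr{D}\theta^{j}
- \pjac\,\ninv^{k}_{r}\ninv^{s}_{i}\partial_{s}\mathscr{D}\theta^{r}
- \pjac\ninv^{k}_{j}\ninv^{s}_{i}[\mathscr{D},\partial_{s}]\theta^{j}
- \tfrac{1}{\alpha}\pjac\ninv^{k}_{i}\ninv^{s}_{j}[\mathscr{D},\partial_{s}]\theta^{j}.
\]
The first term is $-\tfrac{1}{\alpha}\pjac\ninv^{k}_{i}\ndiv(\mathscr{D}\theta)$ by definition \eqref{zeta-divergence}, and the last two terms are already exactly the claimed commutator remainder. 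So the only thing left is to split the middle term into a $\ngrad$ piece and an $\nCurl$ piece.

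For that split I would simply verify the algebraic identity
\[
\ninv^{k}_{r}\ninv^{s}_{i}\partial_{s}\mathscr{D}\theta^{r}
= \ninv^{k}_{j}[\ngrad\mathscr{D}\theta]^{i}_{j} + \ninv^{k}_{j}[\nCurl\mathscr{D}\theta]^{j}_{i},
\]
which is immediate from the definitions \eqref{zeta-gradient} and \eqref{zeta-Curl}: the right-hand side is $\ninv^{k}_{j}\ninv^{s}_{j}\partial_{s}\mathscr{D}\theta^{i} + \ninv^{k}_{j}(\ninv^{s}_{i}\partial_{s}\mathscr{D}\theta^{j} - \ninv^{s}_{j}\partial_{s}\mathscr{D}\theta^{i})$, and the $\ngrad$-diagonal pieces cancel, leaving the required expression after renaming the dummy index. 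Combining with the previous step produces exactly \eqref{pjac-ninv-linearisation-statement}.

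There is no real obstacle here; the only subtlety is bookkeeping: one must be careful to take $\mathscr{D}$ as a generic first-order derivation (so that the identities for $\mathscr{D}\ninv$ and $\mathscr{D}\njac$ apply even when $\mathscr{D}\in\{\rdell,\angdell_{jk}\}$, which do not commute with $\partial_{s}$), and to channel all of the non-commutativity into the $[\mathscr{D},\partial_{s}]\theta^{j}$ terms via the commutator relations \eqref{commutator-rectangular-rad}--\eqref{commutator-rectangular-ang}. The index shuffle in the final $\ngrad$/$\nCurl$ decomposition is the only place where one has to resist writing the natural expression $-\pjac\ninv^{k}_{r}[\ngrad\mathscr{D}\theta]^{r}_{i}$ and instead put it in the form stated, which is the form needed for the later commutator estimates.
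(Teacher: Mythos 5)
Your proof is correct. The paper does not reproduce the proof of this lemma (it refers to \cite{HaJa2}), but the intermediate identity that the paper later invokes in the proof of Theorem~\ref{higher-order-energy-identity-theorem},
\begin{align*}
\angdell_{12}\left(\ninv^{k}_{i}\pjac\right)=-\pjac\ninv^{k}_{j}\ninv^{s}_{i}\dell_{s}\angdell_{12}\theta^{j}-\tfrac{1}{\alpha}\pjac\ninv^{k}_{i}\ninv^{s}_{j}\dell_{s}\angdell_{12}\theta^{j}-\left(\pjac\ninv^{k}_{j}\ninv^{s}_{i}\comm{\angdell_{12}}{\dell_{s}}\theta^{j}+\tfrac{1}{\alpha}\pjac\ninv^{k}_{i}\ninv^{s}_{j}\comm{\angdell_{12}}{\dell_{s}}\theta^{j}\right),
\end{align*}
is exactly your expression before the final $\ngrad/\ndiv/\nCurl$ split, so you are following the same route: apply the derivation identities \eqref{inverse-differentiation-zeta}--\eqref{jacobian-differentiation-zeta} with a general $\mathscr{D}$, use $\mathscr{D}\partial_s\zeta^r=\mathscr{D}\partial_s\theta^r$, channel non-commutativity into $\comm{\mathscr{D}}{\partial_s}\theta$, and decompose the remaining top-order piece via $\ninv^{k}_{r}\ninv^{s}_{i}\partial_{s}\mathscr{D}\theta^{r}=\ninv^{k}_{j}[\ngrad\mathscr{D}\theta]^{i}_{j}+\ninv^{k}_{j}[\nCurl\mathscr{D}\theta]^{j}_{i}$. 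Your remark that the derivation identities hold for any first-order derivation (not merely coordinate derivatives) is the right justification for applying them with $\mathscr{D}\in\{\rdell,\angdell_{jk}\}$, and all the index bookkeeping checks out.
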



Finally, before we prove our higher order energy identity, we make some conventional choices, and define the remainder terms that we will eventually have to estimate. This is done for the purposes of streamlining exposition.

First of all, to obtain a higher order energy identity on $\supp{\psi}$, we need to act on (\ref{rescaled-euler-W-perturbation-estimate-form-1}) with a differential operator of the form $\Ndell{m}{n}$. Note that written out fully,
\begin{align*}
\Ndell{m}{n}=\rdell^{m}\angdell_{12}^{n_{1}}\angdell_{13}^{n_{2}}\angdell_{23}^{n_{3}}.
\end{align*}
To lessen the burden of index tracking when manipulating $\angdell^{\underline{n}}$, say during integration by parts, or expanding a term of the form $\angdell^{\underline{n}}(A\cdot B)$ via the product rule, we prove the higher order energy identity for a differential operator of the form
\begin{align*}
\Ndell{m}{n}=\rdell^{m}\angdell_{12}^{n},
\end{align*}
that is, $\underline{n}=(n,0,0)$. We also assume $n\geq1$. The case for general $(m,\underline{n})$ is addressed in Remarks \ref{general-higher-order-energy-estimates-remark} and \ref{general-higher-order-energy-identity-remark}.

\begin{definition}[Higher Order Remainder Terms]\label{higher-order-remainder-terms-definition}
	Let $(\nu,\theta)$ be the solution to~\eqref{E:EULERNEW2} on $[0,T]$ in the sense of Theorem~\ref{local-well-posedness-theorem}, for some $N\geq2\ceil*{\alpha}+12$. Fix $(m,\underline{n},\underline{k})\in\mathbb{Z}_{\geq0}\times\mathbb{Z}_{\geq0}^{3}\times\mathbb{Z}_{\geq0}^{3}$, with $\underline{n}=(n,0,0)$, $n\geq1$, and $\max{(m+n,|\underline{k}|)}\leq N$. For all $\tau\in[0,T]$, define, for $i,k=1,2,3$,
	\begin{align}
	\rem^{i}_{1}(m,\underline{n})=-e^{-\sigma_{2}\tau}W^{\alpha+m}\left(\sum_{l+p=m-1}\rdell^{l}\mathcal{C}^{1+\alpha+p}_{i}\left(\Ndellonetwo{p}{n}\left(\pjac\ninv\right)\right)+\sum_{a+b=n-1}\Ndellonetwo{m}{a}\mathcal{D}^{\alpha}_{i}\left(1,2,\angdell_{12}^{b}\left(\pjac\ninv\right)\right)\right)\label{rem-i-1},
	\end{align}
	\begin{align}
	\rem^{ki}_{2}(m,\underline{n})=&-\Ndellonetwo{m}{n-1}\left(\pjac\ninv^{k}_{j}\ninv^{s}_{i}\comm{\angdell_{12}}{\dell_{s}}\theta^{j}+\frac{1}{\alpha}\pjac\ninv^{k}_{i}\ninv^{s}_{j}\comm{\angdell_{12}}{\dell_{s}}\theta^{j}\right)\nonumber\\
	&-\pjac\ninv^{k}_{j}\ninv^{s}_{i}\comm{\Ndellonetwo{m}{n-1}}{\dell_{s}}\angdell_{12}\theta^{j}-\frac{1}{\alpha}\pjac\ninv^{k}_{i}\ninv^{s}_{j}\comm{\Ndellonetwo{m}{n-1}}{\dell_{s}}\angdell_{12}\theta^{j}\nonumber\\
	&-\sum_{\substack{p+l=m\\q+b=n-1\\p+q>0}}\mathscr{L}(p,q,l,b)\Ndellonetwo{p}{q}\left(\pjac\ninv^{k}_{j}\ninv^{s}_{i}\right)\Ndellonetwo{l}{b}\left(\dell_{s}\angdell_{12}\theta^{j}\right)\nonumber\\
	&-\frac{1}{\alpha}\sum_{\substack{p+l=m\\q+b=n-1\\p+q>0}}\mathscr{L}(p,q,l,b)\Ndellonetwo{p}{q}\left(\pjac\ninv^{k}_{i}\ninv^{s}_{j}\right)\Ndellonetwo{l}{b}\left(\dell_{s}\angdell_{12}\theta^{j}\right),\label{rem-k-i-2}
	\end{align}	
	\begin{align}
	\rem^{i}(m,\underline{n})&=\psi\rem^{i}_{1}(m,\underline{n})+e^{-\sigma_{2}\tau}\psi\dell_{k}\left(W^{1+\alpha+m}\rem^{ki}_{2}(m,\underline{n})\right)+e^{-\sigma_{2}\tau}W^{1+\alpha+m}\dell_{k}\psi\pjac\ninv^{k}_{j}\left[\ngrad{\Ndellonetwo{m}{n}\theta}\right]^{i}_{j}\nonumber\\
	&+e^{-\sigma_{2}\tau}W^{1+\alpha+m}\dell_{k}\psi\left(\pjac\ninv^{k}_{i}\ndiv{\Ndellonetwo{m}{n}\theta}+\pjac\ninv^{k}_{j}\left[\nCurl{\Ndellonetwo{m}{n}\theta}\right]^{j}_{i}\right),\label{rem-i}
	\end{align}
	\begin{align}
	\rem^{ki}_{2}(\underline{k})=-\sum_{\substack{\underline{p}+\underline{q}=\underline{k}\\|\underline{p}|>0}}\mathscr{L}(\underline{p},\underline{q})\left(\nabla^{\underline{p}}\left(\pjac\ninv^{k}_{j}\ninv^{s}_{i}\right)\nabla^{\underline{q}}\left(\dell_{s}\theta^{j}\right)+\frac{1}{\alpha}\nabla^{\underline{p}}\left(\pjac\ninv^{k}_{i}\ninv^{s}_{j}\right)\nabla^{\underline{q}}\left(\dell_{s}\theta^{j}\right)\right),\label{rem-k-i-2-interior}	
	\end{align}
	\begin{align}
	\rem^{i}(\underline{k})&=-\alpha e^{-\sigma_{2}\tau}\bar{\psi}\nabla^{\underline{k}}\left(\dell_{k}W\pjac\ninv^{k}_{i}\right)+e^{-\sigma_{2}\tau}\bar{\psi}\dell_{k}\left(W\rem^{ki}_{2}(\underline{k})\right)+e^{-\sigma_{2}\tau}W\dell_{k}\bar{\psi}\pjac\ninv^{k}_{j}\left[\ngrad{\nabla^{\underline{k}}\theta}\right]^{i}_{j}\nonumber\\
	&+e^{-\sigma_{2}\tau}W\dell_{k}\bar{\psi}\left(\pjac\ninv^{k}_{i}\ndiv{\nabla^{\underline{k}}\theta}+\pjac\ninv^{k}_{j}\left[\nCurl{\nabla^{\underline{k}}\theta}\right]^{j}_{i}\right),\label{rem-i-interior}
	\end{align}
	with $\mathcal{C}$ and $\mathcal{D}$ defined in Lemma \ref{commutators-instead-of-expansion}.
\end{definition}

We are now ready to state the relevant higher order energy identities.
\begin{theorem}[Higher Order Energy Identity]\label{higher-order-energy-identity-theorem}Let $(\nu,\theta)$ be the solution to~\eqref{E:EULERNEW2} on $[0,T]$ in the sense of Theorem~\ref{local-well-posedness-theorem}, for some $N\geq2\ceil*{\alpha}+12$. Fix $(m,\underline{n},\underline{k})\in\mathbb{Z}_{\geq0}\times\mathbb{Z}_{\geq0}^{3}\times\mathbb{Z}_{\geq0}^{3}$, $\underline{n}=(n,0,0)$, $n\geq1$, and such that $\max{(m+n,|\underline{k}|)}\leq N$. Then for all $\tau\in[0,T]$, we have
	\begin{align}
	&\frac{d}{d\tau}\left(\frac{1}{2\delta}\spaceI e^{\sigma_{1}\tau}\psi W^{\alpha+m}\left|\Ndellonetwo{m}{n}\nu\right|^{2}dx+\spaceI e^{-\sigma_{2}\tau}\psi W^{1+\alpha+m} \pjac\left(\frac{1}{2}|\ngrad{\Ndellonetwo{m}{n}\theta}|^{2}+\frac{1}{2\alpha}|\ndiv{\Ndellonetwo{m}{n}\theta}|^{2}\right)dx\right)\nonumber\\
	&+\frac{1}{2}\mathbb{D}^{\psi}_{(m,\underline{n})}-\frac{d}{d\tau}\left(\frac{1}{4}\spaceI e^{-\sigma_{2}\tau}\psi W^{1+\alpha+m} \pjac|\nCurl{\Ndellonetwo{m}{n}\theta}|^{2}dx\right)\nonumber\\
	&-\frac{\sigma_{2}}{4}\spaceI e^{-\sigma_{2}\tau}\psi W^{1+\alpha+m} \pjac|\nCurl{\Ndellonetwo{m}{n}\theta}|^{2}dx=\mathcal{R}(m,\underline{n}),\label{higher-order-energy-identity-statement}
	\end{align}
and
	\begin{align}
	&\frac{d}{d\tau}\left(\frac{1}{2\delta}\spaceI e^{\sigma_{1}\tau}\bar{\psi} \left|\nabla^{\underline{k}}\nu\right|^{2}dx+\spaceI e^{-\sigma_{2}\tau}\bar{\psi}W \pjac\left(\frac{1}{2}|\ngrad{\nabla^{\underline{k}}\theta}|^{2}+\frac{1}{2\alpha}|\ndiv{\nabla^{\underline{k}}\theta}|^{2}\right)dx\right)+\frac{1}{2}\mathbb{D}^{\bar{\psi}}_{\underline{k}}\nonumber\\
	&-\frac{d}{d\tau}\left(\frac{1}{4}\spaceI e^{-\sigma_{2}\tau}\bar{\psi}W \pjac|\nCurl{\nabla{\underline{k}}\theta}|^{2}dx\right)-\frac{\sigma_{2}}{4}\spaceI e^{-\sigma_{2}\tau}\bar{\psi}W \pjac|\nCurl{\nabla^{\underline{k}}\theta}|^{2}dx=\mathcal{R}(\underline{k}),\label{higher-order-energy-identity-statement-interior}
	\end{align}
where 
	\begin{align}
	\rem(m,\underline{n})=&-\frac{1}{2\alpha}\spaceI e^{-\sigma_{2}\tau}\psi W^{1+\alpha+m}\njac^{-\left(1+\frac{1}{\alpha}\right)}\dell_{\tau}\njac\left|\ngrad{\Ndellonetwo{m}{n}\theta}\right|^{2}dx\nonumber\\
	&-\frac{1}{2\alpha^{2}}\spaceI e^{-\sigma_{2}\tau}\psi W^{1+\alpha+m}\njac^{-\left(1+\frac{1}{\alpha}\right)}\dell_{\tau}\njac\left|\ndiv{\Ndellonetwo{m}{n}\theta}\right|^{2}dx\nonumber\\
	&+\frac{1}{4\alpha}\spaceI e^{-\sigma_{2}\tau}\psi W^{1+\alpha+m}\njac^{-\left(1+\frac{1}{\alpha}\right)}\dell_{\tau}\njac\left|\nCurl{\Ndellonetwo{m}{n}\theta}\right|^{2}dx\nonumber\\
	&+\spaceI e^{-\sigma_{2}\tau}\psi W^{1+\alpha+m}\pjac\dell_{\tau}\left(\ninv^{k}_{j}\right)\dell_{k}\left(\Ndellonetwo{m}{n}\theta^{i}\right)\left[\ngrad{\Ndellonetwo{m}{n}\theta}\right]^{i}_{j}dx\nonumber\\
	&+\frac{1}{\alpha}\spaceI e^{-\sigma_{2}\tau}\psi W^{1+\alpha+m}\pjac\left(\dell_{\tau}\ninv^{k}_{i}\dell_{k}\left(\Ndellonetwo{m}{n}\theta^{i}\right)\right)\ndiv{\Ndellonetwo{m}{n}\theta}dx\nonumber\\
	&-\sum_{i>j}\spaceI e^{-\sigma_{2}\tau}\psi W^{1+\alpha+m}\pjac\left[\dell_{\tau}\ninv^{k}_{i}\dell_{k}\left(\Ndellonetwo{m}{n}\theta^{j}\right)-\dell_{\tau}\ninv^{k}_{j}\dell_{k}\left(\Ndellonetwo{m}{n}\theta^{i}\right)\right]\left[\nCurl{\Ndellonetwo{m}{n}\theta}\right]^{j}_{i}dx\nonumber\\
	&+\spaceI\rem^{i}(m,\underline{n})\Ndellonetwo{m}{n}\nu^{i}dx,\label{higher-order-energy-remainder-statement}
	\end{align}
and
	\begin{align}
	\rem(\underline{k})=&-\frac{1}{2\alpha}\spaceI e^{-\sigma_{2}\tau}\bar{\psi} W\njac^{-\left(1+\frac{1}{\alpha}\right)}\dell_{\tau}\njac\left|\ngrad{\nabla^{\underline{k}}\theta}\right|^{2}dx\nonumber\\
	&-\frac{1}{2\alpha^{2}}\spaceI e^{-\sigma_{2}\tau}\bar{\psi} W\njac^{-\left(1+\frac{1}{\alpha}\right)}\dell_{\tau}\njac\left|\ndiv{\nabla^{\underline{k}}\theta}\right|^{2}dx\nonumber\\
	&+\frac{1}{4\alpha}\spaceI e^{-\sigma_{2}\tau}\bar{\psi} W\njac^{-\left(1+\frac{1}{\alpha}\right)}\dell_{\tau}\njac\left|\nCurl{\nabla^{\underline{k}}\theta}\right|^{2}dx\nonumber\\
	&+\spaceI e^{-\sigma_{2}\tau}\bar{\psi} W\pjac\dell_{\tau}\left(\ninv^{k}_{j}\right)\dell_{k}\left(\nabla^{\underline{k}}\theta^{i}\right)\left[\ngrad{\nabla^{\underline{k}}\theta}\right]^{i}_{j}dx\nonumber\\
	&+\frac{1}{\alpha}\spaceI e^{-\sigma_{2}\tau}\bar{\psi} W\pjac\left(\dell_{\tau}\ninv^{k}_{i}\dell_{k}\left(\nabla^{\underline{k}}\theta^{i}\right)\right)\ndiv{\nabla^{\underline{k}}\theta}dx\nonumber\\
	&-\sum_{i>j}\spaceI e^{-\sigma_{2}\tau}\bar{\psi} W\pjac\left[\dell_{\tau}\ninv^{k}_{i}\dell_{k}\left(\nabla^{\underline{k}}\theta^{j}\right)-\dell_{\tau}\ninv^{k}_{j}\dell_{k}\left(\nabla^{\underline{k}}\theta^{i}\right)\right]\left[\nCurl{\nabla^{\underline{k}}\theta}\right]^{j}_{i}dx\nonumber\\
	&+\spaceI\rem^{i}(\underline{k})\nabla^{\underline{k}}\nu^{i}dx,\label{higher-order-energy-remainder-statement-interior}
	\end{align}
 with $\rem^{i}(m,\underline{n})$, $\rem^{i}(\underline{k})$, $\mathcal{D}^{\psi}_{(m,\underline{n})}$, and $\mathcal{D}^{\bar{\psi}}_{\underline{k}}$ are given in Definitions $\ref{higher-order-remainder-terms-definition}$ and $\ref{damping-functional-definition}$.
\end{theorem}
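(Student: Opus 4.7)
The plan is to mirror the derivation of the zero-order identity in Theorem \ref{zero-order-energy-identity-theorem}, adapted so that the resulting pressure remainders carry the correct $W^{1+\alpha+m}$ weight. We focus on the boundary identity \eqref{higher-order-energy-identity-statement}; the interior identity \eqref{higher-order-energy-identity-statement-interior} follows by an analogous but simpler argument because $W\sim 1$ on $\supp\bar\psi$ and no weight-recovering commutation is required.

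\textbf{Step 1: Apply the modified spherical derivatives to the equation.} Divide \eqref{rescaled-euler-W-perturbation-estimate-form-1} by $W^\alpha$ and apply $\Ndell{m}{n}=\rdell^{m}\angdell_{12}^{n}$. On the pressure side we encounter
\[
\Ndell{m}{n}\!\left[\frac{1}{W^\alpha}\dell_{k}\!\left(W^{1+\alpha}\ninv^{k}_{i}\pjac\right)\right].
\]
The key is to commute all $m$ copies of $\rdell$ and $n$ copies of $\angdell_{12}$ through $\dell_k$ using Lemma \ref{commutators-instead-of-expansion} iteratively. Each application of the $\rdell$-identity raises the power of $W$ by one inside the divergence and produces a $\mathcal{C}^{q+1}$ term; each $\angdell_{12}$-identity preserves the power of $W$ and produces a $\mathcal{D}^{q}$ term. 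Collecting everything, one obtains schematically
\[
\frac{1}{W^{\alpha+m}}\,\dell_{k}\!\left(W^{1+\alpha+m}\,\Ndell{m}{n}\!\left(\pjac\ninv^{k}_{i}\right)\right)+\rem^{i}_{1}(m,\underline{n})/W^{\alpha+m},
\]
where $\rem^{i}_{1}(m,\underline{n})$ is precisely the collected $\mathcal C$- and $\mathcal D$-remainders in \eqref{rem-i-1}. This is the crucial step where the cancellation of Remark \ref{cancellation-remark} secures enough powers of $W$.

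\textbf{Step 2: Test against $\psi\Ndell{m}{n}\nu^{i}$ and integrate by parts.} Multiply the resulting equation by $e^{\sigma_1\tau}\psi\,\Ndell{m}{n}\nu^{i}$ and integrate over $\Omega$. The left-hand side produces
\[
\frac{1}{2\delta}\frac{d}{d\tau}\!\int_\Omega e^{\sigma_1\tau}\psi W^{\alpha+m}|\Ndell{m}{n}\nu|^2\,dx+\frac12(2-\sigma_1)\frac{1}{\delta}\!\int_\Omega e^{\sigma_1\tau}\psi W^{\alpha+m}|\Ndell{m}{n}\nu|^2\,dx,
\]
which supplies the kinetic part of the energy and the first summand of $\mathbb D^{\psi}_{(m,\underline n)}$. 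On the pressure side, integrate by parts in $\dell_k$, sending it onto $\psi \Ndell{m}{n}\nu^i$. This yields (a) the main contribution $e^{-\sigma_2\tau}\psi W^{1+\alpha+m}\Ndell{m}{n}(\pjac\ninv^k_i)\,\dell_k\Ndell{m}{n}\nu^i$ and (b) a cutoff term containing $\dell_k\psi$, which is absorbed into $\rem^{i}(m,\underline n)$ in accordance with the last two lines of \eqref{rem-i}.

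\textbf{Step 3: Linearisation and extraction of the curl/div/grad structure.} Write $\Ndell{m}{n}=\Ndellonetwo{m}{n-1}\angdell_{12}$ and apply Lemma \ref{pjac-ninv-linearisation-lemma} to $\angdell_{12}(\pjac\ninv^{k}_{i})$ to split the integrand in (a) into its leading parts
\[
-\pjac\ninv^{k}_{j}\bigl[\ngrad\Ndell{m}{n}\theta\bigr]^{i}_{j}-\tfrac{1}{\alpha}\pjac\ninv^{k}_{i}\,\ndiv\Ndell{m}{n}\theta-\pjac\ninv^{k}_{j}\bigl[\nCurl\Ndell{m}{n}\theta\bigr]^{j}_{i},
\]
plus Leibniz and commutator remainders which, together with the contribution of the inner $\Ndellonetwo{m}{n-1}$, yield exactly $\rem^{ki}_{2}(m,\underline n)$ as in \eqref{rem-k-i-2}. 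Pairing the three leading terms with $\dell_k\Ndell{m}{n}\nu^i=\ncov^k_s[\ngrad\Ndell{m}{n}\nu]^i_s$ and using the symmetrisation/antisymmetrisation identities
\[
[\ngrad\theta]^{i}_{j}[\ngrad\nu]^{i}_{j}=\tfrac12\dell_\tau|\ngrad\theta|^{2}-[\ngrad\theta]^{i}_{j}\dell_\tau(\ninv^{k}_{j})\dell_{k}\theta^{i},
\]
and their $\ndiv$ and $\nCurl$ counterparts exactly as in the zero-order calculation, produces the exact-derivative terms on the left of \eqref{higher-order-energy-identity-statement} (with the characteristic minus sign for $|\nCurl|^2$), the damping contributions proportional to $\sigma_2$, and the $\dell_\tau\njac$ and $\dell_\tau\ninv$ corrections listed in \eqref{higher-order-energy-remainder-statement}.

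\textbf{Step 4: Interior identity and general multi-indices.} For \eqref{higher-order-energy-identity-statement-interior}, one repeats Steps 2--3 with $\nabla^{\underline k}$ in place of $\Ndell{m}{n}$ and $\bar\psi$ in place of $\psi$; Step 1 is trivialised because $W\sim 1$ on $\supp\bar\psi$ and we may simply distribute $\nabla^{\underline k}$ via the Leibniz rule to obtain $\rem^{ki}_{2}(\underline k)$ of \eqref{rem-k-i-2-interior}, with the source term $-\alpha\dell_k W\,\pjac\ninv^{k}_{i}$ accounting for the leftover $\dell_k W$ factor. The case of a general multi-index $\underline n=(n_1,n_2,n_3)$ follows by the same scheme, treating $\angdell_{13}$ and $\angdell_{23}$ on equal footing with $\angdell_{12}$; the only bookkeeping change is the combinatorial expansion of Leibniz coefficients, which does not alter the structural form of $\rem(m,\underline n)$.

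The principal difficulty is Step 1: a naive commutation of $\rdell$ past $\dell_k$ would force the remainder to contain terms of the form $W^{\alpha+m-1}(\text{derivative of }\pjac\ninv)$, which are one power of $W$ short of the energy-space weight. Lemma \ref{commutators-instead-of-expansion} is designed precisely so that the two potentially dangerous contributions $\rdell W\,\dell_k Q-\dell_k W\,\rdell Q$ combine into an \emph{angular} derivative of $Q$, allowing all remainder pieces to be absorbed into $\rem^{i}_{1}$ and $\rem^{ki}_{2}$ with the correct weight budget; ensuring this cancellation propagates through the $m$-fold iteration is the most delicate part of the derivation.
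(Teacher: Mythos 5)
Your proof sketch follows the paper's own derivation step for step: iterated application of Lemma~\ref{commutators-instead-of-expansion} to raise the weight to $W^{1+\alpha+m}$ and collect $\rem^{i}_{1}$, then linearisation of $\Ndellonetwo{m}{n}(\pjac\ninv)$ via (the proof of) Lemma~\ref{pjac-ninv-linearisation-lemma} acting on $\angdell_{12}$ first and $\Ndellonetwo{m}{n-1}$ second to produce $\rem^{ki}_{2}$, integration by parts, and the same $\dell_\tau$-symmetrisation and $\nCurl$-antisymmetrisation identities used at zero order. The only slips are bookkeeping: since the equation already carries $\tfrac1\delta e^{\sigma_1\tau}$ and $e^{-\sigma_2\tau}$, the multiplier should be $\psi W^{\alpha+m}\Ndellonetwo{m}{n}\nu^{i}$ (not $e^{\sigma_1\tau}\psi\Ndellonetwo{m}{n}\nu^{i}$), and the index placement in $\dell_{k}F^{i}=\ncov^{j}_{k}[\ngrad F]^{i}_{j}$ should be as written here — both immaterial to the argument and consistent with the identity you state immediately afterward.
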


\begin{remark}\label{general-higher-order-energy-identity-remark}
	A corresponding higher order energy identity holds true for the case of general $(m,\underline{n})$, with the remainder terms being appropriately defined in the case of dealing with general $\angdell^{\underline{n}}$.
\end{remark}

\begin{proof}
\textbf{Proof of~\eqref{higher-order-energy-identity-statement}.}
 First we rewrite (\ref{rescaled-euler-W-perturbation-estimate-form-1}) by dividing by $W^{\alpha}$,
\begin{align}
\frac{1}{\delta}e^{\sigma_{1}\tau}\left(\theta^{i}_{\tau\tau}+\theta^{i}_{\tau}\right)+\frac{e^{-\sigma_{2}\tau}}{W^{\alpha}}\dell_{k}\left(W^{1+\alpha}\ninv^{k}_{i}\pjac\right)=0.\label{rescaled-euler-W-perturbation-estimate-form}
\end{align}
We act on this equation by $\rdell^{m}\angdell_{12}^{n}$:
\begin{align}
\frac{1}{\delta}e^{\sigma_{1}\tau}\left(\dell_{\tau}\rdell^{m}\angdell_{12}^{n}\nu^{i}+\rdell^{m}\angdell_{12}^{n}\nu^{i}\right)&+\rdell^{m}\angdell_{12}^{n}\left(\frac{e^{-\sigma_{2}\tau}}{W^{\alpha}}\dell_{k}\left(W^{1+\alpha}\ninv^{k}_{i}\pjac\right)\right)=0.\label{rescaled-euler-W-perturbation-estimate-form-acted-on-by-Ndell}
\end{align}
Apply Lemma \ref{commutators-instead-of-expansion} to the pressure term in (\ref{rescaled-euler-W-perturbation-estimate-form-acted-on-by-Ndell}) in an inductive manner. This leaves us with the equality
\begin{align*}
    \angdell_{12}^{n}\left(\frac{e^{-\sigma_2\tau}}{W^{\alpha}}\dell_{k}\left(W^{1+\alpha}\ninv^{k}_{i}\pjac\right)\right)&=\frac{e^{-\sigma_{2}\tau}}{W^{\alpha}}\dell_{k}\left(W^{1+\alpha}\angdell_{12}^{n}\left(\ninv^{k}_{i}\pjac\right)\right)\\
    &+e^{-\sigma_{2}\tau}\sum_{a+b=n-1}\angdell_{12}^{a}\mathcal{D}^{\alpha}_{i}\left(1,2,\angdell_{12}^{b}\left(\pjac\ninv\right)\right).
\end{align*}
Then we apply $\rdell^{m}$ to the first term on the right hand side. This gives us
\begin{align*}
    \rdell^{m}\left(\frac{e^{-\sigma_{2}\tau}}{W^{\alpha}}\dell_{k}\left(W^{1+\alpha}\angdell_{12}^{n}\left(\ninv^{k}_{i}\pjac\right)\right)\right)&=\frac{e^{-\sigma_{2}\tau}}{W^{\alpha+m}}\dell_{k}\left(W^{1+\alpha+m}\Ndellonetwo{m}{n}\left(\ninv^{k}_{i}\pjac\right)\right)\\
    &+e^{-\sigma_{2}\tau}\sum_{l+p=m-1}\rdell^{l}\mathcal{C}^{1+\alpha+p}_{i}\left(\Ndellonetwo{p}{n}\left(\pjac\ninv\right)\right).
\end{align*}
Using these identities in (\ref{rescaled-euler-W-perturbation-estimate-form-acted-on-by-Ndell}), we obtain
\begin{align*}
    \frac{1}{\delta}e^{\sigma_{1}\tau}W^{\alpha+m}\left(\dell_{\tau}\Ndellonetwo{m}{n}\nu^{i}+\Ndellonetwo{m}{n}\nu^{i}\right)&+e^{-\sigma_{2}\tau}\dell_{k}\left(W^{1+\alpha+m}\Ndellonetwo{m}{n}\left(\ninv^{k}_{i}\pjac\right)\right)=\rem^{i}_{1}(m,\underline{n}),
\end{align*}
where $\rem^{i}_{1}(m,\underline{n})$, as in Definition \ref{higher-order-remainder-terms-definition}, is given by
\begin{align}
    -e^{-\sigma_{2}\tau}W^{\alpha+m}\left(\sum_{l+p=m-1}\rdell^{l}\mathcal{C}^{1+\alpha+p}_{i}\left(\Ndellonetwo{p}{n}\left(\pjac\ninv\right)\right)+\sum_{a+b=n-1}\Ndellonetwo{m}{a}\mathcal{D}^{\alpha}_{i}\left(1,2,\angdell_{12}^{b}\left(\pjac\ninv\right)\right)\right).\label{remainder-from-commutators-instead-of-expansion-lemma}
\end{align}
Now we concentrate on $\Ndellonetwo{m}{n}\left(\ninv^{k}_{i}\pjac\right)$. First of all, we see how $\ninv^{k}_{i}\pjac$ behaves upon the application of an angular derivative (note that the differentiation identities $(\ref{inverse-differentiation-zeta})$ and $(\ref{jacobian-differentiation-zeta})$ still hold for $\angdell$). From the proof of Lemma $\ref{pjac-ninv-linearisation-lemma}$ we have
\begin{align*}
    \angdell_{12}\left(\ninv^{k}_{i}\pjac\right)=&-\pjac\ninv^{k}_{j}\ninv^{s}_{i}\dell_{s}\angdell_{12}\theta^{j}-\frac{1}{\alpha}\pjac\ninv^{k}_{i}\ninv^{s}_{j}\dell_{s}\angdell_{12}\theta^{j}\\
    -&\left(\pjac\ninv^{k}_{j}\ninv^{s}_{i}\comm{\angdell_{12}}{\dell_{s}}\theta^{j}+\frac{1}{\alpha}\pjac\ninv^{k}_{i}\ninv^{s}_{j}\comm{\angdell_{12}}{\dell_{s}}\theta^{j}\right).
\end{align*}
To apply this to our identity, act on both sides with $\Ndellonetwo{m}{n-1}$:
\begin{align*}
    &\Ndellonetwo{m}{n}\left(\ninv^{k}_{i}\pjac\right)=-\pjac\ninv^{k}_{j}\ninv^{s}_{i}\Ndellonetwo{m}{n-1}\dell_{s}\angdell_{12}\theta^{j}-\frac{1}{\alpha}\pjac\ninv^{k}_{i}\ninv^{s}_{j}\Ndellonetwo{m}{n-1}\dell_{s}\angdell_{12}\theta^{j}\nonumber\\
    &-\Ndellonetwo{m}{n-1}\left(\pjac\ninv^{k}_{j}\ninv^{s}_{i}\comm{\angdell_{12}}{\dell_{s}}\theta^{j}+\frac{1}{\alpha}\pjac\ninv^{k}_{i}\ninv^{s}_{j}\comm{\angdell_{12}}{\dell_{s}}\theta^{j}\right)\nonumber\\
    &-\sum_{\substack{p+l=m\\q+b=n-1\\
    p+q>0}}\mathscr{L}(p,q,l,b)\left(\Ndellonetwo{p}{q}\left(\pjac\ninv^{k}_{j}\ninv^{s}_{i}\right)\Ndellonetwo{l}{b}\left(\dell_{s}\angdell_{12}\theta^{j}\right)+\frac{1}{\alpha}\Ndellonetwo{p}{q}\left(\pjac\ninv^{k}_{i}\ninv^{s}_{j}\right)\Ndellonetwo{l}{b}\left(\dell_{s}\angdell_{12}\theta^{j}\right)\right).
\end{align*}
Commute $\Ndellonetwo{m}{n-1}$ with $\dell_{s}$ on the first two terms on the right hand side above:
\begin{align*}
   &\Ndellonetwo{m}{n}\left(\ninv^{k}_{i}\pjac\right)=-\pjac\ninv^{k}_{j}\ninv^{s}_{i}\dell_{s}\Ndellonetwo{m}{n}\theta^{j}-\frac{1}{\alpha}\pjac\ninv^{k}_{i}\ninv^{s}_{j}\dell_{s}\Ndellonetwo{m}{n}\theta^{j}\\
   &-\Ndellonetwo{m}{n-1}\left(\pjac\ninv^{k}_{j}\ninv^{s}_{i}\comm{\angdell_{12}}{\dell_{s}}\theta^{j}+\frac{1}{\alpha}\pjac\ninv^{k}_{i}\ninv^{s}_{j}\comm{\angdell_{12}}{\dell_{s}}\theta^{j}\right)\\
   &-\sum_{\substack{p+l=m\\q+b=n-1\\p+|q|>0}}\mathscr{L}(p,q,l,b)\left(\Ndellonetwo{p}{q}\left(\pjac\ninv^{k}_{j}\ninv^{s}_{i}\right)\Ndellonetwo{l}{b}\left(\dell_{s}\angdell_{12}\theta^{j}\right)+\frac{1}{\alpha}\Ndellonetwo{p}{q}\left(\pjac\ninv^{k}_{i}\ninv^{s}_{j}\right)\Ndellonetwo{l}{b}\left(\dell_{s}\angdell_{12}\theta^{j}\right)\right)\\
   &-\pjac\ninv^{k}_{j}\ninv^{s}_{i}\comm{\Ndellonetwo{m}{n-1}}{\dell_{s}}\angdell_{12}\theta^{j}-\frac{1}{\alpha}\pjac\ninv^{k}_{i}\ninv^{s}_{j}\comm{\Ndellonetwo{m}{n-1}}{\dell_{s}}\angdell_{12}\theta^{j}.
\end{align*}
Once again, using the same idea as in the proof of Lemma \ref{pjac-ninv-linearisation-lemma}, we can rewrite this as
\begin{align}
    \Ndellonetwo{m}{n}\left(\ninv^{k}_{i}\pjac\right)=&-\pjac\ninv^{k}_{j}\left[\ngrad{\Ndellonetwo{m}{n}\theta}\right]^{i}_{j}-\frac{1}{\alpha}\pjac\ninv^{k}_{i}\ndiv{\Ndellonetwo{m}{n}\theta}\nonumber\\
    &-\pjac\ninv^{k}_{j}\left[\nCurl{\Ndellonetwo{m}{n}\theta}\right]^{j}_{i}-\rem^{ki}_{2}(m,\underline{n}),\label{linearising-pjac-ninv-using-Ndell-top-order-and-remainder}
\end{align}
where, as in Definition \ref{higher-order-remainder-terms-definition},
\begin{align}
\rem^{ki}_{2}(m,\underline{n})=&-\Ndellonetwo{m}{n-1}\left(\pjac\ninv^{k}_{j}\ninv^{s}_{i}\comm{\angdell_{12}}{\dell_{s}}\theta^{j}+\frac{1}{\alpha}\pjac\ninv^{k}_{i}\ninv^{s}_{j}\comm{\angdell_{12}}{\dell_{s}}\theta^{j}\right)\nonumber\\
&-\pjac\ninv^{k}_{j}\ninv^{s}_{i}\comm{\Ndellonetwo{m}{n-1}}{\dell_{s}}\angdell_{12}\theta^{j}-\frac{1}{\alpha}\pjac\ninv^{k}_{i}\ninv^{s}_{j}\comm{\Ndellonetwo{m}{n-1}}{\dell_{s}}\angdell_{12}\theta^{j}\nonumber\\
&-\sum_{\substack{p+l=m\\q+b=n-1\\p+q>0}}\mathscr{L}(p,q,l,b)\Ndellonetwo{p}{q}\left(\pjac\ninv^{k}_{j}\ninv^{s}_{i}\right)\Ndellonetwo{l}{b}\left(\dell_{s}\angdell_{12}\theta^{j}\right)\nonumber\\
&-\frac{1}{\alpha}\sum_{\substack{p+l=m\\q+b=n-1\\p+q>0}}\mathscr{L}(p,q,l,b)\Ndellonetwo{p}{q}\left(\pjac\ninv^{k}_{i}\ninv^{s}_{j}\right)\Ndellonetwo{l}{b}\left(\dell_{s}\angdell_{12}\theta^{j}\right).\label{linearising-pjac-ninv-using-Ndell-remainder}
\end{align}
So we can rewrite our equation as
\begin{align*}
    &\frac{1}{\delta}e^{\sigma_{1}\tau}W^{\alpha+m}\left(\dell_{\tau}\Ndellonetwo{m}{n}\nu^{i}+\Ndellonetwo{m}{n}\nu^{i}\right)-e^{-\sigma_{2}\tau}\dell_{k}\left(W^{1+\alpha+m}\pjac\ninv^{k}_{j}\left[\ngrad{\Ndellonetwo{m}{n}\theta}\right]^{i}_{j}\right)\nonumber\\
    &-e^{-\sigma_{2}\tau}\dell_{k}\left(W^{1+\alpha+m}\left(\frac{1}{\alpha}\pjac\ninv^{k}_{i}\ndiv{\Ndellonetwo{m}{n}\theta}+\pjac\ninv^{k}_{j}\left[\nCurl{\Ndellonetwo{m}{n}\theta}\right]^{j}_{i}\right)\right)\\
    &=\rem^{i}_{1}(m,\underline{n})+e^{-\sigma_{2}\tau}\dell_{k}\left(W^{1+\alpha+m}\rem^{ki}_{2}(m,\underline{n})\right).
\end{align*}
Finally, multiply by $\psi$ and commute $\psi$ with $\dell_{k}$ to acquire
\begin{align}
    &\frac{1}{\delta}\psi W^{\alpha+m}e^{\sigma_{1}\tau}\left(\dell_{\tau}\Ndellonetwo{m}{n}\nu^{i}+\Ndellonetwo{m}{n}\nu^{i}\right)-e^{-\sigma_{2}\tau}\dell_{k}\left(\psi W^{1+\alpha+m}\left(\pjac\ninv^{k}_{j}\left[\ngrad{\Ndellonetwo{m}{n}\theta}\right]^{i}_{j}\right)\right)\nonumber\\
    &-e^{-\sigma_{2}\tau}\dell_{k}\left(\psi W^{1+\alpha+m}\left(\frac{1}{\alpha}\pjac\ninv^{k}_{i}\ndiv{\Ndellonetwo{m}{n}\theta}+\pjac\ninv^{k}_{j}\left[\nCurl{\Ndellonetwo{m}{n}\theta}\right]^{j}_{i}\right)\right)=\nonumber\\
    &\psi\rem^{i}_{1}(m,\underline{n})+e^{-\sigma_{2}\tau}\psi\dell_{k}\left(W^{1+\alpha+m}\rem^{ki}_{2}(m,\underline{n})\right)+e^{-\sigma_{2}\tau}W^{1+\alpha+m}\dell_{k}\psi\pjac\ninv^{k}_{j}\left[\ngrad{\Ndellonetwo{m}{n}\theta}\right]^{i}_{j}\nonumber\\
    &+e^{-\sigma_{2}\tau}W^{1+\alpha+m}\dell_{k}\psi\left(\pjac\ninv^{k}_{i}\ndiv{\Ndellonetwo{m}{n}\theta}+\pjac\ninv^{k}_{j}\left[\nCurl{\Ndellonetwo{m}{n}\theta}\right]^{j}_{i}\right)=\rem^{i}(m,\underline{n}).\label{pre-integrated-higher-order-energy-identity}
\end{align}
Take the scalar product of this equation with $\Ndellonetwo{m}{n}\theta_{\tau}^{i}$, and integrate over the domain $\Omega$. This gives us
\begin{align*}
    &\frac{d}{d\tau}\left(\frac{1}{2\delta}\spaceI e^{\sigma_{1}\tau}\psi W^{\alpha+m}\left|\Ndellonetwo{m}{n}\nu\right|^{2}dx\right)+\frac{1}{2\delta}\left(2-\sigma_{1}\right)\spaceI e^{\sigma_{1}\tau}\psi W^{\alpha+m}\left|\Ndellonetwo{m}{n}\nu\right|^{2}dx\nonumber\\
    &+\spaceI e^{-\sigma_{2}\tau}\psi W^{1+\alpha+m}\pjac\dell_{k}\left(\Ndellonetwo{m}{n}\theta^{i}_{\tau}\right)\left(\ninv^{k}_{j}\left[\ngrad{\Ndellonetwo{m}{n}\theta}\right]^{i}_{j}+\frac{1}{\alpha}\ninv^{k}_{i}\ndiv{\Ndellonetwo{m}{n}\theta}\right)dx\nonumber\\
    &+\spaceI e^{-\sigma_{2}\tau}\psi W^{1+\alpha+m}\dell_{k}\left(\Ndellonetwo{m}{n}\theta^{i}_{\tau}\right)\pjac\ninv^{k}_{j}\left[\nCurl{\Ndellonetwo{m}{n}\theta}\right]^{j}_{i}\ dx=\spaceI \rem^{i}(m,\underline{n})\Ndellonetwo{m}{n}\nu^{i}dx.
\end{align*}
Note that we have used the divergence theorem on the top order $\ngrad$, $\ndiv$, and $\nCurl$ terms, as well as the fact that $W$ is $0$ on $\dell\Omega$. We now concentrate on these three integrals on the left hand side. First we have the $\ngrad$ identity given by
\begin{align}
&e^{-\sigma_{2}\tau} \pjac[\ngrad{\Ndellonetwo{m}{n}\theta}]^{i}_{j}\ninv^{k}_{j}\dell_{k}\left(\Ndellonetwo{m}{n}\theta^{i}_{\tau}\right) \notag \\
&\ \ \ \ =\frac{1}{2}\dell_{\tau}\left(e^{-\sigma_{2}\tau} \pjac|\ngrad{\Ndellonetwo{m}{n}\theta}|^{2}\right)+\frac{\sigma_{2}}{2}e^{-\sigma_{2}\tau}\pjac|\ngrad{\Ndellonetwo{m}{n}\theta}|^{2}\nonumber\\
&\ \ \ \ \ \ \ \  +\frac{1}{2\alpha}e^{-\sigma_{2}\tau} \njac^{-(1+\frac{1}{\alpha})}\dell_{\tau}\njac|\ngrad{\Ndellonetwo{m}{n}\theta}|^{2}-e^{-\sigma_{2}\tau} \pjac[\ngrad{\Ndellonetwo{m}{n}\theta}]^{i}_{j}\dell_{\tau}\left(\ninv^{k}_{j}\right)\dell_{k}\left(\Ndellonetwo{m}{n}\theta^{i}\right).\label{higher-order-ngrad-dell-tau-identity}
\end{align}
For $\ndiv$ we obtain
\begin{align}
    &\frac{1}{\alpha}e^{-\sigma_{2}\tau} \pjac\ndiv{\Ndellonetwo{m}{n}\theta}\ninv^{k}_{i}\dell_{k}\left(\Ndellonetwo{m}{n}\theta^{i}_{\tau}\right) \notag \\
    & \ \ \ \  =\frac{1}{2\alpha}\dell_{\tau}\left(e^{-\sigma_{2}\tau} \pjac|\ndiv{\Ndellonetwo{m}{n}\theta}|^{2}\right)+\frac{\sigma_{2}}{2\alpha}e^{-\sigma_{2}\tau}\pjac|\ndiv{\Ndellonetwo{m}{n}\theta}|^{2}\nonumber\\
    & \ \ \ \ \ \ \ \ +\frac{1}{2\alpha^{2}}e^{-\sigma_{2}\tau} \njac^{-(1+\frac{1}{\alpha})}\dell_{\tau}\njac|\ndiv{\Ndellonetwo{m}{n}\theta}|^{2}-\frac{1}{\alpha}e^{-\sigma_{2}\tau} \pjac\left(\ndiv{\Ndellonetwo{m}{n}\theta}\right)\dell_{\tau}\left(\ninv^{k}_{i}\right)\dell_{k}\left(\Ndellonetwo{m}{n}\theta^{i}\right).\label{higher-order-ndiv-dell-tau-identity}
\end{align}
The $\nCurl$ term requires an antisymmetrisation argument as with the zero order energy identity in Theorem $\ref{zero-order-energy-identity-theorem}$, so we end up with
\begin{align}
&e^{-\sigma_{2}\tau} \pjac\left[\nCurl{\Ndellonetwo{m}{n}\theta}\right]^{j}_{i}\ninv^{k}_{j}\dell_{k}\left(\Ndellonetwo{m}{n}\theta^{i}_{\tau}\right) \notag \\
& =-\frac{1}{4}\dell_{\tau}\left(e^{-\sigma_{2}\tau} \pjac\left|\nCurl{\Ndellonetwo{m}{n}\theta}\right|^{2}\right)-\frac{\sigma_{2}}{4}e^{-\sigma_{2}\tau}\pjac\left|\nCurl{\Ndellonetwo{m}{n}\theta}\right|^{2}\nonumber\\
&\ \ \ \ -\frac{1}{4\alpha}e^{-\sigma_{2}\tau}\njac^{-\left(1+\frac{1}{\alpha}\right)}\dell_{\tau}\njac\left|\nCurl{\Ndellonetwo{m}{n}\theta}\right|^{2} \notag \\
& \ \ \ \ +\sum_{i>j}e^{-\sigma_{2}\tau} \pjac\left[\dell_{\tau}\ninv^{k}_{i}\dell_{k}\left(\Ndellonetwo{m}{n}\theta^{j}\right)
-\dell_{\tau}\ninv^{k}_{j}\dell_{k}\left(\Ndellonetwo{m}{n}\theta^{i}\right)\right]\left[\nCurl{\Ndellonetwo{m}{n}\theta}\right]^{j}_{i}.\label{higher-order-nCurl-dell-tau-identity}
\end{align}
Using $(\ref{higher-order-ngrad-dell-tau-identity})-(\ref{higher-order-nCurl-dell-tau-identity})$ leaves us with
\begin{align*}
&\frac{d}{d\tau}\left(\frac{1}{2\delta}\spaceI e^{\sigma_{1}\tau}\psi W^{\alpha+m}\left|\Ndellonetwo{m}{n}\nu\right|^{2}dx+\spaceI e^{-\sigma_{2}\tau}\psi W^{1+\alpha+m} \pjac\left(\frac{1}{2}|\ngrad{\Ndellonetwo{m}{n}\theta}|^{2}+\frac{1}{2\alpha}|\ndiv{\Ndellonetwo{m}{n}\theta}|^{2}\right)dx\right) \notag \\
& \ \ \ \ +\frac{1}{2}\mathbb{D}^{\psi}_{(m,\underline{n})}
-\frac{d}{d\tau}\left(\frac{1}{4}\spaceI e^{-\sigma_{2}\tau}\psi W^{1+\alpha+m} \pjac|\nCurl{\Ndellonetwo{m}{n}\theta}|^{2}dx\right) \notag \\
& \ \ \ \ -\frac{\sigma_{2}}{4}\spaceI e^{-\sigma_{2}\tau}\psi W^{1+\alpha+m} \pjac|\nCurl{\Ndellonetwo{m}{n}\theta}|^{2}dx=\mathcal{R}(m,\underline{n}),\nonumber
\end{align*}
with $\mathbb{D}^{\psi}_{(m,\underline{n})}$ given in  $\ref{damping-functional-definition}$, and $\rem(m,\underline{n})$ in $(\ref{higher-order-energy-remainder-statement})$.\\

\noindent \textbf{Proof of $(\ref{higher-order-energy-identity-statement-interior})$}: This time we write $(\ref{rescaled-euler-W-perturbation-estimate-form-1})$ as
\begin{align}
\frac{1}{\delta}e^{\sigma_{1}\tau}\left(\theta_{\tau\tau}^{i}+\theta_{\tau}^{i}\right)+e^{-\sigma_{2}\tau}\dell_{k}\left(W\pjac\ninv^{k}_{i}\right)=-\alpha e^{-\sigma_{2}\tau}\dell_{k}W \pjac\ninv^{k}_{i}.\label{rescaled-euler-W-perturbation-estimate-form-interior}
\end{align}
Then we act on $(\ref{rescaled-euler-W-perturbation-estimate-form-interior})$ by $\nabla^{\underline{k}}$:
\begin{align}
\frac{1}{\delta}e^{\sigma_{1}\tau}\left(\nabla^{\underline{k}}\theta_{\tau\tau}^{i}+\nabla^{\underline{k}}\theta_{\tau}^{i}\right)+e^{-\sigma_{2}\tau}\dell_{k}\nabla^{\underline{k}}\left(W\pjac\ninv^{k}_{i}\right)=-\alpha e^{-\sigma_{2}\tau}\nabla^{\underline{k}}\left(\dell_{k}W \pjac\ninv^{k}_{i}\right).\label{rescaled-euler-W-perturbation-estimate-form-acted-on-by-nabla-k}
\end{align}
From here the proof of $(\ref{higher-order-energy-identity-statement-interior})$ follows that of $(\ref{higher-order-energy-identity-statement})$.
\end{proof}

We move on to higher order energy inequalities.

\begin{theorem}[Higher Order Energy Estimates]\label{higher-order-energy-estimates-theorem}Let $(\nu,\theta)$ be the solution to~\eqref{E:EULERNEW2} on $[0,T]$ in the sense of Theorem~\ref{local-well-posedness-theorem}, for some $N\geq2\ceil*{\alpha}+12$. Suppose the a priori assumptions $(\ref{a-priori-assumptions})$ hold, and $\nabla W\in\X{N}$. Fix $(m,\underline{n},\underline{k})\in\mathbb{Z}_{\geq0}\times\mathbb{Z}_{\geq0}^{3}\times\mathbb{Z}_{\geq0}^{3}$, with $\underline{n}=(n,0,0)$, $n\geq1$, and $\max{(m+n,|\underline{k}|)}\leq N$. Then for all $0\leq\tau_{1}\leq\tau\leq T$,
	\begin{align}
	\left|\int_{\tau_{1}}^{\tau}\left(\rem(m,\underline{n})+\rem(\underline{k})\right)d\tau'\right|\lesssim \mathscr{S}_{N}(\tau_{1})+\sqrt{\delta}\mathscr{S}_{N}(\tau)&+\int_{\tau_{1}}^{\tau}\sqrt{\delta}\left(e^{-\frac{\sigma_{1}}{2}\tau'}+\sigma_{2}e^{-\frac{\sigma_{2}}{2}\tau'}\right) \left(\mathscr{S}_{N}(\tau')^{3/2}+\mathscr{S}_{N}(\tau')\right)d\tau'\nonumber\\
	&+\int_{\tau_{1}}^{\tau}\sqrt{\delta}e^{-\frac{\sigma_{1}}{2}\tau'}\mathscr{S}_{N}(\tau')^{1/2}d\tau'.\label{higher-order-energy-estimates}
	\end{align}
\end{theorem}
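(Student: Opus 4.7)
The plan is to estimate $\rem(m,\underline n)+\rem(\underline k)$ by grouping its terms into three classes: (A) top-order pieces in which $\dell_\tau\ninv$ or $\dell_\tau\njac$ multiplies one of the quadratic densities $|\ngrad\Ndell{m}{n}\theta|^2$, $|\ndiv\Ndell{m}{n}\theta|^2$, $|\nCurl\Ndell{m}{n}\theta|^2$; (B) the pairings $\spaceI \rem^i(m,\underline n)\,\Ndell{m}{n}\nu^i\,dx$ and $\spaceI \rem^i(\underline k)\,\nabla^{\underline k}\nu^i\,dx$; and (C) the cutoff boundary terms involving $\dell_k\psi$ and $\dell_k\bar\psi$. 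For (A), the identities $\dell_\tau\ninv^k_j=-\ninv^k_r\ninv^s_j\dell_s\nu^r$ and $\dell_\tau\njac=\njac\ninv^s_r\dell_s\nu^r$ show that each integrand is pointwise bounded by $|\ninv|^2|\nabla\nu|$ times a top-order density. The bound $\|\nabla\nu\|_{L^\infty(\Omega)}^2\lesssim\|\nu\|_{\X{N}}^2\lesssim\delta e^{-\sigma_1\tau}\mathscr{S}_N(\tau)$ follows from Lemma~\ref{L-infinity-energy-space-bound-no-weights} on $\supp\psi$ (after invoking the decomposition~(\ref{rectangular-as-ang-rad})) and Sobolev embedding on $\supp\bar\psi$, exactly as in~(\ref{nabla-theta-L-infinity-H2-embedding-1})--(\ref{nabla-theta-L-infinity-H2-embedding-3}). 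Pulling this out of the integrand and using $|\nCurl F|\lesssim|\ngrad F|$ to absorb the curl density into $\Y{N}{\ngrad}$, the remaining integral, weighted by $e^{-\sigma_2\tau}$, is at most $\mathscr{S}_N(\tau)$, yielding the cubic contribution $\int_{\tau_1}^\tau\sqrt\delta e^{-\sigma_1\tau'/2}\mathscr{S}_N(\tau')^{3/2}d\tau'$.

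For (B) I split $\rem^i(m,\underline n)$ into three pieces. The $\rem^{ki}_2$ terms are commutators and products of $\pjac\ninv$-derivatives with $\theta$-derivatives in which no factor carries the full top-order count; after integrating $\dell_k$ by parts onto $\Ndell{m}{n}\nu^i$, the argument used in Lemma~\ref{lower-order-estimates-lemma} bounds them by $\int\sqrt\delta e^{-\sigma_1\tau'/2}(\mathscr{S}_N+\mathscr{S}_N^{3/2})d\tau'$. The $\rem^i_1$ terms, produced by the iterated application of Lemma~\ref{commutators-instead-of-expansion}, split into a \emph{source} part, in which all spatial derivatives land on $W$ while the $\pjac\ninv$ factor collapses to the identity, and a \emph{perturbation} part, in which at least one derivative falls on $\theta$. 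The perturbation part is handled by Cauchy--Schwarz and Lemma~\ref{lower-order-estimates-lemma} and also contributes $\int\sqrt\delta e^{-\sigma_1\tau'/2}(\mathscr{S}_N+\mathscr{S}_N^{3/2})d\tau'$; the source part, mimicking~(\ref{zero-order-energy-estimate-1-source-term}) and using $\nabla W\in\X{N}$, gives the \emph{linear} contribution $\int\sqrt\delta e^{-\sigma_1\tau'/2}\mathscr{S}_N^{1/2}d\tau'$. The interior counterpart $\rem^i(\underline k)$ is handled analogously.

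The boundary pieces $\mathscr{S}_N(\tau_1)+\sqrt\delta\mathscr{S}_N(\tau)$ and the $\sigma_2 e^{-\sigma_2\tau'/2}$ factors on the right-hand side arise by integrating selected source terms by parts in $\tau$: writing $\Ndell{m}{n}\nu^i=\dell_{\tau'}\Ndell{m}{n}\theta^i$ and moving $\dell_{\tau'}$ off $\theta$ produces boundary data at $\tau'=\tau_1,\tau$ together with a $\sigma_2 e^{-\sigma_2\tau'}$ factor from differentiating the $e^{-\sigma_2\tau'}$ weight. The boundary term at $\tau_1$ is controlled by $\|\theta(\tau_1)\|_{\X{N}}^2\leq\mathscr{S}_N(\tau_1)$, while the one at $\tau$ uses Lemma~\ref{theta-estimates-lemma} ($\|\theta\|_{\X{N}}^2\lesssim\delta\mathscr{S}_N$) to produce the crucial $\sqrt\delta$ factor. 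Class (C) is straightforward: $\supp\dell_k\psi$ is a fixed compact set separated from both the origin and $\dell\Omega$, so $W\sim 1$ there and Lemma~\ref{Dk-to-Ndell-lemma} converts modified spherical into rectangular derivatives at no cost; the resulting pieces are directly absorbed into the bound from class (A).

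The main obstacle is the source piece of $\rem^i_1$: because no factor of $\theta$ appears in it, Cauchy--Schwarz produces only $\mathscr{S}_N^{1/2}$ rather than $\mathscr{S}_N^{3/2}$. The resolution uses the density smallness twice: once in the $\sqrt\delta$ extracted from $\|\nu\|_{\X{N}}$, and again via Lemma~\ref{theta-estimates-lemma} when closing the boundary contributions from the IBP in $\tau$. Without this double exploitation of small $\delta$, neither the time integral $\int\sqrt\delta e^{-\sigma_1\tau'/2}\mathscr{S}_N^{1/2}d\tau'$ nor the boundary term $\sqrt\delta\mathscr{S}_N(\tau)$ would be absorbable in the continuity argument underpinning Theorem~\ref{main-theorem}, which is precisely the mechanism highlighted in Remark~\ref{smallness-of-delta}.
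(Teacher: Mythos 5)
Your high‐level decomposition into (A) the $\dell_\tau\ninv,\dell_\tau\njac$ pieces acting on the quadratic densities, (B) the pairings $\spaceI\rem^i\Ndell{m}{n}\nu^i\,dx$, and (C) the $\dell_k\psi$ cutoff pieces matches the paper's strategy, and your account of the pure source contribution (all spatial derivatives on $W$, paired against $\|\nu\|_{\X{N}}\lesssim\sqrt{\delta}e^{-\sigma_1\tau/2}\mathscr{S}_N^{1/2}$) correctly reproduces the $\mathscr{S}_N^{1/2}$ integrand. However, there is a genuine gap in how you treat the mixed terms $\mathcal{A}_2$ arising from the $\mathcal{D}$-remainder, and the IBP in $\tau$ is mislocated.

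The dichotomy you propose for $\rem^{i}_1$ — ``source part'' where all derivatives hit $W$ and $\pjac\ninv$ is undifferentiated, versus ``perturbation part'' where at least one derivative falls on $\theta$ — is too coarse. The genuinely delicate piece is $\mathcal{A}_2$, of the schematic form $e^{-\sigma_2\tau}W^{\alpha+m}\Ndellonetwo{m}{a'}(\pjac\ninv)\,\angdell^{b'}\dell_k W$, where derivatives split between the two factors. The obstruction here is a missing power of $W$: the factor $\angdell^{b'}\dell_k W$ has only derivatives of $W$, not $W$ itself, so the $W^{\alpha+m}$ prefactor provides no room for the norm in Lemma~\ref{lower-order-estimates-lemma}, which requires $W^{1+\alpha+m}$. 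You cannot close this by Cauchy--Schwarz plus Lemma~\ref{lower-order-estimates-lemma} as you suggest, and it is not resolved by the ``double use of $\delta$-smallness'' either. The paper's resolution is a specific spatial integration by parts via the identity $W^{\alpha+m}\dell_1 d_\Omega = \tfrac{1}{1+\alpha+m}(W/d_\Omega)^{\alpha+m}\dell_1(d_\Omega^{1+\alpha+m})$, which regains the missing power of $W$ at the cost of producing a term with $\dell_1\Ndellonetwo{m}{n}\nu^i$ — one spatial derivative above what $\X{N}$ controls. Only then is the IBP in $\tau$ invoked, by writing $\dell_1\Ndellonetwo{m}{n}\nu^i = \dell_\tau\bigl(\dell_1\Ndellonetwo{m}{n}\theta^i\bigr)$ (not $\Ndell{m}{n}\nu^i = \dell_\tau\Ndell{m}{n}\theta^i$ as you write), converting the extra spatial derivative on $\nu$ into a gradient of $\theta$ that is absorbable into $\Y{N}{\ngrad}$. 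The boundary terms $\mathscr{S}_N(\tau_1)+\sqrt\delta\mathscr{S}_N(\tau)$ and the $\sigma_2 e^{-\sigma_2\tau'/2}$ factor come precisely from this step, and the $\sqrt\delta$ on the $\tau$ boundary arises because $\Ndellonetwo{m}{a'}(\pjac\ninv)$ is of order $< N$, so Lemmas~\ref{theta-Y-norm-theta-X-norm-relation-lemma} and~\ref{theta-estimates-lemma} apply to it — not to the top-order $\theta$ factor. The same spatial-IBP-then-$\tau$-IBP mechanism is needed for the top-order part of $\rem^{ki}_2$ after integrating $\dell_k$ onto $\Ndellonetwo{m}{n}\nu^i$, which you attribute only to ``the argument used in Lemma~\ref{lower-order-estimates-lemma}''. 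Without explicitly invoking the spatial IBP to restore the missing weight, the argument as written would stall at the $\mathcal{A}_2$ term.
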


\begin{remark}\label{general-higher-order-energy-estimates-remark}
	Once again, these higher order energy estimates follow for general $(m,\underline{n})$ by bounding the corresponding remainder terms in a manner completely analogous to the proof of the Theorem \ref{higher-order-energy-estimates-theorem}.
\end{remark}

\begin{proof}
	To begin with, we look at the first line of the right hand side of $(\ref{higher-order-energy-remainder-statement})$:
	\begin{align*}
	\spaceI e^{-\sigma_{2}\tau}\psi W^{1+\alpha+m}\njac^{-\left(1+\frac{1}{\alpha}\right)}\dell_{\tau}\njac\left|\ngrad{\Ndellonetwo{m}{n}\theta}\right|^{2}dx.
	\end{align*}
	Since $\dell_{\tau}\njac=\njac\ninv^{s}_{r}\dell_{s}\nu^{r}$, we have
	\begin{align}
	\left|\int_{\tau_{1}}^{\tau}\spaceI e^{-\sigma_{2}\tau'}\psi W^{1+\alpha+m}\njac^{-\left(1+\frac{1}{\alpha}\right)}\dell_{\tau}\njac\left|\ngrad{\Ndellonetwo{m}{n}\theta}\right|^{2}dx d\tau'\right|\lesssim \int_{\tau_{1}}^{\tau}\sqrt{\delta}e^{-\frac{\sigma_{1}}{2}\tau'} \mathscr{S}_{N}(\tau')^{3/2}d\tau'.\label{higher-order-energy-estimate-1}
	\end{align}
	We use $(\ref{Dnu-estimate})$ and $(\ref{a-priori-assumptions})$ respectively to bound the $\nabla\nu$ and $\ninv$ terms in $L^{\infty}$. The second and third lines on the right hand side of $(\ref{higher-order-energy-remainder-statement})$ have the same bound.
	
	Next is the fourth line on the right hand side of $(\ref{higher-order-energy-remainder-statement})$:
	\begin{align*}
	\spaceI e^{-\sigma_{2}\tau}\psi W^{1+\alpha+m}\pjac\dell_{\tau}\left(\ninv^{k}_{j}\right)\dell_{k}\left(\Ndellonetwo{m}{n}\theta^{i}\right)\left[\ngrad{\Ndellonetwo{m}{n}\theta}\right]^{i}_{j}dx.
	\end{align*}
	We write, using (\ref{inverse-differentiation-zeta}),
	\begin{align*}
	\dell_{\tau}\left(\ninv^{k}_{j}\right)\dell_{k}\left(\Ndellonetwo{m}{n}\theta^{i}\right)\left[\ngrad{\Ndellonetwo{m}{n}\theta}\right]^{i}_{j}&=-\ninv^{s}_{j}\dell_{s}\nu^{r}\ninv^{k}_{r}\dell_{k}\left(\Ndellonetwo{m}{n}\theta^{i}\right)\left[\ngrad{\Ndellonetwo{m}{n}\theta}\right]^{i}_{j}\\
	&=-\ninv^{s}_{j}\dell_{s}\nu^{r}\left[\ngrad{\Ndellonetwo{m}{n}\theta}\right]^{i}_{r}\left[\ngrad{\Ndellonetwo{m}{n}\theta}\right]^{i}_{j}.
	\end{align*}
	So we have the bound
	\begin{align*}
	&\left|\int_{\tau_{1}}^{\tau}\spaceI e^{-\sigma_{2}\tau'}\psi W^{1+\alpha+m}\pjac\dell_{\tau}\left(\ninv^{k}_{j}\right)\dell_{k}\left(\Ndellonetwo{m}{n}\theta^{i}\right)\left[\ngrad{\Ndellonetwo{m}{n}\theta}\right]^{i}_{j}dxd\tau'\right|\\
	&\lesssim\int_{\tau_{1}}^{\tau}\spaceI e^{-\sigma_{2}\tau'}\psi W^{1+\alpha+m}\pjac\left|\ninv\right|\left|\nabla\nu\right|\left|\ngrad{\Ndellonetwo{m}{n}\theta}\right|^{2}dxd\tau'.
	\end{align*}
	Once again using $(\ref{Dnu-estimate})$ on $\nabla\nu$, and a priori assumptions on $\ninv$ we have the bound
	\begin{align}
	\left|\int_{\tau_{1}}^{\tau}\spaceI e^{-\sigma_{2}\tau'}\psi W^{1+\alpha+m}\pjac\dell_{\tau}\left(\ninv^{k}_{j}\right)\dell_{k}\left(\Ndellonetwo{m}{n}\theta^{i}\right)\left[\ngrad{\Ndellonetwo{m}{n}\theta}\right]^{i}_{j}dxd\tau'\right|\lesssim\int_{\tau_{1}}^{\tau}\sqrt{\delta}e^{-\frac{\sigma_{1}}{2}\tau'}\mathscr{S}_{N}(\tau')^{3/2}d\tau'.\label{higher-order-energy-estimate-2}
	\end{align}
	The corresponding integrals with $\ndiv{\Ndellonetwo{m}{n}\theta}$ and $\nCurl{\Ndellonetwo{m}{n}}$ terms have the same bound. This deals with all terms on the right hand side of $(\ref{higher-order-energy-remainder-statement})$ except for the last.\\
	
	\noindent \textbf{Estimating $\rem^{i}(m,\underline{n})$.}\\
	
	\noindent We have
	\begin{align}
	\rem^{i}(m,\underline{n})&=\psi\rem^{i}_{1}(m,\underline{n})+e^{-\sigma_{2}\tau}\psi\dell_{k}\left(W^{1+\alpha+m}\rem^{ki}_{2}(m,\underline{n})\right)+e^{-\sigma_{2}\tau}W^{1+\alpha+m}\dell_{k}\psi\pjac\ninv^{k}_{j}\left[\ngrad{\Ndellonetwo{m}{n}\theta}\right]^{i}_{j}\nonumber\\
	&+e^{-\sigma_{2}\tau}W^{1+\alpha+m}\dell_{k}\psi\left(\pjac\ninv^{k}_{i}\ndiv{\Ndellonetwo{m}{n}\theta}\right)+e^{-\sigma_{2}\tau}W^{1+\alpha+m}\dell_{k}\psi\left(\pjac\ninv^{k}_{j}\left[\nCurl{\Ndellonetwo{m}{n}\theta}\right]^{j}_{i}\right),\label{rem-i-1-restatement}
	\end{align}
	and we start by estimating the third term on the right hand side. Since $\psi+\bar{\psi}\equiv1$, we have
	\begin{align*}
	&\int_{\tau_{1}}^{\tau}\spaceI e^{-\sigma_{2}\tau'}W^{1+\alpha+m}\dell_{k}\psi\pjac\ninv^{k}_{j}\left[\ngrad{\Ndellonetwo{m}{n}\theta}\right]^{i}_{j}\Ndellonetwo{m}{n}\nu^{i}dxd\tau'\\ &=\int_{\tau_{1}}^{\tau}\spaceI\psi e^{-\sigma_{2}\tau'}W^{1+\alpha+m}\dell_{k}\psi\pjac\ninv^{k}_{j}\left[\ngrad{\Ndellonetwo{m}{n}\theta}\right]^{i}_{j}\Ndellonetwo{m}{n}\nu^{i}dxd\tau'\\
	&+\int_{\tau_{1}}^{\tau}\spaceI\bar{\psi} e^{-\sigma_{2}\tau'}W^{1+\alpha+m}\dell_{k}\psi\pjac\ninv^{k}_{j}\left[\ngrad{\Ndellonetwo{m}{n}\theta}\right]^{i}_{j}\Ndellonetwo{m}{n}\nu^{i}dxd\tau'.
	\end{align*}
	For the first integral on the right hand side we have
	\begin{align}
	&\left|\int_{\tau_{1}}^{\tau}\spaceI\psi e^{-\sigma_{2}\tau'}W^{1+\alpha+m}\dell_{k}\psi\pjac\ninv^{k}_{j}\left[\ngrad{\Ndellonetwo{m}{n}\theta}\right]^{i}_{j}\Ndellonetwo{m}{n}\nu^{i}dxd\tau'\right|\nonumber\\
	&\lesssim\int_{\tau_{1}}^{\tau}\left\|\theta\right\|_{\Y{N}{\ngrad}}\left\|\nu\right\|_{\X{N}}d\tau'\lesssim\int_{\tau_{1}}^{\tau}\sqrt{\delta}e^{-\frac{\sigma_{1}}{2}\tau'}\mathscr{S}_{N}(\tau')d\tau'.\label{Dk-psi-ngrad-estimate-1}
	\end{align}
	We have used the a priori assumptions $(\ref{a-priori-assumptions})$ to bound $\ninv$ in $L^{\infty}$. Moreover, since $\psi$ is smooth, $\nabla\psi$ can also be bounded in $L^{\infty}$. From there, we use Cauchy-Schwarz to obtain the estimate.
	
	For the corresponding $\supp{\psi}$ integral, we can use $(\ref{Ndell-to-Dk-statement})$ in Lemma \ref{Ndell-to-Dk-lemma} to turn all $\Ndell{a}{b}$ terms in to rectangular derivatives, and we can again bound all $\ninv$ terms in $L^{\infty}$ using $(\ref{a-priori-assumptions})$. Therefore, this integral has the same bound as in $(\ref{Dk-psi-ngrad-estimate-1})$. Overall, we end up with
	\begin{align}
	\left|\int_{\tau_{1}}^{\tau}\spaceI\psi e^{-\sigma_{2}\tau'}W^{1+\alpha+m}\dell_{k}\psi\pjac\ninv^{k}_{j}\left[\ngrad{\Ndellonetwo{m}{n}\theta}\right]^{i}_{j}\Ndellonetwo{m}{n}\nu^{i}dxd\tau'\right|\lesssim \int_{\tau_{1}}^{\tau}\sqrt{\delta}e^{-\frac{\sigma_{1}}{2}\tau'}\mathscr{S}_{N}(\tau')d\tau'.\label{higher-order-energy-estimate-3}
	\end{align}	
	The same bounds hold for the last two terms in  the $(\ref{rem-i-1-restatement})$ involving $\ndiv{\Ndellonetwo{m}{n}\theta}$ and $\nCurl{\Ndellonetwo{m}{n}\theta}$ terms.
	
	Now we turn our attention to estimating $\rem^{i}_{1}(m,\underline{n})$.\\
	
	\noindent \textbf{Remainder Terms from Applying Lemma \ref{commutators-instead-of-expansion} to $\pjac\ninv$.}\\
	
	\noindent We recall that for $i=1,2,3$,
	\begin{align*}
	\rem^{i}_{1}(m,\underline{n})=-e^{-\sigma_{2}\tau}W^{\alpha+m}\left(\sum_{l+p=m-1}\rdell^{l}\mathcal{C}^{1+\alpha+p}_{i}\left(\Ndellonetwo{p}{n}\left(\pjac\ninv\right)\right)+\sum_{a+b=n-1}\Ndellonetwo{m}{a}\mathcal{D}^{\alpha}_{i}\left(1,2,\angdell_{12}^{b}\left(\pjac\ninv\right)\right)\right).
	\end{align*}
	First we deal the term
	\begin{align}
	e^{-\sigma_{2}\tau}W^{\alpha+m}\Ndellonetwo{m}{a}\mathcal{D}_{i}^{\alpha}\left(1,2,\angdell_{12}^{b}\left(\pjac\ninv\right)\right),\label{mathcal-D-expansion-1}
	\end{align}
	for $a+b=n-1$. This term is, by definition,
	\begin{align}
	e^{-\sigma_{2}\tau}W^{\alpha+m}\Ndellonetwo{m}{a}\left(W\comm{\angdell_{12}}{\dell_{k}}\angdell_{12}^{b}\left(\pjac\ninv^{k}_{i}\right)+(1+\alpha)\angdell_{12}^{b}\left(\pjac\ninv^{k}_{i}\right)\angdell_{12}\dell_{k}W+\angdell_{12}W\dell_{k}\angdell_{12}^{b}\left(\pjac\ninv^{k}_{i}\right)\right).\label{mathcal-D-expansion-2}
	\end{align}
	From $(\ref{mathcal-D-expansion-2})$, we must take extra care with terms of the form
	\begin{align}
	&\underbrace{e^{-\sigma_{2}\tau}W^{\alpha+m}\angdell_{12}^{c}W\Ndellonetwo{m}{a}\comm{\angdell_{12}}{\dell_{k}}\angdell_{12}^{b}\left(\pjac\ninv^{k}_{i}\right)}_{\mathcal{A}_{1}}+\underbrace{(1+\alpha)e^{-\sigma_{2}\tau}W^{\alpha+m}\Ndellonetwo{m}{a'}\left(\pjac\ninv^{k}_{i}\right)\angdell_{12}^{b'}\dell_{k}W}_{\mathcal{A}_{2}}\nonumber\\
	&+\underbrace{e^{-\sigma_{2}\tau}W^{\alpha+m}\angdell_{12}^{c''}W\Ndellonetwo{m}{a''}\dell_{k}\angdell_{12}^{b}\left(\pjac\ninv^{k}_{i}\right)}_{\mathcal{A}_{3}},\label{mathcal-D-expansion-3}
	\end{align}
	for $a+c+b=a''+c''+b=n-1$, and $a'+b'=n$. The extra care is required in dealing with the powers of $W$ in the integrals, as $W^{\alpha+m}$ is not suitable by itself for these particular terms. First we look at $\mathcal{A}_{1}$. We have
	\begin{align*}
	e^{-\sigma_{2}\tau}W^{\alpha+m}\angdell_{12}^{c}W\Ndellonetwo{m}{a}\comm{\angdell_{12}}{\dell_{k}}\angdell_{12}^{b}\left(\pjac\ninv^{k}_{i}\right)=e^{-\sigma_{2}\tau}d_{\Omega}W^{\alpha+m}\angdell_{12}^{c}\left(\frac{W}{d_{\Omega}}\right)\Ndellonetwo{m}{a}\comm{\angdell_{12}}{\dell_{k}}\angdell_{12}^{b}\left(\pjac\ninv^{k}_{i}\right),
	\end{align*}
	where the equality comes from $d(x,\dell\Omega)$ being a function of $r$ only. On $\supp{\psi}$, $d_{\Omega}W^{\alpha+m}\sim W^{1+\alpha+m}$. If $c\leq N-2$, we have the bound
	\begin{align}
	&\left|\int_{\tau_{1}}^{\tau}\spaceI e^{-\sigma_{2}\tau'}\psi d_{\Omega}W^{\alpha+m}\angdell_{12}^{c}\left(\frac{W}{d_{\Omega}}\right)\Ndellonetwo{m}{a}\comm{\angdell_{12}}{\dell_{k}}\angdell_{12}^{b}\left(\pjac\ninv^{k}_{i}\right)\Ndellonetwo{m}{n}\nu^{i}dxd\tau'\right|\nonumber\\
	&\lesssim\left\|\angdell_{12}^{c}\left(\frac{W}{d_{\Omega}}\right)\right\|_{L^{\infty}(\supp{\psi})}\int_{\tau_{1}}^{\tau}\spaceI e^{-\sigma_{2}\tau'}\psi W^{1+\alpha+m}\left|\Ndellonetwo{m}{a}\comm{\angdell_{12}}{\dell_{k}}\angdell_{12}^{b}\left(\pjac\ninv^{k}_{i}\right)\right|\left|\Ndellonetwo{m}{n}\nu^{i}\right|dxd\tau'\nonumber\\
	&\lesssim\int_{\tau_{1}}^{\tau}\sqrt{\delta}e^{-\frac{\sigma_{1}}{2}\tau'}\mathscr{S}_{N}(\tau')d\tau'.\label{higher-order-energy-estimate-7}
	\end{align}
	To obtain the final inequality, we use the argument given in (\ref{W-over-d-estimate-1}) to bound the $W/d_{\Omega}$ term, and then use Cauchy-Schwarz on the integral over $\Omega$. We use $(\ref{commutator-rectangular-ang})$ and $(\ref{rectangular-as-ang-rad})$ to convert the commutator $\comm{\angdell_{12}}{\dell_{k}}$ in to a linear combination of radial and angular derivatives, and then apply Lemma \ref{lower-order-estimates-lemma}. If $c=N-1$ or $N$, then we can bound $\Ndellonetwo{m}{a}\comm{\angdell_{12}}{\dell_{k}}\angdell_{12}^{b}\left(\pjac\ninv^{k}_{i}\right)$ in $L^{\infty}$ instead, and obtain the same bound as in $(\ref{higher-order-energy-estimate-7})$. Estimating $\mathcal{A}_{3}$ is also completely analogous to $(\ref{higher-order-energy-estimate-7})$, and gives the same bound.
	
	Finally, we look at $\mathcal{A}_{2}$:
	\begin{align*}
	(1+\alpha)e^{-\sigma_{2}\tau}W^{\alpha+m}\Ndellonetwo{m}{a'}\left(\pjac\ninv^{k}_{i}\right)\angdell_{12}^{b'}\dell_{k}W&=(1+\alpha)e^{-\sigma_{2}\tau}W^{\alpha+m}\angdell_{12}^{b'}\left(d_{\Omega}\dell_{k}\left(\frac{W}{d_{\Omega}}\right)\right)\Ndellonetwo{m}{a'}\left(\pjac\ninv^{k}_{i}\right)\nonumber\\
	&+(1+\alpha)e^{-\sigma_{2}\tau}W^{\alpha+m}\angdell_{12}^{b'}\left(\frac{W}{d_{\Omega}}\dell_{k}d_{\Omega}\right)\Ndellonetwo{m}{a'}\left(\pjac\ninv^{k}_{i}\right).
	\end{align*}
	The first term on the right hand side can be bounded analogously to $(\ref{higher-order-energy-estimate-7})$. For the other term we first write
	\begin{align}
	W^{\alpha+m}\angdell_{12}^{b'}\left(\frac{W}{d_{\Omega}}\dell_{k}d_{\Omega}\right)=\sum_{c'+d'=b'} W^{\alpha+m}\mathscr{L}(c',d')\angdell_{12}^{c'}\left(\frac{W}{d_{\Omega}}\right)\angdell_{12}^{d'}\left(\dell_{k}d_{\Omega}\right).\label{W-dell-k-d-identity-1}
	\end{align}
	Now, using the fact that $d(x,\dell\Omega)=d_{\Omega}(x)$ is a function of $r$ only, as well as $(\ref{commutator-rectangular-ang})$, we have
	\begin{align}
	\angdell_{12}\dell_{k}d_{\Omega}=\comm{\angdell_{12}}{\dell_{k}}d_{\Omega}=\delta_{k2}\dell_{1}d_{\Omega}-\delta_{k1}\dell_{2}d_{\Omega}.\label{W-dell-k-d-identity-2}
	\end{align}
	We also know that $\comm{\angdell_{12}}{\dell_{1}}=-\dell_{2}$, and $\comm{\angdell_{12}}{\dell_{2}}=\dell_{2}$. Hence for any $d'\leq b'$, and $k=1,2,3$, we have
	\begin{align}
	\angdell_{12}^{d'}\dell_{k}d_{\Omega}=\mathscr{L}(d',k)\dell_{1}d_{\Omega}+\bar{\mathscr{L}}(d',k)\dell_{2}d_{\Omega},\label{W-dell-k-d-identity-3}
	\end{align}
	for some constants $\mathscr{L}$ and $\bar{\mathscr{L}}$ depending on $d'$ and $k$. Owing to $(\ref{W-dell-k-d-identity-3})$, we first look at the integral
	\begin{align}
	\spaceI e^{-\sigma_{2}\tau}\psi W^{\alpha+m}\dell_{1}d_{\Omega}\angdell_{12}^{c'}\left(\frac{W}{d_{\Omega}}\right)\Ndellonetwo{m}{a'}\left(\pjac\ninv^{k}_{i}\right)\Ndellonetwo{m}{n}\nu^{i}dx,\label{d-1-alpha-m-ibp-1}
	\end{align}
	The following identity is also key: 
	\begin{align}
	W^{\alpha+m}\dell_{1}d_{\Omega}=\frac{1}{1+\alpha+m}\left(\frac{W}{d_{\Omega}}\right)^{\alpha+m}\dell_{1}\left(d_{\Omega}^{1+\alpha+m}\right).\label{W-dell-k-d-identity-4}
	\end{align}
	So $(\ref{d-1-alpha-m-ibp-1})$ becomes
	\begin{align}
	&\spaceI e^{-\sigma_{2}\tau}\psi W^{\alpha+m}\dell_{1}d_{\Omega}\angdell_{12}^{c'}\left(\frac{W}{d_{\Omega}}\right)\Ndellonetwo{m}{a'}\left(\pjac\ninv^{k}_{i}\right)\Ndellonetwo{m}{n}\nu^{i}dx\nonumber\\
	&=-\frac{1}{1+\alpha+m}\spaceI e^{-\sigma_{2}\tau}d_{\Omega}^{1+\alpha+m}\ \dell_{1}\left(\psi\left(\frac{W}{d_{\Omega}}\right)^{\alpha+m}\angdell_{12}^{c'}\left(\frac{W}{d_{\Omega}}\right) \Ndellonetwo{m}{a'}\left(\pjac\ninv^{k}_{i}\right)\Ndellonetwo{m}{n}\nu^{i}\right)dx,\label{d-1-alpha-m-ibp-2}
	\end{align}
	with the right hand side coming from integration by parts in $\dell_{1}$ as well as $(\ref{W-dell-k-d-identity-4})$. Up to constant, the integral on the right hand side is
	\begin{align}
	&-\spaceI e^{-\sigma_{2}\tau}d_{\Omega}^{1+\alpha+m}\ \dell_{1}\psi\left(\frac{W}{d_{\Omega}}\right)^{\alpha+m}\angdell_{12}^{c'}\left(\frac{W}{d_{\Omega}}\right) \Ndellonetwo{m}{a'}\left(\pjac\ninv^{k}_{i}\right)\Ndellonetwo{m}{n}\nu^{i}dx\nonumber\\
	&-\left(\alpha+m\right)\spaceI e^{-\sigma_{2}\tau}d_{\Omega}^{1+\alpha+m}\ \psi\left(\frac{W}{d_{\Omega}}\right)^{\alpha+m-1}\dell_{1}\left(\frac{W}{d_{\Omega}}\right)\angdell_{12}^{c'}\left(\frac{W}{d_{\Omega}}\right) \Ndellonetwo{m}{a'}\left(\pjac\ninv^{k}_{i}\right)\Ndellonetwo{m}{n}\nu^{i}dx\nonumber\\
	&-\spaceI e^{-\sigma_{2}\tau}d_{\Omega}^{1+\alpha+m}\ \psi\left(\frac{W}{d_{\Omega}}\right)^{\alpha+m}\angdell_{12}^{c'}\left(\frac{W}{d_{\Omega}}\right) \dell_{1}\Ndellonetwo{m}{a'}\left(\pjac\ninv^{k}_{i}\right)\Ndellonetwo{m}{n}\nu^{i}dx\nonumber\\
	&-\spaceI e^{-\sigma_{2}\tau}d_{\Omega}^{1+\alpha+m}\ \psi\left(\frac{W}{d_{\Omega}}\right)^{\alpha+m}\angdell_{12}^{c'}\left(\frac{W}{d_{\Omega}}\right) \Ndellonetwo{m}{a'}\left(\pjac\ninv^{k}_{i}\right)\dell_{1}\Ndellonetwo{m}{n}\nu^{i}dx.\label{d-1-alpha-m-ibp-3}
	\end{align}
	Note that due to $(\ref{W-and-d-equivalence})$,
	\begin{align*}
	\left|\frac{W}{d_{\Omega}}\right|+\left|\frac{d_{\Omega}}{W}\right|\lesssim1
	\end{align*}
	on $\supp{\psi}$. Hence, the first three integrals above can be bounded as in $(\ref{higher-order-energy-estimate-7})$. For the third integral, we also use $(\ref{rectangular-as-ang-rad})$ to write $\dell_{1}$ as a linear combination of radial and angular derivatives with smooth coefficients.
	
	This leaves us with the last integral, in which $\dell_{1}\Ndellonetwo{m}{n}\nu^{i}$ has too many derivatives to be bounded in the correct energy space. So we write
	\begin{align*}
	e^{-\sigma_{2}\tau} \Ndellonetwo{m}{a'}\left(\pjac\ninv^{k}_{i}\right)\dell_{1}\left(\Ndellonetwo{m}{n}\nu^{i}\right)=&\dell_{\tau}\left(e^{-\sigma_{2}\tau} \Ndellonetwo{m}{a'}\left(\pjac\ninv^{k}_{i}\right)\dell_{1}\left(\Ndellonetwo{m}{n}\theta^{i}\right)\right)\\
	&+\sigma_{2}e^{-\sigma_{2}\tau} \Ndellonetwo{m}{a'}\left(\pjac\ninv^{k}_{i}\right)\dell_{1}\left(\Ndellonetwo{m}{n}\theta^{i}\right)\\
	&-e^{-\sigma_{2}\tau} \Ndellonetwo{m}{a'}\left(\pjac\ninv^{k}_{j}\ninv^{s}_{i}\dell_{s}\nu^{i}\right)\dell_{1}\left(\Ndellonetwo{m}{n}\theta^{i}\right)\\
	&-\frac{1}{\alpha}e^{-\sigma_{2}\tau} \Ndellonetwo{m}{a'}\left(\pjac\ninv^{k}_{i}\ninv^{s}_{j}\dell_{s}\nu^{j}\right)\dell_{1}\left(\Ndellonetwo{m}{n}\theta^{i}\right).
	\end{align*}
	Every term on the right hand side above can be estimated as in $(\ref{higher-order-energy-estimate-7})$, along with the Fundamental Theorem of Calculus in $\tau$ for the first term. We end up with the bound
	\begin{align}
	&\left|\int_{\tau_{1}}^{\tau}\spaceI e^{-\sigma_{2}\tau'}\psi\left(\frac{W}{d_{\Omega}}\right)^{\alpha+m}\angdell_{12}^{c'}\left(\frac{W}{d_{\Omega}}\right)d_{\Omega}^{1+\alpha+m} \Ndellonetwo{m}{a'}\left(\pjac\ninv^{k}_{i}\right)\dell_{1}\left(\Ndellonetwo{m}{n}\nu^{i}\right)dxd\tau'\right|\nonumber\\
	&\lesssim \sqrt{\delta}e^{-\frac{\sigma_{2}}{2}\tau_{1}}\mathscr{S}_{N}(\tau_{1})+\sqrt{\delta}e^{-\frac{\sigma_{2}}{2}\tau}\mathscr{S}_{N}(\tau)+\int_{\tau_{1}}^{\tau}\sqrt{\delta}\left(\sigma_{2}e^{-\frac{\sigma_{2}}{2}\tau'}+e^{-\frac{\beta}{2}\tau'}\right)\mathscr{S}_{N}(\tau')d\tau'\nonumber\\
	&\lesssim \mathscr{S}_{N}(\tau_{1})+\sqrt{\delta}\mathscr{S}_{N}(\tau)+\int_{\tau_{1}}^{\tau}\sqrt{\delta}\left(\sigma_{2}e^{-\frac{\sigma_{2}}{2}\tau'}+e^{-\frac{\sigma_{1}}{2}\tau'}\right)\mathscr{S}_{N}(\tau')d\tau'.\label{higher-order-energy-estimate-11}
	\end{align}
	We bound the integral with $\bar{\mathscr{L}}(d',k)\dell_{2}$ instead of $\mathscr{L}(d',k) \dell_{1}$ analogously. Combining the bounds $(\ref{higher-order-energy-estimate-7})$ and $(\ref{higher-order-energy-estimate-14})$ gives us the estimate for $(\ref{mathcal-D-expansion-3})$.
	
 Any terms coming from~\eqref{mathcal-D-expansion-2} that are not of the form $\mathcal{A}_{1}$, $\mathcal{A}_{2}$, or $\mathcal{A}_{3}$ can be bounded using Lemma \ref{lower-order-estimates-lemma}. Therefore, we have the bound
	\begin{align}
	&\left|\sum_{a+b=n-1}\int_{\tau_{1}}^{\tau}\spaceI\psi e^{-\sigma_{2}\tau}W^{\alpha+m}\Ndellonetwo{m}{a}\mathcal{D}_{i}^{\alpha}\left(1,2,\angdell_{12}^{b}\left(\pjac\ninv\right)\right)\Ndellonetwo{m}{n}\nu^{i}dxd\tau'\right|\nonumber\\
	&\lesssim \mathscr{S}_{N}(\tau_{1})+\sqrt{\delta}\mathscr{S}_{N}(\tau)+\int_{\tau_{1}}^{\tau}\sqrt{\delta}\left(\sigma_{2}e^{-\frac{\sigma_{2}}{2}\tau'}+e^{-\frac{\sigma_{1}}{2}\tau'}\right)\mathscr{S}_{N}(\tau')d\tau'+\int_{\tau_{1}}^{\tau}\sqrt{\delta}e^{-\frac{\sigma_{1}}{2}\tau'}\mathscr{S}_{N}(\tau')^{1/2}d\tau'.\label{higher-order-energy-estimate-14}
	\end{align}
	It remains to bound all terms of the form
	\begin{align}
	e^{-\sigma_{2}\tau}W^{\alpha+m}\rdell^{l}\mathcal{C}^{1+\alpha+p}_{i}\left(\Ndellonetwo{p}{n}\left(\pjac\ninv\right)\right),\label{mathcal-C-expansion-1}
	\end{align}
	where $l+p=m-1$. By definition this term is equal to
	\begin{align}
	e^{-\sigma_{2}\tau}W^{\alpha+m}\rdell^{l}\left(\dell_{r}W \frac{x_{j}}{r}\angdell_{jk}\Ndellonetwo{p}{n}\left(\pjac\ninv^{k}_{i}\right)-W\dell_{k}\Ndellonetwo{p}{n}\left(\pjac\ninv^{k}_{i}\right)+(1+\alpha+p)\Ndellonetwo{p}{n}\left(\pjac\ninv^{k}_{i}\right)\rdell\dell_{k}W\right).\label{mathcal-C-expansion-2}
	\end{align}
	We can bound this term analogously to $(\ref{higher-order-energy-estimate-7})$ using Lemma \ref{lower-order-estimates-lemma}. Thus we have
	\begin{align}
	\left|\sum_{l+p=m-1}\int_{\tau_{1}}^{\tau}\spaceI\psi e^{-\sigma_{2}\tau}W^{\alpha+m}\rdell^{l}\mathcal{C}^{1+\alpha+p}_{i}\left(\Ndellonetwo{p}{n}\left(\pjac\ninv\right)\right)\Ndellonetwo{m}{n}\nu^{i}dxd\tau'\right|\lesssim \int_{\tau_{1}}^{\tau}\sqrt{\delta}e^{-\frac{\sigma_{1}}{2}\tau'}\mathscr{S}_{N}(\tau')d\tau'.\label{higher-order-energy-estimate-15}
	\end{align}
	We are left to bound $\rem^{ki}_{2}(m,\underline{n})$.\\
	
	\noindent \textbf{Remainder Terms from Linearising $\pjac\ninv$.}\\
	
	\noindent We look at the expression $e^{-\sigma_{2}\tau}\psi\dell_{k}\left(W^{1+\alpha+m}\rem^{ki}_{2}(m,\underline{n})\right)$, which by definition is given by
	\begin{align*}
	&e^{-\sigma_{2}\tau}\psi\dell_{k}\left(-W^{1+\alpha+m}\Ndellonetwo{m}{n-1}\left(\pjac\ninv^{k}_{j}\ninv^{s}_{i}\comm{\angdell_{12}}{\dell_{s}}\theta^{j}+\frac{1}{\alpha}\pjac\ninv^{k}_{i}\ninv^{s}_{j}\comm{\angdell_{12}}{\dell_{s}}\theta^{j}\right)\right)\nonumber\\
	&-e^{-\sigma_{2}\tau}\psi\dell_{k}\left(W^{1+\alpha+m}\pjac\ninv^{k}_{j}\ninv^{s}_{i}\comm{\Ndellonetwo{m}{n-1}}{\dell_{s}}\angdell_{12}\theta^{j}-\frac{1}{\alpha}W^{1+\alpha+m}\pjac\ninv^{k}_{i}\ninv^{s}_{j}\comm{\Ndellonetwo{m}{n-1}}{\dell_{s}}\angdell_{12}\theta^{j}\right)\nonumber\\
	&-e^{-\sigma_{2}\tau}\psi\dell_{k}\left(W^{1+\alpha+m}\sum_{\substack{p+l=m\\q+b=n-1\\p+q>0}}\mathscr{L}(p,q,l,b)\Ndellonetwo{p}{q}\left(\pjac\ninv^{k}_{j}\ninv^{s}_{i}\right)\Ndellonetwo{l}{b}\left(\dell_{s}\angdell_{12}\theta^{j}\right)\right)\nonumber\\
	&-\frac{1}{\alpha}e^{-\sigma_{2}\tau}\psi\dell_{k}\left(W^{1+\alpha+m}\sum_{\substack{p+l=m\\q+b=n-1\\p+q>0}}\mathscr{L}(p,q,l,b)\Ndellonetwo{p}{q}\left(\pjac\ninv^{k}_{i}\ninv^{s}_{j}\right)\Ndellonetwo{l}{b}\left(\dell_{s}\angdell_{12}\theta^{j}\right)\right).
	\end{align*}
	We first transform this expression using integration by parts:
	\begin{align}
	\spaceI e^{-\sigma_{2}\tau}\psi\dell_{k}\left(W^{1+\alpha+m}\rem^{ki}_{2}(m,\underline{n})\right)\Ndellonetwo{m}{n}\nu^{i}dx=&-\spaceI e^{-\sigma_{2}\tau}\psi W^{1+\alpha+m}\rem^{ki}_{2}(m,\underline{n})\dell_{k}\left(\Ndellonetwo{m}{n}\nu^{i}\right)dx\nonumber\\
	&-\spaceI e^{-\sigma_{2}\tau}\dell_{k}\psi W^{1+\alpha+m}\rem^{ki}_{2}(m,\underline{n})\Ndellonetwo{m}{n}\nu^{i}dx.\label{dell-k-R-ki-ibp}
	\end{align}
	The second term on the right hand side is lower order, and we can use an argument similar to that in Lemma \ref{lower-order-estimates-lemma} to bound all terms coming from $\rem^{ki}_{2}$ in this case. We have the estimate
	\begin{align}
	\left|\int_{\tau_{1}}^{\tau}\spaceI e^{-\sigma_{2}\tau'}\dell_{k}\psi W^{1+\alpha+m}\rem^{ki}_{2}(m,\underline{n})\Ndellonetwo{m}{n}\nu^{i}dxd\tau'\right|\lesssim\int_{\tau_{1}}^{\tau}\delta e^{-\frac{\sigma_{1}}{2}\tau'}\mathscr{S}_{N}(\tau')d\tau'.\label{higher-order-energy-estimate-12}
	\end{align}
	For the other, top order integral, we first write
	\begin{align*}
	e^{-\sigma_{2}\tau}\psi W^{1+\alpha+m}\rem^{ki}_{2}(m,\underline{n})\dell_{k}\left(\Ndellonetwo{m}{n}\nu^{i}\right)=&\dell_{\tau}\left(e^{-\sigma_{2}\tau}\psi W^{1+\alpha+m}\rem^{ki}_{2}(m,\underline{n})\dell_{k}\left(\Ndellonetwo{m}{n}\theta^{i}\right)\right)\\
	&+\sigma_{2}e^{-\sigma_{2}\tau}\psi W^{1+\alpha+m}\rem^{ki}_{2}(m,\underline{n})\dell_{k}\left(\Ndellonetwo{m}{n}\theta^{i}\right)\\
	&-e^{-\sigma_{2}\tau}\psi W^{1+\alpha+m}\dell_{\tau}\rem^{ki}_{2}(m,\underline{n})\dell_{k}\left(\Ndellonetwo{m}{n}\theta^{i}\right).
	\end{align*}
	All of these terms can again be estimated using the methods set out in the proof of Lemma \ref{lower-order-estimates-lemma}, so we have
	\begin{align}
	&\left|\int_{\tau_{1}}^{\tau}\spaceI e^{-\sigma_{2}\tau'}\psi W^{1+\alpha+m}\rem^{ki}_{2}(m,\underline{n})\dell_{k}\left(\Ndellonetwo{m}{n}\nu^{i}\right)dxd\tau'\right|\nonumber\\
	&\lesssim\sqrt{\delta}e^{-\frac{\sigma_{2}}{2}\tau_{1}}\mathscr{S}_{N}(\tau_{1})+\sqrt{\delta}e^{-\frac{\sigma_{2}}{2}\tau}\mathscr{S}_{N}(\tau)+\int_{\tau_{1}}^{\tau}\sqrt{\delta}(\sigma_{2}e^{-\frac{\sigma_{2}}{2}\tau'}+e^{-\frac{\beta}{2}\tau'})\mathscr{S}_{N}(\tau')d\tau'\nonumber\\
	&\lesssim\mathscr{S}_{N}(\tau_{1})+\sqrt{\delta}\mathscr{S}_{N}(\tau)+\int_{\tau_{1}}^{\tau}\sqrt{\delta}(\sigma_{2}e^{-\frac{\sigma_{2}}{2}\tau'}+e^{-\frac{\sigma_{1}}{2}\tau'})\mathscr{S}_{N}(\tau')d\tau'.\label{higher-order-energy-estimate-13}
	\end{align}
	The bounds for $\rem(\underline{k})$ are analogous as $W\sim1$ on $\supp{\bar{\psi}}$. Thus we have Theorem \ref{higher-order-energy-estimates-theorem}.
\end{proof}

\section{Energy Inequality and Proof of Theorem \ref{main-theorem}}

In this section we prove the main result of this paper, Theorem \ref{main-theorem}. However, we first need an energy inequality.

\begin{theorem}[Energy Inequality]\label{energy-inequality-theorem}Let $(\nu,\theta)$ be the solution to~\eqref{E:EULERNEW2} on $[0,T]$ in the sense of Theorem~\ref{local-well-posedness-theorem}, for some $N\geq2\ceil*{\alpha}+12$. Suppose the a priori assumptions $(\ref{a-priori-assumptions})$ hold, and $\nabla W\in\X{N}$. Then for all $0\leq\tau_{1}\leq\tau^{*}\leq T$,
	\begin{align}
	\mathscr{S}_{N}(\tau^{*};\tau_{1})\leq C_{1}\mathscr{S}_{N}(\tau_{1})+C_{2}\sqrt{\delta}+C_{3}\left(\mathscr{C}_{N}(0)+\mathscr{S}_{N}(0)\right)+C_{4}\sqrt{\delta}\mathscr{S}_{N}(\tau^{*};\tau_{1})+C_{5}\int_{\tau_{1}}^{\tau^{*}}\sqrt{\delta}\mathcal{G}(\beta,\tau')\mathscr{S}_{N}(\tau';\tau_{1})d\tau',\label{energy-inequality-statement}
	\end{align}
with $C_{j}$, $j=1,2,3,4,5$ constants $\geq1$, and $\mathcal{G}(\beta,\tau)$ integrable in $\tau$ for all fixed $\beta$.
\end{theorem}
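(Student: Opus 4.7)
The plan is to combine the zero-order energy identity (Theorem~\ref{zero-order-energy-identity-theorem}) with all higher-order identities (Theorem~\ref{higher-order-energy-identity-theorem}) on $\supp\psi$ and $\supp\bar\psi$, sum them over all multi-indices with $m+|\underline n|\le N$ and $|\underline k|\le N$, and integrate in $\tau$ from $\tau_1$ to $\tau^*$. The principal part of the left-hand side then reproduces, up to universal constants, the definition of $\mathscr S_N$, so that after taking a supremum over $\tau'\in[\tau_1,\tau^*]$ we obtain $\mathscr S_N(\tau^*;\tau_1)$ on the left modulo the Curl contribution and the positive damping functionals $\mathbb D^\psi_{(m,\underline n)}$, $\mathbb D^{\bar\psi}_{\underline k}$, which can be discarded by Remark~\ref{damping-functional-is-nonnegative-remark}. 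The leading initial-time boundary term on the right is precisely $C_1\mathscr S_N(\tau_1)$.

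Next I would handle the top-order $\nCurl$ contributions, which appear with a bad sign on the left (the $-\tfrac{d}{d\tau}\bigl(\tfrac14\int e^{-\sigma_2\tau}\psi W^{1+\alpha+m}\pjac|\nCurl\,\Ndell{m}{n}\theta|^2\bigr)$ and the $-\tfrac{\sigma_2}{4}\int\cdots$ pieces, plus their interior analogues). These are bounded by moving them to the right-hand side and invoking Theorem~\ref{Curl-estimates-theorem-theta}: this expresses $e^{-\sigma_2\tau}\|\theta\|_{\Y N \nCurl}^2$ in terms of $\mathscr C_N(0)+\delta\mathscr S_N(0)$, $\delta\mathscr S_N$, $\delta\mathscr S_N^2$, and the factors $\delta e^{-\sigma_2\tau}\tilde G_i(\beta,\tau)$, $\delta e^{-\sigma_2\tau}H_j(\beta,\tau)$. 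By Lemma~\ref{G-dash-and-H-bounded-and-integrable-lemma} each of $e^{-\sigma_2\tau}\tilde G_i$, $e^{-\sigma_2\tau}H_j$ is bounded and integrable in $\tau$, so after one $\tau$-integration these contributions collapse into the $C_3(\mathscr C_N(0)+\mathscr S_N(0))$ term (using $\delta\lesssim 1$) and into a term of the form $\int_{\tau_1}^{\tau^*}\sqrt\delta\,\mathcal G(\beta,\tau')\,\mathscr S_N(\tau';\tau_1)\,d\tau'$ after using the a priori bound $\mathscr S_N\le 1/3$ to convert one factor of $\mathscr S_N^2$ into $\mathscr S_N$.

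For the remainders $\mathcal R(0)$, $\mathcal R(m,\underline n)$ and $\mathcal R(\underline k)$, I would directly apply Theorem~\ref{zero-order-energy-estimates-theorem} and Theorem~\ref{higher-order-energy-estimates-theorem}. The $\sqrt\delta\,e^{-\sigma_1\tau'/2}\mathscr S_N^{1/2}$ source terms (generated when every derivative lands on $W$) yield the $C_2\sqrt\delta$ contribution after Young's inequality $\sqrt\delta\,\mathscr S_N^{1/2}\le \tfrac12\delta\,\mathcal G(\beta,\tau') + \tfrac12\mathscr S_N/\mathcal G(\beta,\tau')$ or, more simply, after integrating $e^{-\sigma_1\tau'/2}$ in $\tau'$ and using $\mathscr S_N^{1/2}\le 1+\mathscr S_N$. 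The higher-order boundary term $\mathscr S_N(\tau_1)$ from Theorem~\ref{higher-order-energy-estimates-theorem} is absorbed into $C_1\mathscr S_N(\tau_1)$, and the $\sqrt\delta\,\mathscr S_N(\tau^*)$ term from the same theorem becomes $C_4\sqrt\delta\,\mathscr S_N(\tau^*;\tau_1)$. The remaining $\int\sqrt\delta(e^{-\sigma_1\tau'/2}+\sigma_2 e^{-\sigma_2\tau'/2})(\mathscr S_N + \mathscr S_N^{3/2})\,d\tau'$ terms are, after using the a priori assumption on $\mathscr S_N$ once more to linearize $\mathscr S_N^{3/2}$, absorbed into $C_5\int\sqrt\delta\,\mathcal G(\beta,\tau')\mathscr S_N(\tau';\tau_1)\,d\tau'$, where we set
\begin{equation*}
\mathcal G(\beta,\tau)\coloneqq e^{-\sigma_1\tau/2}+\sigma_2 e^{-\sigma_2\tau/2}+\sum_{i=1}^{6}e^{-\sigma_2\tau}\tilde G_i(\beta,\tau)+e^{-\sigma_2\tau}H_1(\beta,\tau)+e^{-\sigma_2\tau}H_2(\beta,\tau).
\end{equation*}
Integrability of $\mathcal G$ in $\tau$ for all $\beta>0$ follows from $\sigma_1>0$, from the convention $\sigma_2=0$ when $\beta\le 2$ (in which case the $H_i$, $\tilde G_i$ terms reduce to exponentially decaying expressions), and from Lemma~\ref{G-dash-and-H-bounded-and-integrable-lemma} when $\beta>2$.

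The main obstacle is purely bookkeeping: one must verify that each bad-signed or high-order piece arising on the left is exactly compensated either by a remainder bound from Section~\ref{theta-estimates}--\ref{energy-estimates} or by the curl bound from Theorem~\ref{Curl-estimates-theorem-theta}, and that every resulting prefactor is either an initial datum, a power of $\sqrt\delta$, or an integrable function of $\tau$. The delicate point is the treatment of the $\sigma_2$-dependent terms when $\beta>2$: here $\sigma_2 e^{-\sigma_2\tau/2}$ is bounded but only borderline integrable, which is why Lemma~\ref{G-dash-and-H-bounded-and-integrable-lemma} was specifically designed to give integrability of $e^{-\sigma_2\tau}\tilde G_i$ and $e^{-\sigma_2\tau}H_j$ in that regime. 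After collecting all contributions, taking the supremum over $\tau'\in[\tau_1,\tau^*]$ of both sides produces~\eqref{energy-inequality-statement}.
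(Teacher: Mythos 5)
Your proposal follows the paper's route step for step — integrate and sum the energy identities, discard the nonnegative damping, move the bad-signed curl terms to the right and close them with Theorem~\ref{Curl-estimates-theorem-theta}, bound the remainders with Theorems~\ref{zero-order-energy-estimates-theorem} and~\ref{higher-order-energy-estimates-theorem}, and take a supremum — so the approach is the right one. There are, however, two concrete gaps.

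First, you claim that ``the principal part of the left-hand side ... reproduces the definition of $\mathscr S_N$.'' It does not: the energy identities produce control only on $\tfrac1\delta e^{\sigma_1\tau}\|\nu\|_{\X N}^2$, $e^{-\sigma_2\tau}\|\theta\|_{\Y N{\ngrad}}^2$ and $\tfrac1\alpha e^{-\sigma_2\tau}\|\theta\|_{\Y N{\ndiv}}^2$, but $\mathscr S_N$ also contains the term $\|\theta\|_{\X N}^2$, which does not appear anywhere in the energy identity. One must add $\|\theta\|_{\X N}^2$ to both sides and use Lemma~\ref{theta-estimates-lemma} to bound the added right-hand side contribution by $\delta\mathscr S_N(\tau)\lesssim\sqrt\delta\,\mathscr S_N(\tau)$, so that it is absorbed harmlessly. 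Without this step the left-hand side of your final inequality is strictly weaker than $\mathscr S_N(\tau^*;\tau_1)$.

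Second, your definition of $\mathcal G$ omits an essential $\sigma_2$ prefactor on the curl-derived contributions, and the reason you give for integrability when $\beta\le2$ — that the $H_j,\tilde G_i$ ``reduce to exponentially decaying expressions'' — is false. For $\beta\le 2$ one has $\sigma_2=0$, and then $H_1=H_2\equiv1$ and $\tilde G_i(\beta,\tau)=\int_0^\tau e^{\sigma_1\tau'/2}G_i(\beta,\tau')\,d\tau'$ converges to a positive constant as $\tau\to\infty$; these are bounded but certainly not integrable. Lemma~\ref{G-dash-and-H-bounded-and-integrable-lemma} only gives integrability of $e^{-\sigma_2\tau}\tilde G_i$ and $e^{-\sigma_2\tau}H_j$ for $\beta>2$. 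The rescue is that in the energy identity the time-integrated curl term carries the explicit prefactor $\tfrac{\sigma_2}{4}$, which you must keep. That gives
\[
\mathcal G(\beta,\tau)=e^{-\sigma_1\tau/2}+\sigma_2\Bigl(e^{-\sigma_2\tau/2}+\textstyle\sum_{j=1}^2 e^{-\sigma_2\tau}H_j(\beta,\tau)+\sum_{i=1}^6 e^{-\sigma_2\tau}\tilde G_i(\beta,\tau)\Bigr),
\]
which vanishes apart from $e^{-\sigma_1\tau/2}$ when $\beta\le 2$ (so is trivially integrable there) and is integrable by Lemma~\ref{G-dash-and-H-bounded-and-integrable-lemma} when $\beta>2$. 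As you wrote $\mathcal G$, it is not integrable over $[0,\infty)$ for $\beta\le2$, and the subsequent bootstrap breaks down.
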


Taken from \cite{HaJa2}, the notation $\mathscr{S}_{N}(\tau;\tau_{1})$ is a modification of Definition~\ref{energy-function-for-solutions}, in that we take the supremum over $[\tau_{1},\tau]$ instead of $[0,\tau]$.

The proof will require using Theorems \ref{higher-order-energy-identity-theorem} and \ref{higher-order-energy-estimates-theorem}. Once again we stress that while we assumed a specific form of $(m,\underline{n})$ to prove these energy identities and estimates on the vacuum boundary, this was for simplicity, and there are no additional technicalities when proving the same statements for general $(m,\underline{n})$, only extra care when dealing with $\angdell^{\underline{n}}$. Thus, throughout the proof we assume the statement of these theorems for all $(m,\underline{n})$ such that $m+|\underline{n}|\leq N$.

\begin{proof}
	Fix $\tau_{1}\leq\tau^{*}\leq T$. To begin with we integrate the energy identities $(\ref{zero-order-energy-identity-statement})$, $(\ref{higher-order-energy-identity-statement})$ and $(\ref{higher-order-energy-identity-statement-interior})$ in $\tau\in[\tau_{1},\tau^{*}]$, sum over all $(m,\underline{n})$ and $\underline{k}$ such that $\max{(m+|\underline{n}|,|\underline{k}|)}\leq N$, and obtain
	\begin{align}
	&\frac{1}{2}\left(\frac{1}{\delta}e^{\sigma_{1}\tau}\left\|\nu\right\|_{\X{N}}^{2}(\tau)+e^{-\sigma_{2}\tau}\left\|\theta\right\|_{\Y{N}{\ngrad}}^{2}(\tau)+\frac{1}{\alpha}e^{-\sigma_{2}\tau}\left\|\theta\right\|_{\Y{N}{\ndiv}}^{2}(\tau)\right)\nonumber\\
	&\lesssim\frac{1}{2}\left(\frac{1}{\delta}e^{\sigma_{1}\tau_{1}}\left\|\nu\right\|_{\X{N}}^{2}(\tau_{1})+e^{-\sigma_{2}\tau_{1}}\left\|\theta\right\|_{\Y{N}{\ngrad}}^{2}(\tau_{1})+\frac{1}{\alpha}e^{-\sigma_{2}\tau_{1}}\left\|\theta\right\|_{\Y{N}{\ndiv}}^{2}(\tau_{1})\right)\nonumber\\
	&+\frac{1}{4}\left(e^{-\sigma_{2}\tau}\left\|\theta\right\|_{\Y{N}{\nCurl}}^{2}(\tau)-e^{-\sigma_{2}\tau_{1}}\left\|\theta\right\|_{\Y{N}{\nCurl}}^{2}(\tau_{1})\right)\nonumber\\
	&+\frac{\sigma_{2}}{4}\int_{\tau_{1}}^{\tau}e^{-\sigma_{2}\tau}\left\|\theta\right\|_{\Y{N}{\nCurl}}^{2}(\tau')d\tau'+\int_{\tau_{1}}^{\tau}\left(\rem(0)+\rem(m,\underline{n})+\rem(\underline{k})\right)d\tau'.\label{energy-inequality-proof-1}
	\end{align}
	Note that whilst $(\ref{higher-order-energy-identity-statement-interior})$ does not have the same powers of $W$ on the $\supp{\bar{\psi}}$ integrals as the definition of the energy norms in $(\ref{energy-space-norm})$ and $(\ref{energy-space-norm-2})$, $W\sim1$ on this region, so this is not an issue. Now we apply Theorems \ref{zero-order-energy-estimates-theorem} and  \ref{higher-order-energy-estimates-theorem}, and get
	\begin{align}
	&\frac{1}{2}\left(\frac{1}{\delta}e^{\sigma_{1}\tau}\left\|\nu\right\|_{\X{N}}^{2}(\tau)+e^{-\sigma_{2}\tau}\left\|\theta\right\|_{\Y{N}{\ngrad}}^{2}(\tau)+\frac{1}{\alpha}e^{-\sigma_{2}\tau}\left\|\theta\right\|_{\Y{N}{\ndiv}}^{2}(\tau)\right)\nonumber\\
	&\lesssim\frac{1}{4}e^{-\sigma_{2}\tau}\left\|\theta\right\|_{\Y{N}{\nCurl}}^{2}(\tau)+\frac{\sigma_{2}}{4}\int_{\tau_{1}}^{\tau}e^{-\sigma_{2}\tau}\left\|\theta\right\|_{\Y{N}{\nCurl}}^{2}(\tau')d\tau'+\mathscr{S}_{N}(\tau_{1})+\sqrt{\delta}\mathscr{S}_{N}(\tau)\nonumber\\
	&+\int_{\tau_{1}}^{\tau}\sqrt{\delta}\left(e^{-\frac{\sigma_{1}}{2}\tau'}+\sigma_{2}e^{-\frac{\sigma_{2}}{2}\tau'}\right) \left(\mathscr{S}_{N}(\tau')^{3/2}+\mathscr{S}_{N}(\tau')\right)d\tau'+\int_{\tau_{1}}^{\tau}\sqrt{\delta}e^{-\frac{\sigma_{1}}{2}\tau'}\mathscr{S}_{N}(\tau')^{1/2}d\tau'.\label{energy-inequality-proof-2}
	\end{align}
	Applying Theorem \ref{Curl-estimates-theorem-theta}, this bound becomes
	\begin{align}
	&\frac{1}{2}\left(\frac{1}{\delta}e^{\sigma_{1}\tau}\left\|\nu\right\|_{\X{N}}^{2}(\tau)+e^{-\sigma_{2}\tau}\left\|\theta\right\|_{\Y{N}{\ngrad}}^{2}(\tau)+\frac{1}{\alpha}e^{-\sigma_{2}\tau}\left\|\theta\right\|_{\Y{N}{\ndiv}}^{2}(\tau)\right)\nonumber\\
	&\lesssim \left(\mathscr{C}_{N}(0)+\mathscr{S}_{N}(0)\right)+\sqrt{\delta}\mathscr{S}_{N}(\tau)+\mathscr{S}_{N}(\tau_{1})+\int_{\tau_{1}}^{\tau}\sqrt{\delta}\bar{\mathcal{G}}(\beta,\tau')\mathscr{S}_{N}(\tau')d\tau'+\int_{\tau_{1}}^{\tau}\sqrt{\delta}e^{-\frac{\sigma_{1}}{2}\tau'}\mathscr{S}_{N}(\tau')^{1/2}d\tau'.\label{energy-inequality-proof-3}
	\end{align}
	Here $\bar{\mathcal{G}}$ is defined as
	\begin{align}
	\bar{\mathcal{G}}(\beta,\tau)=e^{-\frac{\sigma_{1}}{2}\tau}+\sigma_{2}\left(e^{-\frac{\sigma_{2}}{2}\tau}+\sum_{j=1}^{2}e^{-\sigma_{2}\tau}H_{j}(\beta,\tau)+\sum_{i=1}^{6}e^{-\sigma_{2}\tau}\tilde{G}_{i}(\beta,\tau)\right),
	\end{align}
	where $H_{j}$ and $\tilde{G}_{i}$ are given in Definition \ref{definition-of-H-G-dash-curl-estimates}. We have also used the property that $e^{-\sigma_{2}\tau}H_{j}(\beta,\tau)$ and $e^{-\sigma_{2}\tau}\tilde{G}_{i}(\beta,\tau)$ are uniformly bounded in $\tau$ for all $\beta>0$. Next we use Young's Inequality on the last integrand on the right hand side of $(\ref{energy-inequality-proof-3})$. We also add $\left\|\theta\right\|_{\X{N}}^{2}$ to both sides and employ Lemma \ref{theta-estimates-lemma} to obtain
	\begin{align}
	&\left(\frac{1}{\delta}e^{\sigma_{1}\tau}\left\|\nu(\tau)\right\|_{\X{N}}^{2}+\left\|\theta(\tau)\right\|_{\X{N}}^{2}+e^{-\sigma_{2}\tau}\left\|\theta(\tau)\right\|_{\Y{N}{\ngrad}}^{2}+\frac{1}{\alpha}e^{-\sigma_{2}\tau}\left\|\theta(\tau)\right\|_{\Y{N}{\ndiv}}^{2}\right)\nonumber\\
	&\lesssim \left(\mathscr{C}_{N}(0)+\mathscr{S}_{N}(0)\right)+\sqrt\delta+\sqrt{\delta}\mathscr{S}_{N}(\tau)+\mathscr{S}_{N}(\tau_{1})+\int_{\tau_{1}}^{\tau}\sqrt{\delta}\mathcal{G}(\beta,\tau)\mathscr{S}_{N}(\tau')d\tau'.\label{energy-inequality-proof-4}
	\end{align}
	Note that $\mathcal{G}$ is given by $\bar{\mathcal{G}}+e^{-\frac{\sigma_{1}}{2}\tau}$. Moreover, since $\sigma_{2}(\beta)=0$ for all $\beta\in(0,2]$, and $e^{-\sigma_{2}\tau}H_{j}(\beta,\tau)$ and $e^{-\sigma_{2}\tau}\tilde{G}_{i}(\beta,\tau)$ are integrable in $\tau$ for all $\beta>2$, we have that $\mathcal{G}$ is integrable in $\tau$ for all $\beta>0$. By definition, $\mathscr{S}_{N}(\tau)\leq\mathscr{S}_{N}(\tau;\tau_{1})+\mathscr{S}_{N}(\tau_{1})$. As $\mathcal{G}$ is integrable, we have
	\begin{align}
	&\left(\frac{1}{\delta}e^{\sigma_{1}\tau}\left\|\nu\right\|_{\X{N}}^{2}(\tau)+\left\|\theta\right\|_{\X{N}}^{2}+e^{-\sigma_{2}\tau}\left\|\theta\right\|_{\Y{N}{\ngrad}}^{2}(\tau)+\frac{1}{\alpha}e^{-\sigma_{2}\tau}\left\|\theta\right\|_{\Y{N}{\ndiv}}^{2}(\tau)\right)\nonumber\\
	&\lesssim \left(\mathscr{C}_{N}(0)+\mathscr{S}_{N}(0)\right)+\sqrt\delta+\mathscr{S}_{N}(\tau_{1})+\sqrt{\delta}\mathscr{S}_{N}(\tau;\tau_{1})+\int_{\tau_{1}}^{\tau}\sqrt{\delta}\mathcal{G}(\beta,\tau)\left(\mathscr{S}_{N}(\tau';\tau_{1})+\mathscr{S}_{N}(\tau_{1})\right)d\tau'\nonumber\\
	&\lesssim \left(\mathscr{C}_{N}(0)+\mathscr{S}_{N}(0)\right)+\sqrt\delta+\mathscr{S}_{N}(\tau_{1})+\sqrt{\delta}\mathscr{S}_{N}(\tau;\tau_{1})+\int_{\tau_{1}}^{\tau}\sqrt{\delta}\mathcal{G}(\beta,\tau)\mathscr{S}_{N}(\tau';\tau_{1})d\tau'.\label{energy-inequality-proof-5}
	\end{align}
	Finally, taking the supremum over $[\tau_{1},\tau^{*}]$ on both sides, and adjusting constants on the right hand side if necessary, we have~\eqref{energy-inequality-statement}, noting the last two terms in $(\ref{energy-inequality-proof-5})$ are increasing in $\tau$.
	
	\subsection{Proof of Theorem \ref{main-theorem}}
	Let $T$ be such that
	\begin{align}
	\sup_{0\leq\tau\leq T}\mathscr{S}_{N}(\tau)=\mathscr{S}_{N}(T)\leq \bar{C}(\mathscr{S}_{N}(0)+\mathscr{C}_{N}(0)+\sqrt\delta),\label{main-proof-inequality-1}
	\end{align}
	for some constant $\bar{C}$ given by local well-posedness theory. Let $C^{*}$ be defined by
	\begin{align}
	C^{*}=3(C_{1}\bar{C}+C_{2}+C_{3}),\label{C-star-definition}
	\end{align}
	with $C_{i}$, $i=1,2,3,4,5$ given in Theorem \ref{energy-inequality-theorem}. Note that $\bar{C}<C^{*}$, and so if we define
	\begin{align}
	T^{*}=\sup_{\tau\geq0}\left\{\text{\ Solution to~\eqref{E:EULERNEW2} exists on $[0,\tau]$\ }|\mathscr{S}_{N}(\tau)\leq C^{*}(\mathscr{S}_{N}(0)+\mathscr{C}_{N}(0)+\sqrt\delta)\right\},\label{T-star-definition}
	\end{align}
	then $T\leq T^{*}$. Now letting $\tau_{1}=T/2$ in Theorem \ref{energy-inequality-theorem}, for any $\tau^{*}\in[\frac{T}{2},T^{*})$, we have
	\begin{align}
	\mathscr{S}_{N}(\tau^{*};\frac{T}{2})&\leq C_{1}\mathscr{S}_{N}(\frac{T}{2})+C_{2}\sqrt\delta+C_{3}\left(\mathscr{C}_{N}(0)+\mathscr{S}_{N}(0)\right)+C_{4}\sqrt{\delta}\mathscr{S}_{N}(\tau^{*};\frac{T}{2})+C_{5}\int_{\frac{T}{2}}^{\tau^{*}}\sqrt{\delta}\mathcal{G}(\beta,\tau')\mathscr{S}_{N}(\tau';\frac{T}{2})d\tau'\nonumber\\
	&\leq C_{1}\mathscr{S}_{N}(\frac{T}{2})+C_{2}\sqrt\delta+C_{3}\left(\mathscr{C}_{N}(0)+\mathscr{S}_{N}(0)\right)+C_{4}\sqrt{\delta}\mathscr{S}_{N}(\tau^{*};\frac{T}{2})+C_{5}\sqrt{\delta}\left(\int_{0}^{\infty}\mathcal{G}(\beta,\tau')d\tau'\right)\mathscr{S}_{N}(\tau^{*};\frac{T}{2}).\label{main-proof-inequality-2}
	\end{align}
	Let $\delta$ be so small that
	\begin{align}
	\delta\leq\min{\left(\left(4C_{4}\right)^{-2},\left(4C_{5}\int_{0}^{\infty}\mathcal{G}(\beta,\tau')d\tau'\right)^{-2}\right)}.\label{delta-smallness-requirements}
	\end{align}
	Then we have 
	\begin{align}
	\mathscr{S}_{N}(\tau^{*};\frac{T}{2})\leq 2C_{1}\mathscr{S}_{N}(\frac{T}{2})+2C_{2}\sqrt\delta+2C_{3}\left(\mathscr{C}_{N}(0)+\mathscr{S}_{N}(0)\right).\label{main-proof-inequality-3}
	\end{align}
	Using $(\ref{main-proof-inequality-1})$ to bound $\mathscr{S}_{N}(T/2)$ by $\bar{C}\left(\mathscr{C}_{N}(0)+\mathscr{S}_{N}(0)+\sqrt\delta\right)$, we obtain
	\begin{align}
	\mathscr{S}_{N}(\tau^{*};\frac{T}{2})&\leq2C_{1}\bar{C}\left(\mathscr{C}_{N}(0)+\mathscr{S}_{N}(0)+\sqrt\delta\right)+2C_{2}\sqrt\delta+2C_{3}\left(\mathscr{C}_{N}(0)+\mathscr{S}_{N}(0)\right)\nonumber\\
	&\leq2(C_{1}\bar{C}+C_{2}+C_{3})\left(\mathscr{C}_{N}(0)+\mathscr{S}_{N}(0)+\sqrt \delta\right)<C^{*}\left(\mathscr{C}_{N}(0)+\mathscr{S}_{N}(0)+\sqrt\delta\right).\label{main-proof-inequality-4}
	\end{align}
	Combining $(\ref{main-proof-inequality-4})$ with $(\ref{main-proof-inequality-1})$, we obtain $\mathscr{S}_{N}(\tau^{*})<C^{*}\left(\mathscr{C}_{N}(0)+\mathscr{S}_{N}(0)+\sqrt\delta\right)$, for all $\tau^{*}\in[0,T^{*})$. Shrinking $\delta$ further if necessary, we also improve our a priori assumptions $(\ref{a-priori-assumptions})$. For $\ninv$ we have, for all $\tau\in[0,T^{*})$,
	\begin{align}
	\left\|\ninv-I\right\|_{L^{\infty}}=\left\|\int_{0}^{\tau}\dell_{\tau}\ninv d\tau'\right\|_{L^{\infty}}\leq C\sqrt{\delta}\int_{0}^{\tau}e^{-\frac{\sigma_{1}}{2}\tau'}\mathscr{S}_{N}(\tau')d\tau'<\frac{1}{3},
	\end{align}
	for small enough $\delta$. Similarly for $\njac$.
	
 Then, by continuity of $\mathscr{S}_{N}(\tau)$ as a function of $\tau$, we must have that $T^{*}=\infty$. Thus the bound $(\ref{global-energy-estimate})$ follows. 
It is left to prove $(\ref{theta-infinity-statement})$. 
	Let $\tau_{1}>\tau_{2}$. For any $m+|\underline{n}|\leq N$, we have
	\begin{align}
	&\spaceI\psi W^{\alpha+m}\left|\Ndell{m}{n}\left(\theta(\tau_{1})-\theta(\tau_{2})\right)\right|^{2}dx=\spaceI\psi W^{\alpha+m}\left|\int_{\tau_{2}}^{\tau_{1}}\Ndell{m}{n}\nu(\tau')d\tau'\right|^{2}dx\nonumber\\
	&\lesssim\left(\int_{\tau_{2}}^{\tau_{1}}e^{-\frac{\sigma_{1}}{2}\tau'}d\tau'\right)\int_{\tau_{2}}^{\tau_{1}}e^{\frac{\sigma_{1}}{2}\tau'}\spaceI\psi W^{\alpha+m}\left|\Ndell{m}{n}\nu\right|^{2}dxd\tau'\nonumber\\
	&\lesssim\delta\mathscr{S}_{N}(\tau_{1})\int_{\tau_{2}}^{\tau_{1}}e^{-\frac{\sigma_{1}}{2}\tau'}d\tau' \notag \\
	& \lesssim \delta \left(e^{-\frac{\sigma_{1}}{2}\tau_{2}}-e^{-\frac{\sigma_{1}}{2}\tau_{1}}\right).
	\end{align}
	The first estimate follows from the Cauchy-Schwarz inequality in $\tau$, the second follows from the integrability of  $e^{-\frac{\sigma_{1}}{2}\tau}$ , and the last one follows from the uniform-in-$\tau$ boundedness of $\delta\mathscr{S}_{N}(\tau)$.
	
	An analogous estimate holds for any $|\underline{k}|\leq N$ on $\supp{\bar{\psi}}$. Thus $\left\|\theta(\tau_{1})-\theta(\tau_{2})\right\|_{\X{N}}\rightarrow0$ as $\tau_{1},\tau_{2}\rightarrow\infty$. In particular this implies that $\theta(\tau_{n})$ is Cauchy, for any strictly increasing sequence $\tau_{n}$. As $\X{N}$ is a Banach space, this implies the existence of $\lim_{\tau\to\infty}\theta(\tau)$ in $\X{N}$, which we call $\theta_{\infty}$. Thus we have $(\ref{theta-infinity-statement})$.
\end{proof}

\section*{Acknowledgments}
S. Parmeshwar acknowledges the support of the EPSRC studentship grant EP/N509498/1.
M. Had\v zi\'c acknowledges the support of the EPSRC grant 
EP/N016777/1. J. Jang acknowledges the support of the NSF grant DMS-1608494.


\begin{appendices}
\section{Commutators}\label{commutators}
We recall that in Section $\ref{derivatives}$, we introduced our angular and radial derivatives, $\angdell$ and $\rdell$ and the various commutator identities $(\ref{commutator-ang-rad})-(\ref{commutator-rectangular-ang})$. In this appendix, we state how these commutators can be written as radial and angular derivatives, following~\cite{HaJa2}.

\begin{lemma}[Higher order commutator identities]\label{higher-order-commutator-identities} Let $\dell_{s}$ be a rectangular derivative. Then for $\underline{n}\in\mathbb{Z}^{3}_{\geq0}$ with $|\underline{n}|>0$, we have
\begin{align}
    \comm{\Ndell{m}{n}}{\dell_{s}}=\sum_{i=1}^{m+|\underline{n}|}\sum_{\substack{a+|\underline{b}|=i\\a\leq m+1}}\mathcal{K}_{s,i,a,\underline{b}}\Ndell{a}{b},\label{expansion-commutator-Ndell-rectangular}
\end{align}
where $\mathcal{K}_{s,i,a,\underline{b}}$ are smooth functions away from the origin.

If $|\underline{n}|=0$, then we instead have
\begin{align}
	\comm{\rdell^{m}}{\dell_{s}}=\sum_{i=1}^{m+|\underline{n}|}\sum_{\substack{a+|\underline{b}|=i\\a\leq m}}\mathcal{F}_{s,i,a,\underline{b}}\Ndell{a}{b},\label{expansion-commutator-rdell-rectangular}
\end{align}
where, again, $\mathcal{F}_{s,i,a,\underline{b}}$ are smooth functions away from the origin.
\end{lemma}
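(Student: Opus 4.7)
The plan is to induct on the total order $M = m + |\underline{n}|$, using the basic commutator identities \eqref{commutator-ang-rad}--\eqref{commutator-rectangular-ang} together with the decomposition \eqref{rectangular-as-ang-rad} to convert any stray rectangular derivative back into a linear combination of $\rdell$ and $\angdell$ with coefficients that are smooth away from the origin (the coefficients $\frac{x_j}{r^2}$ are in $C^\infty(\mathbb{R}^3\setminus\{0\})$, and $\rdell$ and $\angdell_{ij}$ act on such rational functions of $x$ by mapping them to functions of the same type, so smoothness is preserved under all subsequent commutations).

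For the base case $M=1$, the statement reduces directly to $[\rdell,\dell_s] = -\dell_s$ and $[\angdell_{ij},\dell_s] = \delta_{sj}\dell_i - \delta_{si}\dell_j$, and in each case one rewrites the resulting $\dell_i$ using \eqref{rectangular-as-ang-rad}. In the inductive step I distinguish the two cases of the statement. When $|\underline{n}|>0$, I factor $\Ndell{m}{n} = \Ndell{m}{n'}\angdell_{ij}$ with $|\underline{n}'|=|\underline{n}|-1$ and apply the Leibniz commutator rule $[AB,\dell_s] = A[B,\dell_s]+[A,\dell_s]B$. The term $[A,\dell_s] \angdell_{ij}$ with $A = \Ndell{m}{n'}$ is handled by the inductive hypothesis directly (the extra $\angdell_{ij}$ on the right is absorbed using $\comm{\angdell_{ij}}{\rdell}=0$ and $\comm{\angdell_{ij}}{\angdell_{kl}}=\angdell$-term of the same order). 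For the term $\Ndell{m}{n'}[\angdell_{ij},\dell_s]$, the commutator $[\angdell_{ij},\dell_s]$ is a pure rectangular derivative $\dell_l$, which I immediately rewrite via \eqref{rectangular-as-ang-rad} as $\frac{x_p}{r^2}\angdell_{pl}+\frac{x_l}{r^2}\rdell$. The crucial bookkeeping point is that this introduces one factor of $\rdell$, and when commuted through the existing $\rdell^m$ block using \eqref{commutator-ang-rad}--\eqref{commutator-ang-ang} together with $\rdell(x_k/r^2)= -x_k/r^2$, the maximum power of $\rdell$ that can appear in the final expression is $m+1$, matching the stated bound $a\leq m+1$.

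For the case $|\underline{n}|=0$ the same scheme is applied to $\rdell^m$, but now the algebraic identity $\rdell\,\dell_s = \dell_s(\rdell-1)$, iterated, yields
\[
[\rdell^m,\dell_s] \;=\; \dell_s\bigl((\rdell-1)^m - \rdell^m\bigr),
\]
and the polynomial $(\rdell-1)^m-\rdell^m$ has degree exactly $m-1$ in $\rdell$. Substituting $\dell_s = \frac{x_j}{r^2}\angdell_{js}+\frac{x_s}{r^2}\rdell$ and commuting the coefficient functions to the left (which only produces lower-order contributions in $\rdell$, again because $\rdell(\tfrac{x_s}{r^2})=-\tfrac{x_s}{r^2}$ is smooth away from $0$), the leading power of $\rdell$ is $m-1+1=m$, giving the sharper bound $a\leq m$ claimed in \eqref{expansion-commutator-rdell-rectangular}.

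The main technical obstacle is the careful bookkeeping of the upper bound on $a$ in the two cases: the asymmetry between $a\le m+1$ (when at least one angular derivative is present and can be commuted against $\dell_s$ to produce a new rectangular derivative that converts to $\rdell$) and $a\le m$ (when only radial derivatives are present, so that each commutation with $\dell_s$ strictly reduces the polynomial degree in $\rdell$ before reinsertion) is subtle but follows from the two algebraic identities isolated above. Smoothness of the coefficients $\mathcal{K}_{s,i,a,\underline{b}}$ and $\mathcal{F}_{s,i,a,\underline{b}}$ away from the origin is inherited from smoothness of $\frac{x_i}{r^2}$ and closure of this class under the actions of $\rdell$ and $\angdell_{ij}$.
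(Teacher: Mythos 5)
Your proof is correct and follows essentially the same strategy as the paper's: peel off a derivative from $\Ndell{m}{n}$, commute it past $\dell_s$ using the basic identities \eqref{commutator-ang-rad}--\eqref{commutator-rectangular-ang}, and convert any newly produced rectangular derivative back to radial/angular form via \eqref{rectangular-as-ang-rad}. The paper organizes this as an iterative left-to-right sweep of $\dell_s$ through the angular block and then the radial block, whereas you phrase it as a downward induction on the total order together with the commutator Leibniz rule $[AB,\dell_s]=A[B,\dell_s]+[A,\dell_s]B$; these are algebraically equivalent. What you genuinely add is the closed-form identity $\rdell\,\dell_s=\dell_s(\rdell-1)$, hence $[\rdell^m,\dell_s]=\dell_s\bigl((\rdell-1)^m-\rdell^m\bigr)$, which the paper handles only implicitly (the $|\underline{n}|=0$ case is dismissed as ``analogous''). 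This makes the degree count transparent: the polynomial on the right has degree exactly $m-1$ in $\rdell$, so after substituting $\dell_s=\tfrac{x_j}{r^2}\angdell_{js}+\tfrac{x_s}{r^2}\rdell$ (whose coefficients already sit on the left, so no further commutation is required there despite your remark to the contrary) the maximal $\rdell$-power is $m$, yielding the bound $a\le m$; and in the angular case the extra $\rdell$ produced by $\comm{\angdell_{ij}}{\dell_s}\to\dell_l\to\tfrac{x_l}{r^2}\rdell$ raises the bound to $a\le m+1$. This is exactly the asymmetry the lemma asserts, and your argument isolates its source more cleanly than the paper's write-up.
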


\begin{proof}
	First we prove $(\ref{expansion-commutator-Ndell-rectangular})$. Recall that $(\ref{rectangular-as-ang-rad})$ gives us the identity
	\begin{align*}
		\dell_{q}=\frac{x_{k}}{r^{2}}\angdell_{kq}+\frac{x_{q}}{r^{2}}X_{r}.
	\end{align*}
	Write $\angdell^{\underline{n}}=\angdell^{\underline{\tilde{n}}}\angdell_{jl}$, with $|\underline{\tilde{n}}|=|\underline{n}|-1$. We have
	\begin{align*}
		\comm{\Ndell{m}{n}}{\dell_{s}}&=\Ndell{m}{\tilde{n}}\dell_{s}\angdell_{jl}-\dell_{s}\Ndell{m}{n}+\Ndell{m}{\tilde{n}}\comm{\angdell_{jl}}{\dell_{s}}.
	\end{align*}
	Then we invoke $(\ref{commutator-rectangular-ang})$:
	\begin{align*}
		\Ndell{m}{\tilde{n}}\comm{\angdell_{jl}}{\dell_{s}}&=-\delta_{sj}\Ndell{m}{\tilde{n}}\left(\dell_{l}\right)+\delta_{sl}\Ndell{m}{\tilde{n}}\left(\dell_{j}\right)\\
		&=-\delta_{sj}\Ndell{m}{\tilde{n}}\left(\frac{x_{k}}{r^{2}}\angdell_{kl}+\frac{x_{l}}{r^{2}}X_{r}\right)+\delta_{sl}\Ndell{m}{\tilde{n}}\left(\frac{x_{k}}{r^{2}}\angdell_{kj}+\frac{x_{j}}{r^{2}}X_{r}\right).
	\end{align*}
	Now $x/r^{2}$ is a smooth function away from the origin, so expanding the right hand side gives us
	\begin{align*}
		\Ndell{m}{\tilde{n}}\comm{\angdell_{jl}}{\dell_{s}}=\sum_{i=1}^{m+|\underline{n}|}\sum_{\substack{a+|\underline{b}|=i\\a\leq m+1}}K^{(1)}_{s,i,a,\underline{b}}\Ndell{a}{b},
	\end{align*}
	where $K^{(1)}_{s,i,a,\underline{b}}$ are smooth functions away from the origin.
	
	We can keep repeating this procedure with $\Ndell{m}{\tilde{n}}\dell_{s}\angdell_{jl}$, swapping the $\dell_{s}$ sequentially with the sequence of angular derivatives, at each step gaining a term with the same structure as
	\begin{align*}
		\sum_{i=1}^{m+|\underline{n}|}\sum_{\substack{a+|\underline{b}|=i\\a\leq m+1}}K^{(1)}_{s,i,a,\underline{b}}\Ndell{a}{b}
	\end{align*}
	to end up with 
	\begin{align*}
		\comm{\Ndell{m}{n}}{\dell_{s}}=\rdell^{m}\dell_{s}\angdell^{\underline{n}}-\dell_{s}\Ndell{m}{n}+\sum_{i=1}^{m+|\underline{n}|}\sum_{\substack{a+|\underline{b}|=i\\a\leq m+1}}K^{(2)}_{s,i,a,\underline{b}}\Ndell{a}{b},
	\end{align*}
	where $K^{(2)}_{s,i,a,\underline{b}}$ are smooth functions away from the origin.
	
	If $m>0$ we note that $\rdell^{m}\dell_{s}\angdell^{\underline{n}}=\rdell^{m-1}\dell_{s}\rdell\angdell^{\underline{n}}+\rdell^{m-1}\comm{\rdell}{\dell_{s}}\angdell^{\underline{n}}$, and we invoke $(\ref{commutator-rectangular-rad})$ to get
	\begin{align*}
		\rdell^{m}\dell_{s}\angdell^{\underline{n}}&=\rdell^{m-1}\dell_{s}\rdell\angdell^{\underline{n}}-\rdell^{m-1}\dell_{s}\angdell^{\underline{n}}\\
		&=\rdell^{m-1}\dell_{s}\rdell\angdell^{\underline{n}}-\rdell^{m-1}\left(\frac{x_{k}}{r^{2}}\angdell_{ks}+\frac{x_{s}}{r^{2}}X_{r}\right)\angdell^{\underline{n}}\\
		&=\rdell^{m-1}\dell_{s}\rdell\angdell^{\underline{n}}+\sum_{i=1}^{m+|\underline{n}|}\sum_{\substack{a+|\underline{b}|=i\\a\leq m}}K^{(3)}_{s,i,a,\underline{b}}\Ndell{a}{b},
	\end{align*}
	with $K^{(3)}_{s,i,a,\underline{b}}$ smooth away from the origin. We can also keep repeating this process until we get
	\begin{align*}
		\comm{\Ndell{m}{n}}{\dell_{s}}&=\rdell\dell_{s}\rdell^{m-1}\angdell^{\underline{n}}-\dell_{s}\Ndell{m}{n}+\sum_{i=1}^{m+|\underline{n}|}\sum_{\substack{a+|\underline{b}|=i\\a\leq m+1}}K^{(4)}_{s,i,a,\underline{b}}\Ndell{a}{b},
	\end{align*}
	where $K^{(4)}_{s,i,a,\underline{b}}$ are smooth away from the origin. Finally, note that
	
	\begin{align*}
		\rdell\dell_{s}\rdell^{m-1}\angdell^{\underline{n}}-\dell_{s}\Ndell{m}{n}=\comm{\rdell}{\dell_{s}}\Ndell{m-1}{n}=-\dell_{s}\Ndell{m-1}{n},
	\end{align*}
	and using $(\ref{commutator-rectangular-rad})$ once again, we finally get
	\begin{align*}
		\comm{\Ndell{m}{n}}{\dell_{s}}=\sum_{i=1}^{m+|\underline{n}|}\sum_{\substack{a+|\underline{b}|=i\\a\leq m+1}}\mathcal{K}_{s,i,a,\underline{b}}\Ndell{a}{b},
	\end{align*}
	for some $\mathcal{K}_{s,i,a,\underline{b}}$ smooth away from the origin.
	
	If $m=0$, write $\angdell^{\underline{n}}=\angdell_{j'l'}\angdell^{\underline{n}'}$, with $|\underline{n}'|=|\underline{n}|-1$. We obtain
	\begin{align*}
		\comm{\angdell^{\underline{n}}}{\dell_{s}}&=\angdell_{j'l'}\dell_{s}\angdell^{\underline{n}'}-\dell_{s}\angdell^{\underline{n}}+\sum_{i=1}^{m+|\underline{n}|}\sum_{\substack{a+|\underline{b}|=i\\a\leq 1}}K^{(5)}_{s,i,a,\underline{b}}\Ndell{a}{b}\\
		&=\comm{\angdell_{j'l'}}{\dell_{s}}\angdell^{\underline{n}'}+\sum_{i=1}^{m+|\underline{n}|}\sum_{\substack{a+|\underline{b}|=i\\a\leq 1}}K^{(5)}_{s,i,a,\underline{b}}\Ndell{a}{b},
	\end{align*}
	at which point we once again invoke $(\ref{commutator-rectangular-ang})$ and $(\ref{rectangular-as-ang-rad})$ to obtain the desired statement. The proof for the second part of the lemma is analogous.
\end{proof}

\section{Transformation of Derivatives}\label{transformation-of-derivatives}
In this appendix we record how $\Ndell{m}{n}$ can be written as a sum of $\nabla^{\underline{k}}$ on an appropriate sub-domain of the unit ball $B_{1}$, and vice versa.

\begin{lemma}\label{Ndell-to-Dk-lemma}Let $B_{1}$ be the closed unit ball in $\mathbb{R}^{3}$, and let $\Ndell{m}{n}$ and $\nabla^{\underline{k}}$ be defined as in $(\ref{Ndell-def})$ and $(\ref{Dk-def})$. Then we have
	\begin{align}
	\Ndell{m}{n}=\sum_{|\underline{k}|=0}^{m+|\underline{n}|}\mathcal{Q}_{\underline{k}}\nabla^{\underline{k}},\label{Ndell-to-Dk-statement}
	\end{align}
	for some $\mathcal{Q}_{\underline{k}}$, smooth on $B_{1}$.
\end{lemma}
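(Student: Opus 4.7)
The plan is to prove the statement by induction on the total order $M = m + |\underline{n}|$, exploiting the fact that both $\rdell$ and the $\angdell_{ij}$ are first-order differential operators with polynomial (in particular, $C^\infty(B_1)$) coefficients.

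\textbf{Base case.} When $M = 0$, we have $\Ndell{0}{0} = \mathrm{Id} = \nabla^{\underline{0}}$, so the expansion holds with $\mathcal{Q}_{\underline{0}} \equiv 1$.

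\textbf{Inductive step.} Assume \eqref{Ndell-to-Dk-statement} holds for all operators $\Ndell{m'}{n'}$ with $m' + |\underline{n}'| \le M$, and consider an operator $\mathscr{D} \cdot \Ndell{m'}{n'}$ of order $M+1$, where $\mathscr{D} \in \{\rdell, \angdell_{12}, \angdell_{13}, \angdell_{23}\}$. By definition, $\mathscr{D} = c^{i}_{\mathscr D}(x) \partial_i$ for polynomial coefficients $c^i_{\mathscr D}$ (namely $x_i$ for $\rdell$, and $\pm x_j$ for $\angdell_{ij}$), all of which are smooth on $B_1$. Using the inductive hypothesis and the Leibniz rule,
\begin{align*}
\mathscr{D} \cdot \Ndell{m'}{n'}
&= c^i_{\mathscr D}(x) \partial_i \Bigl( \sum_{|\underline{k}|=0}^{M} \mathcal{Q}_{\underline{k}} \nabla^{\underline{k}} \Bigr) \\
&= \sum_{|\underline{k}|=0}^{M} c^i_{\mathscr{D}}(x) \bigl( \partial_i \mathcal{Q}_{\underline{k}} \bigr) \nabla^{\underline{k}} + \sum_{|\underline{k}|=0}^{M} c^i_{\mathscr{D}}(x) \mathcal{Q}_{\underline{k}} \, \partial_i \nabla^{\underline{k}}.
\end{align*}
The first sum is a linear combination of $\nabla^{\underline{k}}$ with $|\underline{k}| \le M$ and coefficients $c^i_{\mathscr D}(x) \partial_i \mathcal{Q}_{\underline{k}}$, which are smooth on $B_1$. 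In the second sum, each operator $\partial_i \nabla^{\underline{k}}$ equals $\nabla^{\underline{k}'}$ for some multi-index $\underline{k}'$ with $|\underline{k}'| = |\underline{k}| + 1 \le M+1$, with coefficient $c^i_{\mathscr D}(x) \mathcal{Q}_{\underline{k}}$, again smooth on $B_1$. Collecting terms of equal multi-index order yields new smooth coefficients $\widetilde{\mathcal{Q}}_{\underline{k}}$ and an expansion $\mathscr{D} \cdot \Ndell{m'}{n'} = \sum_{|\underline{k}|=0}^{M+1} \widetilde{\mathcal{Q}}_{\underline{k}} \nabla^{\underline{k}}$, completing the induction.

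\textbf{Main obstacle.} There is essentially no analytic obstruction here; the only mild subtlety is bookkeeping to ensure that no coefficient requires division by $r$ (which would spoil smoothness at the origin). This is automatic in our setup precisely because we chose $\rdell = x_i \partial_i$ and $\angdell_{ij} = x_i \partial_j - x_j \partial_i$ to have \emph{polynomial} coefficients (cf.\ the discussion in Remark \ref{degeneracy-modified-spherical-derivatives}), as opposed to the naive spherical operators $\partial_r, \partial_\omega$ which carry inverse powers of $r$. Thus every coefficient appearing in the iteration is a polynomial in $x$, which is a fortiori smooth on all of $B_1$.
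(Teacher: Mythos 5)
Your proof is correct and follows essentially the same approach as the paper: the paper simply observes that $\rdell$ and $\angdell_{ij}$ have polynomial coefficients and that one obtains the result ``by expanding the expression,'' and your induction on $m+|\underline{n}|$ with the Leibniz rule is just a precise bookkeeping of that expansion. Nothing is missing.
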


\begin{proof}
	This is a simple consequence of our definitions of $\rdell$, given in (\ref{radial-derivative-def}) and $\angdell$, given in (\ref{angular-derivative-def}). In rectangular coordinates, they are given as
	\begin{align*}
	\rdell&=x.\nabla\\
	\angdell_{ij}&=x_{i}\dell_{j}-x_{j}\dell_{i}.
	\end{align*}
	Noting that $x$ is a smooth function on the ball, we get the desired statement by using these equalities in $\Ndell{m}{n}$ and expanding the expression.
\end{proof}

There is also a partial converse to Lemma $\ref{Ndell-to-Dk-lemma}$.

\begin{lemma}\label{Dk-to-Ndell-lemma}Let $B_{1}$ be the closed unit ball in $\mathbb{R}^{3}$, and let $\Ndell{m}{n}$ and $\nabla^{\underline{k}}$ be defined as in $(\ref{Ndell-def})$ and $(\ref{Dk-def})$. Then we have
	\begin{align}
	\nabla^{\underline{k}}=\sum_{m+|\underline{n}|=0}^{|\underline{k}|}\mathcal{Z}_{(m,\underline{n})}\Ndell{m}{n},\label{Dk-to-Ndell-statement}
	\end{align}
	for some functions $\mathcal{Z}_{(m,\underline{n})}$, smooth on any region removed from the origin in $B_{1}$.
\end{lemma}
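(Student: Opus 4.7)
}
The plan is to induct on $|\underline{k}|$. The base case $|\underline{k}|=0$ is immediate (take $\mathcal{Z}_{(0,\underline{0})}\equiv 1$). For the inductive step, I will fix some $s\in\{1,2,3\}$ with $k_s\ge 1$ and write $\nabla^{\underline{k}}=\dell_s\nabla^{\underline{k}'}$, where $|\underline{k}'|=|\underline{k}|-1$. By the inductive hypothesis
\[
\nabla^{\underline{k}'}=\sum_{m+|\underline{n}|=0}^{|\underline{k}'|}\mathcal{Z}_{(m,\underline{n})}^{(\underline{k}')}\,\Ndell{m}{n},
\]
with each $\mathcal{Z}_{(m,\underline{n})}^{(\underline{k}')}$ smooth away from the origin, so
\[
\nabla^{\underline{k}}=\sum_{m+|\underline{n}|=0}^{|\underline{k}'|}\Bigl(\dell_s\mathcal{Z}_{(m,\underline{n})}^{(\underline{k}')}\Bigr)\,\Ndell{m}{n}+\sum_{m+|\underline{n}|=0}^{|\underline{k}'|}\mathcal{Z}_{(m,\underline{n})}^{(\underline{k}')}\,\dell_s\Ndell{m}{n}.
\]
The first sum is already in the desired form, since derivatives of functions smooth away from the origin are themselves smooth away from the origin.

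To handle $\dell_s\Ndell{m}{n}$, I will use the key identity~(\ref{rectangular-as-ang-rad}),
\[
\dell_s=\frac{x_l}{r^2}\angdell_{ls}+\frac{x_s}{r^2}\rdell,
\]
whose coefficients $x_i/r^2$ are smooth on any set bounded away from the origin. The radial piece gives $\frac{x_s}{r^2}\,\rdell\Ndell{m}{n}=\frac{x_s}{r^2}\Ndell{m+1}{n}$ by (\ref{commutator-ang-rad}). For the angular piece I use $[\angdell_{ls},\rdell]=0$ to commute $\angdell_{ls}$ past $\rdell^m$, and then I apply (\ref{commutator-ang-ang}) repeatedly to move $\angdell_{ls}$ through $\angdell_{12}^{n_1}\angdell_{13}^{n_2}\angdell_{23}^{n_3}$; each commutation produces a term of the same form as the principal one but with one fewer angular factor, and (using $\angdell_{ij}=-\angdell_{ji}$ to restrict to $i<j$) lands in some $\Ndell{m}{n^*}$ with $|\underline{n}^*|=|\underline{n}|+1$, plus a finite sum of strictly lower-order $\Ndell{a}{b}$ with $a+|\underline{b}|\le m+|\underline{n}|$. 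All commutators in (\ref{commutator-ang-ang}) produce angular operators with constant coefficients, so they contribute nothing singular; the only source of non-smoothness at the origin is the factor $1/r^2$ coming from (\ref{rectangular-as-ang-rad}), which is smooth on any region removed from $0$.

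Collecting these contributions yields
\[
\nabla^{\underline{k}}=\sum_{m+|\underline{n}|=0}^{|\underline{k}|}\mathcal{Z}_{(m,\underline{n})}\Ndell{m}{n}
\]
with $\mathcal{Z}_{(m,\underline{n})}$ smooth on any region removed from the origin in $B_1$, completing the induction. There is no genuine obstacle: the content of the lemma is precisely that~(\ref{rectangular-as-ang-rad}) together with~(\ref{commutator-ang-rad})--(\ref{commutator-ang-ang}) form a closed algebraic system on $B_1\setminus\{0\}$, and the only price paid for inverting the relation from Lemma~\ref{Ndell-to-Dk-lemma} is the factor $r^{-2}$, which accounts for the restriction ``smooth on any region removed from the origin''.
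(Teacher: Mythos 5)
Your proof is correct and follows essentially the same route as the paper---substituting the decomposition~(\ref{rectangular-as-ang-rad}) into $\nabla^{\underline{k}}$ and expanding using the commutation relations---with the expansion made precise via induction on $|\underline{k}|$, which the paper leaves implicit. One small nitpick: the radial term $\frac{x_s}{r^2}\rdell\Ndell{m}{n}=\frac{x_s}{r^2}\Ndell{m+1}{n}$ is immediate from the definition of $\Ndell{m}{n}$ and does not require~(\ref{commutator-ang-rad}).
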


\begin{proof}This is an application of identity (\ref{rectangular-as-ang-rad}), namely
	\begin{align*}
	 \dell_{i}=\frac{x_{j}}{r^{2}}\angdell_{ji}+\frac{x_{i}}{r^{2}}X_{r}.
	\end{align*}
The function $x/r^{2}$ is smooth on any region removed from the origin, so we can use this equality in $\nabla^{\underline{k}}$ and expand, giving us the desired result.
\end{proof}

\section{Hardy-Type Inequality and Sobolev Embeddings}\label{hardy-sobolev-embeddings}
One of the main tools we use is a higher order Hardy-type embedding which tells us that a weighted Sobolev space on a domain can be realised in a Sobolev space of lower regularity, in essence sacrificing regularity to remove degeneracy near the boundary. 

\begin{definition}\label{sobolev-norm}For a bounded domain $\mathcal{O}\subset \mathbb R^3$, and $s\in\mathbb{Z}_{\geq0}$, define the Sobolev space $H^{s}(\mathcal{O})$ by
	\begin{align*}
	H^{s}(\mathcal{O})=\left\{F\in L^{2}(\mathcal{O}): \nabla^{\underline{k}}F \ \text{ is weakly in } \ L^2(\mathcal O)
, \ \underline{k}\in\mathbb Z_{\ge0}^3, \ 0\leq |\underline{k}|\leq b\right\},
	\end{align*}
	with norm given by
	\begin{align*}
	\left\|F\right\|_{H^{s}(\mathcal{O})}^{2}=\sum_{|\underline{k}|=0}^{b}\int_{\mathcal{O}} \left|\nabla^{\underline{k}}F\right|^{2}dx.
	\end{align*}
	The definition of $H^{s}(\mathcal{O})$ can be extended to $s\in\mathbb{R}_{\geq0}$ by interpolation.
\end{definition}

\begin{definition}\label{weighted-sobolev-norm}For a bounded domain $\mathcal{O}\subset \mathbb R^3$, $\alpha>0$ and $b\in\mathbb{Z}_{\geq0}$, define the weighted Sobolev space $H^{\alpha,b}$ by
	\begin{align*}
	H^{\alpha,b}(\mathcal{O})=\left\{d^{\frac{\alpha}{2}}F\in L^{2}(\mathcal{O}): 
	\nabla^{\underline{k}}F \ \text{ is weakly in } \ L^2(\mathcal O, d^\alpha dx),
\ \underline{k}\in\mathbb Z_{\ge0}^3, \ 0\leq |\underline{k}|\leq b\right\},
	\end{align*}
	with norm given by
	\begin{align*}
	\left\|F\right\|_{H^{\alpha,b}(\mathcal{O})}^{2}=\sum_{|\underline{k}|=0}^{b}\int_{\mathcal{O}} d^{\alpha}\left|\nabla^{\underline{k}}F\right|^{2}dx,
	\end{align*}
	where $d=d(x,\dell\mathcal{O})$ is the distance function to the boundary on $\mathcal{O}$.
\end{definition}


Given these definitions, we have the following embedding.

\begin{lemma}{\label{hardy-type-embedding}} Let $\mathcal O$ be as above and let $b\in\mathbb Z_{>0}$ and $0<\alpha\le 2b$. Then the Banach spaces $H^{\alpha,b}(\mathcal{O})$ embeds continuously in $H^{b-\frac{\alpha}{2}}(\mathcal{O})$.
\end{lemma}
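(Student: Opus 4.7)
The plan is to reduce the weighted embedding to a classical unweighted Sobolev embedding by iteratively applying a weighted Hardy inequality, and then to cover non-integer exponents by interpolation between integer cases.

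First I would establish the one-step weighted Hardy inequality on $\mathcal O$: for any $\alpha > 1$ and any sufficiently regular $G$,
\begin{align*}
\int_{\mathcal O} d^{\alpha-2} |G|^2 \, dx \lesssim \int_{\mathcal O} d^{\alpha} |\nabla G|^2 \, dx + \int_{\mathcal O} |G|^2 \, dx.
\end{align*}
On a bounded Lipschitz domain this follows by integrating $\dive(d^{\alpha-1} \nabla d \cdot |G|^2)$ by parts, using $|\nabla d| = 1$ near $\partial \mathcal O$; the condition $\alpha > 1$ is exactly what makes the boundary flux $d^{\alpha-1}|G|^2$ integrable and absorbable into the right-hand side.

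Second, I would treat the integer indices $\alpha = 2j$, $j = 1, \dots, b$, by induction on $j$. Applied to $G = \nabla^{\underline{k}} F$ for each $|\underline k | \le b-1$, the Hardy inequality yields
\begin{align*}
\| F \|_{H^{2j-2, b-1}(\mathcal O)} \lesssim \| F \|_{H^{2j, b}(\mathcal O)}.
\end{align*}
Iterating this bound $j$ times descends from the index pair $(2j, b)$ to $(0, b-j)$, establishing the embedding $H^{2j, b}(\mathcal O) \hookrightarrow H^{b-j}(\mathcal O)$ in the integer case.

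Finally, for general $\alpha \in (0, 2b]$, I would write $\alpha = (1-\theta) \cdot 2j + \theta \cdot 2(j+1)$ for the integer $j$ with $2j \le \alpha < 2(j+1)$ and $\theta = (\alpha - 2j)/2 \in [0, 1)$, and invoke real interpolation. The integer-case embeddings at both endpoints combine to give
\begin{align*}
[H^{2j, b}(\mathcal O), H^{2(j+1), b}(\mathcal O)]_{\theta, 2} \hookrightarrow [H^{b-j}(\mathcal O), H^{b-j-1}(\mathcal O)]_{\theta, 2} = H^{b-\alpha/2}(\mathcal O).
\end{align*}
The main obstacle will be the identification $[H^{2j, b}(\mathcal O), H^{2(j+1), b}(\mathcal O)]_{\theta, 2} = H^{\alpha, b}(\mathcal O)$ for weights that are powers of the distance function. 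This is a Stein-Weiss-type statement; on a bounded smooth domain it follows from the fact that $d^{\alpha}$ is a Muckenhoupt-type weight, but some care is required at the endpoint $\alpha = 2b$, where $b - \alpha/2 = 0$ and the target reduces to $L^2(\mathcal O)$.
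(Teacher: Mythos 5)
The paper does not give its own proof of Lemma~\ref{hardy-type-embedding}; it cites \cite{KMP,Jang2014}, and your overall strategy (one-step Hardy inequality, iteration for even-integer weight exponents, interpolation for the rest) is the standard route found in those references. However, as you have written it, the inductive step is circular, and the circularity traces to the precise form of your one-step Hardy inequality.

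You put the lower-order term on the right-hand side as the \emph{unweighted} integral $\int_{\mathcal O}|G|^2\,dx$. That version of the inequality is true---it is weaker than the correct one since $d^\alpha\lesssim 1$ on a bounded domain---but it is useless for the induction: applying it to $G=\nabla^{\underline k}F$ with $|\underline k|\le b-1$ produces on the right the term $\int_{\mathcal O}|\nabla^{\underline k}F|^2\,dx$, an unweighted derivative norm of order up to $b-1$, and bounding \emph{that} by the weighted norm $\|F\|_{H^{2j,b}(\mathcal O)}$ is not automatic; it is essentially what the lemma asserts. The integration by parts of $\dive\bigl(\chi\,d^{\alpha-1}\nabla d\,|G|^2\bigr)$ with a cut-off $\chi$ near $\partial\mathcal O$, after absorbing the mixed terms via Young's inequality as you outline, actually gives the inequality with the \emph{weighted} zeroth-order term,
\begin{align*}
\int_{\mathcal O}d^{\alpha-2}|G|^2\,dx\lesssim\int_{\mathcal O}d^{\alpha}\bigl(|\nabla G|^2+|G|^2\bigr)\,dx,\qquad \alpha>1,
\end{align*}
and with this form the induction closes, since for $|\underline k|\le b-1$ both $\int d^{2j}|\nabla\nabla^{\underline k}F|^2$ and $\int d^{2j}|\nabla^{\underline k}F|^2$ are controlled by $\|F\|_{H^{2j,b}(\mathcal O)}^2$. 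Once this is repaired the rest of your plan is sound: the iteration uses Hardy with exponents $2j,2j-2,\dots,2$, all exceeding~$1$, so its hypothesis is always met; and the interpolation step you correctly flag as delicate---identifying $[H^{2j,b}(\mathcal O),H^{2(j+1),b}(\mathcal O)]_{\theta,2}$ with $H^{\alpha,b}(\mathcal O)$---is indeed the main residual difficulty, because for $\alpha>1$ the weight $d^\alpha$ lies outside the Muckenhoupt $A_2$ class and the identification requires an argument specific to power-distance weights rather than a generic weighted interpolation theorem.
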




Finally, we state the Hardy-Sobolev bounds for $L^\infty$-norm in terms of our energy norms.
Recall the space $\X{b}$ and the set $\Y{b}{\ngrad}$ given in Definition $\ref{energy-function-spaces-definition}$, as well as $\psi$ given in~$\eqref{cutoff-function}$.

\begin{lemma}\label{L-infinity-energy-space-bound-no-weights} Let $a\in\mathbb{Z}_{\geq0}$ and $\underline{b}\in\mathbb{Z}_{\geq0}^{3}$, and let $a+|\underline{b}|=M$. Then
	\begin{align}
	\left\|\Ndell{a}{b}F\right\|_{L^{\infty}(\supp{\psi})}^{2}\lesssim \left\|F\right\|_{\X{\ceil*{\alpha}+2M+4}}^{2}.\label{L-infinity-energy-space-bound-no-weights-statement-1}
	\end{align}
\end{lemma}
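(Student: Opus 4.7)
The plan is to derive the $L^\infty$ bound through a two-stage process: first a standard Sobolev embedding trading $L^\infty$ control for $H^2$ regularity, then an iterated Hardy-type embedding trading unweighted $L^2$ control for the weighted norms appearing in the definition of $\X{b}$. The cornerstone is that on $\supp \psi$ we are bounded away from the origin, so modified spherical derivatives are interchangeable with rectangular ones up to smooth coefficients, and the only degeneracy is at the vacuum boundary $r=1$ where $W \sim 1 - r$ vanishes.

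To execute the plan, I would first invoke the classical embedding $H^{2}(\supp\psi) \hookrightarrow L^\infty(\supp\psi)$ (valid in three dimensions since $2 > 3/2$, and on the smooth spherical shell $\supp\psi$) to obtain
\[
\|\Ndell{a}{b} F\|_{L^\infty(\supp\psi)}^2 \lesssim \sum_{|\underline{k}| \leq 2} \int_{\supp\psi} |\nabla^{\underline{k}} \Ndell{a}{b} F|^2\,dx.
\]
Lemma~\ref{Dk-to-Ndell-lemma} then rewrites each $\nabla^{\underline{k}}$ as a smooth linear combination of $\Ndell{m}{n}$ with $m + |\underline{n}| \leq |\underline{k}|$, and the commutator identities \eqref{commutator-ang-rad}-\eqref{commutator-ang-ang} together with Lemma~\ref{higher-order-commutator-identities} let me reorder past $\Ndell{a}{b}$. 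This reduces the task to bounding
\[
\int_{\supp\psi} |\Ndell{p}{q} F|^2\,dx
\]
for indices satisfying $p + |\underline{q}| \leq M + 2$ and $p \leq a + 2$.

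For each such term I would iterate the radial Hardy inequality
\[
\int_{\supp\psi} W^\beta |u|^2\,dx \lesssim \int_{\supp\psi} W^{\beta + 2} |\rdell u|^2\,dx,
\]
valid because $W$ vanishes at $r = 1$ and $\rdell = r \partial_r$ with $r$ bounded below on $\supp\psi$. Starting from $u = \Ndell{p}{q} F$ and $\beta = 0$, after $k$ iterations one obtains
\[
\int_{\supp\psi} |\Ndell{p}{q} F|^2\,dx \lesssim \int_{\supp\psi} W^{2k} |\Ndell{p+k}{q} F|^2\,dx.
\]
Choosing $k = \ceil*{\alpha} + p$ ensures $2k \geq \alpha + p + k$, so $W^{2k} \leq C\, W^{\alpha + p + k}$ on $\supp\psi$ (after normalizing $W \leq 1$, or absorbing $\sup W$ into the constant), and the right-hand side is dominated by $\|F\|_{\X{p + k + |\underline{q}|}}^2 = \|F\|_{\X{\ceil*{\alpha} + 2p + |\underline{q}|}}^2$. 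Maximizing $\ceil*{\alpha} + 2p + |\underline{q}| = \ceil*{\alpha} + p + (p + |\underline{q}|) \leq \ceil*{\alpha} + (M+2) + (M+2) = \ceil*{\alpha} + 2M + 4$ over admissible $(p, \underline{q})$ matches the claimed index.

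The main obstacle is careful bookkeeping of the iterated Hardy inequality: boundary terms at the inner radius of $\supp\psi$ must be handled (using the cutoff $\psi$ to force vanishing, or absorbing them into lower-order weighted integrals already controlled by $\X{b}$), while boundary terms at the vacuum boundary vanish automatically thanks to the positive power of $W$. An alternative cleaner route is to encapsulate the iteration by invoking Lemma~\ref{hardy-type-embedding} repeatedly with varying weight exponents $W^{\alpha + m}$ level by level in radial derivative count, but this requires verifying at each level that the hypotheses of the embedding hold; the generous index $\ceil*{\alpha} + 2M + 4$ provides the slack to do so. A secondary subtlety is ensuring the angular derivative count, which contributes to the total $\X{b}$-index but not to the weight exponent, is tracked correctly when converting rectangular derivatives via Lemma~\ref{Dk-to-Ndell-lemma}.
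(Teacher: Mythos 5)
Your route — $H^2(\supp\psi)\hookrightarrow L^\infty(\supp\psi)$, conversion to modified spherical derivatives, then iterated Hardy to trade unweighted $L^2$ for the weighted $\X{b}$-norm — is the standard argument for this type of lemma, and it is essentially what the paper implicitly invokes by citing \cite{KMP,Jang2014} rather than writing a proof. The index accounting, with $k=\ceil*{\alpha}+p$ iterations on a term with $p\le a+2$ radial and $p+|\underline{q}|\le M+2$ total derivatives leading to $\ceil*{\alpha}+2M+4$, is correct.

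The one genuine flaw is the stated one-step Hardy inequality
\begin{align*}
\int_{\supp\psi} W^\beta |u|^2\,dx \lesssim \int_{\supp\psi} W^{\beta+2}|\rdell u|^2\,dx,
\end{align*}
which is false as written: take $u\equiv 1$, then the left side is positive and the right side is zero. The vanishing of $W$ at $r=1$ kills the outer boundary term after integrating by parts, but it does not rule out the constant mode. The correct form, for $\beta>-1$, is
\begin{align*}
\int_{\supp\psi} W^\beta |u|^2\,dx \lesssim \int_{\supp\psi} W^{\beta+2}\left(|\rdell u|^2 + |u|^2\right)\,dx,
\end{align*}
which is precisely what Lemma~\ref{hardy-type-embedding} encodes at the level of spaces. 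Your diagnosis — ``boundary terms at the inner radius must be handled'' — is not quite the right way to frame it; near $r=r_1$ the weight $W\sim 1$ and there is no degeneracy, so no boundary-term difficulty. The missing ingredient is the lower-order term on the right-hand side. Once it is included, the $k$-fold iteration produces a \emph{sum} $\sum_{j=0}^{k}\int W^{2k}|\Ndell{p+j}{q}F|^2$ rather than a single term with $j=k$, but since smaller $j$ means fewer radial derivatives (hence a weaker weight requirement $2k\ge \alpha+p+j$), every summand is controlled by $\|F\|_{\X{\ceil*{\alpha}+2M+4}}^2$, and your final bound survives unchanged. One additional small point worth making explicit: the region $\supp\psi\cap\supp\bar\psi$ is covered by both parts of the $\X{b}$-norm, and there $W\sim 1$, so after the Hardy iteration the piece of each integral that falls in $[r_1,r_0]$ is absorbed into the $\bar\psi$-weighted rectangular-derivative terms via Lemma~\ref{Ndell-to-Dk-lemma}.
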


\begin{lemma}
	\label{L-infinity-energy-space-bound-weights} Let $a\in\mathbb{Z}_{\geq0}$ and $\underline{b}\in\mathbb{Z}_{\geq0}^{3}$, and let $a+|\underline{b}|=M$. Then
		\begin{align}
		\left\|W^{\frac{a}{2}}\Ndell{a}{b}F\right\|_{L^{\infty}(\supp{\psi})}^{2}\lesssim \left\|F\right\|_{\X{\ceil*{\alpha}+M+6}}^{2}.\label{L-infinity-energy-space-bound-weights-statement-1}
		\end{align}

		\begin{align}
		\left\|W^{\frac{a}{2}}\nabla\Ndell{a}{b}F\right\|_{L^{\infty}(\supp{\psi})}^{2}\lesssim \left\|F\right\|_{\Y{\ceil*{\alpha}+M+6}{\ngrad}}^{2}.\label{L-infinity-energy-space-bound-weights-statement-2}
		\end{align}
\end{lemma}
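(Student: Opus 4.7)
The plan is to prove both bounds by the standard chain of embeddings
\[
L^\infty(\supp\psi) \hookleftarrow H^{s}(\supp\psi) \hookleftarrow H^{\alpha,b}(\supp\psi) \hookleftarrow \X{\lceil\alpha\rceil+M+6},
\]
combining the Sobolev embedding in $\mathbb{R}^3$ for some $s>3/2$, the Hardy-type inequality of Lemma~\ref{hardy-type-embedding} with $b-\alpha/2\ge s$, and the rectangular-to-spherical conversion formulas of Appendix~\ref{transformation-of-derivatives}. Setting $G:=W^{a/2}\Ndell{a}{b}F$ and recalling that $W\sim d(\cdot,\partial\Omega)$ on $\supp\psi$, the first two reductions give
\[
\|G\|_{L^\infty(\supp\psi)}^{2} \;\lesssim\; \|G\|_{H^{s}(\supp\psi)}^{2} \;\lesssim\; \sum_{|\underline k|\le s+\lceil\alpha/2\rceil}\int_{\supp\psi} W^{\alpha}\,|\nabla^{\underline k}G|^{2}\,dx.
\]
Taking $s=2$, it remains to dominate this weighted sum by $\|F\|_{\X{\lceil\alpha\rceil+M+6}}^{2}$.

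For each multi-index $\underline k$, I would expand $\nabla^{\underline k}G$ by the Leibniz rule into terms of the form $\nabla^{\underline k_1}(W^{a/2})\,\nabla^{\underline k_2}\Ndell{a}{b}F$ with $\underline k_1+\underline k_2=\underline k$. On $\supp\psi$, Lemma~\ref{Dk-to-Ndell-lemma} together with the commutator identities of Lemma~\ref{higher-order-commutator-identities} let me rewrite $\nabla^{\underline k_2}\Ndell{a}{b}F$ as a linear combination, with coefficients smooth on $\supp\psi$, of $\Ndell{a'}{b'}F$ satisfying $a'+|\underline{b'}|\le M+|\underline k_2|$ and radial count $a'\le a+|\underline k_2|$. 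The prototypical resulting integrand is $W^{\alpha+a-2|\underline k_1|}\,|\Ndell{a'}{b'}F|^{2}$, which, after absorbing bounded powers of $W\le C$, is dominated by a term in $\|F\|^{2}_{\X{M+|\underline k|}}$. The outer index $\lceil\alpha\rceil+M+6$ then arises by summing the $M$ intrinsic derivatives, the $\lceil\alpha/2\rceil$ spent in Hardy, the Sobolev cushion $s=2$, and a bounded safety margin to absorb the conversion identities.

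The principal technical obstacle is to control terms in which many derivatives fall on $W^{a/2}$, producing singular prefactors $W^{a/2-|\underline k_1|}$ at $\partial\Omega$. Their integrability against the outer $d^{\alpha}$ weight requires $\alpha+a-2|\underline k_1|>-1$, a condition that may fail marginally if one applies Lemma~\ref{hardy-type-embedding} in a single step. I would therefore iterate the Hardy embedding, trading small increments of weight for derivatives so that no intermediate exponent becomes non-integrable; the extra cost is comfortably inside the $+6$ budget. Finally, inequality~\eqref{L-infinity-energy-space-bound-weights-statement-2} follows from~\eqref{L-infinity-energy-space-bound-weights-statement-1} applied componentwise to $\nabla\Ndell{a}{b}F$: on $\supp\psi$ the a priori assumption~\eqref{a-priori-assumptions} gives $\|\ninv-I\|_{L^\infty}\le\tfrac13$, so $|\nabla\Ndell{a}{b}F|\lesssim|\ngrad\Ndell{a}{b}F|$ pointwise and the $\Y{\lceil\alpha\rceil+M+6}{\ngrad}$-norm controls exactly the analogue of the right-hand side used above.
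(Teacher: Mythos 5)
The paper does not actually supply a proof of Lemma~\ref{L-infinity-energy-space-bound-weights}; Appendix~\ref{hardy-sobolev-embeddings} closes by stating that the proofs of Lemmas~\ref{hardy-type-embedding}--\ref{L-infinity-energy-space-bound-weights} are standard and pointing to~\cite{KMP,Jang2014}. Judged on its own, your reduction has a genuine gap at the central domination step. After Leibniz and the conversion of Lemma~\ref{Dk-to-Ndell-lemma}, you assert that an integrand of the form $W^{\alpha+a-2|\underline k_1|}\,|\Ndell{a'}{b'}F|^{2}$, with $a'\le a+|\underline k_2|$, is ``after absorbing bounded powers of $W\le C$'' dominated by a term in $\|F\|^{2}_{\X{M+|\underline k|}}$. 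But the term of $\X{M+|\underline k|}$ carrying $|\Ndell{a'}{b'}F|^{2}$ has weight $W^{\alpha+a'}$, so pointwise domination would require $W^{\alpha+a-2|\underline k_1|}\lesssim W^{\alpha+a'}$, i.e.\ $a' \le a-2|\underline k_1|$ near the boundary where $W<1$. This fails generically: already in the main Leibniz term $\underline k_1=\underline 0$ one has $a'$ up to $a+|\underline k|$, i.e.\ an excess of $|\underline k|$ \emph{radial} derivatives on $F$ not matched by any gain in the $W$-exponent, and ``absorbing powers of $W\le C$'' only removes positive powers, never supplies the missing ones. The obstacle you flag (singular prefactors from many derivatives hitting $W^{a/2}$) is the secondary one; the principal mismatch is that $\X{N}$ is \emph{anisotropic} on $\supp\psi$ -- the weight $W^{\alpha+m}$ grows with the radial order $m$ -- whereas your single flat Hardy step $H^{\alpha,b}\hookrightarrow H^{b-\alpha/2}$ carries the uniform $W^{\alpha}$ for all orders, and the two are not directly comparable.

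The step can be repaired, but it is itself a Hardy-type estimate rather than a pointwise absorption: one integrates by parts in the radial direction repeatedly, each step trading one radial derivative on $F$ for one \emph{gained} power of $W$ (using $\partial_r W\simeq -1$ near $\partial\Omega$), so that after roughly $p-a$ steps the exponent matches the $\X{\cdot}$ weight $W^{\alpha+p}$; this lands you at a term of $\|F\|^{2}_{\X{M+2|\underline k|}}$ rather than $\|F\|^{2}_{\X{M+|\underline k|}}$, which is still within the $+6$ budget (since $2b=4+2\ceil*{\alpha/2}\le\ceil*{\alpha}+6$), but the factor of two is not visible in your write-up and is exactly where the work is. Your remark about iterating the Hardy embedding points in the right direction but identifies the wrong reason for iterating; making this precise is essentially the content of the anisotropic Hardy--Sobolev embedding for physical-vacuum weights in~\cite{Jang2014}, which is why the paper cites it rather than re-deriving it. Finally, \eqref{L-infinity-energy-space-bound-weights-statement-2} does not follow from \eqref{L-infinity-energy-space-bound-weights-statement-1} applied componentwise to $\ngrad F$ as you suggest: the $\Y{N}{\ngrad}$-norm carries the weight $W^{1+\alpha+m}\pjac$, one extra power of $W$ relative to $\X{N}$, so the base exponent in the embedding must be taken as $1+\alpha$ rather than $\alpha$, and that change must be carried through the proof; the a priori assumption~\eqref{a-priori-assumptions} comparing $\nabla$ and $\ngrad$ is the easy part.
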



The proofs of Lemmas~\ref{hardy-type-embedding}--\ref{L-infinity-energy-space-bound-weights} are standard and can be found in~\cite{KMP,Jang2014}.
\end{appendices}

\end{document}